\newtheorem{thm}{Theorem}[section]
\newtheorem{lemma}[thm]{Lemma}
\newtheorem{prop}[thm]{Proposition}
\theoremstyle{definition}
\newtheorem{defn}[thm]{Definition}
\newtheorem{rmk}[thm]{Remark}
\newtheorem{constr}[thm]{Construction}
\newtheorem{quest}[thm]{Question}
\numberwithin{equation}{section}
\renewcommand{\epsilon}{\varepsilon}
\newcommand{\spn}{\mathrm{span}}
\newcommand{\rad}{\mathrm{rad}}
\newcommand{\cone}{\mathrm{cone}}
\newcommand{\real}{\mathrm{real}}
\newcommand{\infs}{\mathrm{inf}}
\newcommand{\vol}{\mathrm{vol}}
\begin{document}

\title[Standard embeddings and expansion factors]{Standardly embedded train tracks and pseudo-Anosov maps with minimum expansion factor}

\author{Eriko Hironaka}
\address{Department of Mathematics; Florida State University; Tallahassee, FL}
\email{hironaka@math.fsu.edu}
\thanks{The first author was partially supported by a grant from the Simons Foundation \#426722.}

\author{Chi Cheuk Tsang}
\address{University of California, Berkeley \\
    970 Evans Hall \#3840 \\
    Berkeley, CA 94720-3840}
\email{chicheuk@math.berkeley.edu}
\thanks{The second author was partially supported by a grant from the Simons Foundation \#376200.}

\maketitle

\begin{abstract}
We show that given a fully-punctured pseudo-Anosov map $f:S \to S$ whose punctures lie in at least two orbits under the action of $f$, the expansion factor $\lambda(f)$ satisfies the inequality
$$\lambda(f)^{|\chi(S)|} \ge \mu^4 \approx 6.85408,$$
where $\mu = \frac{1 + \sqrt{5}}{2} \approx 1.61803$ is the golden ratio. The proof involves a study of standardly embedded train tracks, and the Thurston symplectic form defined on their weight space.
\end{abstract}

\section{Introduction} \label{sec:intro}
Let $S = S_{g,s}$ be an oriented surface of finite type with genus $g$ and $s$ punctures, such that $\chi(S) = 2-2g-s$ is negative, and let $f:S \to S$ be a homeomorphism of $S$. By the Nielsen-Thurston classification of mapping classes, up to isotopy, $f$ either preserves an essential multicurve (and is \textit{periodic} or \textit{reducible}), or it is \textit{pseudo-Anosov}. In the latter case, there exists a transverse pair of measured, singular \textit{stable} and \textit{unstable} foliations $\ell^s,\ell^u$ such that $f$ stretches the measure of $\ell^u$ by $\lambda(f)$ and contracts the measure of $\ell^s$ by $\frac{1}{\lambda(f)}$. The number $\lambda(f)>1$ here is known as the \textit{expansion factor} of $f$.

Thurston's train track theory links the dynamics of pseudo-Anosov maps with that of Perron-Frobenius matrices, and implies that $\lambda(f)$ is a bi-Perron algebraic unit with degree bounded in terms of $\chi(S)$ \cite{FLP79}. In particular, for fixed $(g,s)$, $\lambda(f)$ attains a minimum $\lambda_{g,s} > 1$. Furthermore, each pseudo-Anosov map determines a closed geodesic on the moduli space $\mathcal M(S)$ with respect to the Teichm\"uller metric, whose length equals $\log(\lambda(f))$. Thus $\log(\lambda_{g,s})$ is the length of the shortest geodesics on $\mathcal M(S_{g,s})$ \cite{Abi80}. However, so far $\lambda_{g,s}$ is known only for small $g$ and $s$, see \cite{HS07}, \cite{CH08}, \cite{LT11b}.

In this paper, we focus on the normalized expansion factor 
$$L(S,f) = \lambda(f)^{|\chi(S)|},$$
and consider pseudo-Anosov maps that are \textit{fully-punctured}, meaning the singularities of $\ell^s$ and $\ell^u$ lie on the punctures of $S$. Let $\mu$ be the golden ratio $\frac{1 + \sqrt{5}}{2} \approx 1.61803$. Our main result is the following.

\begin{thm} \label{thm:mainthm}
Let $f:S \to S$ be a fully-punctured pseudo-Anosov map with at least two puncture orbits, then $L(S,f)$ satisfies the sharp inequality
$$L(S,f) \geq \mu^4 \approx 6.85408.$$
\end{thm}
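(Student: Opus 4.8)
The plan is to convert $f$ into linear-algebraic data via an invariant train track and then extract the bound from the rigidity of the Thurston symplectic form together with the Perron--Frobenius structure.

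\textbf{Step 1: reduce to a transition matrix.} I would fix an invariant \emph{standardly embedded} train track $\tau$ carrying the unstable foliation $\ell^u$, so that $f(\tau)$ is carried by $\tau$ and every complementary region of $\tau$ is a once-punctured polygon, the region at a puncture $q$ being a once-punctured $p_q$-gon where $p_q$ is the number of prongs of $\ell^u$ at $q$ (existence and a normal form for such $\tau$ being the business of the earlier sections). Carrying yields a primitive nonnegative integral transition matrix $M$ on the branch space, with Perron--Frobenius eigenvalue $\lambda(f)$ and Perron eigenvector the weight vector of $\ell^u$. Restricting to the weight space $W(\tau)$ cut out by the switch relations, a count of cusps against $\chi(S)$ on the once-punctured-polygon decomposition gives $\dim W(\tau)=|\chi(S)|$, so the characteristic polynomial of $M|_{W(\tau)}$ is monic integral of degree $|\chi(S)|$ with $\lambda(f)$ among its roots. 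This ``efficient'' count is exactly where the hypotheses enter: it breaks down on a one-orbit surface such as $S_{1,1}$ (there any invariant track has $\dim W(\tau)>|\chi|$, as one sees from $\mu^2\notin\ZZ$), and I would use the two-orbit and fully-punctured hypotheses both to exclude such degenerate surfaces and to guarantee that the standardly embedded $\tau$ above exists.

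\textbf{Step 2: reciprocity from the symplectic form.} Since $f$ is a homeomorphism, $f_\ast$ preserves the Thurston symplectic form $\omega$ on $W(\tau)$; writing $R\subseteq W(\tau)$ for its radical, $f_\ast$ descends to a symplectic automorphism $\overline{M}$ of $W(\tau)/R$, whose characteristic polynomial $Q$ is reciprocal, $Q(x)=x^{\deg Q}Q(1/x)$, of even degree $\deg Q=\operatorname{rank}\omega\le|\chi(S)|$. The unstable foliation of a pseudo-Anosov carries no peripheral curve, so its weight vector lies outside $R$, hence $\lambda(f)$ is still a root of $Q$. Thus $\lambda(f)$ is the house of a monic reciprocal integral polynomial $Q$ of degree $\le|\chi(S)|$, and since for such $Q$ one has $\prod_{|\alpha|>1}|\alpha|\le\lambda(f)^{\deg Q/2}$ while $\lambda(f)>1$,
\[
\lambda(f)^{|\chi(S)|}\ \ge\ \lambda(f)^{\deg Q}\ \ge\ \Bigl(\textstyle\prod_{|\alpha|>1}|\alpha|\Bigr)^{2}\ =\ \mathrm{M}(Q)^{2}.
\]
It therefore suffices to prove $\mathrm{M}(Q)\ge\mu^{2}$ for every $Q$ arising this way.

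\textbf{Step 3: the Mahler-measure bound, and the obstacle.} When $\deg Q=2$ one has $Q(x)=x^{2}-kx+1$ with $k=\operatorname{tr}\overline{M}\in\ZZ$, and $|k|\ge 3$ since $f$ is pseudo-Anosov, so $\mathrm{M}(Q)=\lambda(f)\ge\frac{3+\sqrt5}{2}=\mu^{2}$, equality being the trace-$3$ case realized by the standard map on $S_{0,4}$ with two puncture orbits of size two --- the sharp example. The cases $\deg Q\ge 4$ are the crux: a general reciprocal integral polynomial of large degree can have Mahler measure far below $\mu^{2}$ (Lehmer's polynomial), so one must exploit that $Q$ divides the characteristic polynomial of a \emph{standardly embedded} transition matrix. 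I expect the main obstacle to be precisely this --- showing that the block/combinatorial shape forced on $M$ by a standard embedding, in the presence of at least two puncture orbits, confines $Q$ to a family whose Mahler measure is bounded below by $\mu^{2}$, presumably via an induction that splits off a smallest admissible piece with base case the trace-$3$ matrix above. A secondary point is to check $\omega\not\equiv 0$ (so that $\deg Q\ge 2$ and the displayed chain applies) for the relevant filling tracks, which should again follow from the two-orbit hypothesis ruling out the degenerate surfaces.
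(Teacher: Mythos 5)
Your Steps 1--2 follow the same general road as the paper (an $f$-invariant standardly embedded train track, built from the two $f$-invariant classes of punctures, plus reciprocity extracted from the Thurston form $\omega$), but Step 3 contains the genuine gap, and you have identified it yourself without closing it. After passing to the symplectic quotient $\mathcal{W}(\tau)/\rad(\omega)$ you only retain that $\lambda(f)$ is a root of a monic reciprocal integral polynomial $Q$ of degree at most $|\chi(S)|$, and you then need $\mathrm{M}(Q)\ge\mu^2$. That statement is false for general reciprocal integral polynomials (Lehmer's polynomial has Mahler measure $\approx 1.17628$, and indeed Lehmer's number is the expansion factor of the fibering of the $(-2,3,7)$-pretzel knot complement, a fully-punctured map with one puncture orbit whose normalized expansion factor is $\approx 4.31<\mu^4$), so reciprocity alone cannot give the bound; some combinatorial positivity must be retained. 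The paper's resolution is precisely the ingredient your reduction discards: it keeps the nonnegative integer matrix $f_*^{\real}$ of size exactly $|\chi(S)|$ (the real-edge transition matrix, which is Perron--Frobenius by \Cref{prop:realedgesPF}), proves that this \emph{whole matrix} is reciprocal, and then invokes McMullen's theorem (\Cref{thm:McMullen}) that a reciprocal Perron--Frobenius matrix $A\in M_{n\times n}(\mathbb{Z}^{\ge 0})$ satisfies $\rho(A)^n\ge\mu^4$. Your "induction splitting off a smallest admissible piece" is, in effect, an attempt to reprove that theorem, and without it the argument does not reach the conclusion.

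Two further points of comparison. First, because the paper needs reciprocity of the full characteristic polynomial of $f_*^{\real}$ (not merely of a factor containing $\lambda$), it cannot simply quotient out the radical: it must compute $\rad(\omega)=\spn\{r_c\}$ (\Cref{prop:radical}, the bulk of Sections 4--5) and show the induced action on the radical is reciprocal (\Cref{prop:radicalreciprocal}), before assembling reciprocity of $f_*$ and then of $f_*^{\real}$ via the block decompositions. Your route avoids that computation but pays for it fatally in Step 3. Second, your dimension count is off as stated: for a standardly embedded track one has $|\mathcal{E}_{\real}|=|\chi(S)|$ (\Cref{prop:realedgenumber}), but the switch conditions need not be independent, so in general $\dim\mathcal{W}(\tau)>|\chi(S)|$; the degree-$|\chi(S)|$ object is the real transition matrix, not the restriction of $f_*$ to the weight space. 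Finally, the two-orbit hypothesis is used constructively (it supplies the $f$-invariant partition $\mathcal{X}=\mathcal{X}_I\sqcup\mathcal{X}_O$ from which the invariant standardly embedded track and the Markov partition are built), not merely to "exclude degenerate surfaces"; the one-orbit examples of \Cref{subsec:oneboundary} show the conclusion itself fails without it.
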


Here by a puncture orbit, we mean an orbit of the action of $f$ on the punctures of $S$. If we replace the punctures of $S$ with boundary components, then the number of puncture orbits equals the number of boundary components of the mapping torus of $f$.
We remark that the assumption of $f$ having at least two puncture orbits is necessary. See \Cref{subsec:oneboundary} for explicit counterexamples. % New

For the rest of this introduction, we will first give some background on what is known about the pattern of minimum expansion factors and its relations to the topology and geometry of the mapping torus. Then we will state a more precise version of \Cref{thm:mainthm}, discuss some special cases, and finally explain the tools that are used in the proof.

\subsection{Background on small expansion factors and mapping tori}

The expansion factor is a measure of the dynamical complexity of a pseudo-Anosov map. More concretely, it is equal to the exponential of the entropy, which measures how much the map mixes points on the surface. Thus, the minimum expansion factor is related to the complexity of the underlying surface and of the mapping torus, with the latter being a fibered hyperbolic 3-manifold \cite{Thu88}. In the following, we describe what is known about this relation.

\subsubsection{Minimum expansion factor as a function on the $(g,s)$ plane}

Applying properties of Perron-Frobenius matrices, Penner \cite{Pen91} observed that $\log\lambda_{g,s}$ is bounded below by the reciprocal of a linear function in $g$ and $s$
$$\log \lambda_{g,s} \geq \frac{\log 2}{12g-12+4s}.$$
Furthermore, for $s=0$, Penner constructed a sequence of examples giving an upper bound, yielding
$$\log \lambda_{g,0} \leq \frac{C}{g}$$
for some $C > 0$, which together with the lower bound implies
$$\log \lambda_{g,0} \asymp \frac{1}{|\chi(S_{g,0})|}.$$
This asymptotic behavior has been shown to persist along other lines in the $(g,s)$-plane, namely, $s = mg$ where $m > 0$ \cite{Val12}; $g = 0$ \cite{HK06}; $g = 1$ \cite{Tsa09}; and $s = s_0$ for fixed $s_0 > 0$ \cite{Yaz20}. For lines $g = g_0$ with fixed $g_0 \ge 2$, however, \cite{Tsa09} shows that
$$\log \lambda_{g,s} \asymp \frac{\log|\chi(S_{g,s})|}{|\chi(S_{g,s})|}.$$

We also mention a result of Agol, Leininger, and Margalit \cite{ALM16} that concerns the minimum expansion factor among pseudo-Anosov maps on an oriented closed surface with fixed genus $g$ whose action on the first homology preserves a $k$-dimensional subspace. We denote this minimum expansion factor as $\lambda_{g,0,k}$. Their result is that
$$\log \lambda_{g,0,k} \asymp \frac{k+1}{g}.$$
See \cite{BBKT22} for a partial generalization of this result to punctured surfaces. % New

Let $\lambda_K$ be the minimum expansion factor for pseudo-Anosov maps $f:S \to S$ satisfying $|\chi(S)| = K$.

\begin{quest}[McMullen \cite{McM00}] \label{quest:limitKques}
Does the sequence $(\lambda_K)^K$ converge, and if so does it converge to $\mu^4$?
\end{quest}

As we will see in this paper, it is possible to use the theory of digraphs and standardly embedded train tracks to get information about minimum expansion factors in the fully-punctured case. To translate results from the fully-punctured case to the general case, the following question is of interest.

\begin{quest} \label{quest:levelK}
For fixed $K$, where, on the level sets in the $(g,s)$-plane where $2g+s-2=K$, is the minimum $\lambda_K$ realized? Is there a bound $s_0$, such that $\lambda_K$ is always achieved by $\lambda_{g,s}$ for $s \leq s_0$?
\end{quest}

A positive answer to \Cref{quest:levelK} would suggest that the minimum expansion factors $\lambda^\circ_K$ for fully-punctured pseudo-Anosov maps $f:S \to S$ satisfying $|\chi(S)| = K$ should have the same asymptotic behavior as $\lambda_K$.

We note that \Cref{thm:sharpthm} below answers \Cref{quest:levelK} in the affirmative if we restrict to fully-punctured pseudo-Anosov mapping classes with even Euler characteristic and at least two puncture orbits.

\subsubsection{Mapping tori}

Pseudo-Anosov mapping classes $(S,f)$ can be partitioned into flow-equivalence classes, that is, collections of pseudo-Anosov mapping classes whose mapping tori and suspension flows are equivalent. Thurston's fibered face theory gives a way to parameterize elements of a flow-equivalence class by primitive integral points on a cone over a top-dimensional face $F$, called a \textit{fibered face}, of the Thurston norm ball in $H^1(M;\mathbb{R})$. Here, the Thurston norm of a primitive integral element $a \in \cone (F)$ associated to $(S_a,f_a)$ equals $|\chi(S_a)|$, and hence its projection to $F$ is given by $\overline a = \frac{1}{|\chi(S_a)|}a$, see \cite{Thu86}.

It follows that the rational elements on $F$ are in one-to-one correspondence with elements of the flow equivalence class. By a result of Fried, the normalized expansion factor $L(S_a,f_a)$, considered as a function of integral classes $a$, extends to a continuous concave function defined for all $a \in F$ that goes to infinity towards the boundary of $F$ \cite{Fri82}. Thus, in particular, for any compact subset of $F$, the corresponding pseudo-Anosov maps have bounded normalized expansion factor. 

For a hyperbolic 3-manifold $M$ with fibered face $F$, let $C_{M,F}$ be the infimum of $L(S,f)$, where $(S,f)$ is associated to an element of $\cone(F)$. One consequence of Fried and Thurston's fibered face theory is the following.

\begin{thm} [Fried-Thurston \cite{Fri82} \cite{Fri85} \cite{Thu86}] \label{thm:FTthm}
If $b_1(M) \geq 2$, then for any $\epsilon > 0$ there are infinitely many fibrations $M \to S^1$ whose monodromy $(S,f)$ satisfies
$$L(S,f) < C_{M,F} + \epsilon.$$
\end{thm}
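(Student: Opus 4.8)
The plan is to read the statement directly off Fried's continuity result combined with the fact that a face of positive dimension has a dense set of rational points. Write $\mathcal L \colon F \to \RR_{>0}$ for the function $\overline a \mapsto L(S_a,f_a)$; a priori this is only defined on the rational points of $F$, but by the Fried--Thurston package quoted just above it extends to a continuous (indeed concave, and proper) function on all of $F$, and by the definition of $C_{M,F}$ together with the density of the rational points and continuity of $\mathcal L$ we have $C_{M,F} = \inf_{\overline a \in F} \mathcal L(\overline a)$, a finite number (it is at least $1$ since $\lambda(f) > 1$, and finite since $F$, being a fibered face, contains at least one rational point).

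First I would record the only place the hypothesis is used: since $F$ is a top-dimensional face of the Thurston norm ball in $H^1(M;\RR)$, we have $\dim F = b_1(M) - 1 \geq 1$. Hence every nonempty open subset of $F$ contains infinitely many rational points. Moreover a rational point of $F$ determines a unique ray in $\cone(F)$, hence a unique primitive integral class, hence by Thurston's fibered face theory a genuine fibration $M \to S^1$ with pseudo-Anosov monodromy; distinct rational points of $F$ give distinct (non-isotopic, as the cohomology class is an invariant) fibrations.

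Now fix $\epsilon > 0$. By the definition of $C_{M,F}$ as an infimum there is a rational point $\overline{a_0} \in F$ with $\mathcal L(\overline{a_0}) < C_{M,F} + \epsilon$. By continuity of $\mathcal L$ at $\overline{a_0}$ there is an open neighborhood $V \subseteq F$ of $\overline{a_0}$ on which $\mathcal L < C_{M,F} + \epsilon$ still holds. Since $\dim F \geq 1$, the set $V$ contains infinitely many rational points, and each of them yields a distinct fibration $M \to S^1$ whose monodromy $(S,f)$ satisfies $L(S,f) = \mathcal L(\overline a) < C_{M,F} + \epsilon$. This is precisely the conclusion. (One may alternatively use concavity and properness of $\mathcal L$ to see that the infimum is attained on a compact convex subset of the interior of $F$, and then perturb inside a ball around that set; this is cleaner conceptually but not needed for the stated bound.)

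The argument is essentially formal once the fibered face theory is available, so there is no genuine obstacle; the steps that require care are bookkeeping ones — verifying that the normalization $\overline a = \frac{1}{|\chi(S_a)|}a$ indeed makes $L$ descend to a well-defined function on $F$ (rather than on $\cone(F)$), that the correspondence between rational points of $F$ and fibrations of $M$ is a bijection up to isotopy so that ``infinitely many rational points'' translates faithfully into ``infinitely many fibrations,'' and that the infimum defining $C_{M,F}$ over integral classes agrees with $\inf_F \mathcal L$.
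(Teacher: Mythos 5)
Your argument is correct and is exactly the intended derivation: the paper gives no proof of \Cref{thm:FTthm}, citing Fried and Thurston and presenting it as a consequence of the fact (stated just above it) that $L(S_a,f_a)$ extends to a continuous function on the fibered face $F$, which is precisely what you use, together with the observation that $b_1(M)\geq 2$ makes $\dim F=b_1(M)-1\geq 1$ so that any neighborhood of a nearly-minimizing rational point contains infinitely many rational points, each giving a distinct fibration. The bookkeeping points you flag (homogeneity making $L$ descend to $F$, and distinct rational classes giving distinct fibrations) are handled correctly, so there is no gap.
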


Given $C > 1$, the pseudo-Anosov mapping classes $(S,f)$ that satisfy $L(S,f) < C$ must have mapping tori $M$ and associated fibered face $F$ with $C_{M,F} \leq C$. The following result is often referred to as the universal finiteness property and states that after removing the singular orbits of the suspension pseudo-Anosov flow, the number of such pairs $(M,F)$ is finite.

\begin{thm}[Farb-Leininger-Margalit \cite{FLM11}] \label{thm:FLMthm}
Given any $C$, there is a finite set of fibered, hyperbolic 3-manifolds $\Omega_C$ such that if $(S,f)$ is a fully-punctured pseudo-Anosov map, and $L(S,f) < C$, then the mapping torus of $(S,f)$ lies in $\Omega_C$.
\end{thm}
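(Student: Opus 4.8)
The plan is to show that the hypothesis $L(S,f)<C$ forces the mapping torus $M=M_f$ to have \emph{bounded volume}, and then to extract finiteness from a geometric-limit argument in which the fully-punctured hypothesis plays the decisive role. Recall that, by Agol's theorem, $(S,f)$ is fully-punctured if and only if $M$ carries the canonical veering ideal triangulation dual to the suspension flow, equivalently if and only if the singular orbits of that flow are exactly the cusps of $M$.

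\textbf{Step 1: a volume bound.} Write $\xi=|\chi(S)|\ge 1$. The map $f$ acts on the Teichm\"uller space $\mathcal{T}(S)$ with Teichm\"uller translation length exactly $\log\lambda(f)$. Comparing the Teichm\"uller and Weil--Petersson metrics — the Weil--Petersson length of a tangent vector is at most a universal constant times $\sqrt{\xi}$ times its Teichm\"uller length — the Weil--Petersson translation length of $f$ is bounded above by a universal constant times $\sqrt{\xi}\,\log\lambda(f)$. By Brock's theorem, which bounds the volume of a fibered $3$-manifold above by a universal multiple of the Weil--Petersson translation length of its monodromy, $\vol(M)\le K\sqrt{\xi}\,\log\lambda(f)$ for some universal $K$. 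Since $\xi\ge 1$ and $\xi\log\lambda(f)=\log L(S,f)$,
\[
\vol(M)\ \le\ K\sqrt{\xi}\,\log\lambda(f)\ =\ K\,\frac{\log L(S,f)}{\sqrt{\xi}}\ \le\ K\log C\ =:\ V(C).
\]
Hence every mapping torus in question is a cusped hyperbolic $3$-manifold of volume at most $V(C)$; by the Margulis lemma each cusp occupies at least a universal amount of volume, so these manifolds also have at most $N(C)$ cusps.

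\textbf{Step 2: reduction to a degeneration.} Suppose the set of these mapping tori were infinite, and pick pairwise non-homeomorphic members $M_i=M_{f_i}$. For each $B$ only finitely many surfaces $S_{g,s}$ satisfy $2g-2+s=B$, and on a fixed surface there are only finitely many conjugacy classes of pseudo-Anosov maps with dilatation below any fixed bound; since $\lambda(f_i)=L(S_i,f_i)^{1/|\chi(S_i)|}<C$, if $|\chi(S_i)|$ took only finitely many values there would be only finitely many $f_i$ up to conjugacy, hence only finitely many $M_i$, a contradiction. So, passing to a subsequence, $|\chi(S_i)|\to\infty$ and therefore $\lambda(f_i)\to 1$. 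All the $M_i$ have volume at most $V(C)$, so by the Gromov--Thurston compactness and Dehn surgery theorems a further subsequence converges geometrically to a cusped hyperbolic $3$-manifold $M_\infty$ of volume at most $V(C)$, with $M_i$ obtained for large $i$ from $M_\infty$ by Dehn filling a nonempty set of cusps along slopes $\gamma_i$ of length tending to infinity; the cores of the filling solid tori are then closed geodesics of $M_i$ of length tending to $0$.

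\textbf{Main obstacle.} What remains — and this is the technical heart — is to rule out this last configuration, and it is exactly here that being fully-punctured is used. One expects the following dichotomy for the short core geodesics of $M_i$: either they are isotopic into a fiber, in which case the fibered structure pins $f_i$ down to a pseudo-Anosov map on a surface of bounded complexity and bounded dilatation, of which there are only finitely many, contradicting that the $M_i$ are distinct; or they are transverse to the fibers, in which case Dehn filling along ever deeper slopes augments the fiber so that $|\chi(S_i)|$ grows faster than $\lambda(f_i)$ decays, forcing $L(S_i,f_i)=\lambda(f_i)^{|\chi(S_i)|}\to\infty$ and contradicting $L(S_i,f_i)<C$. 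Making this precise requires controlling the fiber $S_i$ and the dilatation $\lambda(f_i)$ in terms of the geometry of the filling $M_i=M_\infty(\gamma_i)$, and this is possible because fully-puncturedness identifies $M_i$ with the canonical drilled manifold of its suspension flow, whose combinatorics are rigidly encoded by Agol's veering triangulation (equivalently, by Fried's description of the flow as an invariant of the fibered face). With the contradiction established, the set of mapping tori is finite, and we take $\Omega_C$ to be it.
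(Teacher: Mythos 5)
This is a theorem the paper \emph{cites} (to Farb--Leininger--Margalit, with alternative references to Agol and Agol--Tsang) rather than proves, so there is no ``paper's own proof'' to compare against; I'll assess the proposal on its own terms.

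Your Step 1 is correct, though circuitous: the Kojima--McShane inequality (the paper's \Cref{thm:KMthm}) gives $\vol(M)\le 3\pi\log L(S,f)<3\pi\log C$ directly, without needing the Linch/Weil--Petersson comparison or Brock's theorem. Step 2's reduction is also sound: if $|\chi(S_i)|$ were bounded there would be only finitely many relevant pseudo-Anosov conjugacy classes (discreteness of the Teichm\"uller length spectrum), hence finitely many mapping tori; and a volume bound plus Thurston--Gromov compactness and the hyperbolic Dehn surgery theorem do put a subsequence of the $M_i$ into the form $M_\infty(\gamma_i)$ with filling slopes going to infinity.

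The genuine gap is exactly where you flag it. Your ``Main obstacle'' paragraph is not a proof but an announcement of a dichotomy, and the second horn of the dichotomy is, as stated, not believable. You claim that if the short core geodesics are transverse to the fibers, then ``Dehn filling along ever deeper slopes augments the fiber so that $|\chi(S_i)|$ grows faster than $\lambda(f_i)$ decays, forcing $L(S_i,f_i)\to\infty$.'' But Fried's theorem (the paper's \Cref{thm:FTthm}) shows precisely the opposite phenomenon within a \emph{fixed} $M$: there are sequences of fibrations with $|\chi(S_i)|\to\infty$, fibers transverse to a fixed closed orbit, and $L(S_i,f_i)$ bounded (in fact tending to $C_{M,F}$). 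So ``transverse to the fiber with $|\chi|\to\infty$'' does not by itself force $L\to\infty$; what you actually need is an argument specific to \emph{nontrivial Dehn fillings} ($M_i\ne M_\infty$), and you have not supplied one. The role of the fully-punctured hypothesis — that the suspension flow has no interior singular orbits, so any short core geodesic created by the filling must be a regular closed orbit, and the veering triangulation/Markov partition data controls how such a filling affects the fiber and the dilatation — is exactly the content you would need to write out, and the proposal only asserts that this ``is possible.'' As written the argument is a plausible outline in the spirit of FLM/Agol but stops short of the decisive step. For a self-contained route you might instead follow Agol's veering-triangulation argument, which you allude to: bound the number of tetrahedra in the veering triangulation of $M$ in terms of $L(S,f)$, and invoke finiteness of triangulations with at most $n$ tetrahedra; this avoids the geometric-limit dichotomy entirely.
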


(See also \cite{Ago11} and \cite{AT22}.)

The above theorem also implies finiteness for families of defining polynomials that can arise for small expansion factors. Given a polynomial $p(t)$, let $|p|$ be the complex norm of the largest root of $p(t)$. 

\begin{thm}[McMullen \cite{McM00}]
Let $M$ be a fibered hyperbolic 3-manifold, with $n = b_1(M) \geq 2$ and let $F \subset H^1(M;\mathbb{R})$ be a fibered face. Then there is a polynomial $\Theta \in \mathbb{Z}[t_1,\dots,t_n]$ with the property that for any primitive integral $a = (a_1,\dots,a_n)$ in the fibered cone over $F$, $|\Theta(t^{a_1},\dots,t^{a_n})|$ is the expansion factor of the monodromy of $(S,f)$ corresponding to $a$.
\end{thm}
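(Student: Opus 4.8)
The plan is to carry out McMullen's construction: build, from the dynamics of a single fibration in $\cone(F)$, an equivariant ``transition matrix'' over the group ring of $H := H_1(M;\mathbb{Z})/\mathrm{torsion} \cong \mathbb{Z}^n$, take its determinant to be $\Theta$, and then show that evaluating along any primitive integral $a$ in the fibered cone recovers the transition matrix of the monodromy $(S_a,f_a)$. Write $\mathbb{Z}[H] = \mathbb{Z}[t_1^{\pm 1},\dots,t_n^{\pm 1}]$. First I would fix a primitive integral $a_0 \in \cone(F)$, the corresponding fibration $S_0 \to M \to S^1$, and its pseudo-Anosov monodromy $\phi$. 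By Thurston's train track theory (cf. \cite{FLP79}), after replacing $\phi$ by a power if necessary --- which changes $\Theta$ only by a unit of $\mathbb{Z}[H]$ and so is harmless --- there is a birecurrent train track $\tau \subset S_0$ with branches $b_1,\dots,b_k$ carrying the unstable foliation of $\phi$, with $\phi(\tau)$ carried by $\tau$; its transition matrix $A$, recording how often the carried image of $\phi(b_j)$ traverses $b_i$, is Perron--Frobenius with $\lambda(A) = \lambda(\phi)$. Suspending $\tau$ along $\phi$ produces a branched surface $\mathcal{B} \subset M$, transverse to the suspension flow $\psi$ of $\phi$, which fully carries the $2$-dimensional unstable lamination $\mathcal{L}$ of $\psi$.

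Next I would pass to the maximal free abelian cover $p\colon \widetilde M \to M$, with deck group $H$, and pull $\mathcal{B}$ back to $\widetilde{\mathcal B} = p^{-1}(\mathcal B)$. Lifting the carrying map to $\widetilde{\mathcal B}$ and recording, monomial by monomial, the deck-group displacement of each incidence refines $A$ to a square matrix $P_{\mathcal B}$ over $\mathbb{Z}[H]$ with $P_{\mathcal B}(1,\dots,1)$ conjugate to $A$; this $P_{\mathcal B}$ presents the $\mathbb{Z}[H]$-module $E$ spanned by the lifted branches modulo the switch relations imposed by the carrying map. Then I would set
$$\Theta := \det P_{\mathcal B} \in \mathbb{Z}[H],$$
well-defined up to a unit $\pm t_1^{c_1}\cdots t_n^{c_n}$ and, after clearing, an element of $\mathbb{Z}[t_1,\dots,t_n]$. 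That $\Theta$ does not depend on the choices of $\tau$, the power of $\phi$, or $a_0$ follows because $(\Theta)$ is the $0$-th Fitting ideal of $E$, and $E$ is canonically determined by a regular neighborhood of $\mathcal{L}$, which is an invariant of the fibered face.

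To recover the expansion factors, let $a = (a_1,\dots,a_n) \in \cone(F)$ be primitive integral. By Fried's theorem \cite{Fri82}, \cite{Thu86}, the class $a$ is represented by a fibration $S_a \to M \to S^1$ whose monodromy $f_a$ is pseudo-Anosov with suspension flow $\psi$. Putting $S_a$ in general position transverse to $\psi$, the intersection $\tau_a := \mathcal{B} \cap S_a$ is a train track on $S_a$ carrying the unstable foliation of $f_a$, and $f_a$ is the first-return map of $\psi$ to $S_a$. The infinite cyclic cover dual to $a$ is $\widetilde M / \ker(a\colon H \to \mathbb{Z})$, under which $t_i \mapsto t^{a_i}$; transporting the carrying combinatorics through this cover should identify $\Theta(t^{a_1},\dots,t^{a_n})$, up to a factor $\pm t^c$, with the characteristic polynomial of the transition matrix of $\tau_a$ (times possibly extra factors all of whose roots are dominated by $\lambda(f_a)$). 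Since $a$ is interior to the fibered cone, $S_a$ is genuinely transverse to $\psi$ and that transition matrix is Perron--Frobenius, so the norm of the largest root of $\Theta(t^{a_1},\dots,t^{a_n})$ equals its spectral radius $\lambda(f_a)$; that is, $|\Theta(t^{a_1},\dots,t^{a_n})| = \lambda(f_a)$, as claimed.

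The hard part will be the assertion in the previous step that one branched surface $\mathcal{B}$, manufactured from a single fibration, simultaneously carries the unstable foliations of all monodromies in $\cone(F)$, and that the cross-section $\tau_a$ has transition matrix exactly the monomial specialization of $P_{\mathcal B}$. I expect this to rest on three ingredients: (i) Fried's and Thurston's structure theory, so that $\psi$ and its two invariant singular $2$-foliations persist over the whole fibered cone and every transverse cross-section of $\mathcal{B}$ is a legitimate train track for the relevant unstable foliation; (ii) a local analysis along the branch locus of $\mathcal{B}$ showing that the ``fully carried'' condition survives restriction to a general cross-section, together with a positivity and connectedness argument --- using that $a$ lies in the \emph{open} cone --- ruling out cancellation that would destroy the Perron--Frobenius property after specialization; and (iii) the homological bookkeeping identifying the displacement exponents in $P_{\mathcal B}$ with the pairings $\langle a, - \rangle$, so that ``specialize $\Theta$ at $a$'' is literally ``restrict $\mathcal{B}$ to the $a$-cross-section''. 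By comparison, the first two steps are routine train-track theory together with linear algebra over $\mathbb{Z}[H]$, and the well-definedness of $\Theta$ is the formal statement that $(\Theta)$ is a Fitting ideal.
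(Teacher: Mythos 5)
The paper does not prove this theorem; it is cited from McMullen \cite{McM00} and used as a black box, so there is no in-paper argument to compare against. Judged on its own, your sketch captures the essential shape of McMullen's construction: suspend a train track on one fiber to a branched surface carrying the unstable lamination of the flow, lift to the maximal free abelian cover, extract transition data over $\mathbb{Z}[H]$, take a determinant, then specialize at a class $a$ in the cone. But two points deserve correction.

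First, McMullen's $\Theta_F$ is not $\det P_{\mathcal B}$ for a single matrix. His setup singles out the monodromy direction: writing $G = H_1(M)/\mathrm{tors}$ and $G' = \ker(a_0 \colon G \to \mathbb{Z})$, the train track is lifted to the $G'$-cover of the initial fiber $S_0$, and the lifted pseudo-Anosov acts on the $\mathbb{Z}[G']$-modules of edges and of switches by matrices $P_E(t)$ and $P_V(t)$. The Teichm\"uller polynomial is the quotient
$$\Theta_F(t,u) \;=\; \frac{\det\bigl(uI - P_E(t)\bigr)}{\det\bigl(uI - P_V(t)\bigr)},$$
where $u$ is a distinguished extra variable dual to the fiber direction. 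The division by the vertex factor is what makes $\Theta_F$ a genuine invariant, independent of the auxiliary train track. Your parenthetical ``(times possibly extra factors all of whose roots are dominated by $\lambda(f_a)$)'' is gesturing at exactly this: $P_V$ is a signed permutation matrix, so its characteristic polynomial has all roots on the unit circle and cannot affect the spectral radius. But as you have written it, $\det P_{\mathcal B}$ carries that extraneous content, and you would need to identify and divide it out before calling the result $\Theta$; otherwise well-definedness (your Fitting-ideal argument) would not go through because the edge determinant alone depends on the choice of $\tau$.

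Second, the work you relegate to ``routine train-track theory'' in items (i)--(iii) is not routine in McMullen's actual write-up: the statement that $\Theta_F(t^{a_1},\dots,t^{a_n})$ agrees, up to a unit and the unit-circle vertex factor, with the characteristic polynomial of the transition matrix of $(S_a,f_a)$ for \emph{every} primitive integral $a$ in the open fibered cone is Theorem 5.1 of \cite{McM00}, and its proof occupies most of a section. Your framing of $(\Theta)$ as the $0$-th Fitting ideal of the module of transversals to the lifted lamination is the right lens for well-definedness, and the roadmap is sound; the gaps are in details that McMullen spends real effort on, not in the overall strategy.
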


The polynomial $\Theta$ is known as the {\it Teichm\"uller polynomial} of the fibered face.

Define $C_M$ to be the infimum of $L(S,f)$, where $(S,f)$ is the monodromy of a fibration $M \to S^1$. The topological invariant $C_M$ of a fibered hyperbolic 3-manifold $M$ is related to the geometry of $M$ as shown by the following theorem.

\begin{thm}[Kojima-McShane \cite{KM18}] \label{thm:KMthm} 
For any hyperbolic, fibered $3$-manifold $M$, $C_M$ satisfies the following inequality 
$$C_M \geq \exp \left(\frac{\vol(M)}{3\pi} \right).$$
\end{thm}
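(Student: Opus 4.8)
This statement is a theorem of Kojima and McShane \cite{KM18}, so the natural plan is to follow their strategy. Since $C_M = \inf L(S,f) = \inf \lambda(f)^{|\chi(S)|}$, the infimum taken over all fibrations $M \to S^1$, and since $\log$ is increasing, it suffices to establish the per-fibration inequality $\vol(M) \le 3\pi\,|\chi(S)|\,\log\lambda(f)$ for every monodromy $(S,f)$; taking the infimum then gives $\log C_M \ge \vol(M)/(3\pi)$, which is the claim.

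The first ingredient is the identification, recalled in the introduction, of $\log\lambda(f)$ with the Teichm\"uller translation length of the mapping class $[f]$ acting on $\mathcal{T}(S)$: its axis is a Teichm\"uller geodesic $\widetilde{\gamma}:\RR \to \mathcal{T}(S)$ with $\widetilde{\gamma}\bigl(t+\log\lambda(f)\bigr) = f\cdot\widetilde{\gamma}(t)$, carrying a one-parameter family of unit-area quadratic differentials $q_t$ swept out by the Teichm\"uller flow from a single $f$-invariant differential; correspondingly the closed geodesic $\gamma$ in moduli space $\mathcal{M}(S)$ has length exactly $\log\lambda(f)$. This geodesic is the skeleton along which the fibered hyperbolic manifold $M$ is built, and the goal is to turn its length into a volume bound.

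Next I would build a sweepout of $M$ adapted to $\widetilde{\gamma}$. Passing to the infinite cyclic cover $\widetilde{M}\cong S\times\RR$ of the fibration --- a doubly degenerate hyperbolic manifold whose two ends are asymptotic to the stable and unstable laminations of $f$ --- one produces a proper family of pleated (or simplicial hyperbolic) surfaces $g_t:S\to\widetilde{M}$ that exit both ends and are permuted by the deck transformation $\widetilde{f}$ compatibly with the Teichm\"uller time shift. Each image $g_t(S)$ is intrinsically hyperbolic, hence has area $2\pi\,|\chi(S)|$ by Gauss--Bonnet (for a minimal-surface sweepout one instead gets this as an upper bound, using that the ambient curvature is $-1$). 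The part of $\widetilde{M}$ between $g_0(S)$ and $\widetilde{f}\cdot g_0(S) = g_{\log\lambda(f)}(S)$ is a fundamental domain for $\widetilde{f}$, so $\vol(M)$ is exactly its volume.

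The last step is to estimate that volume. A crude coarea bound integrates the areas $2\pi\,|\chi(S)|$ against Teichm\"uller time and gives $2\pi\,|\chi(S)|\log\lambda(f)$, but the sweepout surfaces are in general neither embedded nor mutually disjoint, and the conversion between Teichm\"uller arclength along $\widetilde{\gamma}$ and genuine transversal distance traversed in $\widetilde{M}$ is lossy; one must therefore reparametrize the sweepout and absorb the overlap uniformly. Doing this carefully is precisely what yields the sharp constant $3\pi$ rather than $2\pi$, and this is where I expect the main difficulty: everything before it is setup, but pinning down the numerical constant requires a genuinely quantitative grip on the model geometry of $M$ along its Teichm\"uller axis. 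As a fallback one could instead compare the hyperbolic metric on $M$ directly with the natural singular metric of the Teichm\"uller flow --- a warped product of the flat structures $q_t$ over $\RR$, whose volume is computable in terms of $|\chi(S)|\log\lambda(f)$ and whose volume entropy can be kept under control --- and then invoke a Besson--Courtois--Gallot volume comparison.
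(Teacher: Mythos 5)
The paper does not prove this statement; it is quoted verbatim from Kojima--McShane \cite{KM18}, so your proposal has to be measured against their argument. Your reduction is fine: since $C_M=\inf_a L(S_a,f_a)$ it suffices to prove $\vol(M)\le 3\pi\,|\chi(S)|\log\lambda(f)$ for every fibration, and the identification of $\log\lambda(f)$ with the Teichm\"uller translation length of $[f]$ is correct. But the entire content of the theorem is the quantitative step that you explicitly leave open (``this is where I expect the main difficulty''), and the sweepout strategy you propose cannot deliver it. A family of pleated or simplicial hyperbolic surfaces exiting the ends of the cyclic cover gives each fiber area $2\pi|\chi(S)|$ by Gauss--Bonnet, but there is no coarea-type inequality converting Teichm\"uller arclength along the axis into hyperbolic thickness of the region swept: the surfaces overlap in an uncontrolled way, Teichm\"uller distance between fibers does not bound their ambient separation from below, and the naive computation would produce the constant $2\pi$, which is not what the theorem asserts and is not obtainable by ``absorbing the overlap uniformly''. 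The fallback via a warped-product Teichm\"uller-flow metric and Besson--Courtois--Gallot is likewise not a known route and would require controlling the volume entropy of a singular metric, which you do not address.

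The actual proof in \cite{KM18} is structurally different: it passes through Weil--Petersson geometry and renormalized volume rather than sweepouts. One bounds $\vol(M)$ by a limit of renormalized volumes of the quasi-Fuchsian manifolds $Q(X,f^nX)$, invokes Schlenker's inequality bounding the renormalized volume linearly in the Weil--Petersson distance $d_{WP}(X,f^nX)$ (with constant proportional to $\sqrt{|\chi(S)|}$), and then converts to the Teichm\"uller translation length via Linch's comparison $d_{WP}\le\sqrt{2\pi|\chi(S)|}\,d_T$; multiplying the two constants is exactly what produces $3\pi|\chi(S)|\log\lambda(f)$. So the missing idea in your write-up is precisely this renormalized-volume/WP mechanism; without it (or some genuinely new replacement), the proposal is a setup plus an acknowledged gap rather than a proof.
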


\begin{rmk}
\Cref{thm:KMthm} implies, for example, that if $(S,f)$ satisfies $L(S,f) = \mu^4$, the volume of the mapping torus $M$ must satisfy
$$\vol(M) \leq 12\pi \log (\mu) \approx 18.14123.$$
For comparison, the magic manifold, which realizes many of the smallest known expansion factors, has volume $\approx 5.33349$ according to SnapPy \cite{SnapPy}. Further work comparing known pseudo-Anosov maps with small expansion factor and the volume of their mapping tori can be found in \cite{AD10} and \cite{KKT13}.
\end{rmk}

\subsection{Refinement of the main theorem and some special cases}

For positive integers $a,b$ with $a < b$, define
$$LT_{a,b}(t) = t^{2b} - t^{b+a} - t^b - t^{b-a} + 1,$$
This family of polynomials was first noticed by Lanneau and Thiffeault \cite{LT11a} to play a role in the study of minimum expansion factors.

The more precise version of our main theorem is as follows.
 
\begin{thm} \label{thm:sharpthm}
Let $f:S \to S$ be a fully-punctured pseudo-Anosov map with at least two puncture orbits where $|\chi(S)|=K \geq 3$. Then $\lambda(f)$ satisfies the inequality
$$\lambda(f) \geq |LT_{1,\frac{K}{2}}|,$$
for $K$ even, and
$$\lambda(f)^K \geq 8,$$
for $K$ odd.

Moreover, for each even $K$, equality is achieved by a fully-punctured pseudo-Anosov map with $|\chi(S)|=K$ and $s \leq 4$.
\end{thm}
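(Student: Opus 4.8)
The plan is to translate the statement into combinatorics of digraphs via standardly embedded train tracks, extract a reciprocity constraint from the Thurston symplectic form, run a trace-positivity minimization for the inequality, and handle the ``moreover'' clause separately with explicit fibrations.

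\textbf{Reduction to a digraph.} Given a fully-punctured pseudo-Anosov $f:S\to S$, I would first realize the unstable foliation $\ell^u$ as carried by a standardly embedded invariant train track $\tau\subset S$, obtained from any carrying track by a bounded sequence of standard splittings, so that $f$ acts on the weight cone of $\tau$ by a non-negative integral transition matrix $M$ with Perron--Frobenius eigenvalue $\lambda(f)$. The fully-punctured condition pins down the combinatorial type of the complementary regions of $\tau$ and, crucially, bounds the number of branches of $\tau$ --- hence the size of $M$ --- by a quantity controlled by $K=|\chi(S)|$. The transition data of $(\tau,f)$ then repackages as a digraph $\Gamma$ whose recurrent part is strongly connected (this is where genuine pseudo-Anosovness enters) and whose spectral radius is $\lambda(f)$; the hypothesis of at least two puncture orbits becomes the structural statement that the cycle of complementary cusps carried by $\Gamma$ splits into at least two $f$-cycles. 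This step converts all of the geometric hypotheses into combinatorics of $\Gamma$.

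\textbf{The symplectic constraint.} The Thurston symplectic form $\omega$ on the weight space $W(\tau)$ is $f_*$-invariant and non-degenerate, so $\dim W(\tau)$ is even and the characteristic polynomial of $M$ on $W(\tau)$ is reciprocal. Together with $\lambda(f)$ being a simple Perron root, this yields both an upper bound on the degree of the relevant Perron polynomial $P_\Gamma$ of $\Gamma$ and the coefficient symmetry $a_i=\pm a_{d-i}$ --- precisely the two inputs (bounded degree and reciprocity) needed for a Lehmer-type lower bound.

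\textbf{Combinatorial minimization.} It now suffices to show that among digraphs $\Gamma$ with reciprocal Perron polynomial, size at most the bound of the first step, and the two-cycle cusp condition, the spectral radius --- over those realizing $|\chi|=K$ --- is minimized in the even case by the $\Gamma$ with $P_\Gamma = LT_{1,\frac{K}{2}}$, and satisfies $\lambda^K\ge 8$ in the odd case. I would drive this using the traces $\mathrm{tr}(M^k)=\sum_i\lambda_i^k\ge 0$, which are nonnegative because they count closed walks in $\Gamma$: combining nonnegativity of a few low traces, the reciprocity symmetry, the degree bound, and the extra closed walks forced by the presence of two cusp cycles, one shows that $P_\Gamma$ is coefficientwise dominated by $LT_{1,\frac{K}{2}}$ when $K$ is even, the parity split arising from whether $\Gamma$ (or a natural double cover of it) can carry a reciprocal polynomial of degree exactly $K$. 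I expect this classification to be the main obstacle: keeping the case analysis finite requires the sharp branch count, and ruling out exotic small digraphs that might undercut $LT_{1,\frac{K}{2}}$ is the delicate point.

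\textbf{Sharpness.} For the ``moreover'' clause, for each even $K=2b$ I would exhibit an explicit fully-punctured pseudo-Anosov map on a surface with $|\chi|=K$, at most $4$ punctures, and two puncture orbits, realizing $\lambda=|LT_{1,b}|$. The natural source is a single small fibered hyperbolic $3$-manifold $M$ with $b_1(M)\ge 2$ whose Teichm\"uller polynomial, on a suitable fibered face, specializes along a ray of primitive integral classes to $LT_{1,b}$; one then checks directly that these monodromies are fully-punctured, have $s\le 4$, and have mapping torus with at least two boundary tori. Fried's concavity/continuity theory (underlying \Cref{thm:FTthm}) ensures that the normalized expansion factor is a genuine function on the fibered face, so that these values are attained exactly and not merely approached.
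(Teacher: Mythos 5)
Your outline shares the paper's broad architecture — standardly embedded train tracks, reciprocity via the Thurston form, McMullen-style digraph bounds, and fibered-face examples for sharpness — but it contains two genuine gaps, one of which would invalidate the argument outright.

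\textbf{The Thurston form is not non-degenerate.} You assert that $\omega$ on $\mathcal{W}(\tau)$ is non-degenerate and deduce reciprocity of $f_*$ directly from symplecticity. This is false, and the paper explicitly flags it: the Thurston form is degenerate in general, and a full section (Section 5) is devoted to computing its radical. The radical is nontrivially spanned by the \emph{radical elements} $r_c$ attached to the even-pronged boundary components, and the reciprocity argument decomposes as: (i) the map preserves the radical and acts on it by a signed permutation, hence reciprocally; (ii) the induced map on $\mathcal{W}(\tau)/\rad(\omega)$ is symplectic, hence reciprocal; (iii) reciprocity then propagates to $f_*$ and finally to the real transition matrix $f_*^\real$ via the block-triangular structure coming from the infinitesimal edges. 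Also note that the reciprocity you need is for the $K \times K$ matrix $f_*^\real$, not for the action on the full weight space, and that $K$ can be odd — the odd case in your own statement — so your ``$\dim W(\tau)$ is even'' conclusion is a red flag that the premise is wrong. The equality $|\mathcal{E}_{\real}| = -\chi(\tau) = |\chi(S)| = K$ is what pins the matrix size; it comes from counting branches of a standardly embedded track, not from a symplectic dimension constraint.

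\textbf{You propose to reprove McMullen's theorem.} The entire ``combinatorial minimization'' step — showing that a $K \times K$ reciprocal Perron-Frobenius matrix has spectral radius at least $|LT_{1,K/2}|$ for even $K$, and $\rho^K \geq 8$ for odd $K$ — is exactly McMullen's Theorem 1.1/7.1 from \emph{Entropy and the clique polynomial}, which the paper invokes as a black box (Theorem 2.9 in the paper). Your sketch via trace positivity and coefficient domination is not a proof, and you acknowledge this (``I expect this classification to be the main obstacle''). McMullen's proof goes through curve complexes and clique polynomials, and sharpness of that bound is already subtle. There is no need to attempt a re-derivation; the paper's contribution is precisely in establishing that $f_*^\real$ is a reciprocal Perron-Frobenius matrix of size exactly $K$, after which the numerical inequality is outsourced. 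Relatedly, your interpretation of ``at least two puncture orbits'' as a condition on cusp cycles in $\Gamma$ is off target: the hypothesis is used to choose an $f$-invariant partition $\mathcal{X} = \mathcal{X}_I \sqcup \mathcal{X}_O$ into nonempty sets, which is needed to build the Markov train track partition (stable stars at inner punctures, unstable stars at outer) in the first place, not to impose a combinatorial constraint on the digraph downstream.

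Your sharpness discussion is essentially the paper's Section 7.2 (Teichm\"uller polynomials of the L6a2 and L13n5885 link complements, specialized along rays of primitive classes) and is fine in outline, though the paper also exhibits the invariant tracks explicitly via folding sequences.
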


Sharpness in \Cref{thm:sharpthm} follows from computations in \cite{Hir10}, \cite{AD10}, \cite{KKT13}.

\subsubsection{Orientable pseudo-Anosov mapping classes}

A pseudo-Anosov mapping class $(S,f)$ is called \textit{orientable} if its stable and unstable foliations are orientable. These have the property that the expansion factor of $f$ is equal to the largest eigenvalue of its action on the integral homology of $S$.

In \cite{LT11a}, Lanneau and Thiffeault asked whether the minimum expansion factor $\lambda_g^{+}$ for even genus $g$ orientable pseudo-Anosov mapping classes is given by
$$\lambda_g^{+} = |LT_{1,g}|,$$
and showed that $|LT_{1,g}|$ is a lower bound for $\lambda_g^+$ for $g = 2,4,8,10$. For the case when $g \geq 2$ is even and not divisible by 3, $|LT_{1,g}|$ is an upper bound for $\lambda_g^+$ \cite{Hir10}.

\begin{thm} 
Let $(S,f)$ be an orientable pseudo-Anosov mapping class on a genus $g \geq 2$ closed surface with exactly two singularities, each of which is fixed by $f$. Then $\lambda(f)$ satisfies the inequality
$$\lambda(f) \geq |LT_{1,g}|$$
and this inequality is sharp when $g$ is even and not divisible by 3.
\end{thm}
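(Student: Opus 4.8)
The plan is to deduce this statement from \Cref{thm:sharpthm} by passing to the fully-punctured setting. Let $(S,f)$ be as in the hypothesis: $S$ is closed of genus $g \ge 2$, the stable and unstable foliations $\ell^s, \ell^u$ are orientable with exactly two singularities $p_1, p_2$, and $f(p_i) = p_i$ for $i = 1,2$. Puncture $S$ at $p_1$ and $p_2$ to obtain $S' = S_{g,2}$; since $f$ fixes each $p_i$, it restricts to a homeomorphism $f' : S' \to S'$, and $f'$ is pseudo-Anosov with the same invariant foliations (now viewed on $S'$) and the same expansion factor, $\lambda(f') = \lambda(f)$. By construction the singularities of $\ell^s, \ell^u$ now lie at the punctures, so $f'$ is fully-punctured.

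Next I would check that the hypotheses of \Cref{thm:sharpthm} are met. Since $f$ fixes $p_1$ and $p_2$ individually, the two punctures of $S'$ lie in two distinct $f'$-orbits, so $f'$ has at least two puncture orbits. Moreover $|\chi(S')| = |2 - 2g - 2| = 2g =: K$, which is even and satisfies $K \ge 4 \ge 3$ because $g \ge 2$. Applying \Cref{thm:sharpthm} in the even case gives
$$\lambda(f) = \lambda(f') \ge |LT_{1,K/2}| = |LT_{1,g}|,$$
which is the asserted inequality. (The hypothesis that $\ell^s, \ell^u$ be orientable is not needed for the lower bound; it is only relevant for the sharpness statement, where it pins down the relevant family of examples.)

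For sharpness, suppose $g \ge 2$ is even and $3 \nmid g$. By \cite{Hir10}, $|LT_{1,g}|$ is an upper bound for the minimum expansion factor $\lambda_g^+$ over orientable pseudo-Anosov mapping classes on the closed genus $g$ surface; what I would need to verify is that the explicit minimizer produced there lies in the restricted class considered in the theorem, i.e. that it has exactly two singularities, each fixed by the map — equivalently, that the corresponding fully-punctured model has $s = 2$ punctures forming two singleton orbits, consistent with the $s \le 4$ bound in \Cref{thm:sharpthm}. Granting this, that example attains $\lambda(f) = |LT_{1,g}|$, and combined with the lower bound above the inequality is sharp. The only real obstacle is this bookkeeping with the Hironaka examples: one must confirm the prong data and the fixed-point behavior of the singularities, which is a routine inspection of the constructions in \cite{Hir10} (and the matching computations in \cite{AD10}, \cite{KKT13}).
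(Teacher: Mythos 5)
Your proposal is correct and is essentially the argument the paper intends (the theorem is stated in the introduction without a separate proof, as a consequence of \Cref{thm:sharpthm}): puncture at the two fixed singularities to obtain a fully-punctured pseudo-Anosov on $S_{g,2}$ with two puncture orbits and $|\chi| = 2g$, and apply \Cref{thm:sharpthm} in the even case. You are also right that orientability is not needed for the lower bound. One bookkeeping point worth taking more seriously than you perhaps do: for $g = 2$ the L6a2 example from \Cref{tab:sharpness} has singularity data $(6,2)$, and the $2$-pronged puncture closes up to a regular point, so the resulting closed genus-$2$ map has only \emph{one} singularity — it does not directly verify sharpness for the theorem as stated, which requires exactly two singularities; so you would need to locate an orientable genus-$2$ example with singularity data $(4,4)$ realizing $|LT_{1,2}|$ (as in Lanneau--Thiffeault/Zhirov) rather than relying on the $f_{1,2}$ example. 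For $g \geq 4$ even with $3 \nmid g$ the L6a2 family has data $(3g, g)$ with both entries $\geq 4$ and even, so the check you describe goes through.
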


\subsubsection{Braid Monodromies}
Consider $(S,f)$ where $S = S_{0,n+1}$, and one puncture $p_\infty$ is fixed by $f$. A mapping class of this type is called a \textit{braid monodromy on $n$ strands}. Given a standard braid $\beta$ on $n$ strands, there is an associated braid monodromy, and vice versa (see e.g. \cite{Bir74}). For convenience, we use the standard Artin braid group generators to specify a braid and its associated monodromy.

\begin{thm} \label{thm:braids}
Let $\beta$ be a fully-punctured pseudo-Anosov braid monodromy on $n$ strands. Then $\lambda(\beta)$ satisfies the inequality
$$\lambda(\beta)^{n-1} \geq \mu^4.$$
More precisely, for $n \geq 4$, $\lambda(\beta)$ satisfies the inequality
$$\lambda(\beta) \geq |LT_{1,\frac{n-1}{2}}|$$
for $n$ odd, and 
$$\lambda(\beta)^{n-1} \geq 8$$
for $n$ even.
\end{thm}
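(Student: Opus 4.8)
The plan is to deduce \Cref{thm:braids} from \Cref{thm:mainthm} and \Cref{thm:sharpthm} by specializing those results to braid monodromies; no new train-track or symplectic-form analysis is needed. The first step is bookkeeping: an $n$-strand braid monodromy is, by definition, a fully-punctured pseudo-Anosov map $\beta:S\to S$ with $S=S_{0,n+1}$ and $\beta(p_\infty)=p_\infty$, so $|\chi(S)|=n-1$ and hence $L(S,\beta)=\lambda(\beta)^{n-1}$. Since the mapping class group of $S_{0,m}$ contains pseudo-Anosov elements only when $m\ge 4$, we automatically have $n\ge 3$.

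Next I would verify the hypothesis ``at least two puncture orbits.'' The puncture $p_\infty$ is fixed by $\beta$, hence forms an orbit of size one, while the remaining $n\ge 3$ punctures are permuted among themselves and so constitute at least one further orbit. Thus $\beta$ is a fully-punctured pseudo-Anosov map with at least two puncture orbits, and \Cref{thm:mainthm} then applies directly, giving $\lambda(\beta)^{n-1}=L(S,\beta)\ge\mu^4$.

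For the sharper bounds, assume $n\ge 4$, so that $K:=|\chi(S)|=n-1\ge 3$ and \Cref{thm:sharpthm} is applicable. When $n$ is odd, $K$ is even and $K/2=(n-1)/2$, so \Cref{thm:sharpthm} gives $\lambda(\beta)\ge|LT_{1,(n-1)/2}|$; when $n$ is even, $K$ is odd, so it gives $\lambda(\beta)^{n-1}=\lambda(\beta)^{K}\ge 8$. These are precisely the asserted inequalities. The only value of $n$ left out of \Cref{thm:sharpthm} is $n=3$ (where $K=2$), but that case is already covered by \Cref{thm:mainthm}; it is moreover the sharp case, realized by the three-strand braid $\sigma_1\sigma_2^{-1}$, whose monodromy on $S_{0,4}$ is a fully-punctured pseudo-Anosov map with expansion factor $\mu^2$, so that $L=\mu^4$.

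I expect no real obstacle here --- all of the content is contained in \Cref{thm:mainthm} and \Cref{thm:sharpthm}, and the rest is unwinding definitions. The points that require a little care are the Euler-characteristic normalization $|\chi(S_{0,n+1})|=n-1$ (so that the exponent in the braid statement is $n-1$, not $n$ or $n+1$), the observation that the fixed puncture $p_\infty$ automatically supplies the second orbit demanded by \Cref{thm:mainthm}, and the matching of parities, under which $n$ odd corresponds to $K=n-1$ even and $n$ even to $K$ odd, so that the two cases of \Cref{thm:sharpthm} line up with the two cases in the statement for braids.
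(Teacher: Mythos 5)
Your deduction is correct and is exactly the argument the paper leaves implicit: a fully-punctured pseudo-Anosov braid monodromy on $n$ strands acts on $S_{0,n+1}$ with $|\chi|=n-1\ge 2$, the fixed puncture $p_\infty$ guarantees a second puncture orbit, and the theorem then falls out of \Cref{thm:mainthm} and \Cref{thm:sharpthm} with the parity bookkeeping $K=n-1$. One small but worth-noting point: what you actually prove (and what is provable) for the coarse bound is $\lambda(\beta)^{n-1}\ge\mu^4$, i.e.\ $L(S,\beta)\ge\mu^4$, whereas the statement as printed reads $\lambda(\beta)\ge\mu^4$; the latter is clearly a typo, since the sharpness example $\sigma_1\sigma_2^{-1}$ with $n=3$ has $\lambda(\beta)=\mu^2<\mu^4$ but $\lambda(\beta)^{2}=\mu^4$. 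Your proposal silently corrects this, which is the right call.
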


The 3-strand braid $\beta = \sigma_1\sigma_2^{-1}$ satisfies $\lambda(\beta) = \mu^2$ (see \cite{Hir10}), while the 5-strand braid $\beta = \sigma_1 \sigma_2 \sigma_3 \sigma_4 \sigma_1 \sigma_2$ satisfies $\lambda(\beta) = |LT_{1,2}|$ (see \cite{HS07}). Since both of these braid monodromies are fully-punctured, this shows that \Cref{thm:braids} is sharp. For larger number of strands, however, there are no known examples achieving the lower bound. Instead, the smallest known expansion factors of braid monodromies follow the pattern found in \cite{HK06} (see also \cite{KKT13}). This invites the following question.

\begin{quest} \label{quest:braidsharpness}
Is it true that aside from $\sigma_1\sigma_2^{-1}$ and $\sigma_1 \sigma_2 \sigma_3 \sigma_4 \sigma_1 \sigma_2$, the expansion factor of a fully-punctured pseudo-Anosov braid monodromy on $n$ strands is strictly larger than $|LT_{1,\frac{n-1}{2}}|$?
\end{quest}

A positive answer to \Cref{quest:levelK} would suggest a positive answer to \Cref{quest:braidsharpness} as well. This is because the former would imply that the smallest values of $\lambda_{g,s}$ are concentrated in a vertical band $s \leq s_0$ on the $(g,s)$-plane, whereas the braid monodromies concern the horizontal line $g=0$ which moves away from this vertical band.

We remark that the answer to \Cref{quest:braidsharpness} is `no' if the word `fully-punctured' is dropped: It has been pointed out to us by Eiko Kin that the 5-strand braid $\beta = (\sigma_1 \sigma_2 \sigma_3 \sigma_4)^2 \sigma_4 \sigma_3$ satisfies $\lambda(\beta) = |LT_{1,2}|$ as well (but it is not fully-punctured).

\subsection{Techniques used in the proof}
Our proof of \Cref{thm:mainthm} involves a study of properties of Perron-Frobenius directed graphs and of standardly embedded train tracks.
 
\subsubsection{Perron-Frobenius digraphs}

To any non-negative square integer matrix $M \geq 0$, one can associate a directed graph, or \textit{digraph}, $\Gamma$ where each vertex corresponds to a row (or column) of $M$, and the number of directed edges counted with multiplicity from one vertex to another equals the corresponding $(\mathrm{row},\mathrm{column})$ entry of $M$. The matrix $M$ is \textit{Perron-Frobenius} if it has the property that $M^n > 0$ for large enough $n$. This translates to the condition that $\Gamma$ is \textit{strongly connected}, i.e. any ordered pair of vertices can be connected by a directed edge path, and \textit{aperiodic}, i.e. the greatest common divisor of the lengths of directed cycles is $1$.

The characteristic polynomial $\theta_\Gamma$ of $\Gamma$ is defined to be the characteristic polynomial of $M$. We say that a polynomial is \textit{reciprocal} if the set of roots (counted with multiplicities) is closed under inverses. This is equivalent to the polynomial being palindromic, that is, the list of its coefficients forms a palindrome, up to a sign.

\begin{thm}[McMullen \cite{McM15}] \label{thm:McMthm}
Let $\Gamma$ be a Perron-Frobenius digraph of rank $K \geq 3$ with reciprocal characteristic polynomial $\theta_\Gamma$. Then 
$$|\theta_\Gamma| \ge |LT_{1,\frac{K}{2}}|$$
for $K$ even, and
$$|\theta_\Gamma|^K \ge 8.$$
for $K$ odd.
\end{thm}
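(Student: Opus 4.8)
The plan is to pass from the characteristic polynomial $\theta_\Gamma$ to the \emph{clique polynomial} of $\Gamma$ and to exploit the rigidity that reciprocity imposes on its coefficients.

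\textbf{Reduction to the clique polynomial.} Let $A\ge 0$ be the adjacency matrix of $\Gamma$, so that $|\theta_\Gamma|=\lambda(\Gamma)$ is its spectral radius. In the permutation expansion of $\det(I-tA)$ only products supported on vertex-disjoint unions of directed cycles survive, giving
\[
Q_\Gamma(t):=\det(I-tA)=\sum_{C=\{\gamma_1,\dots,\gamma_k\}}(-1)^{k}\,t^{\,\ell(\gamma_1)+\dots+\ell(\gamma_k)},
\]
the sum being over sets $C$ of pairwise disjoint simple directed cycles; this is the clique polynomial of the disjointness graph on the cycles of $\Gamma$, each cycle weighted by $t^{\ell(\gamma)}$. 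Since $\Gamma$ is Perron-Frobenius, $A$ has a simple, strictly dominant positive eigenvalue, $(I-tA)^{-1}=\sum_n t^nA^n\ge 0$ on $[0,1/\lambda(\Gamma))$, and so $Q_\Gamma>0$ there with $Q_\Gamma(1/\lambda(\Gamma))=0$; thus $1/\lambda(\Gamma)$ is the smallest positive root of $Q_\Gamma$. Reciprocity of $\theta_\Gamma$ means precisely that $Q_\Gamma(t)=t^K\theta_\Gamma(1/t)$ is palindromic of degree $K$; then its leading coefficient $(-1)^K\det A$ equals its constant term $1$, so $\det A=(-1)^K$, $A\in\mathrm{GL}_K(\ZZ)$, and every cycle of $\Gamma$ has length in $\{1,\dots,K\}$. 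Setting $K=2g$ in the even case and letting $x_0=1/|LT_{1,g}|$ be the smallest positive root of the palindromic polynomial $LT_{1,g}(t)=1-t^{g-1}-t^{g}-t^{g+1}+t^{2g}$, it now suffices to show $Q_\Gamma(x_0)\le 0$: with $Q_\Gamma(0)=1>0$ this forces the smallest root of $Q_\Gamma$ into $(0,x_0]$, i.e. $\lambda(\Gamma)\ge|LT_{1,g}|$.

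\textbf{Coefficient rigidity and the minimal digraphs.} Write $Q_\Gamma(t)=1+q_1t+\dots+q_{K-1}t^{K-1}+t^{K}$. Then $q_j$ is the signed count of disjoint cycle systems of total length $j$; for $j$ less than twice the girth of $\Gamma$ it equals $-(\#\text{ cycles of length }j)\le 0$, and palindromicity gives $q_j=q_{K-j}$. These interact strongly. First, $\Gamma$ must contain a cycle of length $\le g-1$: otherwise $q_1=\dots=q_{g-1}=0$, hence $q_{g+1}=\dots=q_{2g-1}=0$, so $Q_\Gamma(t)=1+q_gt^g+t^{2g}$ is a polynomial in $t^g$; but then $A$ and $\zeta A$ ($\zeta$ a primitive $g$-th root of unity) have the same characteristic polynomial, so the spectrum of $A$ is invariant under multiplication by $\zeta$, contradicting the uniqueness of the dominant eigenvalue of a primitive matrix when $g\ge 2$. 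Running this bookkeeping further --- a shortest cycle of length $m\le g-1$ forces, through $q_{2g-m}=q_m$, a compensating supply of longer cycle systems, and so on --- pins down the cheapest reciprocal Perron-Frobenius digraphs of rank $2g$: they carry exactly three simple cycles, of lengths $g-1$, $g$, $g+1$, with the $(g-1)$-cycle and the $(g+1)$-cycle disjoint and together spanning all $2g$ vertices, and for these $Q_\Gamma=LT_{1,g}$ and $\lambda(\Gamma)=|LT_{1,g}|$.

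\textbf{The extremal estimate and the odd case.} For a general reciprocal Perron-Frobenius $\Gamma$ of rank $2g$ one compares $Q_\Gamma$ with $LT_{1,g}$ on $[0,x_0]$ via the vertex-expansion identity $Q_\Gamma(t)=Q_{\Gamma-v}(t)-\sum_{\gamma\ni v}t^{\ell(\gamma)}Q_{\Gamma-V(\gamma)}(t)$: the clique polynomials of all proper induced sub-digraphs are positive on $[0,1/\lambda(\Gamma))$ because their spectral radii are smaller, so enlarging the cycle structure or shortening cycles only pushes the smallest root of $Q_\Gamma$ to the left, and the three-cycle configuration above is its unique local maximizer; this gives $Q_\Gamma(x_0)\le 0$, with equality exactly for the extremal digraphs. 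When $K$ is odd, a palindromic polynomial of odd degree automatically vanishes at $-1$, so $Q_\Gamma(t)=(1+t)\tilde Q(t)$ with $\tilde Q$ palindromic of even degree $K-1$ and $\tilde Q(0)=1$; since $1+t>0$ for $t>0$ the smallest positive root of $\tilde Q$ is still $1/\lambda(\Gamma)$, and the parallel --- now deliberately cruder --- analysis yields the bound $\lambda(\Gamma)^K\ge 8$.

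\textbf{Main obstacle.} The one genuinely delicate point is the uniform combinatorial input in the middle step: proving, over \emph{all} reciprocal Perron-Frobenius digraphs of a given rank, that the cycle-length distribution forced by aperiodicity and palindromicity never lets the clique polynomial stay positive past $x_0$. This requires either a complete classification of the minimal reciprocal Perron-Frobenius digraphs of each rank or a clean monotonicity statement replacing it, together with a hands-on verification of the few small ranks ($K=3,4,5$) where these asymptotic heuristics are weakest; the remainder of the argument is formal.
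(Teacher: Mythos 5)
This statement is not proved in the paper at all: it is quoted from McMullen, and the paper's ``proof'' (of the restatement \Cref{thm:McMullen}) is a pointer to Theorem 1.1 and Theorem 7.1 of \cite{McM15}. So the relevant comparison is between your sketch and McMullen's argument, and there your proposal has a genuine gap, which you in fact concede in your final paragraph. The first block is fine and matches the framework the paper itself uses later (in \Cref{subsec:evensharpness}): $Q_\Gamma(t)=\det(I-tA)$ is the clique polynomial of the disjointness (``curve'') complex of simple cycles, $1/\lambda(\Gamma)$ is its smallest positive root, and reciprocity makes $Q_\Gamma$ palindromic, so the bound $\lambda(\Gamma)\ge|LT_{1,g}|$ would follow from $Q_\Gamma(x_0)\le 0$ at $x_0=1/|LT_{1,g}|$. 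But the entire content of the theorem is the middle step you label ``coefficient rigidity'': the claim that palindromicity plus aperiodicity forces, uniformly over all reciprocal Perron-Frobenius digraphs of rank $2g$, enough short cycles that $Q_\Gamma(x_0)\le 0$, with the $(g-1,g,g+1)$ three-cycle configuration as the unique minimizer. You assert this (``running this bookkeeping further \dots pins down the cheapest digraphs''; ``the three-cycle configuration is its unique local maximizer'') without an argument, and the monotonicity you invoke does not supply one: adding edges or shortening cycles indeed lowers the smallest root, but such deformations do not preserve reciprocity, so they cannot be used to compare an arbitrary reciprocal PF digraph with the extremal one. This is exactly where McMullen's proof does real work -- a classification of the weighted graphs that can arise as curve complexes with small root, not a formal deformation argument -- and it is the part your sketch replaces with a heuristic.

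Two smaller points. Your girth argument (no cycle of length $\le g-1$ would make $Q_\Gamma$ a polynomial in $t^g$, contradicting the strict dominance of the Perron root) is correct and is a reasonable first lemma, but it only starts the bookkeeping; it does not bound how the compensation between $q_m$ and $q_{2g-m}$ plays out for $2\le m\le g-1$. And in the odd case, factoring out $(1+t)$ from a palindromic polynomial of odd degree is fine, but ``the parallel, now deliberately cruder, analysis yields $\lambda(\Gamma)^K\ge 8$'' is again an assertion; in \cite{McM15} the odd case is a separate argument (given within the proof of Theorem 7.1), not a formal corollary of the even case. As it stands, your proposal is a plausible outline of McMullen's strategy whose crux is missing, whereas the paper deliberately treats the statement as an imported black box.
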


\subsubsection{Expansion factors and train tracks}

Recall that to compute expansion factors of pseudo-Anosov maps $f:S \to S$ one uses train tracks to translate the geometric dynamics of $f$ to combinatorial information. In their most general form train tracks are embedded graphs on surfaces with smoothings at vertices indicating directions which paths are allowed to take. A foliation on a surface is said to be \textit{carried} by a train track $\tau$ if any leaf can be isotoped to a smooth path on $\tau$. Thurston showed that if $f:S \to S$ is pseudo-Anosov, then there is a train track $\tau$ embedded on $S$ which carries the unstable foliation of $f$, and for which the map $f$ determines a graph map on $\tau$ which sends vertices to vertices and edges to edge paths. Thus, $f$ naturally induces a linear map 
$$f_* : \mathbb{R}^\mathcal{E} \to \mathbb{R}^\mathcal{E}$$
where $\mathcal E$ is the set of edges of $\tau$ and $\mathbb{R}^\mathcal{E}$ can be thought of as choices of weights defined on the edges. Under an appropriate choice of $\tau$, $f_*$ is Perron-Frobenius and the spectral radius of $f_*$ is the expansion factor of $f$, see for example \cite{FLP79}.

One might be tempted to apply \Cref{thm:McMthm} to the digraph associated to this $f_*$. However, there are some problems with this. Firstly, notice that one also needs to show that $f_*$ has reciprocal characteristic polynomial. A natural idea for doing so is to make use of the \textit{Thurston symplectic form} $\omega$. However, $\omega$ is actually degenerate on $\mathbb{R}^\mathcal{E}$, so reciprocity is not immediate here. Even if one can show reciprocity, the dimension of $\mathbb R^{\mathcal E}$ is rather large, so the bound obtained from \Cref{thm:McMthm} would not be very good.

One can instead restrict $f_*$ to the subspace of \textit{allowable weights} $\mathcal W \subset {\mathbb R}^{\mathcal E}$, consisting of weights that satisfy what are called the branching conditions. This is the minimal condition required so that any simple closed loop which is carried by $\tau$ will define a positive vector in $\mathcal W$. This would reduce the dimension and remove a large part of the degeneracy of $\omega$, but it is now unclear whether the action of $f_*$ on $\mathcal{W}$ is Perron-Frobenius or not. Indeed, there is not even a natural basis of $\mathcal{W}$ that we can write the action of $f_*$ as a matrix in, making it difficult to apply digraph techniques directly.

These problems are avoided by considering only pseudo-Anosov maps that are carried by a special kind of train track based on the Bestvina-Handel algorithm \cite{BH95} \cite{Bri00}, which we define next.

\subsubsection{Standardly embedded train tracks}

\begin{defn}
Let $f:S \to S$ be a fully-punctured pseudo-Anosov map. A train track $\tau$ that carries $f$ is \textit{standardly embedded} if its edges can be partitioned into two types: $\mathcal E_{\real}$, the set of \textit{real edges} and $\mathcal E_{\infs}$, the set of \textit{infinitesimal edges}, such that:
\begin{enumerate}
\item each connected component of the complement of $\tau$ is homeomorphic to a once-punctured disk,
\item the action of $f$ permutes the set of infinitesimal edges, and
\item the linear map on $\mathbb{R}^{\mathcal{E}_{\real}}$ induced by $f$,
$$f_*^{\real}: \mathbb{R}^{\mathcal{E}_{\real}} \to \mathbb{R}^{\mathcal{E}_{\real}}$$ 
is Perron-Frobenius with respect to the standard basis.
\end{enumerate}
\end{defn}

For computations and properties of standardly embedded train tracks see, for example, \cite{KLS02}, \cite{HK06}, \cite{LV17}, \cite{FRW22}.

It follows that given a pseudo-Anosov map $f$ with a standardly embedded train track $\tau$ that carries it, the expansion factor $\lambda(f)$ is the spectral radius of the restriction $f_*^{\real}$ of $f_*$ to $\mathbb{R}^{\mathcal{E}_{\real}}$. Furthermore, the degree of the characteristic polynomial of $f_*^{\real}$ equals $|\chi(S)|$.

Our main technical result is the following.

\begin{thm} \label{thm:standardlyembeddedtt}
If $f:S \to S$ is a fully-punctured pseudo-Anosov map with at least two puncture orbits, then there exists a standardly embedded train track $\tau$ such that
\begin{enumerate}[label=(\roman*)]
\item $\tau$ carries $f$, and
\item the characteristic polynomial of $f_*^{\real}$ is reciprocal.
\end{enumerate}
\end{thm}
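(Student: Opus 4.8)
\emph{Strategy.} The plan is to obtain (i) from the Bestvina--Handel algorithm and (ii) from an analysis of the Thurston symplectic form $\omega$ on the allowable weight space $\mathcal{W}\subset\mathbb{R}^{\mathcal{E}}$ via its radical $\rad(\omega)$. For (i), I would run the Bestvina--Handel algorithm \cite{BH95}, \cite{Bri00} on $f$. Since $f$ is pseudo-Anosov, the algorithm terminates with an efficient train track map whose transition matrix is Perron--Frobenius, and since $f$ is fully-punctured, every singularity of the invariant foliations lies at a puncture; so, possibly after a finite sequence of elementary folds, every complementary component of the resulting train track $\tau$ is a once-punctured disk, bounded by an infinitesimal polygon. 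Declaring the edges of these polygons to be the infinitesimal edges and the remaining edges the real edges, the permutation of the punctures induced by $f$ becomes a permutation of the infinitesimal edges, while the ``efficient'' part of the graph map is precisely the Perron--Frobenius map $f_*^{\real}$ on $\mathbb{R}^{\mathcal{E}_{\real}}$. Hence $\tau$ is standardly embedded and carries $f$. This step does not use the hypothesis on puncture orbits.

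\emph{Reciprocity via the symplectic form.} For (ii), recall that $f$ acts on $(\mathcal{W},\omega)$ by a linear symplectomorphism, so $f_*$ preserves $\rad(\omega)$ and descends to a symplectomorphism of the non-degenerate quotient $\mathcal{W}/\rad(\omega)$; therefore the characteristic polynomial $P_{\mathcal{W}/\rad(\omega)}(t)$ of $f_*$ on $\mathcal{W}/\rad(\omega)$ is reciprocal. To convert this into a statement about $f_*^{\real}$, I would compute $\rad(\omega)$ explicitly from the switch conditions of a standardly embedded train track. I expect it to be spanned by the weights that ``circulate'' around the infinitesimal polygons, so that $f_*$ acts on $\rad(\omega)$ through the permutation of the punctures and $P_{\rad(\omega)}(t)$ is a product of polynomials of the form $t^{m}-1$, in particular reciprocal. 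The switch conditions should at the same time identify $\mathcal{W}$, up to this infinitesimal part, $f_*$-equivariantly with $\mathbb{R}^{\mathcal{E}_{\real}}$; comparing characteristic polynomials then shows that $P_{\mathcal{W}}(t)$ agrees up to a factor $t^{k}$ with $P_{\real}(t)\cdot\prod_i (t^{m_i}-1)$ and equals $P_{\rad(\omega)}(t)\cdot P_{\mathcal{W}/\rad(\omega)}(t)$. Since the ``infinitesimal'' factors on each side are products of cyclotomic-type polynomials and $P_{\mathcal{W}/\rad(\omega)}$ is reciprocal, cancelling the common factors exhibits $P_{\real}$ (of degree $|\chi(S)|$) as a quotient of one reciprocal polynomial by another, hence reciprocal.

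\emph{The role of two puncture orbits, and the main obstacle.} The hypothesis of at least two puncture orbits should enter precisely in reconciling the two factorizations of $P_{\mathcal{W}}$ above --- equivalently, in computing the rank of $\omega$ on $\mathcal{W}$ in terms of $g$, $s$, and the number of orbits. With a single puncture orbit, $f$ has no eigenvalue $1$ on the span of the peripheral classes (their only $f$-fixed combination is the null-homologous total sum), so the $(t\pm 1)$-factor forced whenever $|\chi(S)|$ is odd has no source; with two or more orbits the difference of the orbit-sums supplies this factor and the identification of $\mathcal{W}/\rad(\omega)$ with $\mathbb{R}^{\mathcal{E}_{\real}}$ (up to reciprocal cyclotomic factors) goes through. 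I expect the main obstacle to be exactly this linear algebra: pinning down $\rad(\omega|_{\mathcal{W}})$ and the rank of $\omega$, and tracking which cyclotomic factors are shared by the two factorizations. Once that is settled, reciprocity of $P_{\real}$ is formal.
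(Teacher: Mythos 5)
Your overall strategy --- build a standardly embedded invariant train track, use the Thurston symplectic form on $\mathcal{W}$, control its radical, and deduce reciprocity by block decomposition --- is aligned with the paper's, but there are several genuine gaps.

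\textbf{Construction of the train track.} You claim step (i) ``does not use the hypothesis on puncture orbits.'' This is false in the framework of the theorem. A standardly embedded train track requires a partition of its boundary components into \emph{nonempty} sets of inner and outer components, and for $f$ to act (so that $f_*^{\real}$ is defined and $f$ permutes the infinitesimal edges), that partition must be $f$-invariant. With a single puncture orbit there is no such partition at all. The paper handles this by starting from an $f$-invariant partition $\mathcal{X}=\mathcal{X}_I\sqcup\mathcal{X}_O$ of the punctures (which exists precisely because there are at least two orbits) and building a Markov \emph{train track partition} --- rectangles cut out by stable stars at inner punctures and unstable stars at outer punctures --- from which the train track $\tau_{(\mathcal{X}_I,\mathcal{X}_O)}$ and its transition map are read off (Constructions~\ref{constr:ttpartition}, \ref{constr:ttpartitiontott}, \ref{constr:markovttpartitiontott}; Proposition~\ref{prop:invariantstandardlyembedded}). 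Running Bestvina--Handel and ``folding until components are once-punctured disks'' does not by itself give you an $f$-invariant inner/outer split of the boundary, so (i) as you've argued it does not go through; the two-orbit hypothesis is already needed here, not only in the linear-algebra step.

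\textbf{The radical.} Your guess that $\rad(\omega)$ is ``spanned by the weights that circulate around the infinitesimal polygons,'' on which $f_*$ acts through the permutation of punctures, is not correct. A uniform circulation around an infinitesimal polygon with cusps does not satisfy the switch conditions: the radical element $r_c$ of \Cref{defn:radicalelement} alternates signs $(-1)^k$ around a boundary component $c$, and it is only defined when $c$ is \emph{even}-pronged. Moreover, Proposition~\ref{prop:radical} shows $\rad(\omega)=\spn\{r_c\}$ where $c$ ranges over all even-pronged boundary components, \emph{both inner and outer} --- the outer boundary components contribute radical elements too. Odd-pronged punctures contribute nothing. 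Consequently $f_*$ acts on $\rad(\omega)$ not by a mere permutation but by a \emph{signed} permutation, and there may be one linear relation among the $r_c$ when every puncture is even-pronged (Proposition~\ref{prop:radicalreciprocal}). So $P_{\rad(\omega)}(t)$ is not simply a product of $t^{m}-1$'s; the paper needs \Cref{prop:reciprocalfacts}(2)--(3) to handle the signed permutation and a possible quotient. Establishing $\rad(\omega)=\spn\{r_c\}$ is the technical heart of the argument (all of Section~\ref{sec:radical}) and requires passing to a 2-fold orientation cover when all punctures are even-pronged; this is a real piece of work your proposal has not supplied.

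\textbf{Passing from $\mathcal{W}$ to $\mathbb{R}^{\mathcal{E}_{\real}}$.} The comparison of $P_{\mathcal{W}}$ with $P_{\real}$ also needs care. The paper uses the exact sequence $0\to\mathcal{W}(\tau)\to\mathbb{R}^{\mathcal{E}}\to T_\mathcal{V}(\mathbb{R}^\mathcal{E})\to 0$, with the right-hand action a signed permutation (Proposition~\ref{prop:weightspaceses}), and the block triangularity $f_*=\begin{bmatrix}P & *\\ 0 & f_*^{\real}\end{bmatrix}$ coming from infinitesimal edges mapping to single infinitesimal edges. Your proposed factorization into ``cyclotomic-type'' pieces is a plausible heuristic but is not the actual bookkeeping; in particular, your explanation that the two-orbit hypothesis supplies a needed $(t\pm 1)$ factor when $|\chi(S)|$ is odd does not match the paper's argument, which applies \Cref{prop:reciprocalfacts}(3) twice and does not hinge on a parity analysis of $|\chi(S)|$ at all.

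In short: the outline (symplectic form, radical, block decomposition) is the right shape, but (a) you have not actually produced the required $f$-invariant standardly embedded train track, and the two-orbit hypothesis is essential there, (b) your description of $\rad(\omega)$ is wrong in a way that would break the argument, and (c) the role of the two-orbit hypothesis you propose is not the one that makes the proof work.
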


The proof of (ii) uses an explicit description of the radical of the Thurston symplectic form on the weight space of $\tau$. We refer to \Cref{prop:radical} for the technical statement used in our proof. Later in \Cref{prop:generalradical} we give an extension that applies to general train tracks carrying a pseudo-Anosov map.

Our main theorem is then obtained by applying \Cref{thm:McMthm} to the digraph associated to $f_*^{\real}$ for such a standardly embedded train track.

\subsection{Organization of paper} We set up some basic definitions in \Cref{sec:background}. In \Cref{sec:standardlyemb} we define standardly embedded train tracks and prove \Cref{thm:standardlyembeddedtt}. We note that unlike in the introduction, we will define standardly embedded train tracks as a class of objects separate from surfaces and pseudo-Anosov maps. This will make the discussion easier in \Cref{sec:thurstonform} and \Cref{sec:radical}, where we prove that the characteristic polynomial is reciprocal by studying the Thurston symplectic form.

We prove our main theorem in \Cref{sec:mainthm}. In \Cref{sec:sharpness} we explore the sharpness of the main theorem, in particular demonstrating examples that show the last statement of \Cref{thm:sharpthm}. Finally, in \Cref{sec:questions}, we discuss some questions and future directions.

\subsection*{Acknowledgements} We would like to thank Ian Agol and Curtis McMullen for their support and encouragement throughout this project. We would like to thank Luya Wang, Anna Parlak, Saul Schleimer, and Henry Segerman for helpful conversations. We would like to thank Eiko Kin and Livio Liechti for their comments on an earlier version of this paper. This work was done while the first author was a participant of the Complex Dynamics program at the MSRI and later while she served at the NSF.

\section{Background} \label{sec:background}

\subsection{Pseudo-Anosov maps}

We recall the definition of a pseudo-Anosov map. More details can be found in \cite{FLP79}.

\begin{defn} \label{defn:pamap}
A \textit{finite-type surface} is an oriented closed surface with finitely many points, which we call the \textit{punctures}, removed. 

A homeomorphism $f$ on a finite-type surface $S$ is said to be \textit{pseudo-Anosov} if there exists a pair of singular measured foliations $(\ell^s,\mu^s)$ and $(\ell^u,\mu^u)$ such that:
\begin{enumerate}
    \item Away from a finite collection of \textit{singular points}, which includes the punctures, $\ell^s$ and $\ell^u$ are locally conjugate to the foliations of $\mathbb{R}^2$ by vertical and horizontal lines respectively.
    \item Near a singular point, $\ell^s$ and $\ell^u$ are locally conjugate to either
    \begin{itemize}
        \item the pull back of the foliations of $\mathbb{R}^2$ by vertical and horizontal lines by the map $z \mapsto z^{\frac{n}{2}}$ respectively, for some $n \geq 3$, or
        \item the pull back of the foliations of $\mathbb{R}^2 \backslash \{(0,0)\}$ by vertical and horizontal lines by the map $z \mapsto z^{\frac{n}{2}}$ respectively, for some $n \geq 1$.
    \end{itemize}
    In this case, we say that the singular point is \textit{$n$-pronged}.
    \item $f_* (\ell^s,\mu^s) = (\ell^s,\lambda^{-1} \mu^s)$ and $f_* (\ell^u,\mu^u) = (\ell^u,\lambda \mu^u)$ for some $\lambda=\lambda(f)>1$.
\end{enumerate} 
We call $(\ell^s,\mu^s)$ and $(\ell^u,\mu^u)$ the \textit{stable} and \textit{unstable} measured foliations respectively. We call $\lambda(f)$ the \textit{expansion factor} of $f$ and call $\lambda(f)^{|\chi(S)|}$ the \textit{normalized expansion factor} of $f$.
\end{defn}

\begin{defn} \label{defn:fullypunctured}
$f$ is said to be \textit{fully-punctured} if the set of singular points equals the set of punctures. In this case, we will denote the set of punctures by $\mathcal{X}$. Note that $f$ acts on $\mathcal{X}$ by some permutation, hence it makes sense to talk about the orbits of punctures under the action of $f$, or \textit{puncture orbits} for short.
\end{defn}

We recall two standard facts about pseudo-Anosov maps which will be used in \Cref{sec:standardlyemb}.

\begin{defn} \label{defn:halfleaf}
A \textit{half-leaf} of $\ell^s$ is a properly embedded copy of $[0,\infty)$ in a leaf of $\ell^s$. A \textit{half-leaf} of $\ell^u$ is similarly defined.
\end{defn}

\begin{prop} \label{prop:halfleafdense}
We have the following properties of $f$, $\ell^s$, and $\ell^u$.
\begin{itemize}
    \item The set of periodic points of $f$ is dense.
    \item Every half-leaf of $\ell^s$ and of $\ell^u$ is dense.
\end{itemize}
\end{prop}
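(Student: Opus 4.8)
The plan is to derive both statements from the structure theory of pseudo-Anosov maps, in the following order. \textbf{Step 1: no closed leaves and no saddle connections.} I would first observe that $\ell^u$ has no closed leaves and no \emph{saddle connections} (arcs of a leaf joining two singular points with no singular point in the interior), and likewise for $\ell^s$. There are only finitely many minimal saddle connections, since each is determined by a separatrix at a singular point and there are finitely many separatrices; and a pseudo-Anosov map preserves no isotopy class of essential simple closed curve, so there is no closed leaf. Since $f$ permutes the singular points it permutes these finite collections, so some power $f^m$ fixes a closed leaf or minimal saddle connection $\gamma$; but $f^m$ multiplies the length of any arc of an $\ell^u$-leaf by $\lambda(f)^m>1$, contradicting $f^m(\gamma)=\gamma$.

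\textbf{Step 2: minimality and density of half-leaves.} From Step 1 (or directly from \cite{FLP79}) the foliations $\ell^s,\ell^u$ are minimal and uniquely ergodic, with transverse measures of full support. Unique ergodicity upgrades minimality to a uniform statement: for every $\epsilon>0$ there is $R(\epsilon)>0$ such that every arc of a leaf of $\ell^u$ of length $\geq R(\epsilon)$ is $\epsilon$-dense in $S$. A half-leaf $L$ of $\ell^u$ reaching a non-singular end of its leaf has infinite length, hence contains arcs of arbitrarily large length, hence is $\epsilon$-dense for every $\epsilon$; so $L$ is dense. The argument for $\ell^s$ is identical, which proves the second bullet.

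\textbf{Step 3: density of periodic points.} For the first bullet I would pass to symbolic dynamics. Using the local product structure of $(\ell^s,\ell^u)$ one builds a Markov partition $\mathcal R=\{R_1,\dots,R_k\}$ of $S$ into rectangles whose sides run along $\ell^s$ and $\ell^u$, with transition matrix $A$; since $f$ expands $\ell^u$ and, by Step 2, long leaf-arcs meet every rectangle, $A$ is irreducible and aperiodic, i.e.\ Perron-Frobenius. Given $x\in S$ and $\epsilon>0$, choose $n$ so that some rectangle of the refined partition $\bigvee_{|i|\leq n} f^i(\mathcal R)$, indexed by an admissible word $w$ of length $2n+1$, meets the $\epsilon$-ball about $x$ and has diameter $<\epsilon$; then, using irreducibility and aperiodicity of $A$, extend $w$ to a periodic admissible bi-infinite sequence, which under the Markov coding $\Sigma_A\to S$ (a surjective semiconjugacy, injective off a nowhere dense set) maps to a periodic point of $f$ in the closed rectangle indexed by $w$, hence within $2\epsilon$ of $x$. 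Thus periodic points are dense.

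\textbf{Main obstacle.} The difficulty here is not a computation but the careful assembly of this classical machinery: one must establish minimality and unique ergodicity (Step 2) and the irreducibility of the Markov transition matrix (Step 3) without circularity, since the usual proof that $f$ is topologically mixing already invokes the density of half-leaves. Proving Step 1 first and deducing minimality directly from it keeps the logic clean. In a final writeup one would cite \cite{FLP79} for the existence of the Markov partition, for unique ergodicity, and for the properties of the coding, and spell out only the symbolic-dynamics argument of Step 3.
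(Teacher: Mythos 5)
The paper's ``proof'' of this proposition is simply a citation: it is Propositions 9.20 and 9.6 of \cite{FLP79}, so there is no argument in the paper to compare against. Your proposal reconstructs both facts from the structure theory of pseudo-Anosov maps, which is what those FLP propositions establish, and the overall route (no closed leaves or saddle connections $\Rightarrow$ minimality $\Rightarrow$ density of half-leaves; Markov coding $\Rightarrow$ density of periodic points) is the standard one.

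Two points deserve care. In Step~1, the finiteness-plus-permutation argument is clean for saddle connections (finitely many separatrices), but $f$ could a priori permute an infinite family of closed leaves; for closed leaves one should instead argue that a closed regular leaf would sit in a foliated annulus whose core is a reducing curve, or appeal directly to the full-support transverse measure. In Step~2, you correctly work only with half-leaves ``reaching a non-singular end,'' but the proposition as stated imposes no such restriction, and in the fully-punctured setting a separatrix running into a puncture \emph{is} a properly embedded copy of $[0,\infty)$ in a leaf, yet it is not dense. The intended reading is the FLP one (on a closed surface the singular points are honest points, so such a half-leaf is a compact arc and excluded), and this is also all the paper uses: in \Cref{prop:realedgesPF} the half-leaf $\bigcup_{s\geq 0} f^{sp}(I)$ is built by forward iteration and escapes to a regular end. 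Finally, invoking unique ergodicity to get uniform $\epsilon$-density is heavier than needed; minimality alone suffices, since the closure of a half-leaf escaping to a regular end is a nonempty closed saturated subset.
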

\begin{proof}
This is Proposition 9.20 and Proposition 9.6 of \cite{FLP79} respectively.
\end{proof}

\subsection{Train tracks} \label{subsec:traintrack}

We define train tracks and set up some terminology.

\begin{defn} \label{defn:traintrack}
A \textit{(ribbon) train track} $\tau$ is a finite graph endowed with two additional pieces of data:
\begin{enumerate}
    \item A cyclic ordering on the set $\mathcal{E}_v$ of half-edges incident to each vertex $v$
    \item A partition $\mathcal{E}_v = \mathcal{E}^1_v \sqcup \mathcal{E}^2_v$ into two nonempty cyclically consecutive subsets, which we refer to as the \textit{smoothing} at $v$.
\end{enumerate}

We denote the set of vertices of $\tau$ by $\mathcal{V}(\tau)$ and denote the set of edges of $\tau$ by $\mathcal{E}(\tau)$. We will sometimes refer to the vertices of $\tau$ as the \textit{switches}. 

We will think of the two pieces of additional data on each $\mathcal{E}_v$ as represented by having a small oriented disc at $v$ with the half-edges being arranged around $v$ as determined by the cyclic order, and with a tangent line at $v$ such that the half edges in $\mathcal{E}^1_v$ are tangent to one side of the tangent line and those in $\mathcal{E}^2_v$ tangent to the other side. See \Cref{fig:traintrack} left. 

\begin{figure}
    \centering
    \fontsize{14pt}{14pt}\selectfont
    \resizebox{!}{3cm}{%% Creator: Inkscape inkscape 0.92.4, www.inkscape.org
%% PDF/EPS/PS + LaTeX output extension by Johan Engelen, 2010
%% Accompanies image file '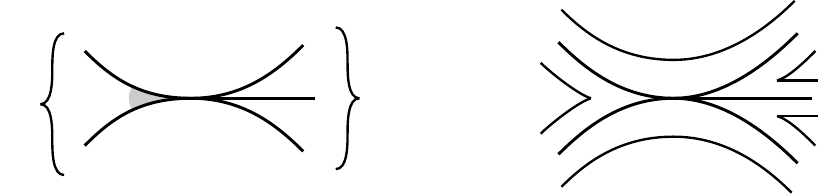' (pdf, eps, ps)
%%
%% To include the image in your LaTeX document, write
%%   \input{<filename>.pdf_tex}
%%  instead of
%%   \includegraphics{<filename>.pdf}
%% To scale the image, write
%%   \def\svgwidth{<desired width>}
%%   \input{<filename>.pdf_tex}
%%  instead of
%%   \includegraphics[width=<desired width>]{<filename>.pdf}
%%
%% Images with a different path to the parent latex file can
%% be accessed with the `import' package (which may need to be
%% installed) using
%%   \usepackage{import}
%% in the preamble, and then including the image with
%%   \import{<path to file>}{<filename>.pdf_tex}
%% Alternatively, one can specify
%%   \graphicspath{{<path to file>/}}
%% 
%% For more information, please see info/svg-inkscape on CTAN:
%%   http://tug.ctan.org/tex-archive/info/svg-inkscape
%%
\begingroup%
  \makeatletter%
  \providecommand\color[2][]{%
    \errmessage{(Inkscape) Color is used for the text in Inkscape, but the package 'color.sty' is not loaded}%
    \renewcommand\color[2][]{}%
  }%
  \providecommand\transparent[1]{%
    \errmessage{(Inkscape) Transparency is used (non-zero) for the text in Inkscape, but the package 'transparent.sty' is not loaded}%
    \renewcommand\transparent[1]{}%
  }%
  \providecommand\rotatebox[2]{#2}%
  \newcommand*\fsize{\dimexpr\f@size pt\relax}%
  \newcommand*\lineheight[1]{\fontsize{\fsize}{#1\fsize}\selectfont}%
  \ifx\svgwidth\undefined%
    \setlength{\unitlength}{392.77548887bp}%
    \ifx\svgscale\undefined%
      \relax%
    \else%
      \setlength{\unitlength}{\unitlength * \real{\svgscale}}%
    \fi%
  \else%
    \setlength{\unitlength}{\svgwidth}%
  \fi%
  \global\let\svgwidth\undefined%
  \global\let\svgscale\undefined%
  \makeatother%
  \begin{picture}(1,0.2365991)%
    \lineheight{1}%
    \setlength\tabcolsep{0pt}%
    \put(0,0){\includegraphics[width=\unitlength,page=1]{traintrack.pdf}}%
    \put(0.22541343,0.12684041){\color[rgb]{0,0,0}\makebox(0,0)[lt]{\lineheight{1.25}\smash{\begin{tabular}[t]{l}\small $v$\end{tabular}}}}%
    \put(0.31779002,0.16476233){\color[rgb]{0,0,0}\makebox(0,0)[lt]{\lineheight{1.25}\smash{\begin{tabular}[t]{l}\small $e'$\end{tabular}}}}%
    \put(0.31779002,0.0573499){\color[rgb]{0,0,0}\makebox(0,0)[lt]{\lineheight{1.25}\smash{\begin{tabular}[t]{l}\small $e$\end{tabular}}}}%
    \put(0.45354567,0.09889384){\color[rgb]{0,0,0}\makebox(0,0)[lt]{\lineheight{1.25}\smash{\begin{tabular}[t]{l}$\mathcal{E}^2_v$\end{tabular}}}}%
    \put(-0.00346841,0.09889384){\color[rgb]{0,0,0}\makebox(0,0)[lt]{\lineheight{1.25}\smash{\begin{tabular}[t]{l}$\mathcal{E}^1_v$\end{tabular}}}}%
    \put(0,0){\includegraphics[width=\unitlength,page=2]{traintrack.pdf}}%
  \end{picture}%
\endgroup%
}
    \caption{A train track and its tie neighborhood near a switch.}
    \label{fig:traintrack}
\end{figure}

In fact, one can form an oriented surface $N$ by taking the union of these small oriented discs and a rectangular strip connecting the discs, respecting their orientations, along each edge. $N$ can be foliated by properly embedded intervals transverse to the edges of $\tau$. We call $N$ the \textit{tie neighborhood} of $\tau$, and call the intervals foliating $N$ the \textit{ties}. See \Cref{fig:traintrack} right.

We refer to a pair of adjacent elements in some $\mathcal{E}^\beta_v$ as a \textit{cusp} at $v$. Equivalently, consider the complementary regions of $\tau$ in its tie neighborhood. The cusps are in one-to-one correspondence with the non-smooth points on the boundary that meet $\tau$.

In general, let $e$ and $e'$ be two elements in $\mathcal{E}^\beta_v$ for some switch $v$ and some $\beta \in \mathbb{Z}/2$. $e$ is said to \textit{lie on the left of} $e'$ if for some (hence any) $f \in \mathcal{E}^{\beta+1}_v$, $(e,e',f)$ is oriented counterclockwise in $\mathcal{E}_v$. We show one example of this in \Cref{fig:traintrack} left. 
\end{defn}

We make two remarks regarding this definition.

Firstly, train tracks are frequently defined to be subsets of surfaces in the literature. However, it will be to our advantage to define them independently of a fixed surface. To connect with the usual definition, we have to consider embeddings of the tie neighborhoods into surfaces. The following definition illustrates an example of this.

\begin{defn} \label{defn:carry}
Let $l$ be a lamination on finite type surface $S$. Let $\tau$ be a train track with tie neighborhood $N$, and let $\iota:\tau \hookrightarrow S$ be an embedding. $\iota(\tau)$ is said to \textit{carry} $l$ if $\iota$ can be extended into an orientation preserving embedding $\iota:N \hookrightarrow S$ such that the leaves of $l$ are contained in $\iota(N)$ and are transverse to the ties. $\iota(\tau)$ is said to \textit{fully carry} $l$ if in addition every tie intersects some leaf of $l$.
\end{defn}

Secondly, the underlying graph of a train track is sometimes required to be trivalent. Here we allow our train tracks to have arbitrary valence since this is the natural setting for discussing standardly embedded train tracks.

We then define maps of train tracks.

\begin{defn} \label{defn:traintrackmap}
An edge path or cycle $c$ of $\tau$ is said to be \textit{carried by $\tau$} if at each vertex $v$ on $c$, the incoming and outgoing edges do not lie in the same $\mathcal{E}^\beta_v$. 

A map $f:\tau \to \tau'$ is said to be a \textit{train track map} if $f$ sends switches of $\tau$ to switches of $\tau'$ and edges of $\tau$ to edge paths carried by $\tau'$, such that the induced map $D_v f: \mathcal{E}_v \to \mathcal{E}_{f(v)}$ of half-edges at each switch preserves the cyclic ordering and sends elements in distinct $\mathcal{E}^\beta_v$ to distinct $\mathcal{E}^{\beta'}_{f(v)}$. 
\end{defn}

Suppose $f:\tau \to \tau'$ is a train track map. Let $N$ and $N'$ be the tie neighborhoods of $\tau$ and $\tau'$ respectively. Then $f$ induces an embedding $N \hookrightarrow N'$ which sends ties in $N$ into ties in $N'$. See \Cref{fig:traintrackmap}. For convenience of notation, we will denote this induced embedding by $f$ as well.

\begin{figure}
    \centering
    \resizebox{!}{4cm}{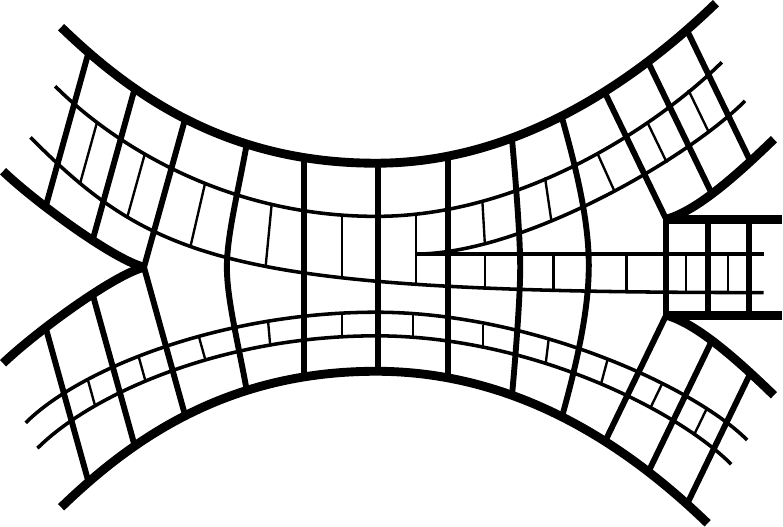}
    \caption{A train track map $\tau \to \tau'$ induces an embedding of tie neighborhoods $N \hookrightarrow N'$.}
    \label{fig:traintrackmap}
\end{figure}

\begin{defn} \label{defn:transitionmatrix}
Let $f: \tau \to  \tau'$ be a train track map. The \textit{transition matrix} of $f$ is the matrix $f_* \in M_{\mathcal{E}(\tau') \times \mathcal{E}(\tau)}(\mathbb{Z})$  whose entries are defined by 
$$(f_*)_{e',e}=\text{\# times $f(e)$ passes through $e'$}.$$ 
\end{defn}

\subsection{Reciprocal Perron-Frobenius matrices} \label{subsec:PFmatrix}

We recall some terminology regarding matrices.

\begin{defn} \label{defn:PF}
A matrix $B$ is said to be \textit{positive} if all of its entries are positive.

Let $A \in M_{n \times n}(\mathbb{Z}^{\geq 0})$ be a matrix with nonnegative integer entries. $A$ is said to be \textit{Perron-Frobenius} if there exists $k \geq 1$ such that $A^k$ is positive. 
\end{defn}

Perron-Frobenius matrices arise naturally in the study of pseudo-Anosov dynamics, and are frequently applied with the following classical theorem.

\begin{thm}[Perron-Frobenius theorem] \label{thm:PF}
Let $A$ be a Perron-Frobenius matrix. Let $\lambda$ be the largest real eigenvalue of $A$. Then $\lambda$ is equal to the spectral radius of $A$ and there exists a positive $\lambda$-eigenvector $v$ of $A$. Moreover, up to scalar multiplication by a positive number, $v$ is the unique positive eigenvector of $A$.
\end{thm}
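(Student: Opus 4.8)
The plan is to build a positive eigenvector by a fixed-point argument and then pin down its eigenvalue by pairing against a positive left eigenvector. First note that a Perron--Frobenius matrix $A$ has no zero row and no zero column, since a zero row (resp.\ column) of $A$ would persist in every power $A^k$, contradicting $A^k>0$. Consequently, for every $x$ in the standard simplex $\Delta=\{x\in\RR^n : x\ge 0,\ \sum_i x_i=1\}$ we have $Ax\ge 0$ and $Ax\ne 0$, so $T(x):=Ax/\lVert Ax\rVert_1$ defines a continuous self-map of the compact convex set $\Delta$. Brouwer's fixed-point theorem then yields $v\in\Delta$ with $Av=\lambda_0 v$, where $\lambda_0=\lVert Av\rVert_1>0$. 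Choosing $k$ with $A^k>0$, from $v\ge 0$, $v\ne 0$ we get $\lambda_0^k v=A^k v>0$, so in fact $v>0$ (and $\lambda_0>0$). One can avoid Brouwer by instead maximizing the Collatz--Wielandt functional $x\mapsto\min_{i:\,x_i>0}(Ax)_i/x_i$ over $\Delta$, but the fixed-point route is quicker to state.

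Next I would show $\lambda_0$ is the spectral radius. Applying the previous step to $A^\top$, which is again Perron--Frobenius, produces a strictly positive row vector $y^\top$ with $y^\top A=\mu_0 y^\top$, $\mu_0>0$. Pairing gives $\lambda_0\,(y^\top v)=y^\top A v=\mu_0\,(y^\top v)$ with $y^\top v>0$, so $\mu_0=\lambda_0$. Now if $\mu\in\CC$ is any eigenvalue of $A$, with eigenvector $w\ne 0$, then taking absolute values entrywise and using $A\ge 0$ gives $\lvert\mu\rvert\,\lvert w\rvert=\lvert Aw\rvert\le A\lvert w\rvert$; applying the positive row vector $y^\top$ yields $\lvert\mu\rvert\,(y^\top\lvert w\rvert)\le\lambda_0\,(y^\top\lvert w\rvert)$, and since $y^\top\lvert w\rvert>0$ we conclude $\lvert\mu\rvert\le\lambda_0$. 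Hence $\lambda_0$ is the spectral radius of $A$; being itself real, it is then the largest real eigenvalue, i.e.\ $\lambda_0=\lambda$, and $v$ is the asserted positive $\lambda$-eigenvector.

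For uniqueness, suppose $v'>0$ with $Av'=\lambda' v'$; pairing with $y^\top$ as above forces $\lambda'=\lambda$. Set $t^\ast=\min_i v_i/v'_i>0$ and $z=v-t^\ast v'\ge 0$, which has a zero coordinate and satisfies $A^k z=\lambda^k z$; were $z\ne 0$ we would get $A^k z>0$, contradicting that $\lambda^k z$ has a zero coordinate, so $z=0$ and $v=t^\ast v'$. I expect the only genuinely delicate point to be the spectral-radius step: the entrywise inequality $\lvert\mu\rvert\,\lvert w\rvert\le A\lvert w\rvert$ by itself does not compare $\lvert\mu\rvert$ with $\lambda_0$, and one must push it through the positive left eigenvector (equivalently, through the Collatz--Wielandt characterization) to extract a scalar bound. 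Existence via Brouwer, strict positivity from a single application of $A^k>0$, and the min-ratio trick for uniqueness are all routine. Alternatively one could first prove the statement for strictly positive matrices and then replace $A$ by $A^k$, but the argument above handles the Perron--Frobenius case in one pass.
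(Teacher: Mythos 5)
The paper states this as the classical Perron--Frobenius theorem and gives no proof of its own, treating it as standard background; so there is nothing to compare against. Your argument is a correct and complete self-contained proof of the primitive-matrix version: the observation that a zero row/column would persist in all powers, the Brouwer fixed-point step on the normalized action over the simplex, strict positivity from a single application of $A^k>0$, the left-eigenvector pairing to show $\lambda_0$ dominates $|\mu|$ for every eigenvalue $\mu$, and the min-ratio subtraction for uniqueness are all sound. The one step worth double-checking, as you flagged, is the passage from $|\mu|\,|w|\le A|w|$ to $|\mu|\le\lambda_0$; your use of the strictly positive left eigenvector $y^\top$ handles it cleanly.
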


Another property of matrices which we will use in this paper is reciprocity. 

\begin{defn} \label{defn:reciprocal}
A linear map $T:V \to V$ is said to be \textit{reciprocal} if its eigenvalues (taken with multiplicity) are invariant under $\mu \mapsto \mu^{-1}$. Equivalently, $T$ is reciprocal if and only if its characteristic polynomial $p(t)$ is reciprocal, that is, it satisfies $p(t)=\pm t^n p(t^{-1})$ where $n=\dim V$.
\end{defn}

We remark that even though we will freely regard matrices as linear transformations on Euclidean spaces using the standard bases, the reader should observe that reciprocity is a property of \emph{linear transformations}, whereas being Perron-Frobenius is strictly a property of \emph{matrices}. 

For matrices that are both Perron-Frobenius and reciprocal, McMullen proved the following sharp inequality regarding their spectral radius.

\begin{thm}[{\cite{McM15}}] \label{thm:McMullen}
Let $A \in M_{n \times n}(\mathbb{Z}^{\geq 0})$, $n \geq 2$, be a reciprocal Perron-Frobenius matrix with spectral radius $\rho(A)$. Then 
$$\rho(A)^n \geq \mu^4.$$

More precisely, for $n \geq 3$, we have
$$\rho(A) \geq \left| LT_{1,\frac{n}{2}} \right|$$
if $n$ is even, and
$$\rho(A)^n \geq 8$$
if $n$ is odd.
\end{thm}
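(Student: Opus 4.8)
The plan is to reduce the ``more precise'' bounds for $n \ge 3$ directly to \Cref{thm:McMthm} through the matrix--digraph dictionary, to settle the base case $n = 2$ by an elementary argument, and then to combine the two to obtain the uniform inequality $\rho(A)^n \ge \mu^4$. For the first part, given a reciprocal Perron--Frobenius matrix $A$, I would pass to the associated digraph $\Gamma = \Gamma_A$ on $n$ vertices, having $(A)_{ij}$ directed edges from vertex $i$ to vertex $j$. As recalled in \Cref{sec:intro}, $A^k > 0$ for some $k$ is equivalent to $\Gamma$ being strongly connected and aperiodic, i.e.\ to $\Gamma$ being a Perron--Frobenius digraph of rank $n$; its characteristic polynomial $\theta_\Gamma$ is by definition that of $A$, hence reciprocal exactly because $A$ is (\Cref{defn:reciprocal}); and by the Perron--Frobenius theorem (\Cref{thm:PF}) the spectral radius $\rho(A)$ is an actual (real, positive) eigenvalue of $A$ of largest modulus, so $|\theta_\Gamma| = \rho(A)$. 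Applying \Cref{thm:McMthm} to $\Gamma$ then yields $\rho(A) \ge |LT_{1,n/2}|$ for $n$ even and $\rho(A)^n \ge 8$ for $n$ odd, which is the ``more precise'' statement.

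For the base case $n = 2$, reciprocity forces the characteristic polynomial of $A$ to be $t^2 - mt + 1$, where $m = \mathrm{tr}(A) \ge 0$ is an integer; in particular $\det A = 1$. Since $A$ has non-negative integer entries with $\det A = 1$, both diagonal entries are positive (a zero on the diagonal would give $\det A = -bc \le 0$), so $m \ge 2$. If $m = 2$, then the diagonal entries are both $1$ and $bc = 0$, so up to transposing $A = \left(\begin{smallmatrix} 1 & b \\ 0 & 1\end{smallmatrix}\right)$; its $(2,1)$-entry remains $0$ in every power, so $A$ is not Perron--Frobenius. Hence $m \ge 3$, the eigenvalues of $A$ are real and positive, and $\rho(A) = \tfrac{m + \sqrt{m^2-4}}{2} \ge \tfrac{3 + \sqrt 5}{2} = \mu^2$, so $\rho(A)^2 \ge \mu^4$ (with equality when $m = 3$, e.g.\ for $A = \left(\begin{smallmatrix}1 & 1\\1 & 2\end{smallmatrix}\right)$).

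Finally I would deduce $\rho(A)^n \ge \mu^4$ for $n \ge 3$ from the precise bounds. For $n$ odd this is immediate, as $\rho(A)^n \ge 8 > \mu^4$. For $n = 2g$ even with $g \ge 2$: since $LT_{1,g}(1) = -1 < 0$ and $LT_{1,g}(t) \to +\infty$, the polynomial $LT_{1,g}$ has a real root $\lambda_0 > 1$, and $\rho(A) \ge |LT_{1,g}| \ge \lambda_0$. Dividing $\lambda_0^{2g} - \lambda_0^{g+1} - \lambda_0^{g} - \lambda_0^{g-1} + 1 = 0$ by $\lambda_0^g$ gives
$$\lambda_0^{\,g} + \lambda_0^{-g} = \lambda_0 + 1 + \lambda_0^{-1} > 3,$$
using $\lambda_0 + \lambda_0^{-1} > 2$. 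As $y \mapsto y + y^{-1}$ is strictly increasing on $(1,\infty)$ and takes the value $3$ at $y = \mu^2$ (indeed $\mu^2 + \mu^{-2} = 3$ since $\mu^2 = \mu + 1$), this forces $\lambda_0^g > \mu^2$, hence $\rho(A)^{2g} \ge \lambda_0^{2g} > \mu^4$. I expect the only real content to lie in \Cref{thm:McMthm} itself, which I take as given (its proof is McMullen's clique-polynomial estimate in \cite{McM15}); apart from that, the sole delicate point is eliminating $\mathrm{tr}(A) = 2$ in the base case, which is a finite verification, so I do not anticipate a genuine obstacle.
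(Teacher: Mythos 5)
Your proposal is correct, but its architecture differs from the paper's. The paper's entire proof of \Cref{thm:McMullen} is a citation: it attributes the uniform bound $\rho(A)^n \ge \mu^4$ to Theorem~7.1 of \cite{McM15}, the even case to Theorem~1.1, and the odd case to the discussion on p.~33 there. You instead invoke only the digraph formulation \Cref{thm:McMthm} (which the paper also cites from \cite{McM15} but only for rank $K\ge 3$), and then fill in the two missing pieces yourself. Both pieces check out. The translation from a reciprocal Perron--Frobenius matrix to a rank-$n$ Perron--Frobenius digraph with reciprocal $\theta_\Gamma$ and $|\theta_\Gamma|=\rho(A)$ is immediate and correctly handled. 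Your $n=2$ argument is sound: reciprocity forces $\det A = 1$, nonnegativity then forces both diagonal entries positive so $\mathrm{tr}\,A\ge 2$, and $\mathrm{tr}\,A = 2$ is excluded because it forces $A$ triangular with unit diagonal, which is never Perron--Frobenius; hence $\mathrm{tr}\,A\ge 3$ and $\rho(A)\ge \mu^2$. Finally, the derivation of $\rho(A)^n\ge\mu^4$ from the $n\ge 3$ bounds is a nice elementary observation: for odd $n$ it is trivial since $8>\mu^4$, and for $n=2g$ your identity $\lambda_0^g+\lambda_0^{-g}=\lambda_0+1+\lambda_0^{-1}>3=\mu^2+\mu^{-2}$ together with the monotonicity of $y\mapsto y+y^{-1}$ on $(1,\infty)$ gives $\lambda_0^g>\mu^2$. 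In short, you buy a more self-contained proof (modulo \Cref{thm:McMthm}) at the cost of a short page of elementary work; the paper buys brevity by leaning on McMullen's Theorem~7.1 directly, which already packages the $n=2$ case and the deduction of the uniform bound.
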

\begin{proof}
In \cite{McM15}, the first statement is stated as Theorem 7.1. The second statement is stated as Theorem 1.1 for $n$ even, and stated in P.33 under the proof of Theorem 7.1 for $n$ odd.
\end{proof}

To conclude this subsection, we state some elementary facts that will help us establish reciprocity.

\begin{prop} \label{prop:reciprocalfacts}
We have the following examples and properties of reciprocal matrices.
\begin{enumerate}
    \item A symplectic matrix is reciprocal.
    \item Suppose $P \in M_{n \times n}(\mathbb{Z})$ is a signed permutation matrix and $V \subset \mathbb{R}^n$ is a subspace invariant under $P$, then $P|_V:V \to V$ is reciprocal.
    \item Suppose matrix $A$ admits a block decomposition 
    $\begin{bmatrix} B & * \\ 0 & C \end{bmatrix}$. Then two of $A$, $B$, or $C$ being reciprocal implies that the remaining one is reciprocal as well.
\end{enumerate}
\end{prop}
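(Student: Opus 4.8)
The plan is to treat the three items independently; each reduces to elementary linear algebra and needs none of the train-track machinery. Throughout I use the characterization of \emph{reciprocal} recalled in \Cref{defn:reciprocal}: a matrix is reciprocal exactly when the multiset of its eigenvalues (equivalently, the multiset of roots of its characteristic polynomial) is invariant under $\lambda\mapsto\lambda^{-1}$; in particular a reciprocal matrix cannot have $0$ as an eigenvalue, so the involution $\lambda\mapsto\lambda^{-1}$ is defined on all of its eigenvalues.

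For item (1), a symplectic $M$ satisfies $M^{\top}\Omega M=\Omega$ for a nondegenerate skew form $\Omega$, so $M^{\top}=\Omega M^{-1}\Omega^{-1}$ is conjugate to $M^{-1}$; since $M$ and $M^{\top}$ always have the same characteristic polynomial, $M$ and $M^{-1}$ have the same eigenvalue multiset, and as the eigenvalues of $M^{-1}$ are the reciprocals of those of $M$, this is precisely the statement that $M$ is reciprocal. For item (3), block-triangularity gives $p_A=p_B\,p_C$ on characteristic polynomials, so the root multiset of $A$ is the multiset union of those of $B$ and $C$. If $B$ and $C$ are reciprocal, a union of two inversion-invariant multisets is inversion-invariant, so $A$ is reciprocal. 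If instead $A$ and, say, $B$ are reciprocal, then neither $A$ nor $B$ has $0$ as a root, and the roots of $p_C=p_A/p_B$ form the multiset difference of the root multisets of $p_A$ and $p_B$; a difference $S\setminus T$ with $T\subseteq S$ and both $S,T$ invariant under $\lambda\mapsto\lambda^{-1}$ is again invariant, so $C$ is reciprocal. The case with $A$ and $C$ reciprocal is symmetric.

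Item (2) is where I expect the only real subtlety. First, a signed permutation matrix $P$ is orthogonal, $PP^{\top}=I$, since each row and column has exactly one nonzero entry and that entry is $\pm1$; hence, over $\CC$, $P$ is unitary, so all of its eigenvalues lie on the unit circle. Now let $V\subseteq\RR^n$ be a $P$-invariant subspace. As $P$ is invertible we get $P(V)=V$, and in a real basis of $V$ the map $P|_V$ is given by a real matrix, so its characteristic polynomial has real coefficients; thus its nonreal roots occur in complex-conjugate pairs, while its real roots, being eigenvalues of $P$, must equal $\pm1$ and are self-inverse. Finally, every eigenvalue of $P|_V$ is an eigenvalue of $P$ and so has modulus $1$, and on the unit circle complex conjugation coincides with inversion; hence the nonreal eigenvalues of $P|_V$ also come in inverse pairs, and the whole eigenvalue multiset of $P|_V$ is invariant under $\lambda\mapsto\lambda^{-1}$. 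The main obstacle is precisely this last point: a priori one might fear that restricting $P$ to an invariant subspace breaks reciprocity, because a sub-multiset of an inversion-invariant multiset need not be inversion-invariant; the resolution is that the realness of $V$ forces conjugation symmetry of the roots while the orthogonality of $P$ pins them to the unit circle, and there these two symmetries agree. I would then finish by double-checking the degenerate cases---repeated $\pm1$ eigenvalues, and the use of $P(V)=V$ rather than just $P(V)\subseteq V$---to make sure no multiplicity bookkeeping goes wrong.
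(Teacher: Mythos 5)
Your proposal is correct and follows essentially the same route as the paper: part (1) via symplectic conjugacy of $M^{-1}$ and $M^{\top}$, part (2) via orthogonality of signed permutation matrices and their restrictions (so that unit-modulus eigenvalues together with a real characteristic polynomial give reciprocity), and part (3) via the factorization $p_A = p_B\,p_C$ of characteristic polynomials for block-triangular matrices. The only difference is that the paper delegates the elementary facts "symplectic matrices are reciprocal" and "orthogonal matrices are reciprocal" to a citation of \cite{GM02}, whereas you supply the short direct arguments yourself.
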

\begin{proof}
In (1), a \textit{symplectic matrix} is a matrix $A \in M_{n \times n}(\mathbb{R})$ satisfying $\omega(Av,Av')=\omega(v,v')$ for all $v,v' \in \mathbb{R}^n$, for some symplectic form $\omega$. It is an elementary fact that these are reciprocal, see for example the proof of \cite[Theorem 2.1]{GM02}.

In (2), by a \textit{signed permutation matrix}, we mean that the only nonzero entries of $P$ are $P_{i,\sigma(i)}=\pm 1$ for some permutation $\sigma \in S_n$. Such matrices are orthogonal (with respect to the standard inner product on $\mathbb{R}^n$), hence the restriction $P|_V$ is also orthogonal, and it is another elementary fact that orthogonal maps are reciprocal (again see \cite[Theorem 2.1]{GM02}).

The statement in (3) follows from the observation that the set of eigenvalues of $A$ is the union of that of $B$ and $C$ (taken with multiplicity).
\end{proof}

\section{Standardly embedded train tracks} \label{sec:standardlyemb}

For the rest of this paper, we fix the following setting: Let $f : S \to S$ be a fully punctured pseudo-Anosov map. Let $(\ell^s, \mu^s)$ and $(\ell^u, \mu^u)$ be the stable and unstable measured foliations of $f$ respectively. Let $\lambda$ be the expansion factor of $f$.

In this section we will define standardly embedded train tracks and explain how they are related to certain Markov partitions that encode the dynamics of $f$.

\subsection{Definition of standardly embedded train tracks} \label{subsec:standardlyemb}

We first define the notion of standardly embedded train tracks. The definition we use is adapted from \cite{CH08}.

Let $\tau$ be a train track with tie neighborhood $N$. Each complementary region of $\tau$ in $N$ is homeomorphic to an annulus, with one boundary component along $\partial N$ and the other boundary component along $\tau$. We call the boundary component along $\tau$ a \textit{boundary component of $\tau$}. Under this terminology, the boundary components of $\tau$ are in one-to-one correspondence with the boundary components of $N$. We denote the collection of boundary components of $\tau$ by $\partial \tau$.

We say that a boundary component $c$ of $\tau$ is $n$-pronged if it contains $n$ cusps. We say that $c$ is odd/even-pronged if it is $n$-pronged for odd/even $n$ respectively.

\begin{defn} \label{defn:standardlyembedded}
Let $\partial \tau = \partial_I \tau \sqcup \partial_O \tau$ be a partition of the boundary components of $\tau$ into a nonempty set of \textit{inner boundary components} and a nonempty set of \textit{outer boundary components} respectively.

A train track $\tau$ is said to be \textit{standardly embedded with respect to $(\partial_I \tau, \partial_O \tau)$} if its set of edges $\mathcal{E}$ can be partitioned into a set of \textit{infinitesimal edges} $\mathcal{E}_{\infs}$ and a set of \textit{real edges} $\mathcal{E}_{\real}$, such that:
\begin{itemize}
    \item The smoothing at each vertex $v$ is defined by separating the infinitesimal edges and the real edges.
    \item The union of infinitesimal edges is a disjoint union of cycles, which we call the \textit{infinitesimal polygons}.
    \item The infinitesimal polygons are exactly the inner boundary components of $\tau$.
\end{itemize}

We show one example of a standardly embedded train track in \Cref{fig:standardlyemb}, where the infintesimal polygons are drawn in gray.

\begin{figure}
    \centering
    \resizebox{!}{5cm}{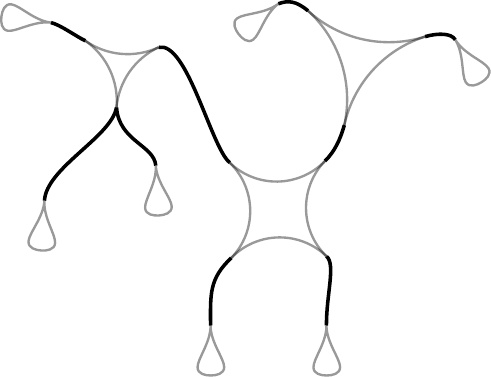}
    \caption{An example of a standardly embedded train track. The infintesimal polygons are drawn in gray.}
    \label{fig:standardlyemb}
\end{figure}
\end{defn}

\begin{prop} \label{prop:realedgenumber}
Let $\tau$ be a standardly embedded train track. The number of real edges of $\tau$ is equal to $-\chi(\tau)$.
\end{prop}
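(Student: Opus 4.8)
The plan is to read off $\chi(\tau)=|\mathcal V(\tau)|-|\mathcal E(\tau)|$ from the graph structure of $\tau$ (regarded as a $1$-complex) and reduce the statement to the single combinatorial identity $|\mathcal E_{\infs}|=|\mathcal V(\tau)|$. Granting this, and writing $\mathcal E(\tau)=\mathcal E_{\infs}\sqcup\mathcal E_{\real}$, we obtain $-\chi(\tau)=|\mathcal E(\tau)|-|\mathcal V(\tau)|=|\mathcal E_{\infs}|+|\mathcal E_{\real}|-|\mathcal V(\tau)|=|\mathcal E_{\real}|$, which is exactly the assertion.

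To establish $|\mathcal E_{\infs}|=|\mathcal V(\tau)|$, I would first use the smoothing condition in \Cref{defn:standardlyembedded}: at each switch $v$ the partition $\mathcal E_v=\mathcal E^1_v\sqcup\mathcal E^2_v$ is obtained by separating the infinitesimal half-edges from the real ones, and since both $\mathcal E^1_v$ and $\mathcal E^2_v$ are required to be nonempty, every switch is incident to at least one infinitesimal half-edge (and, incidentally, at least one real half-edge). In particular the subgraph of $\tau$ spanned by the infinitesimal edges contains every vertex of $\tau$. By hypothesis this subgraph is a disjoint union of cycles — the infinitesimal polygons — so it has no isolated vertices, every switch lies on exactly one infinitesimal polygon, and hence every switch is incident to exactly two infinitesimal half-edges. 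Double counting the set of pairs (switch, incident infinitesimal half-edge) then gives $2|\mathcal V(\tau)|=2|\mathcal E_{\infs}|$, since on one side each switch contributes $2$ and on the other each infinitesimal edge contributes its $2$ half-edges.

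The only point that needs care — rather than the hard part, which there essentially is not — is the phrase ``disjoint union of cycles'': one must know the infinitesimal polygons are pairwise vertex-disjoint, so that a switch lying on the infinitesimal subgraph lies on exactly one of them and has infinitesimal valence exactly $2$, rather than $4$ or more. I would take this as the intended meaning of ``disjoint'' in \Cref{defn:standardlyembedded}, reinforced by the fact that the infinitesimal polygons are literally the inner boundary components of $\tau$, which are disjoint embedded circles. With that settled, the Euler characteristic count above is immediate and the proposition follows.
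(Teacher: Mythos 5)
Your proof is correct and follows essentially the same route as the paper's: both reduce the claim to the identity $|\mathcal E_{\infs}| = |\mathcal V(\tau)|$ and then compute $\chi(\tau)$. The paper simply asserts this identity as immediate from the definition, whereas you spell out why each switch is incident to exactly two infinitesimal half-edges; that elaboration is accurate and makes the argument self-contained.
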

\begin{proof}
It follows from the definition that $|\mathcal{E}_{\infs}| = |\mathcal{V}|$, so $\tau$ has $|\mathcal{V}| + |\mathcal{E}_{\real}|$ edges in total. Hence $\chi(\tau) = |\mathcal{V}| - (|\mathcal{V}| + |\mathcal{E}_{\real}|) = -|\mathcal{E}_{\real}|$. 
\end{proof}

\subsection{Train track partitions} \label{subsec:traintrackpartition}

We now define the notion of train track partitions. These will be used to construct standardly embedded train tracks in the next subsection.

\begin{defn} \label{defn:partition}
A \textit{rectangle} is defined to be a subset $R$ of $S$ such that 
$(R,\ell^s|_R,\ell^u|_R)$ is homeomorphic to 
$$(([0,1] \times [0,1]) \backslash \{x_1,...,x_n\}, \mathrm{Vert}, \mathrm{Hor})$$
where $\{x_1,...,x_n\}$ is a (possibly empty) collection of points on the boundary of $[0,1] \times [0,1]$, $\mathrm{Vert}$ is the foliation by vertical lines and $\mathrm{Hor}$ is the foliation by horizontal lines. See \Cref{fig:rectangle}, where the empty dots denote omitted points. In this paper we will draw stable leaves in red and unstable leaves in blue.

\begin{figure}
    \centering
    \resizebox{!}{3cm}{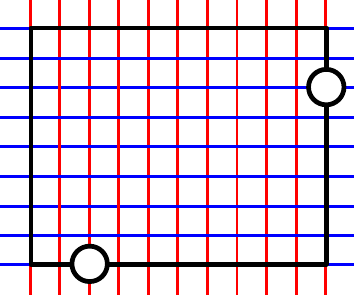}
    \caption{A rectangle in $S$.}
    \label{fig:rectangle}
\end{figure}

We call the two sides of $\partial R$ that lie along leaves of $\ell^s$ the \textit{stable sides} of $R$ and write $\partial_s R$ for the union of the two stable sides. Similarly, we call the two sides of $\partial R$ that lie along leaves of $\ell^u$ the \textit{unstable sides} of $R$ and write $\partial_u R$ for the union of the two unstable sides. In \Cref{fig:rectangle}, the stable sides are the vertical sides while the unstable sides are the horizontal sides.

A \textit{partition} is defined to be a finite collection of rectangles $\{R_i\}$ that have disjoint interiors and cover $S$. A partition $\{R_i\}$ is \textit{Markov} if it satisfies
\begin{itemize}
    \item $f(\bigcup_i \partial_s R_i) \subset \bigcup_i \partial_s R_i$, and
    \item $f^{-1}(\bigcup_i \partial_u R_i) \subset \bigcup_i \partial_u R_i$.
\end{itemize}
\end{defn}

\begin{defn} \label{defn:prong}
Let $x$ be a puncture of $S$. A \textit{stable prong at $x$} is a connected subset of a stable leaf that limits to $x$. A \textit{stable star at $x$} is a maximal disjoint union of prongs at $x$. In particular, a stable star at $x$ has $n$ connected components when $x$ is an $n$-pronged puncture. A \textit{side} of a stable star is the union of two adjacent prongs.

An \textit{unstable prong} and an \textit{unstable star at $x$} are similarly defined.
\end{defn}

See \Cref{fig:prongs} for an example. Here $x$ is a $5$-pronged puncture, so both the stable and unstable stars at $x$ in the figure have $5$ prongs. We have also indicated a side of the unstable star in dark blue.

\begin{figure}
    \centering
    \resizebox{!}{5cm}{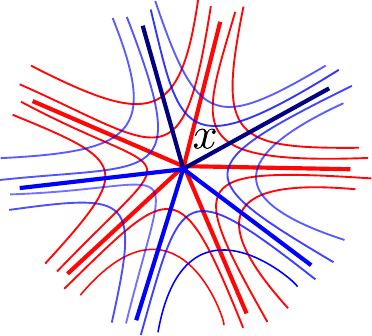}
    \caption{The local picture of $\ell^s$ and $\ell^u$ at a $5$-pronged puncture $x$. We have indicated a side of the unstable star at $x$ in dark blue.}
    \label{fig:prongs}
\end{figure}

\begin{defn} \label{defn:traintrackpartition}
Let $\mathcal{X}=\mathcal{X}_I \sqcup \mathcal{X}_O$ be a partition of the set of punctures of $S$ into a nonempty set of \textit{inner punctures} and a nonempty set of \textit{outer punctures} respectively.

A \textit{train track partition with respect to $(\mathcal{X}_I,\mathcal{X}_O)$} consists of
\begin{itemize}
    \item a partition $\{R_i\}$, along with
    \item a stable star $\sigma^s_x$ at every inner puncture $x \in \mathcal{X}_I$, and
    \item an unstable star $\sigma^u_x$ at every outer puncture $x \in \mathcal{X}_O$
\end{itemize}
such that:
\begin{itemize}
    \item Each stable star $\sigma^s_x$ is disjoint from each unstable star $\sigma^u_x$.
    \item Each stable side of each rectangle $R_i$ lies along the closure of some side of some stable star $\sigma^s_x$, and each point on each $\sigma^s_x$ meets the stable side of some $R_i$.
    \item Each unstable side of each rectangle $R_i$ lies along the closure of some side of some unstable star $\sigma^u_x$, and each point on each $\sigma^u_x$ meets the unstable side of some $R_i$.
\end{itemize}

A train track partition $(\{R_i\},\{\sigma^s_x\},\{\sigma^u_x\})$ is \textit{Markov} if the partition $\{R_i\}$ is Markov.
\end{defn}

Notice that the collection of stable and unstable stars actually determines the partition: the rectangles are the complementary regions of their union.

Also notice that if the train track partition is Markov, then $f$ must preserve the set of inner and outer punctures respectively. In particular, $f$ has at least two puncture orbits. We will show below that this is in fact a sufficient condition. The following construction will be used in the proof.

\begin{constr} \label{constr:ttpartition}
Let $\mathcal{X}=\mathcal{X}_I \sqcup \mathcal{X}_O$ be some partition of $\mathcal{X}$ into two nonempty subsets.

Let $\widehat{\sigma^u_x}$ be the unstable star at each $x \in \mathcal{X}_O$ for which each of its prongs has $\mu^s$-length 1. Let $\sigma^s_x$ be the stable star at each $x \in \mathcal{X}_I$ that is maximal with respect to the property that it is disjoint from $\bigcup_{x \in \mathcal{X}_O} \widehat{\sigma^u_x}$. That is, each $\sigma^s_x$ is defined by extending the stable prongs at $x$ until it bumps into some $\widehat{\sigma^u_x}$. Then define $\sigma^u_x$ to be the unstable star at each $x \in \mathcal{X}_O$ that is maximal with respect to the property that it contains $\widehat{\sigma^u_x}$ and is disjoint from $\bigcup_{x \in \mathcal{X}_I} \sigma^s_x$. That is, each $\sigma^u_x$ is defined by extending the prongs of $\widehat{\sigma^u_x}$ until it bumps into some $\sigma^s_x$.

We claim that each complementary region of $\bigcup_{x \in \mathcal{X}_I} \sigma^s_x \cup \bigcup_{x \in \mathcal{X}_O} \sigma^u_x$ is a rectangle. By construction, the punctures never lie in the interior of a complementary region, hence each complementary region is foliated by properly embedded intervals that are the restriction of the stable leaves. Meanwhile, a stable star $\sigma^s_x$ and an unstable star $\sigma^u_x$ can only meet in the interior of the $\sigma^s_x$ and at the endpoint of the $\sigma^u_x$ or vice versa. That is, they meet in a $\perp$ form. Hence each complementary region has convex corners along its boundary, which implies the claim. 

By taking the partition to be the collection of complementary regions, we obtain a train track partition, which we denote by $\mathcal{M}_{(\mathcal{X}_I,\mathcal{X}_O)}$.
\end{constr}

We remark that this construction of $\mathcal{M}_{(\mathcal{X}_I,\mathcal{X}_O)}$ is natural. More precisely, if $f_1:S_1 \to S_1$ and $f_2:S_2 \to S_2$ are fully-punctured pseudo-Anosov maps, and $g:S_1 \to S_2$ is a homeomorphism that sends the stable and unstable measured foliations $(\ell^{s/u}_1,\mu^{s/u}_1)$ of $f_1$ to $(\ell^{s/u}_2,\mu^{s/u}_2)$ of $f_2$, then for any partition $\mathcal{X}_1=\mathcal{X}_{1,I} \sqcup \mathcal{X}_{1,O}$ of the set of punctures of $S_1$, we have $$g_*\mathcal{M}_{(\mathcal{X}_{1,I},\mathcal{X}_{1,O})}=\mathcal{M}_{(g_*\mathcal{X}_{1,I},g_*\mathcal{X}_{1,O})}.$$
More generally, if $f:S \to S$ is a fully-punctured pseudo-Anosov map, $\pi:\widetilde{S} \to S$ is a finite cover, and $\widetilde{f}:\widetilde{S} \to \widetilde{S}$ is a lift of $f$, with its stable and unstable measured foliations lifted from that of $f$, then for any partition $\mathcal{X}=\mathcal{X}_I \sqcup \mathcal{X}_O$, we have
$$\pi^*(\mathcal{M}_{(\mathcal{X}_I,\mathcal{X}_O)})=\mathcal{M}_{(\pi^*\mathcal{X}_I,\pi^*\mathcal{X}_O)}.$$
We will be applying this fact implicitly in the sequel. % New

\begin{prop} \label{prop:markovttpartition}
If $\mathcal{X}_I$ and $\mathcal{X}_O$ are $f$-invariant, then $\mathcal{M}_{(\mathcal{X}_I,\mathcal{X}_O)}$ is Markov.
\end{prop}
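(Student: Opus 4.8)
The plan is to reduce the Markov condition to two set inclusions and then establish each by playing the maximality built into \Cref{constr:ttpartition} against the expansion/contraction behavior of $f$. Throughout write
$$\mathcal X_I\text{-stars } \Sigma^s = \bigcup_{x \in \mathcal X_I}\sigma^s_x,\qquad \mathcal X_O\text{-stars } \Sigma^u = \bigcup_{x \in \mathcal X_O}\sigma^u_x,\qquad \mathcal A = \bigcup_{x \in \mathcal X_O}\widehat{\sigma^u_x},$$
for the unions of the stable stars, the unstable stars, and the length-one unstable stars occurring in the construction. First I would record that $\mathcal M_{(\mathcal X_I,\mathcal X_O)}$ is a train track partition with respect to $(\mathcal X_I,\mathcal X_O)$, so by \Cref{defn:traintrackpartition} the union of the stable sides of its rectangles is $\overline{\Sigma^s}$ and the union of their unstable sides is $\overline{\Sigma^u}$; since $f$ is a homeomorphism, to verify the two conditions of \Cref{defn:partition} it is enough to show
$$f(\Sigma^s)\subseteq \Sigma^s \qquad\text{and}\qquad \Sigma^u \subseteq f(\Sigma^u).$$
I would also note the elementary facts to be used freely: as $\mathcal X_I$ and $\mathcal X_O$ are $f$-invariant, $f$ restricts to a bijection of $\mathcal X_I$ and of $\mathcal X_O$, and carries any stable (resp.\ unstable) star at a puncture $x$ to such a star at $f(x)$; and from $f_*(\ell^s,\mu^s)=(\ell^s,\lambda^{-1}\mu^s)$ in \Cref{defn:pamap}, $f$ multiplies the $\mu^s$-length of any arc transverse to $\ell^s$ by $\lambda>1$.

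The engine for the first inclusion is the observation that $f(\mathcal A)\supseteq\mathcal A$. Indeed, since the prongs of $\widehat{\sigma^u_x}$ have $\mu^s$-length $1$, the set $f(\widehat{\sigma^u_x})$ is an unstable star at $f(x)\in\mathcal X_O$ whose prongs have $\mu^s$-length $\lambda>1$, hence $f(\widehat{\sigma^u_x})\supseteq\widehat{\sigma^u_{f(x)}}$; taking the union over $x\in\mathcal X_O$ and using that $f$ permutes $\mathcal X_O$ gives $f(\mathcal A)\supseteq\mathcal A$. Now fix $x\in\mathcal X_I$. Then $f(\sigma^s_x)$ is a stable star at $f(x)\in\mathcal X_I$, and
$$f(\sigma^s_x)\cap\mathcal A \subseteq f(\sigma^s_x)\cap f(\mathcal A) = f\bigl(\sigma^s_x\cap\mathcal A\bigr) = \emptyset,$$
since $\sigma^s_x$ is disjoint from $\mathcal A$ by construction and $\mathcal A\subseteq f(\mathcal A)$. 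But $\sigma^s_{f(x)}$ is \emph{the maximal} stable star at $f(x)$ disjoint from $\mathcal A$, so any stable star at $f(x)$ avoiding $\mathcal A$ lies inside it; hence $f(\sigma^s_x)\subseteq\sigma^s_{f(x)}\subseteq\Sigma^s$, and unioning over $x\in\mathcal X_I$ yields $f(\Sigma^s)\subseteq\Sigma^s$.

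For the second inclusion, I would first invoke \Cref{prop:halfleafdense}: every unstable half-leaf is dense, hence meets the nonempty union of stable arcs $\Sigma^s$, so the maximality clause of \Cref{constr:ttpartition} forces each prong of each $\sigma^u_x$ to have finite $\mu^s$-length, its free endpoint being the first point of $\overline{\Sigma^s}$ along that separatrix. Fix $z\in\mathcal X_O$ and put $y=f(z)\in\mathcal X_O$; since $f$ permutes $\mathcal X_O$ it suffices to prove $\sigma^u_y\subseteq f(\sigma^u_z)$. Both $f(\sigma^u_z)$ and $\sigma^u_y$ are unstable stars at $y$, so I compare them along each separatrix at $y$ in turn. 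Let $P$ be the prong of $\sigma^u_z$ along a given separatrix at $z$ and $q_P$ its free endpoint, so $q_P\in\overline{\Sigma^s}$; then $f(q_P)\in f(\overline{\Sigma^s})\subseteq\overline{\Sigma^s}$ by the first inclusion, so the prong $f(P)$ of $f(\sigma^u_z)$ contains a point of $\overline{\Sigma^s}$ on its separatrix at $y$. The prong $Q$ of $\sigma^u_y$ along that same separatrix runs, by maximality, from $y$ only up to the \emph{first} point of $\overline{\Sigma^s}$; as $f(P)$ already reaches such a point, $Q\subseteq f(P)$. Unioning over the separatrices at $y$ gives $\sigma^u_y\subseteq f(\sigma^u_z)$, and then over $z\in\mathcal X_O$ gives $\Sigma^u\subseteq f(\Sigma^u)$.

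I expect the only genuinely delicate point to be the asymmetry between the two inclusions. On the stable side the clean fact $\mathcal A\subseteq f(\mathcal A)$ lets one feed $f(\sigma^s_x)$ directly into the maximality of $\sigma^s_{f(x)}$. No analogous inclusion holds on the unstable side — $f(\sigma^u_z)$ is only known to avoid $f(\Sigma^s)$, not all of $\Sigma^s$ — and trying to bound $f(\sigma^u_z)$ from above would produce the \emph{wrong} inclusion $f(\sigma^u_z)\subseteq\sigma^u_y$. The resolution is to argue in the opposite direction: the point $f(q_P)\in\overline{\Sigma^s}$ certifies that the prongs of $f(\sigma^u_z)$ reach at least as far as the first hit of $\overline{\Sigma^s}$ — that is, at least as far as the prongs of $\sigma^u_y$ — thereby bounding $f(\sigma^u_z)$ from below. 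Once this is handled, the remaining ingredients (that $\mathcal M_{(\mathcal X_I,\mathcal X_O)}$ is a train track partition, finiteness of prongs, and the correspondence of separatrices under $f$) are routine and already essentially contained in \Cref{constr:ttpartition}.
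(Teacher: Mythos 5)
Your proof is correct and follows essentially the same approach as the paper: establish $\mathcal A \subseteq f(\mathcal A)$ from the expansion of $\mu^s$-lengths, cascade this through the maximality built into \Cref{constr:ttpartition} to get $f(\Sigma^s)\subseteq\Sigma^s$ and then $\Sigma^u\subseteq f(\Sigma^u)$, and identify these unions with the stable and unstable sides of the rectangles. You spell out the maximality steps and the finiteness of the unstable prongs more explicitly than the paper's terse proof, but the underlying chain of inclusions is the same.
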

\begin{proof}
In \Cref{constr:ttpartition}, notice that $f^{-1}(\bigcup_{x \in \mathcal{X}_O} \widehat{\sigma^u_x}) \subset \bigcup_{x \in \mathcal{X}_O} \widehat{\sigma^u_x}$ since $f$ expands $\mu^s$-lengths. This implies that $f^{-1}(\bigcup_{x \in \mathcal{X}_I} \sigma^s_x) \supset \bigcup_{x \in \mathcal{X}_I} \sigma^s_x$, since each $f^{-1}(\sigma^s_x)$ is obtained by extending the stable prongs at $x$ until it bumps into some $f^{-1}(\widehat{\sigma^u_x})$. Similarly, this in turn implies that $f^{-1}(\bigcup_{x \in \mathcal{X}_O} \sigma^u_x) \subset \bigcup_{x \in \mathcal{X}_O} \sigma^u_x$. Now the proposition follows from the observation that $\bigcup_i \partial_s R_i = \bigcup_{x \in \mathcal{X}_I} \sigma^s_x$ and $\bigcup_i \partial_u R_i = \bigcup_{x \in \mathcal{X}_O} \sigma^u_x$. 
\end{proof}

\subsection{From train track partitions to standardly embedded train tracks} \label{subsec:ttpartitiontott}

Train track partitions give rise to standardly embedded train tracks via the following construction.

\begin{constr} \label{constr:ttpartitiontott}
Let $\mathcal{M}=(\{R_i\},\{\sigma^s_x\},\{\sigma^u_x\})$ be a train track partition with respect to $(\mathcal{X}_I,\mathcal{X}_O)$.

Define a graph $\tau_\mathcal{M}$ by taking a set of vertices in one-to-one correspondence with the sides of the stable stars $\sigma^s_x$. The edges of $\tau_\mathcal{M}$ will come in two types: infinitesimal and real. The infinitesimal edges are in one-to-one correspondence with the prongs of $\sigma^s_x$, with their endpoints at the two vertices corresponding to the two sides the prong lies in. The real edges are in one-to-one correspondence with the rectangles $R_i$, with their endpoints at the two vertices corresponding to the two sides the stable sides of $R_i$ lie along.

The smoothing at each vertex is defined by separating the infinitesimal edges and the real edges. Each vertex is incident to exactly two infinitesimal half-edges so it suffices to order the real half-edges: This order is determined by the position of rectangles at the corresponding side. It is straightforward to check that this makes $\tau_\mathcal{M}$ into a standardly embedded train track.

We illustrate a local picture of this construction in \Cref{fig:partitiontott}.

\begin{figure}
    \centering
    \resizebox{!}{8cm}{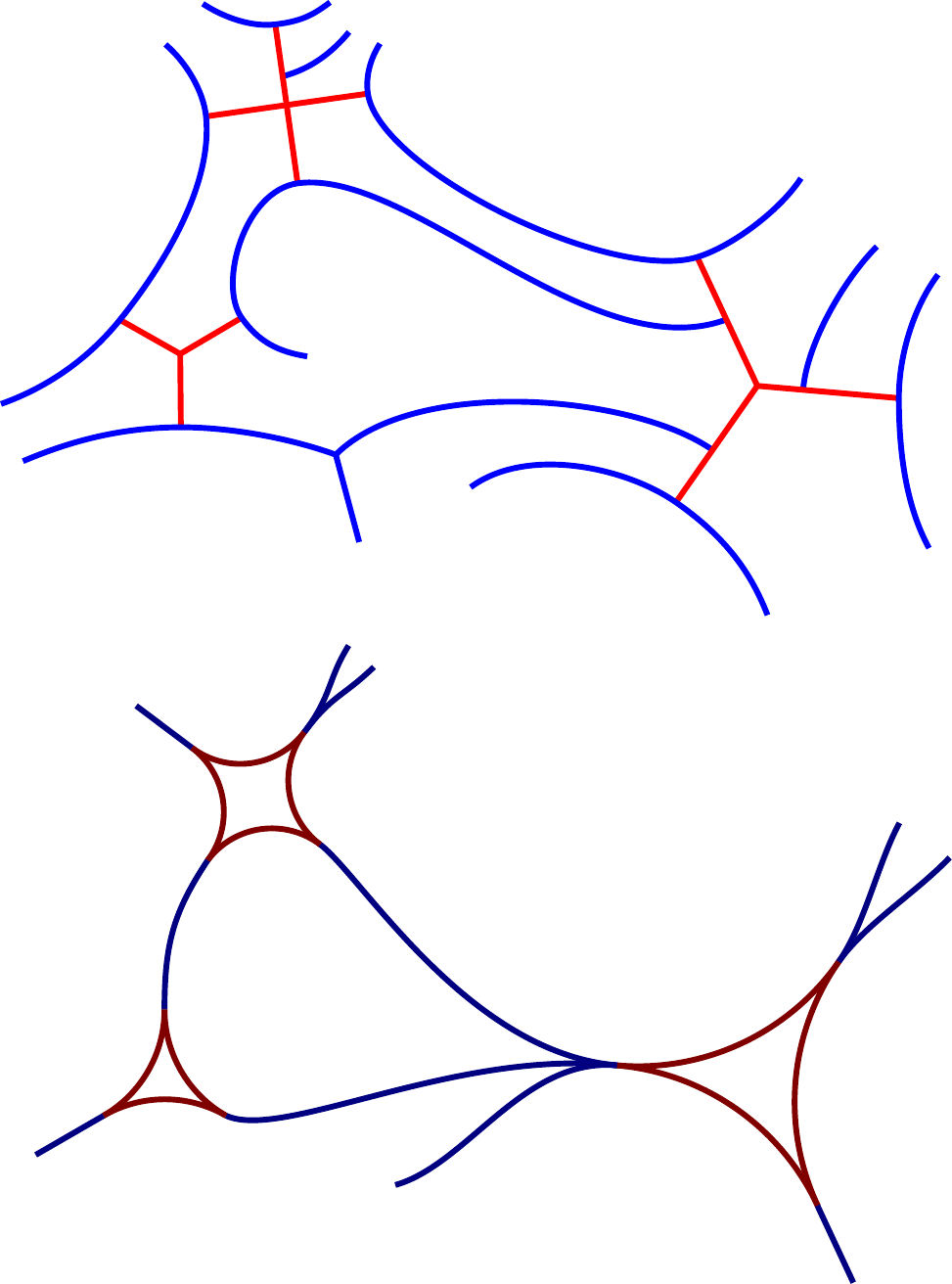}
    \caption{A local example of \Cref{constr:ttpartitiontott}.}
    \label{fig:partitiontott}
\end{figure}

When $\mathcal{M}=\mathcal{M}_{(\mathcal{X}_I,\mathcal{X}_O)}$, we write $\tau_{(\mathcal{X}_I,\mathcal{X}_O)} = \tau_{\mathcal{M}_{(\mathcal{X}_I,\mathcal{X}_O)}}$.
\end{constr}

\begin{constr} \label{constr:markovttpartitiontott}
When $\mathcal{M}$ is a Markov train track partition, we can in addition define a train track map $f_\mathcal{M}:\tau_\mathcal{M} \to \tau_\mathcal{M}$ as follows:

$f_\mathcal{M}$ maps the vertex corresponding to a side $s$ of a stable star to the vertex corresponding to the side containing $f(s)$, maps the infinitesimal edge corresponding to a stable prong $p$ to the infinitesimal edge corresponding to the prong containing $f(p)$, and maps the real edge corresponding to a rectangle $R_i$ to the edge path corresponding to the sequence of rectangles and prongs passed through by $f(R_i)$.

When $\mathcal{M}=\mathcal{M}_{(\mathcal{X}_I,\mathcal{X}_O)}$, we write $f_{(\mathcal{X}_I,\mathcal{X}_O)} = f_{\mathcal{M}_{(\mathcal{X}_I,\mathcal{X}_O)}}$.
\end{constr}

The train track $\tau_\mathcal{M}$ and the train track map $f_\mathcal{M}$ capture the dynamics of $f$ in the following sense.

\begin{prop} \label{prop:tienbd=surface}
For every train track partition $\mathcal{M}$, there is an embedding $\iota$ of the tie neighborhood $N$ of $\tau_\mathcal{M}$ into $S$ that is a homotopy equivalence, such that $\iota(\tau_\mathcal{M})$ fully carries the unstable lamination of $f$ (obtained by blowing air into the leaves of the unstable foliation $\ell^u$ that contain the punctures). 

Moreover, if $\mathcal{M}$ is Markov, then $f \iota$ and $\iota f_\mathcal{M}$ are homotopic embeddings of $N$ in $S$.
\end{prop}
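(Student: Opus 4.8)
\emph{Proof proposal.} The plan is to build $\iota$ directly from the geometry of the train track partition, identifying each piece of the tie neighborhood $N$ of $\tau_\mathcal{M}$ with a concrete subsurface of $S$, and then read off the Markov statement from the very definition of $f_\mathcal{M}$.

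First I would construct $\iota$. Recall from \Cref{constr:ttpartitiontott} that the vertices of $\tau_\mathcal{M}$ correspond to sides of the stable stars $\sigma^s_x$ ($x\in\mathcal{X}_I$), the infinitesimal edges to the prongs of these stars, and the real edges to the rectangles $R_i$. Accordingly, I set $\iota$ on the band of $N$ lying over a real edge to be the rectangle $R_i$ itself, foliated by its stable leaves, so that these leaves become the ties and the real edge runs in the unstable direction between the two stable sides of $R_i$. For each inner puncture $x$ I take a small regular neighborhood $U_x$ of $\sigma^s_x$ in $S$, which is a once-punctured disk; $\iota$ maps the vertices and infinitesimal edges of the polygon around $x$ into $U_x$, with ties short unstable arcs transverse to the prongs. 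The compatibility conditions in \Cref{defn:traintrackpartition} — that stable sides of rectangles lie along sides of stable stars, and that stable and unstable stars meet in the $\perp$ pattern of \Cref{constr:ttpartition} — are exactly what is needed to glue these pieces along subarcs of the $\partial_s R_i$ compatibly with the smoothings at the vertices, producing an embedding and not merely an immersion. The image $\iota(N)$ is $S$ with a small (punctured) disk removed around each puncture: outer punctures lie in the outer complementary annuli, and each inner puncture $x$ lies in the once-punctured disk cut off by the infinitesimal polygon around $x$. Since $f$ is fully-punctured, $S\setminus\iota(N)$ is a disjoint union of once-punctured disks, so $\iota(N)\hookrightarrow S$ is a deformation retract and $\iota$ is a homotopy equivalence.

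For full carrying: the ties are subsegments of leaves of $\ell^s$ inside the $R_i$ (and their extensions into the $U_x$), while the unstable lamination — obtained by blowing air into the leaves of $\ell^u$ through the punctures — restricts on each rectangle to a family of unstable arcs, so transversality of leaves to ties is immediate. For the word "fully" one uses that $\ell^u$ genuinely foliates the interior of every rectangle, so every interior tie is crossed by leaves of $\ell^u$ (here density of half-leaves, \Cref{prop:halfleafdense}, rules out degenerate behaviour), and that the blow-up only inserts complementary bands along $\partial_u R_i$, away from the interior ties. Finally, in the Markov case, \Cref{constr:markovttpartitiontott} defines $f_\mathcal{M}:\tau_\mathcal{M}\to\tau_\mathcal{M}$ precisely by recording, for each real edge $R_i$, the sequence of rectangles and prongs that the slab $f(R_i)$ passes through, and it induces a tie-preserving embedding $f_\mathcal{M}:N\to N$. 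Tracing the definitions, $f\circ\iota$ and $\iota\circ f_\mathcal{M}$ send each edge of $\tau_\mathcal{M}$ to arcs with the same combinatorial pattern relative to the partition, hence induce conjugate homomorphisms $\pi_1(N)\to\pi_1(S)$. Since $S$ is aspherical and $N$ is homotopy equivalent to a finite graph, two maps $N\to S$ inducing conjugate $\pi_1$-homomorphisms are freely homotopic, so $f\iota\simeq\iota f_\mathcal{M}$.

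I expect the main obstacle to be the first step: making the gluing precise enough to see that $\iota$ is genuinely an embedding and that $S\setminus\iota(N)$ consists of once-punctured disks. This is where the defining properties of a train track partition and the $\perp$-form intersection geometry of stable and unstable stars must be used carefully rather than just read off a picture; once that is in place, the carrying statement and the Markov compatibility are essentially bookkeeping plus the asphericity argument above.
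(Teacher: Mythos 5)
Your proposal is correct and follows essentially the same strategy as the paper: build $\iota$ by sending real edges into their corresponding rectangles (with ties along stable arcs) and infinitesimal polygons into neighborhoods of the stable stars, observe the complement of $\iota(N)$ is a union of once-punctured disks, and deduce the Markov compatibility from the fact that $f_\mathcal{M}$ is defined precisely to record where $f$ sends the pieces of $\mathcal{M}$. The only cosmetic differences are that the paper first cuts $S$ along the unstable stars to package the gluing more cleanly, and it asserts the homotopy $f\iota\simeq\iota f_\mathcal{M}$ directly from the tie-band matching rather than invoking asphericity of $S$; both variants are fine.
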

\begin{proof}
Cut $S$ along the unstable stars $\sigma^u_x$, which does not change the homeomorphism type of $S$. The resulting space $\widehat{S}$ can be obtained by gluing the rectangles $R_i$ along their stable sides to the stable stars $\sigma^s_x$. Since the pattern of gluing corresponds exactly to that used to glue real edges to infinitesimal polygons of $\tau_\mathcal{M}$, we can define an embedding $\iota:\tau_\mathcal{M} \hookrightarrow S$ by sending the infinitesimal polygons into a neighborhood of the corresponding stable stars, then sending the real edges into their corresponding rectangles. 

After blowing air into the leaves of the unstable foliation $\ell^u$ that contain the punctures, we can assume that the leaves of the unstable lamination are contained in $\widehat{S}$. In this case, we can extend $\iota$ into an embedding of $N$ such that the ties are transverse to the leaves.

With this $\iota$, $f \iota$ will send each the union of ties in $N$ intersecting an infinitesimal polygon into a neighborhood of the image of the corresponding stable star under $f$, which by definition is the stable star corresponding to the image of the infinitesimal polygon under $f_\mathcal{M}$. Similarly, $f \iota$ will send the union of ties that intersect a real edge into the sequence of rectangles passed through by the image of the corresponding rectangle under $f$, which by definition is the sequence of rectangles corresponding to the image of the real edge under $f_\mathcal{M}$. From this one can construct a homotopy $f \iota \simeq \iota f_\mathcal{M}$.
\end{proof}

In the sequel, we will always use $\iota$ in \Cref{prop:tienbd=surface} to embedd $\tau_\mathcal{M}$ in $S$ if needed.

\begin{rmk} \label{rmk:ttpuncture}
The proof of \Cref{prop:tienbd=surface} shows that the boundary components of $\tau_\mathcal{M}$ are in one-to-one correspondence with the punctures of $S$. Namely, a boundary component $c$ corresponds to the puncture $p$ which $\iota(c)$ is homotopic into. Under this correspondence, $c$ is an element of $\partial_{I/O} \tau_\mathcal{M}$ if and only if $p$ is an element of $\mathcal{X}_{I/O}$ respectively. Also, $c$ is $n$-pronged if and only if $p$ is $n$-pronged.
\end{rmk}

The fact that $f_\mathcal{M}$ captures the dynamics of $f$ implies that we can compute the expansion factor of $f$ using the transition matrix of $f_\mathcal{M}$. More specifically, notice that by definition, $f_\mathcal{M}$ maps each infinitesimal edge to a single infinitesimal edge, hence if we list the infinitesimal edges in front of the real edges, the transition matrix of $f_\mathcal{M}$ will be of the form 
$$f_{\mathcal{M}*} =
\begin{bmatrix}
P & * \\
0 & f_{\mathcal{M}*}^\real
\end{bmatrix}$$
where $P$ is a permutation matrix. We call $f_{\mathcal{M}*}^\real$ the \textit{real transition matrix} of $f_\mathcal{M}$.

\begin{prop} \label{prop:realedgesPF}
For any Markov train track partition $\mathcal{M}$, $f_{\mathcal{M}*}^\real$ is Perron-Frobenius.
\end{prop}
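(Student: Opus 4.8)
The plan is to translate $f_{\mathcal{M}*}^{\real}$ into a statement about how iterates of $f$ shuffle the rectangles of the partition, and then invoke the density of unstable leaves recorded in \Cref{prop:halfleafdense}. Write the real edges of $\tau_{\mathcal{M}}$ as $e_1,\dots,e_r$, with $e_i$ corresponding to the rectangle $R_i$, and let $\alpha_i\subset R_i$ be an unstable leaf arc joining the two stable sides of $R_i$. Since $\mathcal{M}$ is also a Markov train track partition for $f^n$ (the conditions $f(\bigcup_k\partial_s R_k)\subset\bigcup_k\partial_s R_k$ and $f^{-1}(\bigcup_k\partial_u R_k)\subset\bigcup_k\partial_u R_k$ iterate), applying \Cref{constr:markovttpartitiontott} to $f^n$ --- equivalently, composing $f_{\mathcal{M}}$ with itself $n$ times and using that transition matrices multiply under composition --- identifies $(f_{\mathcal{M}*}^{\real})^n_{ji}$ with the number of times the unstable traversal of $f^n(R_i)$ runs through $R_j$. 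In particular this entry is positive whenever $f^n(R_i)$ contains an unstable cross-arc of $R_j$. (In fact the count is exact: $f^n(\bigcup_k\partial_s R_k)\subset\bigcup_k\partial_s R_k$, a set disjoint from the interiors of all the rectangles, so no component of $f^n(R_i)\cap\mathrm{int}\, R_j$ can be truncated inside $R_j$ by a stable side of $f^n(R_i)$, and every such component is a full cross-arc strip; but we only need the displayed inequality.) It therefore suffices to exhibit $N\ge 1$ such that $f^N(R_i)$ crosses every rectangle $R_j$, for every $i$.

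The main point is a uniform density statement: there is $L_0>0$ such that every arc $\gamma$ of an unstable leaf of $\ell^u$ with $\mu^s(\gamma)\ge L_0$ crosses every rectangle of the partition. I would prove this by contradiction and compactness. Fix $R_j$ and suppose there were unstable leaf arcs $\gamma_n$ with $\mu^s(\gamma_n)\to\infty$, none crossing $R_j$. An unstable leaf meets $R_j$ monotonically in the unstable direction and can enter or leave only through a stable side, so any component of $\gamma_n\cap\mathrm{int}\, R_j$ with both endpoints on $\partial R_j$ is a crossing of $R_j$; since $\gamma_n$ does not cross $R_j$, the set $\gamma_n\cap\mathrm{int}\, R_j$ consists of at most two arcs (those containing the endpoints of $\gamma_n$), each of $\mu^s$-length bounded by the width of $R_j$. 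Hence $\gamma_n$ contains a sub-arc $\gamma_n'\subset S\setminus\mathrm{int}\, R_j$ with $\mu^s(\gamma_n')\to\infty$. Basing $\gamma_n'$ at an endpoint $y_n$ and passing to a subsequence, $y_n$ converges (in the closed surface obtained by filling in the punctures) and $\gamma_n'$ converges uniformly on compacta to a half-leaf $\gamma_\infty$ of $\ell^u$ lying in $S\setminus\mathrm{int}\, R_j$. This contradicts \Cref{prop:halfleafdense}, which says $\gamma_\infty$ is dense.

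Granting this, recall from the proof of \Cref{prop:markovttpartition} that $f$ expands $\mu^s$-lengths by $\lambda$, so $\mu^s(f^n(\alpha_i))=\lambda^n\mu^s(\alpha_i)\to\infty$. Choose $N$ with $\lambda^N\mu^s(\alpha_i)\ge L_0$ for all $i$ (there are finitely many rectangles). The density statement then shows that $f^N(\alpha_i)\subset f^N(R_i)$ crosses every rectangle $R_j$, so $(f_{\mathcal{M}*}^{\real})^N_{ji}\ge 1$ for all $i,j$; that is, $(f_{\mathcal{M}*}^{\real})^N$ is a positive matrix, and $f_{\mathcal{M}*}^{\real}$ is Perron-Frobenius.

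The step I expect to be delicate is the uniform density claim, specifically the limiting argument: one must control the arcs $\gamma_n$ near the punctures, justify that a subsequential limit of longer and longer based unstable leaf arcs is a genuine (properly embedded) half-leaf, and handle leaf arcs running partly along an unstable side of a rectangle. Alternatively one could deduce the claim from unique ergodicity of $\ell^u$ --- long leaf segments equidistribute against a measure of full support --- or, purely for aperiodicity, use a fixed point of $f$ in the interior of some rectangle (when one exists) to produce a self-loop in the digraph of $f_{\mathcal{M}*}^{\real}$; but \Cref{prop:halfleafdense} is the hypothesis that treats all Markov train track partitions uniformly.
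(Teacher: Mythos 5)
Your proof is correct and gets to the same conclusion, but the engine is different from the paper's. The paper picks a periodic point $z$ in $\mathrm{int}\,R_i$ (using the density of periodic points from \Cref{prop:halfleafdense}), takes a small unstable interval $I$ at $z$, and observes that $\bigcup_s f^{sp}(I)$ is an increasing union whose limit is a half-leaf; density of that one half-leaf then produces a power $k_i$ with $(f_{\mathcal{M}*}^\real)^{k_i}$ having positive $i$-th column, and the paper finishes by taking $\prod_i k_i$. You instead prove a \emph{uniform} statement --- every unstable arc of $\mu^s$-length $\ge L_0$ crosses every rectangle --- via a compactness/limiting argument on longer and longer arcs avoiding a fixed $R_j$, then choose a single $N$ with $\lambda^N\mu^s(\alpha_i)\ge L_0$ for all $i$. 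Each route has its own delicate spot: yours is the limiting argument you flag (controlling arcs near the singularities and identifying the subsequential limit as a genuine half-leaf); the paper's is the step ``$(f_{\mathcal{M}*}^\real)^{\prod k_i}$ is positive,'' which implicitly uses that no row of $A^{k_i}$ vanishes (this follows from the rectangles covering $S$ plus the Markov condition, but is left unstated). Your version cleanly sidesteps that bookkeeping by producing a single uniform power at the cost of the compactness argument. Also, your proof only invokes the density of half-leaves, not the density of periodic points, so in that sense it uses a weaker hypothesis; the fixed-point observation you add at the end is essentially the germ of the paper's actual argument.
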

\begin{proof}
The rows and columns of $f_{\mathcal{M}*}^\real$ are indexed by the real edges of $\tau_\mathcal{M}$, which in turn correspond to the rectangles $R_i$ in the train track partition $\mathcal{M}$. Under this correspondence, the $(R_j,R_i)$-entry of $(f_{\mathcal{M}*}^\real)^k$ is the number of times $f^k(R_i)$ crosses $R_j$. We claim that for each $i$ there exists $k_i$ such that the $(R_j,R_i)$-entry of $(f_{\mathcal{M}*}^\real)^{k_i}$ is positive for each $j$. This would imply that $(f_{\mathcal{M}*}^\real)^{\prod k_i}$ is positive, hence $f_{\mathcal{M}*}^\real$ is Perron-Frobenius.

To show the claim, we fix an $i$. By \Cref{prop:halfleafdense}, there exists a periodic point $z$ in the interior of $R_i$, say of period $p$. Consider a short interval $I$ lying along the unstable leaf passing through $z$, with one endpoint on $z$ and contained within $R_i$. Up to doubling $p$, we have $I \subset f^p(I)$ and $\mu^s(f^p(I))=\lambda^p \mu^s(I)$. 

Now $\bigcup_{s=0}^\infty f^{sp}(I)$ is a half-leaf, hence by \Cref{prop:halfleafdense}, is dense in $S$. So there exists $s>0$ such that $f^{sp}(I)$ meets the interior of each rectangle, which implies that $f^{sp}(R_i)$ crosses every rectangle and proves the claim. 
\end{proof}

\begin{prop} \label{prop:realedgesdilatation}
For any Markov train track partition $\mathcal{M}$, the spectral radius of $f_{\mathcal{M}*}^\real$ is the expansion factor of $f$.
\end{prop}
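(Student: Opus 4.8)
The plan is to produce a strictly positive eigenvector of $f_{\mathcal{M}*}^\real$ with eigenvalue $\lambda$ and then invoke \Cref{prop:realedgesPF} together with the Perron--Frobenius theorem. The eigenvector comes from the transverse measure of the unstable foliation: for each rectangle $R_i$ of $\mathcal{M}$, let $h_i$ be the $\mu^u$-measure of a stable crossing arc of $R_i$ (all such arcs have equal measure, by the product structure of a rectangle), and set $\mathbf{h}=(h_i)_i$, a vector of positive real entries indexed by $\mathcal{E}_{\real}$. I claim $f_{\mathcal{M}*}^\real\,\mathbf{h}=\lambda\mathbf{h}$. Granting this: by \Cref{prop:realedgesPF}, $f_{\mathcal{M}*}^\real$ is Perron--Frobenius, so by \Cref{thm:PF} its spectral radius is the unique eigenvalue possessing a positive eigenvector; since $\mathbf{h}$ is such an eigenvector with eigenvalue $\lambda$, the spectral radius of $f_{\mathcal{M}*}^\real$ equals $\lambda$. (One may instead use the $\mu^s$-widths of the rectangles, which give a positive eigenvector of the \emph{transpose} of $f_{\mathcal{M}*}^\real$ with eigenvalue $\lambda$; since a matrix and its transpose have the same spectral radius and both are Perron--Frobenius here, either route works.)

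To establish the eigenvector equation I would first record the geometric content of the Markov condition. By \Cref{defn:transitionmatrix} and \Cref{constr:markovttpartitiontott}, the $(R_j,R_i)$-entry of $f_{\mathcal{M}*}^\real$ equals the number of connected components with nonempty interior of $f(R_i)\cap R_j$, equivalently, applying the homeomorphism $f^{-1}$, of $f^{-1}(R_j)\cap R_i$. The key claim is that each such component $C$ is a sub-rectangle that crosses $R_i$ completely in the stable direction and crosses $f^{-1}(R_j)$ completely in the unstable direction. This is proved by examining which leaf segments can occur among the four sides of $C$: the sides transverse to the stable direction lie in $f^{-1}(\partial_u R_j)\cup\partial_u R_i\subseteq\bigcup_k\partial_u R_k$ by the Markov condition $f^{-1}(\bigcup_k\partial_u R_k)\subseteq\bigcup_k\partial_u R_k$, and in a train track partition $\bigcup_k\partial_u R_k$ is a union of unstable stars at the outer punctures, which run along rectangle boundaries and never enter a rectangle's interior, so $\bigcup_k\partial_u R_k\cap\overline{R_i}=\partial_u R_i$, forcing these two sides of $C$ onto $\partial_u R_i$; the statement for the other pair of sides follows symmetrically after conjugating by $f$ and using the other Markov condition $f(\bigcup_k\partial_s R_k)\subseteq\bigcup_k\partial_s R_k$. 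Granting the claim, choose a stable crossing arc $\delta$ of $f^{-1}(R_j)$. It meets every component $C$ of every $f^{-1}(R_j)\cap R_i$, crosses each completely in the stable direction (contributing $\mu^u$-measure $h_i$), and these pieces exhaust $\delta$ outside a finite, hence $\mu^u$-null, set of points on $\bigcup_k\partial R_k$. Summing these contributions and using $\mu^u(f^{-1}(\delta'))=\lambda\,\mu^u(\delta')$ for a stable crossing arc $\delta'$ of $R_j$ (so that $\mu^u(\delta)=\lambda h_j$) yields $\lambda h_j=\sum_i (f_{\mathcal{M}*}^\real)_{R_j,R_i}\,h_i$, which is the eigenvector equation.

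I expect the main obstacle to be the geometric claim about the components $C$: turning the Markov conditions, stated as containments of boundary sets, into the concrete ``full band'' description of $f^{-1}(R_j)\cap R_i$, and in particular verifying that $\bigcup_k\partial_u R_k$ (and likewise $\bigcup_k\partial_s R_k$) meets no rectangle interior, which uses that in a train track partition the unstable (stable) sides are precisely the unstable (stable) stars at the outer (inner) punctures. Once that is settled, the measure bookkeeping and the Perron--Frobenius conclusion are routine.
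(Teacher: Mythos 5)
Your proof is correct and follows the same strategy as the paper's: exhibit a positive $\lambda$-eigenvector coming from the transverse measures, then combine \Cref{prop:realedgesPF} with \Cref{thm:PF}. Two points of comparison are worth recording. First, the paper sets $u_{R_i}=\mu^s(\partial_u R_i)$ and asserts $f_{\mathcal{M}*}^\real u=\lambda u$. Under the indexing convention of \Cref{defn:transitionmatrix} (the $(R_j,R_i)$-entry of $f_{\mathcal{M}*}^\real$ is the number of times $f(R_i)$ crosses $R_j$, as the proof of \Cref{prop:realedgesPF} also spells out), the measure identity one actually obtains, $\lambda u_{R_i}=\sum_j (\text{number of times }f(R_i)\text{ crosses }R_j)\,u_{R_j}$, makes $u$ an eigenvector of $(f_{\mathcal{M}*}^\real)^{T}$. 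You flag this alternate route explicitly; your main vector $\mathbf{h}$ of $\mu^u$-widths is a genuine right eigenvector of $f_{\mathcal{M}*}^\real$ (matching the weight-space viewpoint used elsewhere in the paper), and since a nonnegative matrix and its transpose share the same spectral radius and are simultaneously Perron--Frobenius, either variant completes the proof. Second, you spend most of your effort verifying the geometric ``full band'' claim: that each component of $f^{-1}(R_j)\cap R_i$ spans $R_i$ in the stable direction and $f^{-1}(R_j)$ in the unstable direction, using the two Markov containments together with the fact that $\bigcup_k\partial_u R_k$ and $\bigcup_k\partial_s R_k$ are unions of stars which do not meet rectangle interiors. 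The paper compresses all of this into ``by definition''; what you prove is precisely what makes that sentence valid, so the additional detail is a feature rather than a flaw.
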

\begin{proof}
We will directly define an eigenvector $u$ of $f_{\mathcal{M}*}^\real$. The entries of $u$ can be indexed by the rectangles $R_i$ in $\mathcal{M}$ as above. We define the $R_i$-entry of $u$ to be the $\mu^s$-measure of an unstable side of $R_i$. By definition, the $R_i$-entry of $f_{\mathcal{M}*}^\real u$ is the $\mu^s$ measure of the image of an unstable side of $f(R_i)$, which is $\lambda$ times the $R_i$-entry of $u$. Hence $u$ is a $\lambda$-eigenvector of $f_{\mathcal{M}*}^\real$. 

We have shown that $f_{\mathcal{M}*}^\real$ is Perron-Frobenius in \Cref{prop:realedgesPF}. Here $u$ is positive, hence by \Cref{thm:PF}, $\lambda$ is the spectral radius of $f_{\mathcal{M}*}^\real$. 
\end{proof}

Let $\mathcal{X}=\mathcal{X}_I \sqcup \mathcal{X}_O$ be a partition of the set of punctures of $S$ into nonempty $f$-invariant sets. \Cref{prop:realedgenumber} and \Cref{prop:tienbd=surface} imply that $f_{(\mathcal{X}_I,\mathcal{X}_O)*}^\real$ is a $|\chi(S)|$-by-$|\chi(S)|$ matrix, while \Cref{prop:realedgesPF} implies that it is Perron-Frobenius. Our goal in \Cref{sec:thurstonform} and \Cref{sec:radical} is to show that $f_{(\mathcal{X}_I,\mathcal{X}_O)*}^\real$ is reciprocal, for then we can apply \Cref{thm:McMullen} and prove our main theorem.

\subsection{Invariant standardly embedded train tracks} \label{subsec:invariantstandardlyemb}

In this subsection, we will explain how for every Markov train track partition $\mathcal{M}$, $\tau_\mathcal{M}$ is a $f$-invariant train track, which implies \Cref{thm:standardlyembeddedtt}. To do so, we have to discuss elementary moves on train tracks.

\begin{defn} \label{defn:subdivision}
Let $\tau$ be a train track. Suppose $e$ is an edge of $\tau$ with endpoints at switches $v_1, v_2$. We can define a new train track $\tau'$ by declaring an interior point $v$ of $e$ as a new vertex and replacing $e$ with two edges $e_1,e_2$ connecting $v_1$ and $v_2$ respectively with $v$. The ordering and smoothing at $v_i$ is unchanged, with the half edge determined by $e_i$ replacing that determined by $e$. For $v$, we take the unique choice of ordering and smoothing. 

Note that $\tau'$ is homeomorphic to $\tau$ as a topological space but non-isomorphic as a graph. Nevertheless, there is a natural train track map $\tau \to \tau'$. We refer to this map as \textit{the subdivision move on $e$}. See \Cref{fig:elementarymoves} top.
\end{defn}

\begin{defn} \label{defn:elementaryfolding}
Let $\tau$ be a train track. Suppose $e_1,e_2$ are two edges of $\tau$ which determine a cusp at switch $v$, and having their other endpoint on switches $v'_1$ and $v'_2$ respectively. Define a new train track $\tau'$ by combining $v'_1$ and $v'_2$ into one switch $v'$ and replacing $e_1$ and $e_2$ by a single edge $e$ connecting $v$ and $v'$. Without loss of generality suppose that $e_1$ lies to the left of $e_2$ at $v$ and the half edge of $e_i$ lies in $\mathcal{E}^2_{v_i}$ for both $i=1,2$. Then the ordering on $\mathcal{E}^1_{v'} = \mathcal{E}^1_{v'_1} \sqcup \mathcal{E}^1_{v'_2}$ is determined by placing all the half edges in $\mathcal{E}^1_{v'_1}$, in their original order, to the left of those in $\mathcal{E}^1_{v'_2}$, in their original order. The ordering on $\mathcal{E}^2_{v'} = (\mathcal{E}^2_{v'_1} \backslash \{e_1\}) \sqcup \{e\} \sqcup (\mathcal{E}^2_{v'_2} \backslash \{e_2\})$ is determined by placing all the half edges in $\mathcal{E}^2_{v_2} \backslash \{e_2\}$, in their original order, to the left of $e$, which is in turn placed to the left of all the half edges in $\mathcal{E}^1_{v_1} \backslash \{e_1\}$, in their original order.

There is a train track map $\tau \to \tau'$ defined by sending $v_1$ and $v_2$ to $v$, sending $e_1$ and $e_2$ to $e$, and sending the remaining vertices and edges to themselves. We refer to this map as \textit{the elementary folding move on $(e_1,e_2)$}. See \Cref{fig:elementarymoves} bottom.
\end{defn}

\begin{figure}
    \centering
    \fontsize{18pt}{18pt}\selectfont
    \resizebox{!}{5cm}{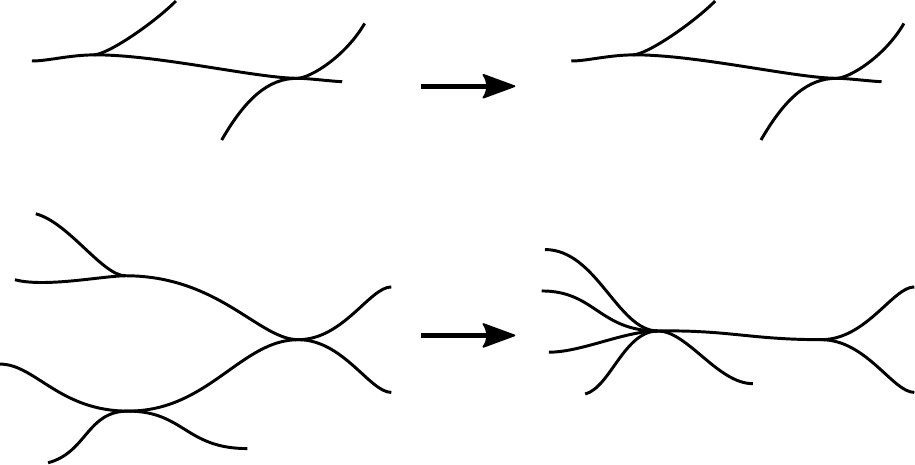}
    \caption{Elementary moves. Top: the subdivision move on $e$. Bottom: the elementary folding move on $(e_1,e_2)$.}
    \label{fig:elementarymoves}
\end{figure}

We refer to a subdivision move or an elementary folding move as an \textit{elementary move} in general.

\begin{defn} \label{defn:invarianttt}
Let $\tau$ be a train track and with tie neighborhood $N$, and let $\iota:N \hookrightarrow S$ be an embedding. Suppose there exists train track maps $f_1,...,f_n, \sigma$ where each $f_i$ is an elementary move and $\sigma$ is an isomorphism of train tracks, such that $f \iota$ and $\iota \sigma f_n \cdots f_1$ are homotopic embeddings of $N$ in $S$. Then $\iota(\tau)$ is said to be \textit{$f$-invariant}.

When $\iota$ is understood, we will just say that $\tau$ is $f$-invariant.
\end{defn}

\Cref{prop:widerfolding} below will do most of the heavy lifting in showing that $\tau_\mathcal{M}$ is $f$-invariant. We will also be applying \Cref{prop:widerfolding} and its corollary, \Cref{prop:innerouterfolding}, in \Cref{sec:thurstonform} and \Cref{sec:radical}.

\begin{defn} \label{defn:wider}
Let $\mathcal{M}=(\{R_i\},\{\sigma^s_x\},\{\sigma^u_x\})$ and $\mathcal{M}'=(\{R'_i\},\{\sigma'^s_x\},\{\sigma'^u_x\})$ be two train track partitions with respect to $(\mathcal{X}_I,\mathcal{X}_O)$ and $(\mathcal{X}'_I,\mathcal{X}'_O)$ respectively. We say that $\mathcal{M}$ is \textit{wider} than $\mathcal{M}'$ if $\bigcup_{x \in \mathcal{X}_I} \sigma^s_x \subset \bigcup_{x \in \mathcal{X}'_I} \sigma'^s_x$ and $\bigcup_{x \in \mathcal{X}_O} \sigma^u_x \supset \bigcup_{x \in \mathcal{X}'_O} \sigma'^u_x$. Notice this implies that $\mathcal{X}_I \subset \mathcal{X}'_I$ and $\mathcal{X}_O \supset \mathcal{X}'_O$. 
\end{defn}

\begin{prop} \label{prop:widerfolding}
If $\mathcal{M}$ is wider than $\mathcal{M}'$ then there exists elementary moves $f_1,...,f_n$ such that $f_n \cdots f_1$ maps $\tau_\mathcal{M}$ to $\tau_{\mathcal{M}'}$. 
\end{prop}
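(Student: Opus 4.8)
The plan is to link $\tau_{\mathcal M}$ to $\tau_{\mathcal M'}$ by a finite chain of train track partitions $\mathcal M = \mathcal M_0, \mathcal M_1, \dots, \mathcal M_N = \mathcal M'$, each $\mathcal M_k$ wider than $\mathcal M_{k+1}$ and differing from it by a single ``minimal widening'', and to realize each minimal widening by a bounded sequence of subdivision and folding moves; composing all of these produces the $f_1,\dots,f_n$ in the statement. To set up the chain I would first record the combinatorics forced by \Cref{defn:wider}: since $\bigcup_{x\in\mathcal X_I}\sigma^s_x\subset\bigcup_{x\in\mathcal X'_I}\sigma'^s_x$, each stable star of $\mathcal M$ is obtained from the corresponding stable star of $\mathcal M'$ by deleting terminal sub-arcs of its prongs, and dually each unstable star of $\mathcal M'$ is obtained from that of $\mathcal M$ by deleting terminal sub-arcs of prongs; and a puncture may be outer for $\mathcal M$ but inner for $\mathcal M'$. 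Note also that by \Cref{prop:realedgenumber} and \Cref{prop:tienbd=surface} every train track partition of $S$ gives a train track with exactly $|\chi(S)|$ real edges, so the number of rectangles is the same constant for $\mathcal M$, $\mathcal M'$ and all the $\mathcal M_k$; in particular a widening cannot merely add stable cuts or merely remove unstable cuts, it must trade one for the other.

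I would then induct on a complexity measuring how far $\mathcal M$ is from $\mathcal M'$, say the total $\mu^u$-length of $\bigcup_x\sigma'^s_x\setminus\bigcup_x\sigma^s_x$ plus the total $\mu^s$-length of $\bigcup_x\sigma^u_x\setminus\bigcup_x\sigma'^u_x$ (or a suitable combinatorial surrogate), together with the number of punctures whose inner/outer type differs; when this vanishes $\mathcal M=\mathcal M'$ and there is nothing to prove. For the inductive step I would produce $\mathcal M_1$ strictly between $\mathcal M$ and $\mathcal M'$ in the widening order and of strictly smaller complexity, as follows. If some unstable prong $q$ of $\mathcal M$ strictly contains the corresponding prong of $\mathcal M'$, pull back the tip of $q$ by the least amount that changes the combinatorial type of the tiling; by the previous paragraph this simultaneously lets one ``vertical'' stable arc grow into the vacated space, so the tiling changes only inside a bounded cluster of rectangles, and one checks that the result is a valid train track partition still sandwiched between $\mathcal M$ and $\mathcal M'$ (arranging the growing stable arc to stay inside $\bigcup_x\sigma'^s_x$). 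If $\sigma^u_x$ has been pulled all the way back to the puncture $x$, relabel $x$ as inner with the now-trivial stable star; this is legitimate in the limiting configuration and does not change $\tau$, and subsequent minimal widenings of the same kind then grow a genuine stable star for $x$. Finiteness of the partition and density of half-leaves (\Cref{prop:halfleafdense}) guarantee that the pulling-back process meets a legal configuration after finitely many steps, so the induction terminates.

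It remains to realize each minimal widening by elementary moves. Such a step changes the tiling inside a small region, replacing one configuration of horizontal and vertical arcs by another with the same rectangle count --- essentially a diagonal exchange --- and the corresponding modification of $\tau_{\mathcal M}$ is effected by a subdivision move on one real edge followed by an elementary folding move of the new edge with a suitable neighbor (or, in degenerate cases, a subdivision or folding move on an infinitesimal edge, or the empty sequence for a type relabelling). \textbf{The main obstacle is this local verification.} One must inspect, configuration by configuration, exactly how the rectangles are arranged around each stable star side before and after the step, and confirm that the smoothings and the cyclic orderings of half-edges at the affected switches transform precisely as prescribed in \Cref{defn:subdivision} and \Cref{defn:elementaryfolding}; the most delicate case is the outer-to-inner transition, where one also has to check that the boundary cycle of $\tau$ around the relevant puncture has degenerated exactly to an $n$-gon so that it may be reinterpreted as an infinitesimal polygon. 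I expect essentially all of the work to lie in these orientation and ordering checks.
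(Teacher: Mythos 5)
Your underlying mechanism --- retract the unstable prongs of $\mathcal M$ toward those of $\mathcal M'$ one step at a time and realize each atomic step by subdivision/folding moves --- is the same idea as the paper's, but your requirement that every intermediate stage be a genuine train track partition sandwiched between $\mathcal M$ and $\mathcal M'$ creates a real gap exactly where the proposition has content, namely when some puncture $x$ is outer for $\mathcal M$ but inner for $\mathcal M'$. In any sandwiched partition in which $x$ is still outer, each prong of the current unstable star at $x$ must terminate on the current stable set, which is contained in $\bigcup_{y\neq x}\sigma'^s_y$; since stable prongs at other punctures do not limit to $x$ (no saddle connections, by \Cref{prop:halfleafdense}), this set lies at positive distance from $x$, so $\sigma^u_x$ can never be ``pulled all the way back to the puncture'' through legal partitions, and the outer-to-inner switch is forced to be a single non-local jump: the remaining unstable star at $x$ may cross many rectangles, and all stable prongs terminating on it must be re-terminated simultaneously, so this step is not a ``minimal widening'' confined to a bounded cluster of rectangles. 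Your proposed remedy --- relabel $x$ as inner with a ``now-trivial stable star'', asserting this does not change $\tau$ --- does not work: \Cref{defn:traintrackpartition} has no trivial stars, the limiting configuration is not a train track partition, and across the type change $\tau$ must acquire a new infinitesimal polygon at $x$ (new switches and infinitesimal edges), so the transition genuinely changes $\tau$ and is precisely where elementary moves, in particular subdivisions, are required; calling it ``the empty sequence'' discards exactly the moves that \Cref{prop:innerouterfolding} later relies on.

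The paper sidesteps all of this by not asking the intermediate stages to be partitions: it fixes the full target stable set $\bigcup_x\sigma'^s_x$ once and for all, performs all subdivision moves at the outset (each component of $\bigcup_x\sigma'^s_x$ cut off by the unstable prongs of $\mathcal M$ and not lying in $\bigcup_x\sigma^s_x$ gives two subdivisions of the real edge of the $\mathcal M$-rectangle it crosses --- this is where the vertices of the future infinitesimal polygons at the new inner punctures are created), and then retracts each unstable prong past one intersection point with $\bigcup_x\sigma'^s_x$ at a time, through hybrid configurations in which stable and unstable arcs cross; each such step is realized by one or two elementary folding moves. If you want to keep the chain-of-partitions framing, the type-change step must be isolated and its elementary moves actually produced, which is essentially the original problem again. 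A secondary issue: your complexity is a sum of transverse measures, and a decreasing positive real does not guarantee termination; the finite quantity that decreases is the number of intersection points of the current unstable set with $\bigcup_x\sigma'^s_x$, which is what the paper's indexing by the nested subsets $p^j_i$ encodes.
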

\begin{proof}
For notational convenience we set $\sigma^s_x = \varnothing$ for $x \in \mathcal{X}_O$ and $\sigma^u_x = \varnothing$ for $x \in \mathcal{X}_I$, and similarly $\sigma'^s_x = \varnothing$ for $x \in \mathcal{X}'_O$ and $\sigma'^u_x = \varnothing$ for $x \in \mathcal{X}'_I$. Then we will have $\sigma^s_x \subset \sigma'^s_x$ and $\sigma^u_x \supset \sigma'^u_x$ for every $x$.

We label the prongs of all the $\sigma^u_x$ as $p_1,...,p_N$, and label the prong of the $\sigma'^u_x$ that is contained in $p_i$ as $p'_i$. The idea of the proof is that in contracting each $p_i$ into $p'_i$, we combine some rectangles, and this determines corresponding elementary moves. See \Cref{fig:widerfolding1}. The precise description of the proof is rather technical. The reader may wish to skip it on the first reading.

Define subsets $p^j_i$ of $p_i$ by setting $p^0_i=p_i$ and inductively defining $p^{j+1}_i$ to be the subset maximal with respect to the property of containing $p'_i$ and being properly contained in $p^j_i$, and having an endpoint lying on $\bigcup_x \sigma'^s_x$. Suppose $p'_i=p^{n_i}_i$. Write $U_{\sum_{i=1}^{k-1} n_i + j}=\bigcup_{i=1}^{k-1} p'_i \cup p^j_k \cup \bigcup_{i=k}^N p_i$.

Notice that for each $j$, $(\bigcup_x \sigma'^s_x) \backslash U_j$ is a union of intervals. We call each such interval a \textit{prong} of $(\bigcup_x \sigma'^s_x) \backslash U_j$. Each interval has two sides in the $\ell^u$ direction. We consider two sides to such intervals to be equivalent if there is a rectangle in the complement of $\bigcup_x \sigma'^s_x \cup U_j$ with a stable side incident to the two sides. We call each equivalence class of sides a \textit{side} of $(\bigcup_x \sigma'^s_x) \backslash U_j$. Thus each prong lies on two sides, and for each $j$, $(\bigcup_x \sigma'^s_x) \backslash U_{j+1}$ has one or two less sides than $(\bigcup_x \sigma'^s_x) \backslash U_j$. Intuitively, as $j$ increases, $U_j$ shrinks and the sides of $(\bigcup_x \sigma'^s_x) \backslash U_j$ get combined, until we hit $j=\sum_{i=1}^N n_i$ and $(\bigcup_x \sigma'^s_x) \backslash U_{\sum_{i=1}^N n_i}$ becomes exactly the union of stable stars that are the $\sigma'^s_x$ and the definitions of prongs and sides agree with our previous usage.

We will define train tracks $\tau_j$ for each $j=0,...,\sum_{i=1}^N n_i$ such that 
\begin{itemize}
    \item the vertices of $\tau_j$ are in one-to-one correspondence with the sides of $(\bigcup_x \sigma'^s_x) \backslash U_j$, and
    \item the edges of $\tau_j$ are in one-to-one correspondence with the rectangles in the complement of $\bigcup_x \sigma'^s_x \cup U_j$ and the prongs of $(\bigcup_x \sigma'^s_x) \backslash U_j$,
\end{itemize}
and such that each $\tau_{j+1}$ is obtained from $\tau_j$ via elementary folding moves.

First, to define $\tau_0$, consider the components of $(\bigcup_x \sigma'^s_x) \backslash U_0 = (\bigcup_x \sigma'^s_x) \backslash (\bigcup_x \sigma^u_x)$ that do not lie in $\bigcup_x \sigma^s_x$. Each of these is an interval lying in a rectangle $R_i$. For each of these intervals, we subdivide twice the real edge of $\tau_\mathcal{M}$ corresponding to the rectangle the interval lies in. That is, we add two new vertices, each corresponding to a side of $(\bigcup_x \sigma'^s_x) \backslash U_0$.

Inductively, suppose $\tau_j$ is defined. When going from $U_j$ to $U_{j+1}$, two rectangles in the complement of $\bigcup_x \sigma'^s_x \cup U_j$ are combined into one in $\bigcup_x \sigma'^s_x \cup U_{j+1}$. We fold the two edges of $\tau_j$ corresponding to these two rectangles. If $(\bigcup_x \sigma'^s_x) \backslash U_{j+1}$ has one less component than $(\bigcup_x \sigma'^s_x) \backslash U_j$, then two prongs of $(\bigcup_x \sigma'^s_x) \backslash U_j$ are combined into one. In this case we also fold the two edges of $\tau_j$ corresponding to these two prongs. After these one or two elementary folding moves, we obtain $\tau_{j+1}$. See \Cref{fig:widerfolding1}.

\begin{figure}
    \centering
    \resizebox{!}{5cm}{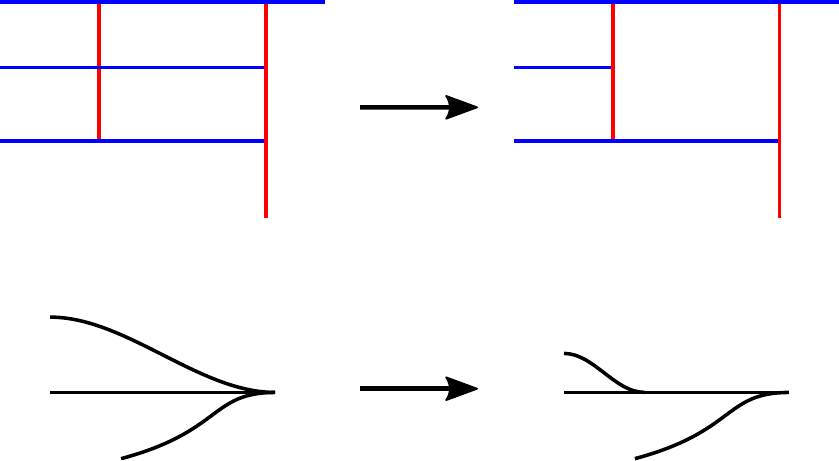}
    \caption{Performing elementary folding moves on $\tau_j$ to obtain $\tau_{j+1}$.}
    \label{fig:widerfolding1}
\end{figure}

Continuing inductively, by the time we reach $\tau_{\sum_{i=1}^N n_i}$, this will be a train track with vertices in one-to-one correspondence with the sides of $\bigcup_x \sigma'^s_x$ and edges in one-to-one correspondence with the rectangles $R'_i$ and the prongs of $\bigcup_x \sigma'^s_x$, hence $\tau_{\sum_{i=1}^N n_i}=\tau_{\mathcal{M}'}$.
\end{proof}

\begin{prop} \label{prop:innerouterfolding}
If $\mathcal{X}_I \subset \mathcal{X}'_I$, then there are elementary folding moves $f_1,\dots,f_n$ such that $f_n \cdots f_1$ maps $\tau_{(\mathcal{X}_I,\mathcal{X}_O)}$ to $\tau_{(\mathcal{X}'_I,\mathcal{X}'_O)}$
\end{prop}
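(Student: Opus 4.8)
The plan is to obtain this as a corollary of \Cref{prop:widerfolding}: I will check that the hypothesis $\mathcal{X}_I \subset \mathcal{X}'_I$ forces the canonical train track partition $\mathcal{M}_{(\mathcal{X}_I,\mathcal{X}_O)}$ to be \emph{wider}, in the sense of \Cref{defn:wider}, than $\mathcal{M}_{(\mathcal{X}'_I,\mathcal{X}'_O)}$, and then quote \Cref{prop:widerfolding}. Note that \Cref{defn:wider} only records that wideness \emph{implies} $\mathcal{X}_I \subset \mathcal{X}'_I$, so the actual content to be established is the converse, and it is special to the partitions produced by \Cref{constr:ttpartition}.

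First I would unwind what $\mathcal{X}_I \subset \mathcal{X}'_I$ means at each stage of \Cref{constr:ttpartition}. It gives $\mathcal{X}'_O \subset \mathcal{X}_O$, and since the seed star $\widehat{\sigma^u_x}$ (the unstable star at $x$ all of whose prongs have $\mu^s$-length $1$) depends only on $x$, we have $\widehat{\sigma'^u_x} = \widehat{\sigma^u_x}$ for each $x \in \mathcal{X}'_O$, hence $\bigcup_{x \in \mathcal{X}'_O} \widehat{\sigma'^u_x} \subseteq \bigcup_{x \in \mathcal{X}_O} \widehat{\sigma^u_x}$. The key tool for the rest is the monotonicity observation that if $A \subseteq B$ are two subsets of $S$, then the maximal stable (resp.\ unstable) star at $x$ avoiding $B$ is contained in the maximal one avoiding $A$ --- each prong is pushed out until it first meets the obstruction, and a smaller obstruction only lets it go farther; one uses here that the maximal such star is unique. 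Applying this to the stable-star step, with obstruction sets $\bigcup_{x \in \mathcal{X}'_O} \widehat{\sigma'^u_x} \subseteq \bigcup_{x \in \mathcal{X}_O} \widehat{\sigma^u_x}$, gives $\sigma^s_x \subseteq \sigma'^s_x$ for every $x \in \mathcal{X}_I$, and summing over $\mathcal{X}_I \subseteq \mathcal{X}'_I$ yields $\bigcup_{x \in \mathcal{X}_I} \sigma^s_x \subseteq \bigcup_{x \in \mathcal{X}'_I} \sigma'^s_x$. Feeding this back into the final unstable-star step, the same monotonicity --- now for maximal unstable stars that contain the fixed seed and avoid a varying obstruction --- gives $\sigma'^u_x \subseteq \sigma^u_x$ for every $x \in \mathcal{X}'_O \subseteq \mathcal{X}_O$, hence $\bigcup_{x \in \mathcal{X}'_O} \sigma'^u_x \subseteq \bigcup_{x \in \mathcal{X}_O} \sigma^u_x$. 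These two containments are precisely the conditions in \Cref{defn:wider}, so $\mathcal{M}_{(\mathcal{X}_I,\mathcal{X}_O)}$ is wider than $\mathcal{M}_{(\mathcal{X}'_I,\mathcal{X}'_O)}$, and \Cref{prop:widerfolding} then supplies the required elementary moves carrying $\tau_{(\mathcal{X}_I,\mathcal{X}_O)}$ to $\tau_{(\mathcal{X}'_I,\mathcal{X}'_O)}$.

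The main point requiring care is the order of the comparisons: \Cref{constr:ttpartition} builds seed unstable stars, then stable stars, then final unstable stars, and each of my three containment claims must be proved using only data from earlier stages, so that the chain is not circular; the monotonicity lemma for "maximal star avoiding an obstruction" is exactly what makes this work, and it leans on uniqueness of the maximal star. A secondary, purely bookkeeping matter is that \Cref{prop:widerfolding} in general also produces subdivision moves, whereas the statement here names only folding moves; one should either check directly that no subdivision is needed when both partitions are of the canonical form $\mathcal{M}_{(-,-)}$, or read "elementary folding moves" as "elementary moves" here.
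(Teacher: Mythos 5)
Your proposal matches the paper's own proof essentially verbatim: both observe that $\mathcal{X}_I \subset \mathcal{X}'_I$ forces the canonical partitions to be nested at each stage of \Cref{constr:ttpartition}, so that $\mathcal{M}_{(\mathcal{X}_I,\mathcal{X}_O)}$ is wider than $\mathcal{M}_{(\mathcal{X}'_I,\mathcal{X}'_O)}$, after which one invokes \Cref{prop:widerfolding}. Your closing worry about subdivision versus folding moves is warranted --- as written, the proof of \Cref{prop:widerfolding} does perform subdivisions in its initial step --- so the word ``folding'' in the statement is a minor imprecision of the paper itself, harmless here because every downstream use (e.g.\ \Cref{lemma:manystandardlyemb}) needs only equivalence via elementary moves.
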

\begin{proof}
If $\mathcal{X}_I \subset \mathcal{X}'_I$, then $\mathcal{X}_O \supset \mathcal{X}'_O$, so $\bigcup \widehat{\sigma^u_x} \supset \bigcup \widehat{\sigma'^u_x}$ in \Cref{constr:ttpartition}. This implies that $\bigcup \sigma^s_x \subset \bigcup \sigma'^s_x$, which in turn implies that $\bigcup \sigma^u_x \supset \bigcup \sigma'^u_x$. That is, $\mathcal{M}_{(\mathcal{X}_I,\mathcal{X}_O)}$ is wider than $\mathcal{M}_{(\mathcal{X}'_I,\mathcal{X}'_O)}$. So this proposition follows from \Cref{prop:widerfolding}.
\end{proof}

\begin{lemma} \label{lemma:ttmapfactorization}
For any Markov train track partition $\mathcal{M}$, the train track map $f_\mathcal{M}:\tau_\mathcal{M} \to \tau_\mathcal{M}$ can be written as a composition of train track maps $\sigma f_n \cdots f_1$ where each $f_i$ is an elementary move and $\sigma$ is an isomorphism of train tracks.
\end{lemma}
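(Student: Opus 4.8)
The plan is to realize $f_\mathcal{M}$ as a folding sequence of the kind produced by \Cref{prop:widerfolding}, followed by the combinatorial isomorphism that $f$ itself induces. Write $\mathcal{M} = (\{R_i\}, \{\sigma^s_x\}, \{\sigma^u_x\})$ with respect to $(\mathcal{X}_I, \mathcal{X}_O)$; recall that since $\mathcal{M}$ is Markov the sets $\mathcal{X}_I, \mathcal{X}_O$ are $f$-invariant. Because $f^{-1}$ is a homeomorphism preserving $\ell^s$ and $\ell^u$, it sends stable stars, unstable stars and rectangles to stable stars, unstable stars and rectangles while preserving all the incidence conditions of \Cref{defn:traintrackpartition}, so $\mathcal{M}^- := (\{f^{-1}(R_i)\}, \{f^{-1}(\sigma^s_x)\}, \{f^{-1}(\sigma^u_x)\})$ is again a train track partition with respect to $(\mathcal{X}_I, \mathcal{X}_O)$.

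First I would check that $\mathcal{M}$ is \emph{wider} than $\mathcal{M}^-$ in the sense of \Cref{defn:wider}. From \Cref{defn:traintrackpartition} one has $\bigcup_i \partial_s R_i = \bigcup_{x \in \mathcal{X}_I} \sigma^s_x$ and $\bigcup_i \partial_u R_i = \bigcup_{x \in \mathcal{X}_O} \sigma^u_x$ for every train track partition, so the Markov conditions $f(\bigcup_i \partial_s R_i) \subset \bigcup_i \partial_s R_i$ and $f^{-1}(\bigcup_i \partial_u R_i) \subset \bigcup_i \partial_u R_i$ become $\bigcup_{x \in \mathcal{X}_I} \sigma^s_x \subset f^{-1}\!\big(\bigcup_{x \in \mathcal{X}_I} \sigma^s_x\big)$ and $f^{-1}\!\big(\bigcup_{x \in \mathcal{X}_O} \sigma^u_x\big) \subset \bigcup_{x \in \mathcal{X}_O} \sigma^u_x$. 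Since the right-hand side of the first inclusion and the left-hand side of the second are exactly the unions of the stable, respectively unstable, stars of $\mathcal{M}^-$, these inclusions say precisely that $\mathcal{M}$ is wider than $\mathcal{M}^-$.

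\Cref{prop:widerfolding} then provides elementary moves $f_1,\dots,f_n$ with $f_n\cdots f_1 : \tau_\mathcal{M} \to \tau_{\mathcal{M}^-}$. On the other hand, $f$ carries $\mathcal{M}^-$ onto $\mathcal{M}$ and hence induces a bijection between the sides of stable stars (respectively the prongs of stable stars, respectively the rectangles) of $\mathcal{M}^-$ and those of $\mathcal{M}$ compatible with all cyclic orderings and smoothings; this is a train track isomorphism $\sigma : \tau_{\mathcal{M}^-} \xrightarrow{\sim} \tau_\mathcal{M}$. I would finish by verifying $f_\mathcal{M} = \sigma f_n \cdots f_1$, comparing both sides on vertices, on infinitesimal edges, and on real edges: by construction of $\tau_{\mathcal{M}^-}$, the composition $f_n\cdots f_1$ sends the vertex of $\tau_\mathcal{M}$ labelled by a side $s$ of a stable star of $\mathcal{M}$ to the vertex labelled by the side of a stable star of $\mathcal{M}^-$ containing $s$, the infinitesimal edge of a prong $p$ to that of the prong of $\mathcal{M}^-$ containing $p$, and the real edge of a rectangle $R_i$ to the edge path recording the rectangles and prongs of $\mathcal{M}^-$ passed through by $R_i$; postcomposing with $\sigma$ applies $f$ to each of these $\mathcal{M}^-$-data, which is exactly the description of $f_\mathcal{M}$ in \Cref{constr:markovttpartitiontott}.

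The main obstacle I anticipate is this final verification. The proof of \Cref{prop:widerfolding} constructs $\tau_{\mathcal{M}^-}$ through an explicit, order-dependent sequence of prong-shortenings and rectangle-merges, so one must confirm that the resulting identification of the cells of $\tau_{\mathcal{M}^-}$ with the sides, prongs and rectangles of $\mathcal{M}^-$, and therefore the action of $f_n\cdots f_1$ on labels, is the canonical one and is independent of the order of moves. This holds because the target $\tau_{\mathcal{M}^-}$ and the labelling of its cells are determined by $\mathcal{M}^-$ alone, but tracking the bookkeeping carefully is the part of the argument that needs attention; no new idea is required.
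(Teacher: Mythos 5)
Your proposal is correct and follows essentially the same route as the paper: the Markov conditions show $\mathcal{M}$ is wider than $f^{-1}(\mathcal{M})$, \Cref{prop:widerfolding} supplies the elementary moves to $\tau_{f^{-1}(\mathcal{M})}$, $f$ induces the isomorphism $\sigma$ back to $\tau_\mathcal{M}$, and the equality $f_\mathcal{M}=\sigma f_n\cdots f_1$ is checked edge by edge on infinitesimal and real edges exactly as in the paper. Your added remark that the labelling of the cells of $\tau_{f^{-1}(\mathcal{M})}$ is canonical (independent of the folding order) is a harmless elaboration of a point the paper leaves implicit.
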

\begin{proof}
Note that $\mathcal{M}$ being Markov implies that $\mathcal{M}$ is wider than $f^{-1}(\mathcal{M})$. So we can apply \Cref{prop:widerfolding} to get elementary moves $f_1,\dots,f_n$ such that $f_n \cdots f_1$ maps $\tau_\mathcal{M}$ to $\tau_{f^{-1}(\mathcal{M})}$. Meanwhile, since $f$ sends $f^{-1}(\mathcal{M})$ to $\mathcal{M}$, there is an induced isomorphism of train tracks $\sigma:\tau_{f^{-1}(\mathcal{M})} \to \tau_{\mathcal{M}}$. 

It remains to show that $f_\mathcal{M} = \sigma f_n \cdots f_1$. This will follow from checking that these maps send each edge $e$ to the same edge path. 

Suppose $e$ is infinitesimal and corresponds to a prong $p$. Then $f_n \cdots f_1$ sends $e$ to the infinitesimal edge in $\tau_{f^{-1}(\mathcal{M})}$ corresponding to the prong containing $p$, which $\sigma$ sends to the infinitesimal edge in $\tau_{\mathcal{M}}$ corresponding to the prong containing $f(p)$. This is equal to the image of $e$ under $f_\mathcal{M}$. 

Suppose $e$ is real and corresponds to a rectangle $R$. Then $f_n \cdots f_1$ sends $e$ to the edge path in $\tau_{f^{-1}(\mathcal{M})}$ corresponding to the sequence of rectangles and prongs passed by an unstable side of $R$, which $\sigma$ sends to the edge path in $\tau_{\mathcal{M}}$ corresponding to the sequence of rectangles and prongs passed by an unstable side of $f(R)$. This is equal to the image of $e$ under $f_\mathcal{M}$. 
\end{proof}

Combining \Cref{prop:markovttpartition}, \Cref{prop:tienbd=surface}, and \Cref{lemma:ttmapfactorization}, we have the following.

\begin{prop} \label{prop:invariantstandardlyembedded}
Let $f:S \to S$ be a  fully-punctured pseudo-Anosov map with at least two puncture orbits. Let $\mathcal{X}=\mathcal{X}_I \sqcup \mathcal{X}_O$ be some partition of the set of punctures into two nonempty $f$-invariant subsets. Then there exists a $f$-invariant train track $\tau$ that is standardly embedded with respect to $(\partial_I \tau, \partial_O \tau)$, where the boundary components in $\partial_{I/O} \tau$ are homotopic into the punctures in $\mathcal{X}_{I/O}$ respectively.
\end{prop}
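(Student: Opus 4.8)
The plan is to take $\tau = \tau_{(\mathcal{X}_I,\mathcal{X}_O)} = \tau_{\mathcal{M}_{(\mathcal{X}_I,\mathcal{X}_O)}}$, the standardly embedded train track produced by \Cref{constr:ttpartitiontott} from the train track partition $\mathcal{M}_{(\mathcal{X}_I,\mathcal{X}_O)}$ of \Cref{constr:ttpartition}, embedded in $S$ via the map $\iota$ of \Cref{prop:tienbd=surface}. Everything then reduces to checking the three clauses of the statement: that $\tau$ is standardly embedded with the correct partition of boundary components, that $\iota(\tau)$ carries $f$, and that $\iota(\tau)$ is $f$-invariant in the sense of \Cref{defn:invarianttt}.

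First I would note that since $\mathcal{X}_I$ and $\mathcal{X}_O$ are $f$-invariant, \Cref{prop:markovttpartition} guarantees that $\mathcal{M}_{(\mathcal{X}_I,\mathcal{X}_O)}$ is a Markov train track partition, which is exactly the hypothesis needed to apply \Cref{constr:markovttpartitiontott} and \Cref{lemma:ttmapfactorization}. That $\tau$ is a standardly embedded train track with respect to some partition $(\partial_I \tau, \partial_O \tau)$ is built into \Cref{constr:ttpartitiontott}; that $\partial_{I/O}\tau$ consists precisely of the boundary components homotopic into the punctures of $\mathcal{X}_{I/O}$ is the content of \Cref{rmk:ttpuncture}. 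That $\iota(\tau)$ carries $f$ — in fact fully carries the unstable lamination — is part of the conclusion of \Cref{prop:tienbd=surface}.

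The remaining point is $f$-invariance, and here I would combine \Cref{prop:tienbd=surface}, which gives that $f\iota$ and $\iota f_\mathcal{M}$ are homotopic embeddings of $N$ in $S$, with \Cref{lemma:ttmapfactorization}, which writes $f_\mathcal{M} = \sigma f_n \cdots f_1$ with each $f_i$ an elementary move and $\sigma$ an isomorphism of train tracks. Substituting the latter into the former, $f\iota$ is homotopic to $\iota \sigma f_n \cdots f_1$, which is precisely the condition in \Cref{defn:invarianttt} for $\iota(\tau)$ to be $f$-invariant. This completes the verification.

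Since the substantive work has already been carried out in \Cref{prop:markovttpartition}, \Cref{prop:tienbd=surface}, and \Cref{lemma:ttmapfactorization}, I do not expect a genuine obstacle here; the only care required is bookkeeping — ensuring that the embedding $\iota$ used is the one supplied by \Cref{prop:tienbd=surface}, that the composition in \Cref{lemma:ttmapfactorization} appears in the order demanded by \Cref{defn:invarianttt}, and that the identification of $\partial_{I/O}\tau$ in \Cref{rmk:ttpuncture} is compatible with the partition $(\mathcal{X}_I,\mathcal{X}_O)$ we started from.
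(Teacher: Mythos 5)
Your proposal is correct and follows exactly the route of the paper, whose proof is simply the combination of \Cref{prop:markovttpartition}, \Cref{prop:tienbd=surface}, and \Cref{lemma:ttmapfactorization}, with \Cref{rmk:ttpuncture} supplying the identification of boundary components with punctures. The bookkeeping you describe (substituting the factorization $f_\mathcal{M}=\sigma f_n\cdots f_1$ into the homotopy $f\iota \simeq \iota f_\mathcal{M}$ to match \Cref{defn:invarianttt}) is precisely what the paper intends.
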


\Cref{thm:standardlyembeddedtt} follows from \Cref{prop:invariantstandardlyembedded}.

\section{The Thurston symplectic form} \label{sec:thurstonform}

In this section, we make the first steps towards showing that $f_{(\mathcal{X}_I,\mathcal{X}_O)*}^\real$ is reciprocal. The idea is to consider the space of weights on the train track, on which the Thurston symplectic form can be defined. There is a slight misnomer here: the Thurston symplectic form is in general degenerate hence not a symplectic form. However, we will only set up the theory in this section, leaving the task of addressing this to \Cref{sec:radical}.

\subsection{Weight space} \label{subsec:weightspace}

\begin{defn} \label{defn:weightspace}
Let $\tau$ be a train track. A system of \textit{weights} on $\tau$ is an assignment of a real number $w(e)$ to each edge $e$ of $\tau$ such that at each switch $v$,
$$\sum_{e \in \mathcal{E}^1_v} w(e)=\sum_{e \in \mathcal{E}^2_v} w(e).$$
It is convenient to think of the weight of a branch as the width of the branch. From this point of view, the equation above simply states that the total width on the two sides of a switch match up.

The \textit{weight space} of $\tau$ is the linear space of all systems of weights on $\tau$, which we denote by $\mathcal{W}(\tau)$.
\end{defn}

Consider the space $\mathbb{R}^{\mathcal{E}(\tau)}$ of all assignments of real numbers to the edges of $\tau$. Define the linear map $T_v:\mathbb{R}^{\mathcal{E}(\tau)} \to \mathbb{R}$ by $T_v(w) = \sum_{e \in \mathcal{E}^1_v} w(e) - \sum_{e \in \mathcal{E}^2_v} w(e)$ for each switch $v$, and define $T_\mathcal{V}:\mathbb{R}^{\mathcal{E}(\tau)} \to \mathbb{R}^{\mathcal{V}(\tau)}$ by $T_\mathcal{V}(w)=(T_v(w))_{v \in \mathcal{V}(\tau)}$. Then the weight space $\mathcal{W}(\tau)$ is the kernel of $T_V$. In other words, we have the exact sequence
\begin{center}
\begin{tikzcd}
0 \arrow[r] & \mathcal{W}(\tau) \arrow[r] & \mathbb{R}^{\mathcal{E}(\tau)} \arrow[r, "T_\mathcal{V}"] & \mathbb{R}^{\mathcal{V}(\tau)} \\
\end{tikzcd}
\end{center}

We remark that there is no canonical way to label $\mathcal{E}^1_v$ and $\mathcal{E}^2_v$ for each $v$, so each $T_v$ can only be canonically defined up to a sign. This is not very significant for our purposes, but for concreteness let us simply fix some labelling of $\mathcal{E}^1_v$ and $\mathcal{E}^2_v$ for each $v$.

\begin{prop} \label{prop:weightspaceses}
Let $f:\tau \to \tau'$ be a train track map. There exists a signed permutation matrix $P \in M_{\mathcal{V}(\tau') \times \mathcal{V}(\tau)}(\mathbb{R})$ which fits into the commutative diagram
\begin{center}
\begin{tikzcd}
0 \arrow[r] & \mathcal{W}(\tau) \arrow[r] \arrow[d, "\mathcal{W}(f)"] & \mathbb{R}^{\mathcal{E}(\tau)} \arrow[r, "T_\mathcal{V}"] \arrow[d, "f_*"] & \mathbb{R}^{\mathcal{V}(\tau)} \arrow[d, "P"] \\
0 \arrow[r] & \mathcal{W}(\tau') \arrow[r] & \mathbb{R}^{\mathcal{E}(\tau')} \arrow[r, "T_\mathcal{V}"] & \mathbb{R}^{\mathcal{V}(\tau')} 
\end{tikzcd}
\end{center}
Here $f_*$ is the transition matrix of $f$ and $\mathcal{W}(f)$ is the restriction of $f_*$ to $\mathcal{W}(\tau)$.
\end{prop}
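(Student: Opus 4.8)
The plan is to reduce everything to a local identity at each switch of $\tau'$, whose only real input is the definition of a carried edge path.

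First I would record the sign data that $f$ carries at switches. By the clause in \Cref{defn:traintrackmap} requiring $D_vf$ to send elements lying in distinct $\mathcal{E}^\beta_v$ to elements lying in distinct $\mathcal{E}^{\beta'}_{f(v)}$, the image $D_vf(\mathcal{E}^1_v)$ lies entirely in one of the two half-edge classes at $f(v)$ and $D_vf(\mathcal{E}^2_v)$ entirely in the other; set $\epsilon(v)=+1$ if $D_vf(\mathcal{E}^1_v)\subseteq\mathcal{E}^1_{f(v)}$ and $\epsilon(v)=-1$ otherwise. The heart of the argument is the claim that, as linear functionals on $\mathbb{R}^{\mathcal{E}(\tau)}$,
\begin{equation*}
T_{v'}\circ f_* \;=\; \sum_{v\in f^{-1}(v')}\epsilon(v)\,T_v \qquad\text{for every switch } v'\text{ of }\tau'. \tag{$\ast$}
\end{equation*}
Granting $(\ast)$, one defines $P$ by letting its $(v',v)$ entry be $\epsilon(v)$ when $f(v)=v'$ and $0$ otherwise; then $P$ sends each standard basis vector of $\mathbb{R}^{\mathcal{V}(\tau)}$ to $\pm$ a standard basis vector of $\mathbb{R}^{\mathcal{V}(\tau')}$, which is the required signed permutation matrix, and $(\ast)$ is precisely the assertion $T_\mathcal{V}\circ f_* = P\circ T_\mathcal{V}$. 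The right-hand square then commutes on the nose, and since $f_*$ consequently carries $\mathcal{W}(\tau)=\ker T_\mathcal{V}$ into $\mathcal{W}(\tau')=\ker T_\mathcal{V}$, it restricts to a linear map $\mathcal{W}(\tau)\to\mathcal{W}(\tau')$, which is $\mathcal{W}(f)$ by definition; this gives the left-hand square and finishes the proof.

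To establish $(\ast)$ I would fix a switch $v'$ and an edge $e$ of $\tau$ with endpoints $v_1,v_2$ and with half-edges $h_1$ at $v_1$ and $h_2$ at $v_2$, and write the carried edge path $f(e)=(e'_1,\dots,e'_k)$. Unwinding the definitions of $f_*$ and of $T_{v'}$, the coefficient of $w(e)$ in $T_{v'}(f_*w)$ equals $\delta_1-\delta_2$, where $\delta_i$ counts the half-edge slots of the path $f(e)$ that sit at $v'$ and belong to $\mathcal{E}^i_{v'}$. Now group the $2k$ slots of the path into the initial slot (the half-edge $D_{v_1}f(h_1)$ at $f(v_1)$), the terminal slot ($D_{v_2}f(h_2)$ at $f(v_2)$), and the $k-1$ pairs of slots occurring at the internal junctions between consecutive $e'_j,e'_{j+1}$. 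At every internal junction the carried condition of \Cref{defn:traintrackmap} forces the incoming and outgoing slot into different $\mathcal{E}^\beta$, so such a pair contributes $(1,1)$ to $(\delta_1,\delta_2)$ and nothing to $\delta_1-\delta_2$. Hence $\delta_1-\delta_2$ is accounted for entirely by the two end slots: for $i=1,2$ the end slot at $v_i$ contributes, when $f(v_i)=v'$, a $+1$ if $D_{v_i}f(h_i)\in\mathcal{E}^1_{v'}$ and a $-1$ if $D_{v_i}f(h_i)\in\mathcal{E}^2_{v'}$, which by the definition of $\epsilon$ equals $\epsilon(v_i)$ when $h_i\in\mathcal{E}^1_{v_i}$ and $-\epsilon(v_i)$ when $h_i\in\mathcal{E}^2_{v_i}$. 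Multiplying by $w(e)$, summing over all edges $e$ of $\tau$, and reindexing the sum by the half-edge of $\tau$ carrying each contribution, the terms regroup into $\sum_{v:f(v)=v'}\epsilon(v)\bigl(\sum_{h\in\mathcal{E}^1_v}w(\bar h)-\sum_{h\in\mathcal{E}^2_v}w(\bar h)\bigr)=\sum_{v:f(v)=v'}\epsilon(v)T_v(w)$, which is $(\ast)$.

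The one step that takes genuine care is the bookkeeping in the previous paragraph: one must count half-edge slots correctly when $e$ is a loop, when $f(e)$ is a closed path, or when $f(e)$ passes through $v'$ several times, and verify that a loop edge at $v'$ is weighted the same way in the definition of $T_{v'}$ and in the slot count. Everything else — the well-definedness of $\epsilon(v)$, the commutativity of the two squares, and the identification of $\mathcal{W}(f)$ with the restriction of $f_*$ — is formal once $(\ast)$ is in hand.
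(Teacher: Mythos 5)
Your proof is correct and takes essentially the same approach as the paper: define $P$ by the sign $\epsilon(v)$ recording whether $D_vf$ preserves or swaps the two sides of the smoothing, then verify $T_\mathcal{V}\circ f_* = P\circ T_\mathcal{V}$ by observing that contributions at the internal junctions of each carried edge path $f(e)$ cancel, leaving only the two end slots. Your half-edge-slot bookkeeping spells out in full detail what the paper compresses into ``it can also be checked that their signs coincide respectively.''
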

\begin{proof}
Recall that $f$ sends switches to switches. Define 
$$P_{v',v}=
\begin{cases}
1 \text{ , if $v'=f(v)$ and $D_v f$ sends $\mathcal{E}^1_v$ into $\mathcal{E}^1_{v'}$} \\
-1 \text{ , if $v'=f(v)$ and $D_v f$ sends $\mathcal{E}^1_v$ into $\mathcal{E}^2_{v'}$} \\
0 \text{ , otherwise}
\end{cases}$$

We have to check that $P T_\mathcal{V} = T_\mathcal{V} f_*$. For convenience, let us denote the unit vector in $\mathbb{R}^{\mathcal{E}(\tau)}$ corresponding to $e \in \mathcal{E}(\tau)$ by $e$ as well. Then $P T_\mathcal{V}(e)$ is the vector having two $\pm 1$ entries at the images of the endpoints of $e$ under $f$, whereas $T_\mathcal{V} f_*(e)$ is the vector having two $\pm 1$ entries at the endpoints of the image of $e$ under $f$. The position of the only two nonzero entries hence coincide. It can also be checked that their signs coincide respectively.
\end{proof}

Note that by \Cref{prop:weightspaceses} and the discussion above \Cref{prop:realedgesPF} (along with \Cref{prop:reciprocalfacts}), in order to show that $f_{(\mathcal{X}_I,\mathcal{X}_O)*}^\real$ is reciprocal, it suffices to show that $\mathcal{W}(f_{(\mathcal{X}_I,\mathcal{X}_O)})$ is reciprocal.

\subsection{The Thurston symplectic form} \label{subsec:thurstonform}

In this subsection we define the Thurston symplectic form on the weight space of a train track. This is done for trivalent train tracks in \cite[Section 3.2]{PH92}; we simply generalize the discussion to general train tracks. 

\begin{defn} \label{defn:thurstonform}
Let $w_1, w_2 \in \mathcal{W}(\tau)$ be two systems of weights on $\tau$. We define
$$\omega(w_1,w_2) = \sum_{v \in \mathcal{V}(\tau)} \sum_{e_1 \text{ left of } e_2} (w_1(e_1) w_2(e_2) - w_1(e_2) w_2(e_1))$$
where the second summation is taken over all pairs $e_1,e_2$ in $\mathcal{E}_v$ for which $e_1$ is on the left of $e_2$. 

Then $\omega$ is clearly a skew-symmetric bilinear form on $\mathbb{R}^{\mathcal{E}(\tau)}$ hence on $\mathcal{W}(\tau)$. We call $\omega$ the \textit{Thurston symplectic form}.
\end{defn}

The definition of the Thurston symplectic form $\omega$ is motivated from the algebraic intersection number. We refer to \cite[Lemma 3.2.2]{PH92} for an explanation of this. The property of $\omega$ that matters to us is that it is preserved by the type of train track maps that we study.

\begin{lemma} \label{lemma:subdivisionsymplectic}
If $f:\tau \to \tau'$ is a subdivision move, then $\mathcal{W}(f)$ is an isomorphism that preserves the Thurston symplectic form $\omega$.
\end{lemma}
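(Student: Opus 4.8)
The plan is to analyze the subdivision move very concretely at the level of weight spaces. Recall from \Cref{defn:subdivision} that the subdivision move on an edge $e$ with endpoints $v_1,v_2$ replaces $e$ by two edges $e_1,e_2$ joined at a new switch $v$, where $v$ has $e_1$ on one side of its smoothing and $e_2$ on the other. First I would exhibit the linear map on weight spaces explicitly. Given $w \in \mathcal{W}(\tau)$, define $\mathcal{W}(f)(w)$ on $\tau'$ by assigning the weight $w(e)$ to both $e_1$ and $e_2$, and keeping $w(e')$ on every other edge $e'$. The switch condition at $v_1$ and $v_2$ is unaffected (the half-edge of $e$ is simply replaced by that of $e_1$ or $e_2$ carrying the same weight), and the switch condition at $v$ reads $w(e) = w(e)$, which holds trivially. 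Conversely, any weight $w'$ on $\tau'$ satisfies $w'(e_1)=w'(e_2)$ by the switch condition at $v$, so it arises uniquely from the weight on $\tau$ obtained by collapsing $e_1,e_2$ back to $e$ with common value $w'(e_1)$. Hence $\mathcal{W}(f)$ is a bijective linear map, i.e.\ an isomorphism. (I expect this is exactly the map $P_{v',v}$ of \Cref{prop:weightspaceses} restricts to.)

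Next I would check that $\omega$ is preserved. The point is that the local contribution to $\omega$ coming from the switches is essentially unchanged. At $v_1$ and $v_2$, the cyclic order and the partition into $\mathcal{E}^1,\mathcal{E}^2$ are literally the same after relabeling the half-edge of $e$ as that of $e_1$ (resp.\ $e_2$); since $\mathcal{W}(f)$ assigns $e_1$ (resp.\ $e_2$) the same weight $e$ had, every term $w_1(a)w_2(b) - w_1(b)w_2(a)$ appearing in the $v_1$- and $v_2$-sums is matched verbatim. At the new switch $v$, its half-edge set $\mathcal{E}_v = \{e_1, e_2\}$ has $e_1$ and $e_2$ lying in \emph{different} sides of the smoothing (by the construction of the subdivision move), so there is no pair $(a,b)$ with $a$ to the left of $b$ within a single $\mathcal{E}^\beta_v$; that is, $v$ contributes $0$ to $\omega$ on $\tau'$. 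Therefore $\omega(\mathcal{W}(f)w_1, \mathcal{W}(f)w_2) = \omega(w_1,w_2)$ for all $w_1,w_2 \in \mathcal{W}(\tau)$, as desired.

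I do not anticipate a serious obstacle here; the only point that needs a little care is the bookkeeping at the new switch $v$ — one must confirm from \Cref{defn:subdivision} that $e_1$ and $e_2$ are genuinely placed on opposite sides of the smoothing at $v$ (so that the summation over "$e_1$ left of $e_2$" within one $\mathcal{E}^\beta_v$ is empty there), rather than, say, both lying in the same $\mathcal{E}^\beta_v$. Once that is pinned down, the verification is a direct term-by-term comparison of the two sums defining $\omega$.
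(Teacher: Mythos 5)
Your proof is correct and follows the same route as the paper: identify $\mathcal{W}(f)$ as the map duplicating the weight of $e$ onto $e_1, e_2$, observe it is invertible by collapsing back, and note that the new switch $v$ contributes nothing to $\omega$ since its two half-edges lie on opposite sides of the smoothing (forced by the requirement that both $\mathcal{E}^1_v$ and $\mathcal{E}^2_v$ be nonempty). The paper states the last point more tersely as ``$v$ only meets two half-edges,'' but this is the same observation.
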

\begin{proof}
We use the notation as in \Cref{defn:subdivision}, and write $w'=(\mathcal{W}(f))(w)$. 

The map $\mathcal{W}(f)$ is an isomorphism since $w(e)$ can be recovered as $w'(e_1) = w'(e_2)$.

When computing $\omega$ in $\tau'$, $v$ does not make a contribution, since it only meets two half-edges. For the other vertices, since $w'(e_1) = w'(e_2) = w(e)$, their contributions remain the same. 
\end{proof}

\begin{lemma} \label{lemma:elementaryfoldingsymplectic}
If $f:\tau \to \tau'$ is an elementary folding move, then $\mathcal{W}(f)$ is an isomorphism that preserves the Thurston symplectic form $\omega$.
\end{lemma}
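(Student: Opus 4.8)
The plan is to mirror the proof of \Cref{lemma:subdivisionsymplectic}: first exhibit an explicit description of $\mathcal{W}(f)$, then check it is a linear isomorphism, and finally verify that the two expressions for $\omega$ — one computed on $\tau$, the other on $\tau'$ — agree term by term after accounting for the one switch that is modified. Throughout I will use the notation of \Cref{defn:elementaryfolding}: the move folds the pair $(e_1,e_2)$ forming a cusp at $v$, identifies the far endpoints $v'_1,v'_2$ into a single switch $v'$, and replaces $e_1,e_2$ by a single edge $e$. All other switches and edges are untouched, so $\mathcal{W}(f)$ acts as the identity on all coordinates except those of $e_1,e_2,e$.

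First I would show $\mathcal{W}(f)$ is an isomorphism. Writing $w' = (\mathcal{W}(f))(w)$, the transition matrix sends both $e_1$ and $e_2$ to $e$, so $w'(e) = w(e_1) + w(e_2)$ and $w'$ agrees with $w$ on every other edge; one must check that $w'$ is still a legal weight system at the merged switch $v'$. The switch condition at $v'$ is the sum of the conditions at $v'_1$ and $v'_2$ (the partition $\mathcal{E}^1_{v'} = \mathcal{E}^1_{v'_1} \sqcup \mathcal{E}^1_{v'_2}$ and likewise for side $2$, as spelled out in \Cref{defn:elementaryfolding}, is exactly engineered so that adding the two balancing equations yields the balancing equation at $v'$, with the two occurrences $w(e_1),w(e_2)$ replaced by the single $w'(e)$). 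Injectivity fails a priori because $w(e_1),w(e_2)$ are not individually recoverable from $w'$; this is repaired by observing that the switch condition at $v$ in $\tau$ determines $w(e_1)$ (hence $w(e_2) = w'(e) - w(e_1)$) as a linear function of the remaining coordinates, since $e_1,e_2 \in \mathcal{E}^2_v$ and the edges in $\mathcal{E}^1_v$ together with $\mathcal{E}^2_v \setminus\{e_1,e_2\}$ are carried over unchanged. Thus $\mathcal{W}(f)$ has a two-sided linear inverse and $\dim\mathcal{W}(\tau) = \dim\mathcal{W}(\tau')$; the fact that this dimension count is consistent is exactly what one expects, since $\tau'$ has one fewer switch and one fewer edge than $\tau$.

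Next I would verify $\omega$ is preserved. Since $\mathcal{W}(f)$ changes only the data at $v$ and $v'$ (and the edges $e_1,e_2,e$), it suffices to compare the contributions to $\omega$ coming from these switches. At $v$ in $\tau$: the pair $(e_1,e_2)$ is a cusp, i.e. $e_1,e_2$ are \emph{adjacent} in $\mathcal{E}^2_v$ with $e_1$ to the left of $e_2$, so in $\omega(w_1,w_2)$ this pair contributes $w_1(e_1)w_2(e_2) - w_1(e_2)w_2(e_1)$; for any other half-edge $f\in\mathcal{E}_v$, the pairs $(e_1,f)$ and $(e_2,f)$ (in whichever left/right order) contribute terms which, upon substituting $w_i(e_1) + w_i(e_2) = w_i'(e)$, collapse to the single pair $(e,f)$ contribution at $v'$ — here one uses that $e_1,e_2$ being adjacent means every other half-edge sees them "from the same side," so their left/right relation to $f$ is identical and the cross terms add rather than partially cancel. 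At $v'$ in $\tau'$: the ordering on $\mathcal{E}_{v'}$ described in \Cref{defn:elementaryfolding} is exactly the one that places the surviving half-edges of $v'_1$ and $v'_2$ in their original cyclic positions with $e$ inserted where $e_1,e_2$ used to sit at $v$; so the pairs internal to $v'_1$, internal to $v'_2$, and those involving $e$ reproduce exactly the pairs at $v'_1$, at $v'_2$ (in $\tau$), plus the $(e,-)$ pairs just discussed. The one genuinely new subtlety is pairs with one half-edge from $\mathcal{E}_{v'_1}$ and one from $\mathcal{E}_{v'_2}$: I claim these all cancel or contribute zero, because in $\tau$ the switches $v'_1,v'_2$ were distinct and contributed independently, so no such cross-pair existed, and the orientation conventions force the $\mathcal{E}_{v'_1}$–$\mathcal{E}_{v'_2}$ pairs to appear in $\omega$ at $v'$ only through the mediating edge $e$. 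I expect this bookkeeping — tracking precisely which left/right pairs survive, split, or merge, with correct signs, across the three switches $v,v'_1,v'_2 \rightsquigarrow v,v'$ — to be the main obstacle; it is entirely mechanical but requires care with the cyclic orderings, and a clean way to organize it is to note that $\omega$ as defined on $\mathbb{R}^{\mathcal{E}}$ depends only on the ribbon structure, and the elementary folding move is, at the level of tie neighborhoods (cf. the embedding $N \hookrightarrow N'$ following \Cref{defn:traintrackmap}), an isotopy of embedded surfaces, under which the algebraic-intersection interpretation of $\omega$ (\cite[Lemma 3.2.2]{PH92}) is manifestly invariant. I would likely present the term-by-term computation as the primary argument and remark on the tie-neighborhood picture as conceptual motivation.
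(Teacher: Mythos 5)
There are two genuine gaps, both at the points your proposal itself flags as the crux. First, the injectivity argument fails as stated: you propose to recover $w(e_1)$ from the switch condition at the cusp vertex $v$, but $e_1$ and $e_2$ lie in the \emph{same} side $\mathcal{E}^\beta_v$ there, so that condition only constrains the sum $w(e_1)+w(e_2)$, which you already know as $w'(e)$; it cannot determine $w(e_1)$ individually. The recovery must instead use the switches that get merged: at $v'_1$ the edge $e_1$ is the only edge altered by the fold, so the switch condition there gives $w(e_1)=\sum_{e'\in\mathcal{E}^1_{v'_1}}w'(e')-\sum_{e'\in\mathcal{E}^2_{v'_1}\setminus\{e_1\}}w'(e')$, and similarly for $e_2$ at $v'_2$ (this is exactly how the paper argues).

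Second, the decisive step of the $\omega$-verification is asserted for an invalid reason. At the merged switch $v'$, pairs with one half-edge from $\mathcal{E}^1_{v'_1}$ and one from $\mathcal{E}^1_{v'_2}$ (and likewise on side $2$) contribute \emph{directly} to $\omega$ in $\tau'$; they are not ``mediated by $e$,'' and individually these terms are nonzero. Their total does vanish, but only because $w_1,w_2$ satisfy the switch conditions at $v'_1$ and $v'_2$: the side-$1$ cross terms sum to $\bigl(\sum_{\mathcal{E}^1_{v'_1}}w_1\bigr)\bigl(\sum_{\mathcal{E}^1_{v'_2}}w_2\bigr)-\bigl(\sum_{\mathcal{E}^1_{v'_2}}w_1\bigr)\bigl(\sum_{\mathcal{E}^1_{v'_1}}w_2\bigr)$, and the side-$2$ cross terms become the negative of this after the switch relations convert side-$2$ sums into side-$1$ sums. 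In particular the invariance holds on $\mathcal{W}(\tau)$ but \emph{not} on all of $\mathbb{R}^{\mathcal{E}(\tau)}$, so the idea that $\omega$ on $\mathbb{R}^{\mathcal{E}}$ ``depends only on the ribbon structure'' cannot carry the argument, and the switch conditions must be invoked somewhere — your write-up never uses them in this part. Relatedly, your bookkeeping does not balance: you record the cusp contribution $w_1(e_1)w_2(e_2)-w_1(e_2)w_2(e_1)$ at $v$ in $\tau$, claim all other pairs match, and claim the new cross pairs vanish, which would leave exactly that cusp term as a discrepancy. In the actual computation it is cancelled by a correction term produced at $v'$ when the $(e,\cdot)$ pairs on side $2$ are split into $(e_1,\cdot)$ and $(e_2,\cdot)$ pairs using the prescribed ordering $\mathcal{E}^2_{v'_2}\setminus\{e_2\}$, then $e$, then $\mathcal{E}^2_{v'_1}\setminus\{e_1\}$. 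The tie-neighborhood isotopy picture is reasonable motivation but does not substitute for these two cancellations, which are the actual content of the lemma.
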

\begin{proof}
We use the notation as in \Cref{defn:elementaryfolding}, and write $w'_i=(\mathcal{W}(f))(w_i)$. 

The map $\mathcal{W}(f)$ is an isomorphism since $w_i(e_j)$ can be recovered as 
$$\sum_{e \in \mathcal{E}^1_{v_j}} w'_i(e) - \sum_{e \in \mathcal{E}^2_{v_j} \backslash \{e_j\}} w'_i(e).$$

When computing $\omega$ in $\tau'$, the contributions from $v$ and $v'$ are:

(we omit the summands when they are $w_1(e') w_2(e'') - w_1(e'') w_2(e')$ and omit writing `$e'$ to the left of $e''$' under the summations)

\begin{align*}
    & \sum_{e', e'' \in \mathcal{E}^2_v} + \sum_{e', e'' \in \mathcal{E}^1_v} + \sum_{e', e'' \in \mathcal{E}^2_{v'}} + \sum_{e', e'' \in \mathcal{E}^1_{v'}} \\
    =& \sum_{e', e'' \in \mathcal{E}^2_v} + ~(\sum_{e', e'' \in \mathcal{E}^1_v} - ~(w_1(e_1) w_2(e_2) - w_1(e_2) w_2(e_1))) \\
    &+ (\sum_{e', e'' \in \mathcal{E}^2_{v'_1}} + \sum_{e', e'' \in \mathcal{E}^2_{v'_2}} + \sum_{e' \in \mathcal{E}^2_{v'_2}, e'' \in \mathcal{E}^2_{v'_1}} - ~(w_1(e_2) w_2(e_1) - w_1(e_1) w_2(e_2))) \\
    &+ (\sum_{e', e'' \in \mathcal{E}^1_{v'_1}} + \sum_{e', e'' \in \mathcal{E}^1_{v'_2}} + \sum_{e' \in \mathcal{E}^1_{v'_1}, e'' \in \mathcal{E}^1_{v'_2}}) \\
    =& \sum_{e', e'' \in \mathcal{E}^2_v} + \sum_{e', e'' \in \mathcal{E}^1_v} + \sum_{e', e'' \in \mathcal{E}^2_{v'_1}} + \sum_{e', e'' \in \mathcal{E}^2_{v'_2}} \\
    &+ (\sum_{e' \in \mathcal{E}^2_{v'_2}} w_1(e')) (\sum_{e'' \in \mathcal{E}^2_{v'_1}} w_2(e'')) - (\sum_{e'' \in \mathcal{E}^2_{v'_1}} w_1(e'')) (\sum_{e' \in \mathcal{E}^2_{v'_2}} w_2(e')) \\
    &+ \sum_{e', e'' \in \mathcal{E}^1_{v'_1}} + \sum_{e', e'' \in \mathcal{E}^1_{v'_2}} + ~(\sum_{e' \in \mathcal{E}^1_{v'_1}} w_1(e')) (\sum_{e'' \in \mathcal{E}^1_{v'_2}} w_2(e'')) - (\sum_{e'' \in \mathcal{E}^1_{v'_2}} w_1(e'')) (\sum_{e' \in \mathcal{E}^1_{v'_1}} w_2(e')) \\
    =& \sum_{e', e'' \in \mathcal{E}^2_v} + \sum_{e', e'' \in \mathcal{E}^1_v} + \sum_{e', e'' \in \mathcal{E}^2_{v'_1}} + \sum_{e', e'' \in \mathcal{E}^2_{v'_2}} + \sum_{e', e'' \in \mathcal{E}^1_{v'_1}} + \sum_{e', e'' \in \mathcal{E}^1_{v'_2}}
\end{align*}
which is the contribution from $v,v'_1,v'_2$ when computing $\omega$ in $\tau$.

The contributions from the rest of the vertices stay the same, so $\mathcal{W}(f)$ preserves $\omega$.
\end{proof}

\begin{prop} \label{prop:ttmapsymplectic}
For any Markov train track partition $\mathcal{M}$, $\mathcal{W}(f_{\mathcal{M}}):\mathcal{W}(\tau_\mathcal{M}) \to \mathcal{W}(\tau_\mathcal{M})$ is an isomorphism that preserves $\omega$.
\end{prop}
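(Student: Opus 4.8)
The plan is to exploit the factorization of $f_\mathcal{M}$ into elementary moves supplied by \Cref{lemma:ttmapfactorization}, together with the functoriality of $\mathcal{W}(-)$ and the fact that each building block of that factorization preserves $\omega$.

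\textbf{Step 1: functoriality.} I would first record that $\mathcal{W}(-)$ is functorial: if $g:\tau \to \tau'$ and $h:\tau' \to \tau''$ are train track maps, then $(h \circ g)_* = h_* g_*$, since the number of times $h(g(e))$ traverses an edge $e''$ of $\tau''$ equals $\sum_{e'} (g_*)_{e',e}(h_*)_{e'',e'}$. By \Cref{prop:weightspaceses}, $g_*$ and $h_*$ restrict to maps on weight spaces, so $\mathcal{W}(h\circ g) = \mathcal{W}(h)\circ\mathcal{W}(g)$.

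\textbf{Step 2: the isomorphism factor.} If $\sigma:\tau \to \tau'$ is an isomorphism of train tracks, then $\mathcal{W}(\sigma)$ is clearly an isomorphism, and it preserves $\omega$: by \Cref{defn:thurstonform}, $\omega$ is defined purely from the vertex set and the ``left of'' relation among half-edges at each vertex, and a train track isomorphism preserves both (it preserves the cyclic ordering and smoothing at each switch by \Cref{defn:traintrackmap}).

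\textbf{Step 3: assemble.} Apply \Cref{lemma:ttmapfactorization} to write $f_\mathcal{M} = \sigma f_n \cdots f_1$ with each $f_i$ a subdivision or an elementary folding move. By \Cref{lemma:subdivisionsymplectic} and \Cref{lemma:elementaryfoldingsymplectic}, each $\mathcal{W}(f_i)$ is an isomorphism preserving $\omega$, and by Step 2 so is $\mathcal{W}(\sigma)$. Hence $\mathcal{W}(f_\mathcal{M}) = \mathcal{W}(\sigma)\,\mathcal{W}(f_n)\cdots\mathcal{W}(f_1)$ is a composition of $\omega$-preserving isomorphisms, and is therefore itself an $\omega$-preserving isomorphism.

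\textbf{The main obstacle.} There is no deep obstacle here — the proposition is essentially a bookkeeping assembly of \Cref{lemma:ttmapfactorization}, \Cref{lemma:subdivisionsymplectic}, and \Cref{lemma:elementaryfoldingsymplectic}. The one point requiring a little care is the sign convention: the maps $T_v$, and hence the realization of $\mathcal{W}(\tau)$ as $\ker T_\mathcal{V}$, depend on an arbitrary labelling $\mathcal{E}_v = \mathcal{E}^1_v \sqcup \mathcal{E}^2_v$, and $\mathcal{W}(\sigma)$ or the $\mathcal{W}(f_i)$ could swap the roles of the two sides at a vertex. However $\omega$ as defined in \Cref{defn:thurstonform} never references this labelling — the summation runs over pairs lying in a common $\mathcal{E}^\beta_v$ and ``left of'' is labelling-independent — so no sign ambiguity enters; this is already implicit in the proofs of the two lemmas cited, and I would simply note it.
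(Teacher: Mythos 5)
Your proof is correct and follows the same approach as the paper: both assemble the result from \Cref{lemma:ttmapfactorization}, \Cref{lemma:subdivisionsymplectic}, \Cref{lemma:elementaryfoldingsymplectic}, and the observation that train track isomorphisms preserve $\omega$. The paper states this in one sentence; you simply spell out the functoriality and sign-convention bookkeeping that the paper leaves implicit.
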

\begin{proof}
This follows from \Cref{lemma:ttmapfactorization}, \Cref{lemma:subdivisionsymplectic}, \Cref{lemma:elementaryfoldingsymplectic}, and the fact that an isomorphism of train tracks induces an isomorphism which preserves the Thurston symplectic form on the weight space.
\end{proof}

In general, \Cref{lemma:subdivisionsymplectic} and \Cref{lemma:elementaryfoldingsymplectic} essentially say that we can modify a train track as much as we like using elementary moves when studying its weight space. Hence we make the following definition.

\begin{defn} \label{defn:equivalenttt}
Consider the equivalence relation on the set of all train tracks that is generated by there being an elementary move between two train tracks. We say that two train tracks in the same equivalence class are \textit{equivalent}.
\end{defn}

In particular we have the following lemma, which will be very useful when making computations in \Cref{sec:radical}.

\begin{lemma} \label{lemma:manystandardlyemb}
Let $\mathcal{X}=\mathcal{X}_I \sqcup \mathcal{X}_O$ and $\mathcal{X}=\mathcal{X}'_I \sqcup \mathcal{X}'_O$ be two partitions of the set of punctures of $S$. Then $\tau_{(\mathcal{X}_I,\mathcal{X}_O)}$ and $\tau_{(\mathcal{X}'_I,\mathcal{X}'_O)}$ are equivalent.
\end{lemma}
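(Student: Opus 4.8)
The plan is to reduce the statement to \Cref{prop:innerouterfolding} by passing through a common refinement of the two partitions. The key observation is that \Cref{prop:innerouterfolding} already shows that if $\mathcal{X}_I \subset \mathcal{X}'_I$ (equivalently $\mathcal{X}_O \supset \mathcal{X}'_O$), then $\tau_{(\mathcal{X}_I,\mathcal{X}_O)}$ and $\tau_{(\mathcal{X}'_I,\mathcal{X}'_O)}$ are connected by a sequence of elementary folding moves, hence are equivalent in the sense of \Cref{defn:equivalenttt}. So equivalence holds whenever one inner set contains the other. The idea is that for arbitrary partitions $\mathcal{X} = \mathcal{X}_I \sqcup \mathcal{X}_O$ and $\mathcal{X} = \mathcal{X}'_I \sqcup \mathcal{X}'_O$, we can interpolate between them using an auxiliary partition whose inner set is contained in both $\mathcal{X}_I$ and $\mathcal{X}'_I$.

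First I would fix a puncture $x_0 \in \mathcal{X}_I \cap \mathcal{X}'_I$; such an $x_0$ need not exist in general, so instead I would argue as follows. Let $\mathcal{Y}_I = \mathcal{X}_I \cap \mathcal{X}'_I$ and $\mathcal{Y}_O = \mathcal{X} \setminus \mathcal{Y}_I = \mathcal{X}_O \cup \mathcal{X}'_O$. If $\mathcal{Y}_I$ is nonempty, then $(\mathcal{Y}_I, \mathcal{Y}_O)$ is a valid partition into two nonempty subsets with $\mathcal{Y}_I \subset \mathcal{X}_I$ and $\mathcal{Y}_I \subset \mathcal{X}'_I$, so two applications of \Cref{prop:innerouterfolding} give that $\tau_{(\mathcal{X}_I,\mathcal{X}_O)}$ is equivalent to $\tau_{(\mathcal{Y}_I,\mathcal{Y}_O)}$, which is equivalent to $\tau_{(\mathcal{X}'_I,\mathcal{X}'_O)}$. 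Dually, if $\mathcal{X}_O \cap \mathcal{X}'_O$ is nonempty, one can run the same argument with inner sets $\mathcal{X}_I \cup \mathcal{X}'_I$ and use the symmetric version of \Cref{prop:innerouterfolding} (which follows immediately by taking complements: $\mathcal{X}_I \subset \mathcal{X}'_I$ is the same condition as $\mathcal{X}'_O \subset \mathcal{X}_O$, so \Cref{prop:innerouterfolding} is symmetric under swapping the roles of inner and outer, since its hypothesis $\mathcal{X}_I \subset \mathcal{X}'_I$ together with its conclusion applies equally to show $\tau_{(\mathcal{X}'_I,\mathcal{X}'_O)}$ and $\tau_{(\mathcal{X}_I,\mathcal{X}_O)}$ are equivalent).

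The one case that requires care is when $\mathcal{X}_I \cap \mathcal{X}'_I = \varnothing$ \emph{and} $\mathcal{X}_O \cap \mathcal{X}'_O = \varnothing$ simultaneously — that is, $\mathcal{X}'_I = \mathcal{X}_O$ and $\mathcal{X}'_O = \mathcal{X}_I$, so the two partitions are exact opposites. Here I would insert one intermediate partition: since $\mathcal{X}$ has at least two punctures (as $S$ admits a fully-punctured pseudo-Anosov map with at least two puncture orbits, hence at least two punctures), I can pick a proper nonempty subset $\mathcal{Z}_I \subsetneq \mathcal{X}_I$ with $\mathcal{Z}_I \neq \varnothing$ if $|\mathcal{X}_I| \geq 2$, or else handle $|\mathcal{X}_I| = |\mathcal{X}'_I| = 1$ directly; in either subcase one can always produce a partition $(\mathcal{Z}_I, \mathcal{Z}_O)$ whose inner set meets $\mathcal{X}_I$ and whose outer set meets $\mathcal{X}_O = \mathcal{X}'_I$, and then chain through it. The main (minor) obstacle is therefore purely bookkeeping: ensuring that at each step the auxiliary partition genuinely has both parts nonempty so that \Cref{prop:innerouterfolding} applies, and confirming that equivalence of train tracks is transitive — which it is by definition, being an equivalence relation. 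Once the partitions-into-opposites case is dispatched via one interpolating step, the general case follows, and the lemma is proved.
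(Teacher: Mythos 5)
Your approach — interpolating between the two partitions through a chain of partitions whose inner sets are related by containment, then invoking \Cref{prop:innerouterfolding} at each step — is the natural way to make the paper's terse ``follows immediately'' precise, and I believe it is what the authors had in mind. Your treatment of the generic case is correct: when $\mathcal{X}_I \cap \mathcal{X}'_I \neq \varnothing$ you pass through $(\mathcal{X}_I \cap \mathcal{X}'_I,\, \cdot)$, and when $\mathcal{X}_O \cap \mathcal{X}'_O \neq \varnothing$ you pass through $(\mathcal{X}_I \cup \mathcal{X}'_I,\, \cdot)$; the opposite-partition case with $|\mathcal{X}_I| \geq 2$ (or $|\mathcal{X}_O| \geq 2$) can then be chained through a proper nonempty $\mathcal{Z}_I \subsetneq \mathcal{X}_I$ followed by $\mathcal{Z}_I \cup \mathcal{X}'_I$, which by your earlier two cases is enough. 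However, you should be careful here: it is not enough that the intermediate partition's inner set ``meets'' $\mathcal{X}_I$ as you write — \Cref{prop:innerouterfolding} requires an actual containment of inner sets, so the chain must be spelled out in terms of containments as above.

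The genuine gap is the case you wave off with ``handle $|\mathcal{X}_I| = |\mathcal{X}'_I| = 1$ directly.'' If $|\mathcal{X}| = 2$, say $\mathcal{X} = \{x_0,y_0\}$ with the two punctures in distinct orbits, the only admissible partitions are $(\{x_0\},\{y_0\})$ and $(\{y_0\},\{x_0\})$, and they are opposites. There is no intermediate partition with both parts nonempty, and neither singleton inner set is contained in the other. So \Cref{prop:innerouterfolding} (and the underlying \Cref{prop:widerfolding}, since ``wider'' forces containment of the inner puncture sets) cannot connect them, and the entire interpolation strategy collapses: there is literally nothing to chain through. You would need an entirely separate argument showing $\tau_{(\{x_0\},\{y_0\})}$ and $\tau_{(\{y_0\},\{x_0\})}$ are related by elementary moves — perhaps by exhibiting a common subdivision/splitting of both, or by a more refined notion of intermediate train track partition not of the form $\mathcal{M}_{(\cdot,\cdot)}$. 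As stated, ``handle directly'' is not a proof. (For what it's worth, the paper's one-line proof has exactly this same blind spot; in the paper's actual applications of the lemma the $|\mathcal{X}|=2$ opposite case never has to be invoked, but the lemma as written does claim it.)
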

\begin{proof}
This follows immediately from \Cref{prop:innerouterfolding}.
\end{proof}

\section{Computing the radical} \label{sec:radical}

As mentioned in the last section, the Thurston symplectic form $\omega$ on the weight space $\mathcal{W}(\tau)$ is not actually a symplectic form in general. Its failure of being one is measured by its \textit{radical} $\rad(\omega) = \{w_0 \in \mathcal{W}(\tau): \omega(w,w_0)=0 \text{ for every $w \in \mathcal{W}(\tau)$} \}$. 

Now, if $\omega$ were symplectic, we would be able to show that $\mathcal{W}(f_{(\mathcal{X}_I,\mathcal{X}_O)})$ is reciprocal by just applying \Cref{prop:reciprocalfacts}(1). In general, to apply this approach, we need to understand what $\rad(\omega)$ is. In this section, we make the computations to determine this.

\subsection{Radical elements} \label{subsec:radicalelements}

\begin{defn} \label{defn:radicalelement}
Suppose $c$ is an even-pronged boundary component of $\tau$. Label the intervals in the complement of the cusps by $I_1,...,I_n$ in a cyclic order. We define the \textit{radical element} of $c$, denoted by $r_c$, to be the element of $\mathcal{W}(\tau)$ where we assign to each edge on $I_k$ a weight of $(-1)^k$. See \Cref{fig:radicalelement} left. Here we assign weights with multiplicity, meaning if an edge appears multiple times on possibly multiple $I_k$, then the weight we assign to it is the sum of the weights that is assigned to it each time it appears in some $I_k$.

In the degenerate case when $c$ is $0$-pronged, $r_c$ is the element of $\mathcal{W}(\tau)$ that assigns each edge on $c$ a weight of $1$.
\end{defn}

\begin{figure}
    \centering
    \fontsize{4pt}{4pt}\selectfont
    \resizebox{!}{4cm}{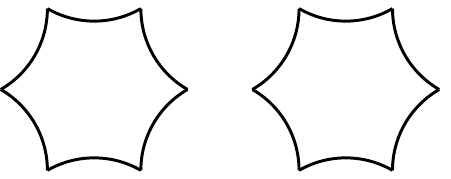}
    \caption{Left: A radical element $r_c$. Right: Showing that the terms cancel each other out when computing $\omega(w,r_c)$.}
    \label{fig:radicalelement}
\end{figure}

We remark that there is no canonical way to label the $I_k$, so $r_c$ can only be canonically defined up to a sign. This is not very significant for our purposes, we can simply fix a labelling where appropriate.

\begin{prop} \label{prop:radicalelementinradical}
Each $r_c$ lies in the radical of $\omega$.
\end{prop}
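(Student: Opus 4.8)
The plan is to prove $\omega(w,r_c)=0$ for every $w\in\mathcal{W}(\tau)$ by a direct local computation organized around the boundary component $c$. Extend $\omega$ to a bilinear form on all of $\mathbb{R}^{\mathcal{E}(\tau)}$ by the formula of \Cref{defn:thurstonform}; the claim is that the linear functional $w\mapsto\omega(w,r_c)$ vanishes on $\mathcal{W}(\tau)=\ker T_{\mathcal V}$. I would prove something slightly more concrete: that $\omega(\cdot,r_c)$, as a functional on all of $\mathbb{R}^{\mathcal{E}(\tau)}$, is a linear combination of the switch functionals $T_v$, and therefore automatically kills $\mathcal{W}(\tau)$. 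Since $\omega(\cdot,r_c)=\sum_{v\in\mathcal V(\tau)}C_v$ with $C_v$ the contribution of the pairs of half-edges at $v$, it suffices to understand each $C_v$.

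To set this up, recall that the complementary region $A$ of $\tau$ in its tie neighborhood whose inner boundary is $c$ is an annulus; orient $c$ as $\partial A$ with $A$ on its left. Traversing $c$ in this orientation, it alternately runs along edge-sides of $\tau$ and, at each switch it meets, either passes smoothly across the switch (at one of the two tangency points of the smoothing) or turns around a cusp; deleting the cusp points recovers the intervals $I_1,\dots,I_n$ of \Cref{defn:radicalelement}, and the chosen orientation lets one fix the cyclic labeling so that the sign $(-1)^k$ of $I_k$ closes up consistently around $c$ --- this is exactly where the evenness of $n$ enters. Two bookkeeping facts I would record first: a half-edge at a switch $v$ belongs to an edge on which $r_c$ is possibly nonzero (through that half-edge) precisely when it faces one of the sectors of $N\setminus\tau$ at $v$ lying in $A$; and, because $A$ is consistently on the left of $c$, at \emph{every} passage and \emph{every} cusp the curve $c$ turns counterclockwise in the small oriented disc at $v$. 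This uniformity is what forces the local contributions into a single shape.

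Then comes the local computation. Fix a switch $v$ with half-edges $h_1,\dots,h_m$ in cyclic order and $\mathcal{E}^1_v=\{h_1,\dots,h_a\}$. Only the half-edges facing a sector of $A$ --- the corners of $c$ at $v$ --- carry nonzero $r_c$-weight, so $C_v$ is a sum of terms, one per corner of $c$ at $v$. Using the "left of" relation and the alternating pattern of $r_c$, one computes that a smooth passage of $c$ through $v$ inside an interval of sign $\sigma$, entering along an edge $e_{\mathrm{in}}$ and leaving along $e_{\mathrm{out}}$, contributes $\sigma\bigl(\pm T_v(w)-w(e_{\mathrm{in}})+w(e_{\mathrm{out}})\bigr)$, while a cusp of $c$ at $v$ between $I_k$ and $I_{k+1}$, with $e_{\mathrm{in}}$ the last edge of $I_k$ and $e_{\mathrm{out}}$ the first edge of $I_{k+1}$, contributes $-(-1)^k\bigl(w(e_{\mathrm{in}})+w(e_{\mathrm{out}})\bigr)$. (The $0$-pronged case is the degenerate one where $c$ is a single circle built from smooth passages and $r_c$ is the all-ones weight on it; the smooth-passage formula applies with $\sigma=1$.) Summing over all switches and reorganizing the total as a sum over the edge-sides traversed by $c$: an edge-side carrying sign $\sigma_s$ occurs once as the "outgoing" edge at the corner behind it, contributing $+\sigma_s\,w$, and once as the "incoming" edge at the corner ahead of it, contributing $-\sigma_s\,w$; these signs match because a smooth passage preserves the interval sign whereas a cusp flips it, and $n$ even makes the match consistent all the way around. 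Hence all the $w(e)$-terms cancel telescopically around the cycle $c$, leaving only a combination of terms $\pm\sigma\,T_v(w)$, which vanishes since $w\in\mathcal{W}(\tau)$.

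The main obstacle is purely the orientation and sign bookkeeping: one must simultaneously track the orientation of the ambient surface near each switch, the boundary orientation of $c$, and the labels of the intervals $I_k$, and verify that the cusp and smooth-passage contributions assemble into a telescoping sum. The cleanest way to organize this --- and what \Cref{fig:radicalelement} is meant to illustrate --- is to fix the boundary orientation of $c$ once and for all, note that $c$ then turns counterclockwise at every corner, and check the two local formulas above directly from \Cref{defn:thurstonform}; the even-pronged hypothesis is used only to make the alternating weights $(-1)^k$ well-defined and the per-edge-side cancellation globally consistent around $c$.
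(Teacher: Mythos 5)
Your proposal is correct and follows essentially the same strategy as the paper: compute the contribution to $\omega(w,r_c)$ at each switch on $c$, distinguishing the cusp case (contribution $\pm(w(e_1)+w(e_2))$) from the smooth-passage case, and observe telescoping cancellation around $c$. The one stylistic refinement --- keeping the $T_v(w)$ term explicit in the smooth-passage formula so that the total visibly lies in $\spn\{T_v\}$, rather than invoking the switch relation $T_v(w)=0$ vertex-by-vertex as the paper does --- is a cleaner packaging of the identical computation, and like the paper it leaves the non-embedded-$c$ case to the reader.
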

\begin{proof}
Let $w$ be an element of $\mathcal{W}(\tau)$, we have to check that $\omega(w,r_c)=0$. Let us first assume that $c$ is embedded in $\tau$ for simplicity. Let $v$ be a vertex on $c$ that lies at a cusp. Suppose half-edges $e_1$ and $e_2$ determine the cusp, where $e_1$ is to the left of $e_2$, and suppose that $r_c$ assigns the weight $(-1)^i$ to $e_i$. Then the contribution of $v$ in $\omega(w,r_c)$ is 
\begin{align*}
    & \sum_{e \text{ left of } e_1} (w(e) \times (-1) - w(e_1) \times 0) + (w(e) \times 1 - w(e_2) \times 0) \\
    &+ \sum_{e_2 \text{ left of } e} (w(e_2) \times 0 - w(e) \times 1) + (w(e) \times 1 - w(e_2) \times 1) + w(e_1) \times 1 - w(e_2) \times (-1) \\
    &= w(e_1) + w(e_2).
\end{align*}
If instead $r_c$ assigns the weight $(-1)^{i+1}$ to $e_i$, then the contribution of $v$ is $-w(e_1)-w(e_2)$.

Now let $v$ be a vertex on $c$ that does not lie at a cusp. Let $e_\beta \in \mathcal{E}^\beta_v$, $\beta=1,2$, be the two edges that lie on $c$, such that $e_1$ is the rightmost half-edge in $\mathcal{E}^1_v$ and $e_2$ is the leftmost half-edge in $\mathcal{E}^2_v$. Suppose that $r_c$ assigns the weight $1$ to $e_1$ and $e_2$. Then the contribution of $v$ in $\omega(w,r_c)$ is 
\begin{align*}
    & \sum_{e \text{ left of } e_1} (w(e) \times 1 - w(e_1) \times 0) + \sum_{e_2 \text{ left of } e} (w(e_2) \times 0 - w(e) \times 1) \\
    &= (\sum_{e \in \mathcal{E}^1_v} w(e) - w(e_1)) - (\sum_{e \in \mathcal{E}^2_v} w(e) - w(e_2)) \\
    &= w(e_2) - w(e_1).
\end{align*}
If instead $r_c$ assigns the weight $-1$ to $e_1$ and $e_2$, then the contribution of $v$ is $w(e_1)-w(e_2)$.

When adding together the contributions from all vertices, the terms cancel out in pairs, giving us $0$. We schematically illustrate how the cancelling occurs in \Cref{fig:radicalelement} right.

If $c$ is not embedded, that is, some switch of $\tau$ meets $c$ more than once, then the above computation still holds with some more careful book-keeping. 
For example, if a vertex $v$ lies on two cusps of $c$, say determined by pairs of half-edges $(e^1_1, e^1_2)$ and $(e^2_1, e^2_2)$ respectively, then one can check that the contribution of $v$ is $\pm (w(e^1_1)+w(e^1_2)) \pm (w(e^2_1)+w(e^2_2))$, depending on whether $r_c$ assigns $\pm (-1)^i$ to $e^1_i$ and $e^2_i$ respectively. % New
We let the reader fill in the details. Alternatively, one can pass to a finite cover of $\tau$ where $c$ is embedded and perform the computation there.
\end{proof}

The radical elements behave nicely with respect to elementary moves. To state this precisely, first note that if $\tau'$ is obtained from $\tau$ by an elementary move, then the boundary components of $\tau'$ are in natural one-to-one correspondence with those of $\tau$. 

\begin{prop} \label{prop:elementarymovesradicalelement}
Suppose $c'$ is a boundary component of $\tau'$ that corresponds to a boundary component $c$ of $\tau$. Then $r_c$ maps to $r_{c'}$ under the isomorphism $\mathcal{W}(\tau) \to \mathcal{W}(\tau')$ induced by the elementary move.
\end{prop}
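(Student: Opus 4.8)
The plan is to verify the claim separately for each type of elementary move, using the same case analysis that appears in the proofs of \Cref{lemma:subdivisionsymplectic} and \Cref{lemma:elementaryfoldingsymplectic}. Recall that in both cases the isomorphism $\mathcal{W}(\tau) \to \mathcal{W}(\tau')$ is the one induced by (the restriction of) the transition matrix, and it was shown there that a weight $w$ is sent to a weight $w'$ which agrees with $w$ on all edges not involved in the move, with the remaining edges determined by the branching equations. So to prove that $r_c \mapsto r_{c'}$, it suffices to check that $r_{c'}$, as defined intrinsically on $\tau'$ via \Cref{defn:radicalelement}, coincides with the image of $r_c$ under this explicit formula.

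First I would handle the subdivision move. Here $\tau'$ is obtained from $\tau$ by replacing an edge $e$ by two edges $e_1, e_2$ meeting at a new bivalent vertex $v$. The edge $e$ lies on at most two boundary components of $\tau$ (one on each side), and after subdivision each of $e_1, e_2$ lies on the same boundary components in the same cyclic position. Since $\mathcal{W}(f)$ sends $w$ to the weight with $w'(e_1) = w'(e_2) = w(e)$ and all other weights unchanged, and since the intervals $I_k$ cut out by cusps on $c'$ are in obvious bijection with those on $c$ (the new vertex $v$ is not at a cusp, so it does not split any $I_k$), the formula $(-1)^k$ or $1$ assigned to an edge on $I_k$ is preserved. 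This is immediate. The $0$-pronged case is identical, assigning $1$ throughout.

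Next I would handle the elementary folding move, which is the substantive case. Using the notation of \Cref{defn:elementaryfolding}: edges $e_1, e_2$ form a cusp at $v$, and are replaced by a single edge $e$, with $v'_1, v'_2$ merged into $v'$. I would split into subcases according to how the boundary component $c$ interacts with the cusp and with $v'_1, v'_2$. The key point is that folding removes exactly one cusp of $\tau$ (the one at $v$ between $e_1$ and $e_2$) and correspondingly merges two intervals $I_k, I_{k+1}$ on some boundary component into a single interval of $c'$; for the radical element, the alternating signs $(-1)^k$ and $(-1)^{k+1}$ on those two intervals must become a single consistent sign on the merged interval, which is possible precisely because the two intervals meet at the folded cusp and $c'$ has two fewer cusps — hence parity is consistent around $c'$. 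Meanwhile, on the side of $e_1, e_2$ away from the cusp, whether those edges lie on one boundary component of $c$ or two, the merge of $v'_1, v'_2$ glues the relevant intervals compatibly. One then checks, edge by edge, that the recovery formula for $w_i(e_j)$ from the proof of \Cref{lemma:elementaryfoldingsymplectic} sends $r_{c'}$ back to $r_c$: the weight $r_{c'}$ assigns to $e$ equals the common sign on the merged interval, and this is exactly $r_c(e_1) = r_c(e_2)$, which holds because $e_1$ and $e_2$ are consecutive edges in a single interval of $c$ bordering the cusp at $v$ (so $r_c$ assigns them the same sign).

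I expect the main obstacle to be the bookkeeping in the folding case: one must carefully track the several configurations of how $c$ sits relative to the cusp $(e_1,e_2)$ and the vertices $v'_1, v'_2$ (e.g. $c$ passing through $v$ at the cusp versus elsewhere; $c$ meeting both $v'_1$ and $v'_2$ versus just one), and confirm in each that the parity of the interval labels is globally consistent on $c'$ and matches the image of $r_c$. As with the analogous lemmas in the paper, I would present the representative case in detail and remark that the others are similar, or alternatively pass to a finite cover of $\tau$ in which $c$ is embedded to streamline the argument, exactly as in the proof of \Cref{prop:radicalelementinradical}. Finally, since an arbitrary elementary move is one of these two types, and the correspondence of boundary components and the induced isomorphism compose functorially, the general statement follows.
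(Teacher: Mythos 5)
Your treatment of the subdivision move is correct and agrees with the paper. The folding case, which is the substance of the proposition, rests on a false premise: you assert that an elementary folding move removes the cusp at $v$ between $e_1$ and $e_2$, so that two intervals of $c$ merge and $c'$ has two fewer cusps. In fact a fold does not change the number of cusps on any boundary component: the cusp between $e_1$ and $e_2$ is not destroyed but migrates to the merged vertex $v'$ (one cusp disappears at $v$, but exactly one new cusp appears at $v'$, between the half-edges coming from $\mathcal{E}^1_{v'_1}$ and from $\mathcal{E}^1_{v'_2}$ in the ordering of \Cref{defn:elementaryfolding}, and it belongs to the same complementary region). This preservation is essential -- it is what guarantees that $c'$ is even-pronged exactly when $c$ is, so that $r_{c'}$ is defined at all.

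Because of this, your computation in the key case is wrong. When the folded cusp lies on $c$, the edges $e_1$ and $e_2$ are not ``consecutive edges in a single interval of $c$''; they lie on the two intervals adjacent to that cusp, so $r_c$ assigns them opposite signs. The induced map on weights assigns to the new edge $e$ the sum $r_c(e_1)+r_c(e_2)=0$ (note also that your claimed value, the common sign, is internally inconsistent with the formula $w'(e)=w(e_1)+w(e_2)$ you invoke from \Cref{lemma:elementaryfoldingsymplectic}: a common sign $s$ would give image $2s$, not $s$). The value $0$ is the correct target because after folding the complementary region whose cusp was at $v$ is pushed off the edge $e$ entirely -- its cusp now sits at $v'$ -- so $c'$ does not traverse $e$ at the corresponding place and $r_{c'}$ receives no contribution on $e$ from it; this cancellation is exactly the paper's short argument. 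Passages of $c$ along the non-cusp sides of $e_1$ or $e_2$, and along all other edges, carry over with unchanged signs since the cyclic sequence of cusps of $c$ is preserved. So your case division is reasonable, but the proposed mechanism (cusp removal and interval merging) would not yield the statement and must be replaced by the migration-of-the-cusp picture together with the cancellation $r_c(e_1)+r_c(e_2)=0$.
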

\begin{proof}
The proposition is clear for subdivision moves, and clear for elementary folding moves if, in the notation of \Cref{defn:elementaryfolding}, the cusp determined by $(e_1,e_2)$ is not in $c$. If the cusp is in $c$, then, up to a sign, $r_c$ assigns the weight $(-1)^i$ to $e_i$, hence upon folding these add up to $0$, which is the weight assigned by $r_{c'}$.
\end{proof}

\begin{prop} \label{prop:radicalreciprocal}
Let $\mathcal{X}=\mathcal{X}_I \sqcup \mathcal{X}_O$ be a partition of the set of punctures into two nonempty $f$-invariant sets. Then the train track map $f_{(\mathcal{X}_I,\mathcal{X}_O)}:\tau_{(\mathcal{X}_I,\mathcal{X}_O)} \to \tau_{(\mathcal{X}_I,\mathcal{X}_O)}$ preserves $\spn \{r_c\}$ and acts on it via a reciprocal map.
\end{prop}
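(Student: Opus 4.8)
The plan is to trace the radical elements $r_c$ through a factorization of $f_{(\mathcal{X}_I,\mathcal{X}_O)}$ into elementary moves, conclude that $\spn\{r_c\}$ is invariant, and then observe that the induced action on it has finite order. Since $\mathcal{X}_I$ and $\mathcal{X}_O$ are $f$-invariant, the train track partition $\mathcal{M}:=\mathcal{M}_{(\mathcal{X}_I,\mathcal{X}_O)}$ is Markov by \Cref{prop:markovttpartition}, so by \Cref{lemma:ttmapfactorization} we may write $f_{(\mathcal{X}_I,\mathcal{X}_O)} = \sigma\circ f_n\circ\cdots\circ f_1$, where each $f_i$ is an elementary move and $\sigma$ is an isomorphism of train tracks, the intermediate train tracks interpolating between $\tau:=\tau_{(\mathcal{X}_I,\mathcal{X}_O)}$ and $\tau_{f^{-1}(\mathcal{M})}$. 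By \Cref{prop:elementarymovesradicalelement}, the isomorphism of weight spaces induced by each $f_i$ carries $r_c$ to $r_{c'}$ for the corresponding (necessarily even-pronged) boundary component $c'$, and the same is immediate for $\mathcal{W}(\sigma)$, since $r_c$ depends only on the combinatorial data of the train track. Chaining these isomorphisms together shows that $\mathcal{W}(f_{(\mathcal{X}_I,\mathcal{X}_O)})$ sends each radical element $r_c$ of $\tau$ to $\pm r_{c'}$ for some even-pronged boundary component $c'$ of $\tau$, the sign being exactly the ambiguity already present in the definition of $r_c$ (there being no canonical labelling of the complementary intervals). In particular the subspace $V:=\spn\{r_c\}$ --- which lies in $\rad(\omega)\subseteq\mathcal{W}(\tau)$ by \Cref{prop:radicalelementinradical} --- is preserved by $\mathcal{W}(f_{(\mathcal{X}_I,\mathcal{X}_O)})$.

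To conclude that $f_{(\mathcal{X}_I,\mathcal{X}_O)}$ acts on $V$ reciprocally, I would argue that this action has finite order. By \Cref{prop:ttmapsymplectic}, $\mathcal{W}(f_{(\mathcal{X}_I,\mathcal{X}_O)})=\mathcal{W}(f_\mathcal{M})$ is an isomorphism of $\mathcal{W}(\tau)$, so its restriction to the invariant subspace $V$ is an automorphism of $V$; this automorphism maps the finite set $\{\pm r_c\}\subset V$ into itself, and therefore (being injective) permutes it, so $\mathcal{W}(f_{(\mathcal{X}_I,\mathcal{X}_O)})|_V$ has finite order. A finite-order operator on a real vector space has all of its eigenvalues roots of unity, and, being real, these occur in complex-conjugate pairs, so the eigenvalue multiset is invariant under $\zeta\mapsto\overline{\zeta}=\zeta^{-1}$ --- which is exactly reciprocity. (Equivalently, one can present the action on $V$ as an equivariant quotient of a signed permutation matrix and invoke \Cref{prop:reciprocalfacts}(2) and (3).)

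The substantive inputs here are \Cref{lemma:ttmapfactorization} and \Cref{prop:elementarymovesradicalelement}; granting those, the only point that needs a little care is that the $r_c$ need not be linearly independent, so one cannot simply declare $\mathcal{W}(f_{(\mathcal{X}_I,\mathcal{X}_O)})|_V$ to be a signed permutation matrix in the ``basis'' $\{r_c\}$ --- the finite-order observation (or the quotient argument) is what sidesteps this. I do not anticipate a serious obstacle beyond carefully assembling these pieces.
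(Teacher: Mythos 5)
Your proof is correct, and it takes a genuinely different — and arguably slicker — route for the reciprocity half of the statement. The invariance of $\spn\{r_c\}$ is handled the same way as in the paper: factor $f_{(\mathcal{X}_I,\mathcal{X}_O)}$ via \Cref{lemma:ttmapfactorization} and chase radical elements through the elementary moves using \Cref{prop:elementarymovesradicalelement}. Where you diverge is in establishing reciprocity. The paper first proves a structural claim about $\{r_c\}$: they are linearly independent unless every puncture is even-pronged, in which case there is exactly one relation $\sum_c \pm r_c = 0$; this requires a small argument using \Cref{lemma:manystandardlyemb} to reduce to $|\mathcal{X}_O|=1$. It then splits into two cases: if $\{r_c\}$ is a basis, the action is literally a signed permutation matrix (reciprocal by \Cref{prop:reciprocalfacts}(2)); if there is the single relation, it presents $\spn\{r_c\}$ as a quotient of $\mathbb{R}^{\mathcal{X}}$ by $\langle\sum\pm r_c\rangle$ via a short exact sequence and invokes \Cref{prop:reciprocalfacts}(3). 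Your observation that $\mathcal{W}(f_{(\mathcal{X}_I,\mathcal{X}_O)})|_V$ injectively maps the finite spanning set $\{\pm r_c\}\subset V$ into itself --- hence permutes it, hence has finite order, hence has eigenvalues which are roots of unity and therefore closed under $\zeta\mapsto\overline{\zeta}=\zeta^{-1}$ --- bypasses the linear-independence analysis entirely and treats both cases uniformly. What the paper's version buys in exchange is explicit information about $\dim\spn\{r_c\}$, which it never actually uses in this proposition but is the same kind of bookkeeping that underlies its computation of $\rad(\omega)$; your version buys brevity and uniformity. You correctly flag the one place that would trip up a naive reading (the $r_c$ need not be a basis, so one cannot declare the matrix to be a signed permutation outright), and your parenthetical quotient alternative is essentially the paper's second case. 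The only minor thing worth stating explicitly is that $\mathcal{W}(\sigma)$ carries radical elements to radical elements because $\sigma$ is a train track isomorphism and $r_c$ is defined purely combinatorially --- which you do note.
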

\begin{proof}
\Cref{lemma:ttmapfactorization} and \Cref{prop:elementarymovesradicalelement} imply that $\mathcal{W}(f_{(\mathcal{X}_I,\mathcal{X}_O)})$ preserves $\spn \{r_c\}$.

For the second part of the statement, we claim that $\{r_c\}$ are linearly independent unless all punctures are even-pronged, in which case there is only at most one relation of the form $\sum_c \pm r_c = 0$.

Notice that by \Cref{prop:elementarymovesradicalelement}, the validity of the claim is invariant under equivalence of train tracks. Hence by \Cref{lemma:manystandardlyemb}, we can assume that $\mathcal{X}_O$ consists of a single element corresponding to $c_O \in \partial \tau$.

If there is an odd-pronged puncture, we can further assume that $c_O$ is odd-pronged. In this case, all even-pronged boundary components of the train track are disjoint, hence $\{r_c\}$ are linearly independent.

If all punctures are even-pronged, then all even-pronged boundary components except for $c_O$ are disjoint. For an edge $e$ on an infinitesimal polygon $c$, $r_c$ and $r_{c_O}$, and only $r_c$ and $r_{c_O}$, are nonzero on it. In fact, they assign weights $\pm 1$ to $e$, so the ratio of their coefficients is uniquely determined to be one of $\pm 1$. This proves the claim.

Returning to the proof of the second part of the proposition, if $\{r_c\}$ are linearly independent, then by \Cref{lemma:ttmapfactorization} and \Cref{prop:elementarymovesradicalelement}, $f_{(\mathcal{X}_I,\mathcal{X}_O)}$ acts on $\spn \{r_c\}$ by a signed permutation matrix, so this follows from \Cref{prop:reciprocalfacts}(2).

If $\{r_c\}$ are not linearly independent, then consider the commutative diagram
\begin{center}
\begin{tikzcd}
0 \arrow[r] & \langle \sum \pm r_c \rangle \arrow[r] \arrow[d, "\pm 1"] & \mathbb{R}^\mathcal{X} \arrow[r] \arrow[d, "P"] & \spn \{r_c\} \arrow[r] \arrow[d, "\mathcal{W}(f_{(\mathcal{X}_I,\mathcal{X}_O)})"] & 0 \\
0 \arrow[r] & \langle \sum \pm r_c \rangle \arrow[r] & \mathbb{R}^\mathcal{X} \arrow[r] & \spn \{r_c\} \arrow[r] & 0
\end{tikzcd}
\end{center}
where $P$ is a signed permutation matrix, hence is reciprocal, as in the last case. By \Cref{prop:reciprocalfacts}(3), $\mathcal{W}(f_{(\mathcal{X}_I,\mathcal{X}_O)})$ acts on $\spn \{r_c\}$ via a reciprocal map.
\end{proof}

The significance of \Cref{prop:radicalreciprocal} comes from the following result, whose proof will occupy the rest of this section.

\begin{prop} \label{prop:radical}
Let $\mathcal{X}=\mathcal{X}_I \sqcup \mathcal{X}_O$ be a partition of the set of punctures into two nonempty sets. For the train track $\tau_{(\mathcal{X}_I,\mathcal{X}_O)}$, we have 
\begin{equation} \label{eq:radical}
    \rad(\omega)=\spn \{r_c\}
\end{equation}
where $c$ ranges over all even-pronged boundary components of $\tau$.
\end{prop}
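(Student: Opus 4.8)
The inclusion $\spn\{r_c\}\subseteq\rad(\omega)$ is precisely \Cref{prop:radicalelementinradical}, so the content of \eqref{eq:radical} is the reverse inclusion. The plan is to deduce it from a dimension count: granting $\spn\{r_c\}\subseteq\rad(\omega)$, it is enough to show $\dim\rad(\omega)\le\dim\spn\{r_c\}$, equivalently to bound $\operatorname{rank}(\omega)$ below by $\dim\mathcal W(\tau)-\dim\spn\{r_c\}$. Before counting I would put the train track in a convenient form. By \Cref{lemma:subdivisionsymplectic}, \Cref{lemma:elementaryfoldingsymplectic}, and \Cref{prop:elementarymovesradicalelement}, an elementary move induces an $\omega$-preserving isomorphism of weight spaces carrying radical elements to radical elements, so the truth of \eqref{eq:radical} is invariant under the equivalence of \Cref{defn:equivalenttt}; hence by \Cref{lemma:manystandardlyemb} I may assume $\mathcal X_O$ consists of a single puncture, so that $\tau$ has exactly one outer boundary component $c_O$ and all of its other boundary components are infinitesimal polygons. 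This is the setting in which the book-keeping below is cleanest.

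Next I would pin down the two dimensions. For $\dim\spn\{r_c\}$ I would reuse and sharpen the linear-independence argument from the proof of \Cref{prop:radicalreciprocal}: with $|\mathcal X_O|=1$ the family $\{r_c\}$ (over even-pronged $c$) is linearly independent unless every puncture is even-pronged, in which case there is exactly one relation, namely the one forced by the observation that each infinitesimal edge is met by only $r_c$ and $r_{c_O}$, with values in $\{\pm1\}$. For $\dim\mathcal W(\tau)$ I would analyze the switch equations directly using the standardly embedded structure: at every switch exactly two infinitesimal half-edges lie on one side of the smoothing and all real half-edges on the other, so for a weight system $w$ the switch equation at a switch $v$ reads $w(i)+w(i')=\sum_{e\text{ real},\ e\ni v}w(e)$ with $i,i'$ the two incident infinitesimal edges. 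Solving these around each infinitesimal polygon shows that the kernel of the projection $\mathcal W(\tau)\to\mathbb R^{\mathcal E_{\real}}$ is exactly $\spn\{r_c : c \text{ an even-pronged infinitesimal polygon}\}$ and that its image is cut out by one linear equation per even-pronged infinitesimal polygon; together with \Cref{prop:realedgenumber} and \Cref{prop:tienbd=surface} this gives $\dim\mathcal W(\tau)=-\chi(\tau)=|\chi(S)|$, with a single extra dimension in the orientable case (all punctures even-pronged), where the switch equations are one short of being independent. Matching the two counts, the target value of $\operatorname{rank}(\omega)$ works out to $2g-2+(s-e)$ in general and $2g$ in the orientable case, with $e$ the number of even-pronged boundary components; in both cases this is even, as it should be, since $s-e$ (the number of odd-pronged punctures) is even because the total number of prongs equals $2|\chi(S)|$.

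The remaining, and principal, step is the lower bound $\operatorname{rank}(\omega)\ge\dim\mathcal W(\tau)-\dim\spn\{r_c\}$, i.e.\ exhibiting a subspace of that dimension on which $\omega$ is nondegenerate. I expect this to be the main obstacle, and would attack it via the homological meaning of the Thurston form. By \Cref{prop:tienbd=surface} the tie neighborhood $N$ of $\tau$ embeds in $S$ as a homotopy equivalence, and a weight system determines, after passing where necessary to the orientation double cover associated to $\ell^u$, a first-homology class of the appropriate compactified surface, with respect to which $\omega$ is the pullback of the algebraic intersection pairing; this is the extension past the trivalent case of \cite[Lemma 3.2.2]{PH92}. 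Since the intersection pairing on the homology of a compact genus-$g$ surface with $s$ boundary circles has rank $2g$ and radical of dimension $s-1$ spanned by the peripheral classes, I would finish by (i) computing the kernel and image of the carrying map $\phi\colon\mathcal W(\tau)\to H_1$ explicitly from the combinatorics of $\tau$, (ii) checking that $\phi$ maps $\spn\{r_c\}$ onto the peripheral radical and that every element of $\rad(\omega)$ maps into that radical --- this is where the parity dichotomy enters, since a boundary component of $\tau$ with an odd number of cusps supports no alternating-sign weight system and so contributes to the degeneracy only through $\ker\phi$, not through a fresh radical generator --- and (iii) reconciling the dimensions with the previous paragraph. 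A more self-contained route, staying inside the train-track formalism, would instead choose for each infinitesimal polygon and each complementary region an explicit ``dual'' weight system supported near a short transversal to $\tau$ (made into an honest weight by a folding move) and compute the Gram matrix of $\omega$ on this collection directly, reducing nondegeneracy to a finite cusp-by-cusp computation in the spirit of \Cref{prop:radicalelementinradical}. Either way, the heart of the matter is the nondegeneracy of $\omega$ on a complement of the manifestly peripheral part, with the orientable/non-orientable dichotomy for $\ell^u$ tracked throughout.
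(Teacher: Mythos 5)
Your plan is genuinely different from the paper's, and as written it contains a serious gap at the step you yourself flag as the principal one.

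The paper does not argue by dimension count or invoke any homological reinterpretation of $\omega$. Instead it splits into the two cases ``some puncture is odd-pronged'' and ``all punctures are even-pronged.'' In the first case it uses \Cref{lemma:manystandardlyemb} to make $\mathcal{X}_I$ (not $\mathcal{X}_O$) a single odd-pronged puncture, so $\tau$ is a ``nice'' track with one infinitesimal polygon, and then shows (\Cref{lemma:combineprong}, \Cref{lemma:splitprong}) that the validity of \eqref{eq:radical} is preserved under deleting a real edge, with four-way case analysis on the parities of the boundary components affected; since any two nice tracks with the same polygon are connected by edge additions/deletions, this reduces to a direct check on the explicit model $\tau_n$ of \Cref{fig:eztt}, whose weight space is visibly spanned by the $r_c$. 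In the second case it first passes to the orientation double cover via \Cref{lemma:covering} and then runs the analogous deletion argument in the oriented setting. No intersection-pairing identification is ever made.

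The gap in your proposal is precisely the nondegeneracy lower bound $\operatorname{rank}(\omega)\ge\dim\mathcal W(\tau)-\dim\spn\{r_c\}$. You sketch two possible routes (extend \cite[Lemma~3.2.2]{PH92} past the trivalent case and track the carrying map $\phi$ through the orientation double cover; or assemble ad hoc dual weight systems and compute a Gram matrix), but neither is carried out, and the first would itself require establishing a nontrivial identification and then carefully analyzing $\ker\phi$ and the interaction with the peripheral radical in the non-orientable, many-infinitesimal-polygon situation. This is the entire content of the proposition; without it the argument is a reduction of one hard claim to another. Separately, your dimension count conflates ``$\tau$ orientable'' with ``all punctures even-pronged.'' The one dependency among the switch equations (which gives the extra dimension in $\mathcal W(\tau)$) exists precisely when $\tau$ is orientable, and having all boundary components even-pronged is necessary but not sufficient for orientability; the same dichotomy governs whether the unique potential relation among the $r_c$ actually holds. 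The two effects plausibly cancel in the final rank formula, but as stated the bookkeeping is not quite right, and this is exactly the subtlety the paper's double-cover step is designed to handle cleanly.

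Your reduction via \Cref{lemma:manystandardlyemb} and \Cref{lemma:elementaryfoldingsymplectic}/\Cref{prop:elementarymovesradicalelement} is correct and matches the paper's \Cref{lemma:elementarymoveradical}, and your analysis of the kernel and image of the projection $\mathcal W(\tau)\to\mathbb R^{\mathcal E_{\real}}$ is a correct and useful observation that the paper does not state explicitly. If you want to salvage the homological route, the cleanest path is likely to \emph{first} pass to the orientation double cover exactly as in \Cref{lemma:covering} (so that orientability is genuinely available), prove the generalization of the intersection-form identification there, and only then descend; but this is a substantial project compared with the paper's elementary deletion induction.
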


The strategy to proving \Cref{prop:radical} is to modify the train track into a convenient form before making concrete computations. This strategy was already used in the proof of \Cref{prop:radicalreciprocal}, where we modified the train track up to equivalence. This applies equally well in the setting of \Cref{prop:radical}, as we have the following observation.

\begin{lemma} \label{lemma:elementarymoveradical}
Suppose $\tau$ and $\tau'$ are equivalent, then (\ref{eq:radical}) holds for $\tau$ if and only if it holds for $\tau'$.
\end{lemma}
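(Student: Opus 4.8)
The plan is to first reduce to a single elementary move: the equivalence relation of \Cref{defn:equivalenttt} is generated by single elementary moves, and the biconditional ``\Cref{eq:radical} holds for $\tau$ $\iff$ it holds for $\tau'$'' is transitive, so it suffices to treat the case in which $\tau'$ is obtained from $\tau$ by one subdivision or one elementary folding move.

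In that case, \Cref{lemma:subdivisionsymplectic} (resp.\ \Cref{lemma:elementaryfoldingsymplectic}) provides a linear isomorphism $\Phi = \mathcal{W}(f):\mathcal{W}(\tau)\to\mathcal{W}(\tau')$ carrying the Thurston form $\omega$ of $\tau$ to that of $\tau'$. Since an $\omega$-preserving isomorphism carries the radical of the form isomorphically onto the radical (if $\omega(u,w)=0$ for all $u$ then $\omega(\Phi u,\Phi w)=0$ for all $u$, and $\Phi$ is onto), we get $\Phi(\rad(\omega))=\rad(\omega)$, the left-hand radical taken in $\mathcal{W}(\tau)$ and the right-hand one in $\mathcal{W}(\tau')$. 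Next, recalling the natural bijection $\partial\tau\leftrightarrow\partial\tau'$ described just before \Cref{prop:elementarymovesradicalelement}, that proposition gives $\Phi(r_c)=r_{c'}$ for each corresponding pair $c\leftrightarrow c'$.

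To finish, I need the bijection $\partial\tau\leftrightarrow\partial\tau'$ to match even-pronged components with even-pronged components; equivalently, that a single elementary move preserves the parity of the number of cusps on each boundary component. For a subdivision this is immediate, as neither the boundary components nor their cusps are affected. For an elementary folding move I would read off from \Cref{defn:elementaryfolding} that collapsing $(e_1,e_2)$ destroys exactly the cusp between them and creates exactly one new cusp at the vertex $v'$ obtained by merging $v_1'$ and $v_2'$, and that these two cusps lie on the same boundary component, so no boundary component changes its cusp count (this is also what underlies \Cref{prop:elementarymovesradicalelement}, where the weight on the new edge $e$ becomes $(-1)^i+(-1)^{i+1}=0$). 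Granting this, $\Phi$ restricts to an isomorphism from $\spn\{r_c: c\in\partial\tau \text{ even-pronged}\}$ onto $\spn\{r_{c'}: c'\in\partial\tau' \text{ even-pronged}\}$; combining with $\Phi(\rad(\omega))=\rad(\omega)$ then shows $\rad(\omega)=\spn\{r_c\}$ for $\tau$ exactly when the corresponding equality holds for $\tau'$.

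The only genuine work is the parity bookkeeping for the folding move; everything else is a formal consequence of $\Phi$ respecting $\omega$ and the radical elements. I expect the bookkeeping to be straightforward but slightly fiddly when some switch meets a boundary component more than once, in which case --- exactly as in the proof of \Cref{prop:radicalelementinradical} --- it is cleanest to pass to a finite cover of $\tau$ on which the relevant boundary components embed and carry out the count there.
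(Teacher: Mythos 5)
Your proposal is correct and takes essentially the same approach as the paper, whose proof simply cites \Cref{lemma:elementaryfoldingsymplectic} and \Cref{prop:elementarymovesradicalelement}. You fill in the details that the paper leaves implicit --- in particular the reduction to a single elementary move, the formal argument that an $\omega$-preserving isomorphism identifies radicals, and the parity bookkeeping showing that the natural bijection on boundary components matches even-pronged components with even-pronged components (which is tacitly presupposed whenever \Cref{prop:elementarymovesradicalelement} is invoked).
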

\begin{proof}
This follows from \Cref{lemma:elementaryfoldingsymplectic} and \Cref{prop:elementarymovesradicalelement}.
\end{proof}

Let us call a standardly embedded train track \textit{floral} if it has only one infinitesimal polygon. Visually, the single infinitesimal polygon forms the pistil while the real edges forms the petals of a flower. See \Cref{fig:oddnice} for an example. Hence using \Cref{lemma:manystandardlyemb} and \Cref{lemma:elementarymoveradical}, we can assume that $\mathcal{X}_I$ consists of a single element, that is, $\tau_{(\mathcal{X}_I,\mathcal{X}_O)}$ is floral.

Here the proof divides into two cases. Case 1 is if there is an odd-pronged puncture. In this case we can take the single infinitesimal polygon of $\tau_{(\mathcal{X}_I,\mathcal{X}_O)}$ to be odd-pronged. We show that \textit{admissible deletion} of real edges preserves (\ref{eq:radical}) (\Cref{lemma:combineprong} and \Cref{lemma:splitprong}). This allows us to modify our train track into a simple form where we can explicitly verify (\ref{eq:radical}). This case is tackled in \Cref{subsec:oddprong}.

Case 2 is if all punctures are even-pronged. In this case, we would like to repeat the reasoning in case 1, but here we must first pass to the orientable cover (\Cref{lemma:covering}) before the arguments work. This case is tackled in \Cref{subsec:evenprong}.

\subsection{Case 1: There is an odd-pronged puncture}  \label{subsec:oddprong}

As explained above, in this case we can assume that $\tau_{(\mathcal{X}_I,\mathcal{X}_O)}$ is floral with an odd-pronged infinitesimal polygon, or \textit{odd floral} for short. See \Cref{fig:oddnice} for an example.

\begin{figure}
    \centering
    \resizebox{!}{4cm}{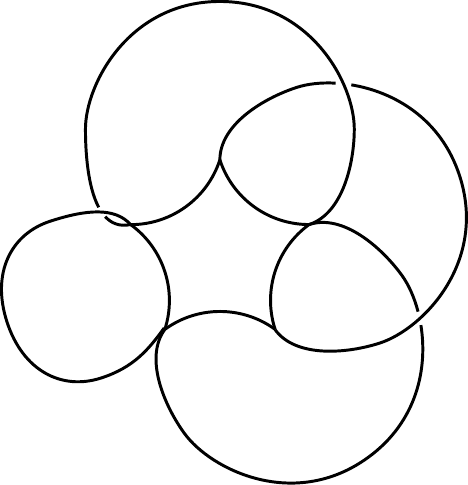}
    \caption{An odd floral train track.}
    \label{fig:oddnice}
\end{figure}

For odd floral train tracks, we can construct some convenient elements in the weight space for which we can test elements of $\rad(\omega)$ against. 

\begin{constr} \label{constr:realedgeelement}
Let $\tau$ be an odd floral train track. Let $e$ be a real edge in $\tau$. Let $c_I$ be the infinitesimal polygon of $\tau$. Label the vertices of $c_I$ by $v_1,..,v_n$ in a cyclic order such that the endpoints of $e$ lie on $v_1$ and $v_k$. Up to flipping the ordering, we can assume that $k$ is even. Also label the edges of $c_I$ by $e_1,...,e_n$ such that $e_i$ connects $v_i$ to $v_{i+1}$. 

We define $w_e \in \mathcal{W}(\tau)$ to be the element that assigns $(-1)^{i+1}$ to $e_i$ for $i=1,...,k-1$, assigns $1$ to $e$, and assigns $0$ to all other edges.
\end{constr}

We now introduce the operation of admissible deletions. This is a general way of modifying train tracks, but is particularly useful for proving \Cref{prop:radical} when applied within the realm of floral train tracks, as we will see.

\begin{constr} \label{constr:delete}
Let $\tau$ be a train track and $e$ be an edge of $\tau$. If $e$ is not the only half-edge at both of its end points, then we can delete $e$ from $\tau$ to get a new train track $\tau'$. See \Cref{fig:delete}. We call this operation an \textit{admissible deletion} (of the edge $e$).

\begin{figure}
    \centering
    \fontsize{16pt}{16pt}\selectfont
    \resizebox{!}{1.5cm}{%% Creator: Inkscape inkscape 0.92.4, www.inkscape.org
%% PDF/EPS/PS + LaTeX output extension by Johan Engelen, 2010
%% Accompanies image file '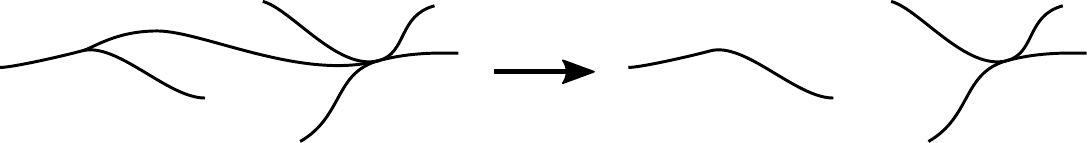' (pdf, eps, ps)
%%
%% To include the image in your LaTeX document, write
%%   \input{<filename>.pdf_tex}
%%  instead of
%%   \includegraphics{<filename>.pdf}
%% To scale the image, write
%%   \def\svgwidth{<desired width>}
%%   \input{<filename>.pdf_tex}
%%  instead of
%%   \includegraphics[width=<desired width>]{<filename>.pdf}
%%
%% Images with a different path to the parent latex file can
%% be accessed with the `import' package (which may need to be
%% installed) using
%%   \usepackage{import}
%% in the preamble, and then including the image with
%%   \import{<path to file>}{<filename>.pdf_tex}
%% Alternatively, one can specify
%%   \graphicspath{{<path to file>/}}
%% 
%% For more information, please see info/svg-inkscape on CTAN:
%%   http://tug.ctan.org/tex-archive/info/svg-inkscape
%%
\begingroup%
  \makeatletter%
  \providecommand\color[2][]{%
    \errmessage{(Inkscape) Color is used for the text in Inkscape, but the package 'color.sty' is not loaded}%
    \renewcommand\color[2][]{}%
  }%
  \providecommand\transparent[1]{%
    \errmessage{(Inkscape) Transparency is used (non-zero) for the text in Inkscape, but the package 'transparent.sty' is not loaded}%
    \renewcommand\transparent[1]{}%
  }%
  \providecommand\rotatebox[2]{#2}%
  \newcommand*\fsize{\dimexpr\f@size pt\relax}%
  \newcommand*\lineheight[1]{\fontsize{\fsize}{#1\fsize}\selectfont}%
  \ifx\svgwidth\undefined%
    \setlength{\unitlength}{521.5600885bp}%
    \ifx\svgscale\undefined%
      \relax%
    \else%
      \setlength{\unitlength}{\unitlength * \real{\svgscale}}%
    \fi%
  \else%
    \setlength{\unitlength}{\svgwidth}%
  \fi%
  \global\let\svgwidth\undefined%
  \global\let\svgscale\undefined%
  \makeatother%
  \begin{picture}(1,0.13142113)%
    \lineheight{1}%
    \setlength\tabcolsep{0pt}%
    \put(0,0){\includegraphics[width=\unitlength,page=1]{delete.pdf}}%
    \put(0.19360118,0.10190867){\color[rgb]{0,0,0}\makebox(0,0)[lt]{\lineheight{1.25}\smash{\begin{tabular}[t]{l}$e$\end{tabular}}}}%
  \end{picture}%
\endgroup%
}
    \caption{Admissible deletion of an edge from a train track.}
    \label{fig:delete}
\end{figure}

Suppose $e$ meets boundary components $c_1$ and $c_2$ of $\tau$. If $c_1 \neq c_2$, then deleting $e$ combines $c_1$ and $c_2$ into one boundary component $c'$ of $\tau'$. If $c_i$ is $n_i$-pronged, then $c'$ is $(n_1+n_2-2)$-pronged. On the other hand, if $c_1=c_2=:c$, then deleting $e$ splits $c$ into two boundary components $c'_1$ and $c'_2$. In this case, if $c'_i$ is $n'_i$-pronged, then $c$ is $(n'_1+n'_2+2)$-pronged 

If $w'$ is a system of weights on $\tau'$, then we can define a system of weights $w$ on $\tau$ by $w(e')=w'(e')$ for $e' \neq e$ and $w(e)=0$. Conversely, if $w$ is a system of weights on $\tau$ with $w(e)=0$, then by restricting $w$ to the remaining edges, we get a system of weights $w'$ on $\tau'$. This allows us to identify $\mathcal{W}(\tau')$ as a subspace of $\mathcal{W}(\tau)$. Moreover, this inclusion preserves the Thurston symplectic form $\omega$.
\end{constr}

Our next task is to show that admissible deletion of a real edge from an odd floral train track preserves (\ref{eq:radical}). We split into two cases according to whether the admissible deletion combines two boundary components into one (\Cref{lemma:combineprong}) or splits one boundary component into two (\Cref{lemma:splitprong}).

\begin{lemma} \label{lemma:combineprong}
Suppose $\tau$ and $\tau'$ are odd floral train tracks where $\tau'$ is obtained from admissible deletion of a real edge $e$ from $\tau$. Suppose deleting $e$ combines two boundary components into one. Then (\ref{eq:radical}) holds for $\tau$ if and only if it holds for $\tau'$. 
\end{lemma}
\begin{proof}
There are two cases here. Case 1 is if at least one of $c_i$, say $c_1$, is even-pronged. In this case, we claim that $\mathcal{W}(\tau)=\mathcal{W}(\tau') \oplus \langle r_{c_1} \rangle$. Indeed, since $c_1 \neq c_2$, $r_{c_1}(e) \neq 0$, so $\mathcal{W}(\tau') \cap \langle r_{c_1} \rangle = 0$. Meanwhile, $\dim(\mathbb{R}^{\mathcal{E}(\tau)}/\mathbb{R}^{\mathcal{E}(\tau')})=1$ and $\mathcal{W}(\tau)/\mathcal{W}(\tau')$ can be identified with a subspace of $\mathbb{R}^{\mathcal{E}(\tau)}/\mathbb{R}^{\mathcal{E}(\tau')}$, so $\dim(\mathcal{W}(\tau)/\mathcal{W}(\tau')) \leq 1$, which proves the claim.

To prove the lemma in this case, first suppose that (\ref{eq:radical}) holds for $\tau$. Then it follows from the claim for any $w'_0 \in \rad(\omega)$ in $\mathcal{W}(\tau')$, $w'_0 \in \rad(\omega)$ in $\mathcal{W}(\tau)$ as well. Hence $w'_0 = \sum_c a_c r_c$. But $w'_0(e)=0$ so we have $a_{c_1}=0$ if $c_2$ is odd-pronged, and $a_{c_1}=a_{c_2}$ if $c_2$ is even-pronged (under an appropriate choice of signs for $r_{c_i}$). Together with the fact that $\pm r_{c'}=r_{c_1}+r_{c_2}$ when $c_2$ is even-pronged, this shows that $w'_0 = \sum_{c'} a_{c'} r_{c'}$ in $\mathcal{W}(\tau')$. 

Conversely, suppose that (\ref{eq:radical}) holds for $\tau'$. Then for any $w_0 \in \rad(\omega)$ in $\mathcal{W}(\tau)$, we can consider $w_0-w_0(e)r_{c_1}$. This lies in $\mathcal{W}(\tau')$ hence lies in $\rad(\omega)$ in $\mathcal{W}(\tau')$. By hypothesis, we then have $w_0-w_0(e)r_{c_1} \in \spn\{r_{c'}\}$, which together with the fact again that $\pm r_{c'}=r_{c_1}+r_{c_2}$ when $c_2$ is even-pronged, we have $w_0 \in \spn \{r_c\}$ in $\mathcal{W}(\tau)$. 

Case 2 is if both $c_i$ are odd-pronged. In this case $c'$ is even-pronged. It can be shown by the same reasoning as in the last case that $\mathcal{W}(\tau)=\mathcal{W}(\tau') \oplus \langle w_e \rangle$, where $w_e$ is defined in \Cref{constr:realedgeelement}.

We compute $\omega(r_{c'},w)$ for $w \in \mathcal{W}(\tau)$. Suppose first for simplicity that $e$ meets $c'$ in two of its cusps $p$ and $p'$. For $i=1,2$, let $e_i$ be the half-edge at $p$ which is adjacent to $e$ on $c_i$, and let $e'_i$ be the half-edge at $p'$ which is adjacent to $e$ on $c_i$. Without loss of generality suppose that $e_1$ lies to the left of $e$ and $r_{c'}$ assigns $(-1)^i$ to $e_i$, then since $c_i$ are odd-pronged, $r_{c'}$ assigns $(-1)^{i+1}$ to $e'_i$. See \Cref{fig:combineprong} left.

\begin{figure}
    \centering
    \fontsize{6pt}{6pt}\selectfont
    \resizebox{!}{4.5cm}{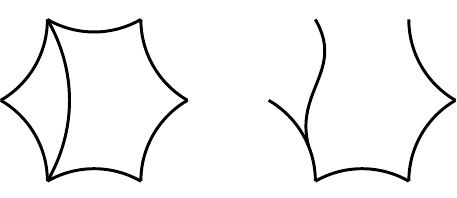}
    \caption{The situation in case 2 of \Cref{lemma:combineprong}. Left: if $e$ meets $c'$ in two of its cusps. Right: if $e$ meets $c'$ away from its cusps.}
    \label{fig:combineprong}
\end{figure}

By the computation made in \Cref{prop:radicalelementinradical}, the total contribution to $\omega(r_{c'},w)$ from the vertices on $c'$ aside from $p$ and $p'$ is $w(e_1)+w(e_2)+w(e'_1)+w(e'_2)$. The contribution from $p$ is $-w(e)-w(e_2)-w(e)-w(e_1)$, and the contribution from $p'$ is $-w(e)-w(e'_2)-w(e)-w(e'_1)$. Adding these together, we see that $\omega(r_{c'},w)=-4w(e)$.

If one of the endpoints of $e$ lies in the complement of the cusps on $c'$ instead, we define $e_i$ as above, but with $p$ or $p'$ being the endpoint of $e$. Suppose $p$ is such an endpoint, say $e_1$ does not determine a cusp with $e$ on $c_1$, and suppose that $r_{c'}$ assigns $1$ to $e_1$ and $e_2$. See \Cref{fig:combineprong} right. Then the the total contribution to $\omega(r_{c'},w)$ from the vertices on $c'$ aside from $p$ and $p'$ is $-w(e_1)+w(e_2)+w(e'_1)+w(e'_2)$. The contribution from $p$ is $(-\sum_{e \in \mathcal{E}^\beta_p} w(e) + w(e_1)) + (\sum_{e \in \mathcal{E}^\beta_p} w(e) - w(e_2) - 2w(e))$, and the contribution from $p'$ is $-w(e)-w(e'_2)-w(e)-w(e'_1)$. So we still have $\omega(r_{c'},w)=-4w(e)$.

We are now ready to prove the lemma in this case. Suppose (\ref{eq:radical}) holds for $\tau$. Let $w'_0 \in \rad(\omega)$ in $\mathcal{W}(\tau')$. Let $a=\omega(w'_0,w_e)$, then $w'_0 + \frac{a}{4} r_{c'} \in \rad(\omega)$ in $\mathcal{W}(\tau')$ and $\omega(w'_0 + \frac{a}{4} r_{c'},w_e) = a + \frac{a}{4} (-4w_e(e)) = 0$, so $w'_0 + \frac{a}{4} r_{c'} \in \rad(\omega)$ in $\mathcal{W}(\tau)$, implying that $w'_0 \in \spn \{r_c\} - \frac{a}{4} r_{c'} \subset \spn \{r_{c'}\}$ in $\mathcal{W}(\tau')$.

Conversely, suppose (\ref{eq:radical}) holds for $\tau'$. Let $w_0 \in \rad(\omega)$ in $\mathcal{W}(\tau)$. Then $0 = \omega(r_{c'},w_0) = -4w_0(e)$. Hence we can treat $w_0$ as an element of $\mathcal{W}(\tau')$, hence an element of $\rad(\omega)$ in $\mathcal{W}(\tau')$. Thus $w_0 \in \spn \{r_{c'}\}$ in $\mathcal{W}(\tau')$. Say $w_0 = \sum_{c'} a_{c'} r_{c'}$. To establish that $w_0 \in \spn \{r_c\}$ in $\mathcal{W}(\tau)$, we need to show that $a_{c'}=0$. This follows since $0 = \omega(w_0,w_e) = \omega(a_{c'}r_{c'},w_e) = -4a_{c'}$.
\end{proof}

\begin{lemma} \label{lemma:splitprong}
Suppose $\tau$ and $\tau'$ are odd floral train tracks where $\tau'$ is obtained from an admissible deletion of a real edge $e$ from $\tau$. Suppose deleting $e$ splits a boundary component into two. Then (\ref{eq:radical}) holds for $\tau$ if and only if it holds for $\tau'$.
\end{lemma}
\begin{proof}
There are three cases here. The proof of each case is similar to one of the cases in \Cref{lemma:combineprong}.

Case 1 is if $c$ is odd-pronged. In this case one of $c'_i$, say $c'_1$, is even-pronged while the other is odd-pronged.

We follow the strategy of case 2 in \Cref{lemma:combineprong}. It can be shown as before that $\mathcal{W}(\tau)=\mathcal{W}(\tau') \oplus \langle w_e \rangle$. Also, one can compute that $\omega(r_{c'_1},w)=-2w(e)$ for $w \in \mathcal{W}(\tau)$ (for an appropriate choice of sign for $r_{c'_1}$). 

Suppose (\ref{eq:radical}) holds for $\tau$. Let $w'_0 \in \rad(\omega)$ in $\mathcal{W}(\tau')$. Let $a=\omega(w'_0,w_e)$, then $w'_0 + \frac{a}{2} r_{c'_1} \in \rad(\omega)$ in $\mathcal{W}(\tau')$ and $\omega(w'_0 + \frac{a}{2} r_{c'_1},w_e) = a + \frac{a}{2} (-2w_e(e)) = 0$, so $w'_0 + \frac{a}{2} r_{c'_1} \in \rad(\omega)$ in $\mathcal{W}(\tau)$, implying that $w'_0 \in \spn \{r_c\} - \frac{a}{2} r_{c'_1} \subset \spn \{r_{c'}\}$ in $\mathcal{W}(\tau')$.

Conversely, suppose (\ref{eq:radical}) holds for $\tau'$. Let $w_0 \in \rad(\omega)$ in $\mathcal{W}(\tau)$. Then $0 = \omega(r_{c'_1},w_0) = -2w_0(e)$. Hence we can treat $w_0$ as an element of $\mathcal{W}(\tau')$, hence an element of $\rad(\omega)$ in $\mathcal{W}(\tau')$. Thus $w_0 \in \spn \{r_{c'_1}\}$ in $\mathcal{W}(\tau')$. Say $w_0 = \sum_{c'} a_{c'} r_{c'}$. To establish that $w_0 \in \spn \{r_c\}$ in $\mathcal{W}(\tau)$, we need to show that $a_{c'_1}=0$. This follows since $0 = \omega(w_0,w_e) = \omega(a_{c'_1}r_{c'_1},w_e) = -2a_{c'_1}$.

Case 2 is if $c$ is even-pronged while $c'_1$ and $c'_2$ are odd-pronged.

We follow the strategy of case 1 in \Cref{lemma:combineprong}. It can be shown as before that $\mathcal{W}(\tau)=\mathcal{W}(\tau') \oplus \langle r_c \rangle$, since $c'_i$ being odd-pronged implies that $r_c(e)=2$ (under an appropriate choice of sign for $r_c$).

Suppose that (\ref{eq:radical}) holds for $\tau$. For any $w'_0 \in \rad(\omega)$ in $\mathcal{W}(\tau')$, $w'_0 \in \rad(\omega)$ in $\mathcal{W}(\tau)$ as well. Hence $w'_0 = \sum_c a_c r_c$. But $w'_0(e)=0$ so we have $a_c=0$. This shows that $w'_0 = \sum_{c'} a_{c'} r_{c'}$ in $\mathcal{W}(\tau')$. 

Conversely, suppose that (\ref{eq:radical}) holds for $\tau'$. Then for any $w_0 \in \rad(\omega)$ in $\mathcal{W}(\tau)$, we can consider $w_0-\frac{w_0(e)}{2}r_c$. This lies in $\mathcal{W}(\tau')$ hence lies in $\rad(\omega)$ in $\mathcal{W}(\tau')$. By hypothesis, we then have $w_0-\frac{w_0(e)}{2}r_c \in \spn \{r_{c'}\}$, which gives $w_0 \in \spn \{r_c\}$ in $\mathcal{W}(\tau)$. 

Finally, case 3 is if $c$ is even-pronged while $c'_1$ and $c'_2$ are even-pronged.

We follow the strategy of case 2 in \Cref{lemma:combineprong}. It can be shown as before that $\mathcal{W}(\tau)=\mathcal{W}(\tau') \oplus \langle w_e \rangle$. Also, one can compute that $\omega(r_{c'_1},w)=-2w(e)$ for $w \in \mathcal{W}(\tau)$. Using the fact that $r_c=r_{c'_1}+r_{c'_2}$, we have $\omega(r_{c'_2},w)=2w(e)$ (for appropriate signs for $r_c,r_{c'_1},r_{c'_2}$).

Suppose (\ref{eq:radical}) holds for $\tau$. Let $w'_0 \in \rad(\omega)$ in $\mathcal{W}(\tau')$. Let $a=\omega(w'_0,w_e)$, then $w'_0 + \frac{a}{2} r_{c'_1} \in \rad(\omega)$ in $\mathcal{W}(\tau')$ and $\omega(w'_0 + \frac{a}{2} r_{c'_1},w_e) = a + \frac{a}{2} (-2w_e(e)) = 0$, so $w'_0 + \frac{a}{2} r_{c'_1} \in \rad(\omega)$ in $\mathcal{W}(\tau)$, implying that $w'_0 \in \spn \{r_c\} - \frac{a}{2} r_{c'_1} \subset \spn \{r_{c'}\}$ in $\mathcal{W}(\tau')$.

Conversely, suppose (\ref{eq:radical}) holds for $\tau'$. Let $w_0 \in \rad(\omega)$ in $\mathcal{W}(\tau)$. Then $0 = \omega(r_{c'_1},w_0) = -2w_0(e)$. Hence we can treat $w_0$ as an element of $\mathcal{W}(\tau')$, hence an element of $\rad(\omega)$ in $\mathcal{W}(\tau')$. Thus $w_0 \in \spn \{r_{c'_1}\}$ in $\mathcal{W}(\tau')$. Say $w_0 = \sum_{c'} a_{c'} r_{c'}$. To establish that $w_0 \in \spn \{r_c\}$ in $\mathcal{W}(\tau)$, we need to show that $a_{c'_1}=a_{c'_2}$. This follows since $0 = \omega(w_0,w_e) = \omega(a_{c'_1}r_{c'_1}+a_{c'_2}r_{c'_2},w_e) = -2a_{c'_1}+2a_{c'_2}$.
\end{proof}

Now notice that if $\tau_1$ and $\tau_2$ are two floral train tracks whose unique infinitesimal polygons have the same number of prongs, then they can be related by a sequence of admissible deletion of real edges. This is because upon fixing an identification of their infinitesimal polygons, one can first add all the real edges of $\tau_2$ to $\tau_1$ (which is the reverse of deleting those edges), with the cyclic ordering of half-edges determined by, say, placing all the half-edges of $\tau_2$ to the left of those of $\tau_1$, then deleting the real edges of $\tau_1$. See \Cref{fig:modify} for an example of this procedure. 

\begin{figure}
    \centering
    \resizebox{!}{4cm}{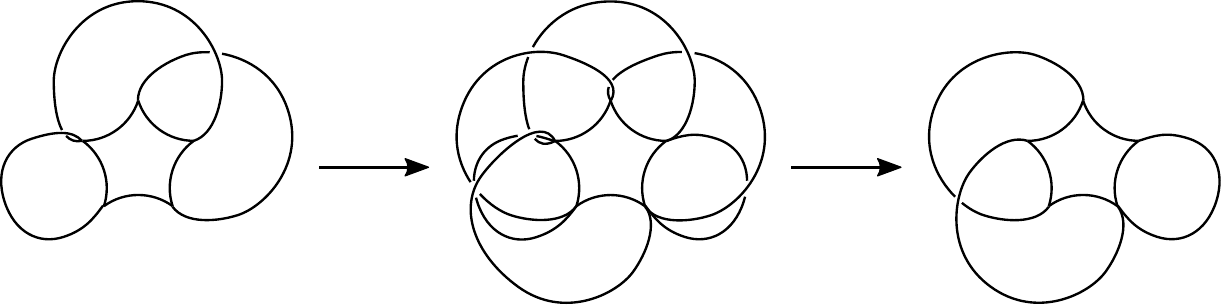}
    \caption{Any two floral train tracks whose unique infinitesimal polygons have the same number of prongs are related by a sequence of removals of real edges.}
    \label{fig:modify}
\end{figure}

Hence \Cref{lemma:combineprong} and \Cref{lemma:splitprong} implies that in order to prove \Cref{prop:radical} in this case, we can simply establish (\ref{eq:radical}) for one floral train track with an $n$-pronged infinitesimal polygon for every odd $n$.

We choose the floral train track $\tau_n$ illustrated in \Cref{fig:eztt} left to this end. That is, $\tau_n$ has an $n$-pronged infinitesimal polygon, say with vertices labelled $v_1,...,v_n$ in a cyclic way, and $n$ real edges $e_1,...,e_n$, each $e_i$ having endpoints on $v_i$ and $v_{i+1}$, with $e_{i-1}$ lying to the left of $e_i$ at $v_i$ for every $i$. 
Then $\tau_n$ has $n+2$ boundary components, two of them $n$-pronged and $n$ of them $0$-pronged. It is straightforward to check that $\mathcal{W}(\tau_n)$ is generated by $\{r_c\}$ for all the $0$-pronged boundary components $c$. Hence $\rad(\omega)=\spn \{r_c\}$ must hold. This concludes the proof of \Cref{prop:radical} in this case.

\begin{figure}
    \centering
    \fontsize{20pt}{20pt}\selectfont
    \resizebox{!}{4cm}{%% Creator: Inkscape inkscape 0.92.4, www.inkscape.org
%% PDF/EPS/PS + LaTeX output extension by Johan Engelen, 2010
%% Accompanies image file '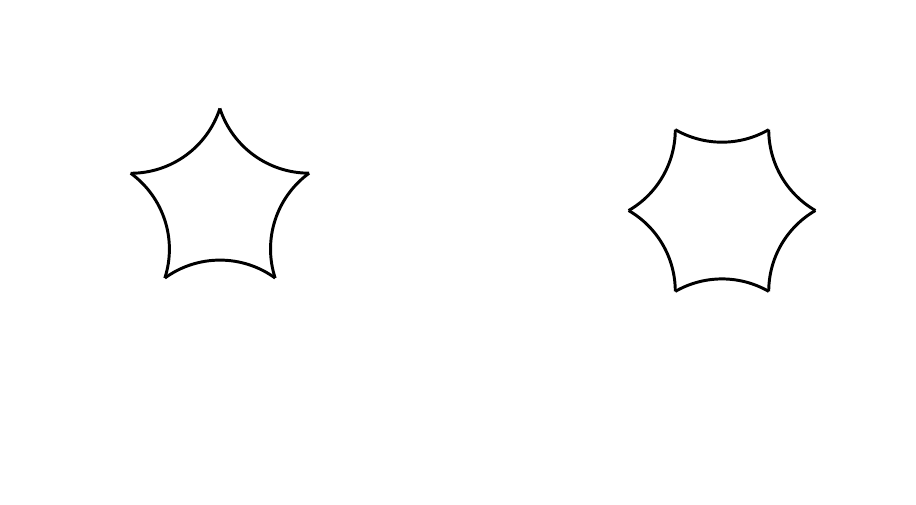' (pdf, eps, ps)
%%
%% To include the image in your LaTeX document, write
%%   \input{<filename>.pdf_tex}
%%  instead of
%%   \includegraphics{<filename>.pdf}
%% To scale the image, write
%%   \def\svgwidth{<desired width>}
%%   \input{<filename>.pdf_tex}
%%  instead of
%%   \includegraphics[width=<desired width>]{<filename>.pdf}
%%
%% Images with a different path to the parent latex file can
%% be accessed with the `import' package (which may need to be
%% installed) using
%%   \usepackage{import}
%% in the preamble, and then including the image with
%%   \import{<path to file>}{<filename>.pdf_tex}
%% Alternatively, one can specify
%%   \graphicspath{{<path to file>/}}
%% 
%% For more information, please see info/svg-inkscape on CTAN:
%%   http://tug.ctan.org/tex-archive/info/svg-inkscape
%%
\begingroup%
  \makeatletter%
  \providecommand\color[2][]{%
    \errmessage{(Inkscape) Color is used for the text in Inkscape, but the package 'color.sty' is not loaded}%
    \renewcommand\color[2][]{}%
  }%
  \providecommand\transparent[1]{%
    \errmessage{(Inkscape) Transparency is used (non-zero) for the text in Inkscape, but the package 'transparent.sty' is not loaded}%
    \renewcommand\transparent[1]{}%
  }%
  \providecommand\rotatebox[2]{#2}%
  \newcommand*\fsize{\dimexpr\f@size pt\relax}%
  \newcommand*\lineheight[1]{\fontsize{\fsize}{#1\fsize}\selectfont}%
  \ifx\svgwidth\undefined%
    \setlength{\unitlength}{439.46407388bp}%
    \ifx\svgscale\undefined%
      \relax%
    \else%
      \setlength{\unitlength}{\unitlength * \real{\svgscale}}%
    \fi%
  \else%
    \setlength{\unitlength}{\svgwidth}%
  \fi%
  \global\let\svgwidth\undefined%
  \global\let\svgscale\undefined%
  \makeatother%
  \begin{picture}(1,0.55087001)%
    \lineheight{1}%
    \setlength\tabcolsep{0pt}%
    \put(0,0){\includegraphics[width=\unitlength,page=1]{eztt.pdf}}%
    \put(0.74262874,0.00721982){\color[rgb]{0,0,0}\makebox(0,0)[lt]{\lineheight{1.25}\smash{\begin{tabular}[t]{l}$n=6$\end{tabular}}}}%
    \put(0.18314795,0.00721982){\color[rgb]{0,0,0}\makebox(0,0)[lt]{\lineheight{1.25}\smash{\begin{tabular}[t]{l}$n=5$\end{tabular}}}}%
    \put(0,0){\includegraphics[width=\unitlength,page=2]{eztt.pdf}}%
  \end{picture}%
\endgroup%
}
    \caption{The train tracks we use to demonstrate (\ref{eq:radical}) at the end of our modifications.}
    \label{fig:eztt}
\end{figure}

\subsection{Case 2: All punctures are even-pronged} \label{subsec:evenprong}

As explained under \Cref{prop:radical}, the proof in this case is very similar to the last case. The only difference is the preparatory step of taking a 2-fold cover.

\begin{defn} \label{defn:orientable}
A train track is said to be \textit{orientable} if its edges can be oriented in a way such that at each switch $v$, all the edges in one $\mathcal{E}^\beta_v$ are oriented into $v$ while all the edges in $\mathcal{E}^{\beta+1}_v$ are oriented out of $v$.
\end{defn}

For example, the train tracks $\tau_{\mathcal{X}_I,\mathcal{X}_O}$ we have been considering are orientable if and only if the unstable foliation $\ell^u$ is orientable in the usual sense.

Now, up to passing to a 2-fold cover, $\ell^u$ can always be made orientable. More specifically, one can define a 1-cocycle $\alpha \in H^1(S, \mathbb{Z}/2)$ by $\alpha(\gamma)=0$ if and only if $\ell^u$ is orientable in a neighborhood of a curve $\gamma$. Then the \textit{orientable 2-fold cover} $\widetilde{S}$ is determined by $\alpha$. By embedding $\tau$ in $S$ as usual, $\alpha$ also determines the \textit{orientable 2-fold cover} $\widetilde{\tau} \to \tau$.

The special property when there are no odd-pronged punctures is that $\alpha(c)=0$ for every boundary component $c$ of $\tau$. So if $\widetilde{\tau}$ is the orientable 2-fold cover, then every boundary component of $\tau$ lifts homemorphically to $\widetilde{\tau}$. Correspondingly, we have the 2-to-1 map $\widetilde{\mathcal{X}} \to \mathcal{X}$. In particular, we can lift a partition $\mathcal{X}=\mathcal{X}_I \sqcup \mathcal{X}_O$ to $\widetilde{\mathcal{X}}=\widetilde{\mathcal{X}_I} \sqcup \widetilde{\mathcal{X}_O}$.

By naturality of \Cref{constr:ttpartition}, $\tau_{(\widetilde{\mathcal{X}_I}, \widetilde{\mathcal{X}_O})}$ is the orientable 2-fold cover of $\tau_{(\mathcal{X}_I, \mathcal{X}_O)}$. Hence by the following lemma, we can assume that the $\tau_{(\mathcal{X}_I, \mathcal{X}_O)}$ we are dealing with is orientable.

\begin{lemma} \label{lemma:covering}
Let $\pi:\widetilde{\tau} \to \tau$ be a finite normal covering of train tracks such that each boundary component of $\tau$ lifts homeomorphically to $\widetilde{\tau}$. Then (\ref{eq:radical}) holds for $\tau$ if (\ref{eq:radical}) holds for $\widetilde{\tau}$.
\end{lemma}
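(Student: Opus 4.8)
The plan is to compare the weight spaces, the Thurston forms, and the radical elements of $\tau$ and $\widetilde\tau$ through the pullback and transfer maps on weights. Write $d$ for the degree of $\pi$ and $G$ for its deck group, so $|G|=d$. Define $\pi^{*}\colon\mathcal{W}(\tau)\to\mathcal{W}(\widetilde\tau)$ by $(\pi^{*}w)(\widetilde e)=w(\pi(\widetilde e))$ and $\pi_{*}\colon\mathcal{W}(\widetilde\tau)\to\mathcal{W}(\tau)$ by $(\pi_{*}\widetilde w)(e)=\sum_{\widetilde e\in\pi^{-1}(e)}\widetilde w(\widetilde e)$; both are well defined because, $\pi$ being a covering of train tracks, the induced map on the half-edges at each switch is a bijection preserving the cyclic order and the partition $\mathcal{E}^{1}\sqcup\mathcal{E}^{2}$, hence carries switch conditions to switch conditions. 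First I would record the routine identities $\pi_{*}\pi^{*}=d\cdot\mathrm{id}$ (so $\pi^{*}$ is injective and $\pi_{*}$ surjective) and, using normality of the cover, $\pi^{*}\pi_{*}=\sum_{g\in G}g$ on $\mathcal{W}(\widetilde\tau)$. Moreover, since the half-edge map at each switch preserves the data entering \Cref{defn:thurstonform}, the local summand of $\omega$ at a switch $\widetilde v$ of $\widetilde\tau$ coincides with the one at $\pi(\widetilde v)$; summing over the $d$ switches lying over each switch of $\tau$ gives
\[
\omega_{\widetilde\tau}(\pi^{*}w_{1},\pi^{*}w_{2})=d\cdot\omega_{\tau}(w_{1},w_{2})\qquad\text{for all }w_{1},w_{2}\in\mathcal{W}(\tau).
\]

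Next I would show that $\pi^{*}$ matches the radicals: $w_{0}\in\rad(\omega_{\tau})$ if and only if $\pi^{*}w_{0}\in\rad(\omega_{\widetilde\tau})$. The ``if'' direction is immediate from the displayed scaling identity together with surjectivity of $\pi_{*}$. For the ``only if'' direction --- the crux, because $\pi^{*}\mathcal{W}(\tau)$ is a proper subspace of $\mathcal{W}(\widetilde\tau)$ --- take $w_{0}\in\rad(\omega_{\tau})$ and an arbitrary $\widetilde w\in\mathcal{W}(\widetilde\tau)$; using that $\omega_{\widetilde\tau}$ and $\pi^{*}w_{0}$ are $G$-invariant and that $\sum_{g\in G}g\widetilde w=\pi^{*}\pi_{*}\widetilde w$,
\[
\omega_{\widetilde\tau}(\widetilde w,\pi^{*}w_{0})=\frac{1}{d}\sum_{g\in G}\omega_{\widetilde\tau}(g\widetilde w,\pi^{*}w_{0})=\frac{1}{d}\,\omega_{\widetilde\tau}\bigl(\pi^{*}\pi_{*}\widetilde w,\ \pi^{*}w_{0}\bigr)=\omega_{\tau}(\pi_{*}\widetilde w,w_{0})=0 .
\]

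Then I would pin down how radical elements transfer. By hypothesis every boundary component of $\tau$ lifts homeomorphically, so each boundary component $\widetilde c$ of $\widetilde\tau$ maps homeomorphically onto $c:=\pi(\widetilde c)$; hence $\widetilde c$ is $n$-pronged exactly when $c$ is, so in particular $\widetilde c$ is even-pronged exactly when $c$ is. Transporting through this homeomorphism a cyclic labeling of the complementary intervals of the cusps of $c$, and noting that the appearances on $\widetilde c$ of edges lying over a fixed edge $e$ of $\tau$ biject with the appearances of $e$ on $c$, one obtains $\pi_{*}r_{\widetilde c}=\pm r_{c}$ (both in the generic even-pronged case and in the degenerate $0$-pronged case). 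Now assume \Cref{eq:radical} holds for $\widetilde\tau$. The inclusion $\spn\{r_{c}\}\subseteq\rad(\omega_{\tau})$ is \Cref{prop:radicalelementinradical}, so only the reverse inclusion requires work. Given $w_{0}\in\rad(\omega_{\tau})$, the previous paragraph gives $\pi^{*}w_{0}\in\rad(\omega_{\widetilde\tau})=\spn\{r_{\widetilde c}\}$, where $\widetilde c$ ranges over even-pronged boundary components of $\widetilde\tau$; writing $\pi^{*}w_{0}=\sum_{\widetilde c}a_{\widetilde c}r_{\widetilde c}$ and applying $\pi_{*}$,
\[
d\,w_{0}=\pi_{*}\pi^{*}w_{0}=\sum_{\widetilde c}a_{\widetilde c}\,\pi_{*}r_{\widetilde c}=\sum_{\widetilde c}(\pm a_{\widetilde c})\,r_{\pi(\widetilde c)},
\]
which lies in $\spn\{r_{c}\}$ since each $\pi(\widetilde c)$ is an even-pronged boundary component of $\tau$. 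Hence $w_{0}\in\spn\{r_{c}\}$, completing the proof.

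The main obstacle is the ``only if'' half of the radical comparison: because $\pi^{*}$ is not surjective, one cannot deduce that $\pi^{*}w_{0}$ is $\omega$-orthogonal to all of $\mathcal{W}(\widetilde\tau)$ directly from $w_{0}\in\rad(\omega_{\tau})$, and it is precisely here that the $G$-averaging argument --- hence the normality of the cover --- is needed. Everything else is bookkeeping; the only mild subtlety, the undetermined sign in $\pi_{*}r_{\widetilde c}=\pm r_{c}$, is harmless since the conclusion only asserts membership in a span.
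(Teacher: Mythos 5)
Your proposal is correct and follows essentially the same strategy as the paper: define pullback and transfer maps on weight spaces, show that $\pi^*$ carries $\rad(\omega_\tau)$ into $\rad(\omega_{\widetilde\tau})$, expand $\pi^*w_0$ in the $r_{\widetilde c}$'s, and return to $\tau$ to conclude $w_0 \in \spn\{r_c\}$. The only cosmetic difference is in the last step: you apply $\pi_*$ and use $\pi_*r_{\widetilde c}=\pm r_c$, whereas the paper applies the averaging operator $s=\pi^*\pi_*$, uses $\pi^*(r_c)=\sum_{\pi(\widetilde c)=c}r_{\widetilde c}$, and then invokes injectivity of $\pi^*$ — equivalent bookkeeping leading to the same conclusion.
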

\begin{proof}
Let $d$ be the degree of the covering and let $G$ be the group of deck transformations. We can define operators $\pi_*:\mathcal{W}(\widetilde{\tau}) \to \mathcal{W}(\tau)$, $\pi^*:\mathcal{W}(\tau) \to \mathcal{W}(\widetilde{\tau})$, and $s:\mathcal{W}(\widetilde{\tau}) \to \mathcal{W}(\widetilde{\tau})$ by
\begin{align*}
    (\pi_*(\widetilde{w}))(e) &= \sum_{\pi(\widetilde{e})=e} \widetilde{w}(e) \\
    (\pi^*(w))(\widetilde{e}) &= w(\pi(\widetilde{e})) \\
    (s(\widetilde{w}))(\widetilde{e}) &= \sum_{g \in G} \widetilde{w}(g\widetilde{e})
\end{align*}
We have the following properties.
\begin{itemize}
    \item $\pi^*(r_c)=\sum_{\pi(\widetilde{c})=c} r_{\widetilde{c}}$
    \item $\pi_* \pi^* (w)= dw$
    \item $\omega(w_1,\pi_*(\widetilde{w_2}))=\omega(\pi^*(w_1),\widetilde{w_2})$
    \item $s\pi^*(w)=d \pi^*(w)$
\end{itemize}
which imply that $\pi^*$ is injective and $\pi^*(\rad(\omega)) \subset \rad(\omega)$.

Now suppose that (\ref{eq:radical}) holds for $\widetilde{\tau}$. Let $w_0 \in \rad(\omega)$ in $\mathcal{W}(\tau)$. $\pi^*(w_0) \in \rad(\omega)$ so $\pi^*(w_0)=\sum_{\widetilde{c}} a_{\widetilde{c}} r_{\widetilde{c}}$. Hence
$$d \pi^*(w_0) = s \pi^*(w_0) = \sum_{\widetilde{c}} (\sum_{g \in G} a_{g\widetilde{c}}) r_{\widetilde{c}} = \pi^*(\sum_c (\sum_{\pi(\widetilde{c})=c} a_{\widetilde{c}}) r_c)$$
which implies that $w_0 = \frac{1}{d} \sum_c (\sum_{\pi(\widetilde{c})=c} a_{\widetilde{c}}) r_c \in \spn \{r_c\}$ by injectivity of $\pi^*$.
\end{proof}

Together with \Cref{lemma:manystandardlyemb}, we can assume that $\tau_{(\mathcal{X}_I,\mathcal{X}_O)}$ is orientable and floral (which implies that the infinitesimal polygon $c_I$ is even-pronged), or \textit{orientable floral} for short. See \Cref{fig:evennice} for an example of such a train track. In this case, if we label the vertices of $c_I$ by $v_1,..,v_n$ in a counterclockwise order, then by orientability, the endpoints on each real edge must lie on $v_i$ and $v_j$ for $i,j$ of different parity.

\begin{figure}
    \centering
    \resizebox{!}{4cm}{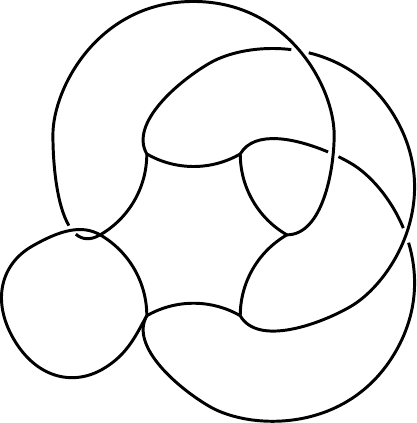}
    \caption{An orientable floral train track.}
    \label{fig:evennice}
\end{figure}

\Cref{constr:realedgeelement} can be repeated for orientable floral train tracks word-by-word. Notice here that we require $\tau$ to be orientable in order for $k$ to be even, in the notation of \Cref{constr:realedgeelement}. Also notice that in this case both cyclic orderings give an even $k$. The $w_e$ defined under the two choices will differ by $r_{c_I}$. This is not very significant for our purposes, but for concreteness we can simply fix some choice for each real edge $e$.

The proofs of \Cref{lemma:combineprong} and \Cref{lemma:splitprong} then carry over word for word to show the following lemma.

\begin{lemma} \label{lemma:deleteevenprong}
Suppose $\tau$ and $\tau'$ are orientable floral train tracks where $\tau'$ is obtained from an admissible deletion of a real edge $e$ from $\tau$. Then (\ref{eq:radical}) holds for $\tau$ if and only if it holds for $\tau'$.
\end{lemma}

Hence by the reasoning at the end of the last subsection, we can simply establish (\ref{eq:radical}) for one orientable floral train track with an $n$-pronged infinitesimal polygon for every even $n$.

We choose the floral train track $\tau_n$ illustrated in \Cref{fig:eztt} right to this end. That is, $\tau_n$ has an $n$-pronged infinitesimal polygon, say with vertices labelled $v_1,...,v_n$ in a cyclic way, and $n$ real edges $e_1,...,e_n$, each $e_i$ having endpoints on $v_i$ and $v_{i+1}$, with $e_{i-1}$ lying to the left of $e_i$ at $v_i$ for every $i$. 
Then $\tau_n$ has $n+2$ boundary components, two of them $n$-pronged and $n$ of them $0$-pronged. It is straightforward to check that $W(\tau_n)$ is generated by $\{r_c\}$ for all these boundary components $c$. Hence $\rad(\omega)=\spn \{r_c\}$ must hold. This concludes the proof of \Cref{prop:radical}.

\section{Proof of the main theorem} \label{sec:mainthm}

We gather all the ingredients to prove our main theorem, which we restate below for the reader's convenience.

\begin{thm} \label{thm:mainthmtext}
Let $f:S \to S$ be a fully-punctured pseudo-Anosov map with at least two puncture orbits. Then the normalized expansion factor $L(S,f) = \lambda(f)^{|\chi(S)|}$ satisfies the inequality
$$L(S,f) \geq \mu^4.$$

More precisely, for $|\chi(S)| \geq 3$, we have
$$\lambda(f) \geq |LT_{1,\frac{K}{2}}|$$
if $|\chi(S)|=K$ is even, and
$$\lambda(f)^K \geq 8$$
if $|\chi(S)|=K$ is odd.
\end{thm}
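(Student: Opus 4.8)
The plan is to derive the theorem directly from McMullen's spectral bound (\Cref{thm:McMullen}) applied to the real transition matrix of a suitable standardly embedded train track, so that essentially all of the work has already been done in the earlier sections.

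First I would apply \Cref{thm:standardlyembeddedtt}: since $f$ is fully-punctured with at least two puncture orbits, there is a standardly embedded train track $\tau$ that carries $f$ and for which the characteristic polynomial of $A := f_*^{\real}$ is reciprocal. By the definition of a standardly embedded train track and the remarks following it (see also \Cref{prop:realedgesPF} and \Cref{prop:realedgesdilatation}), $A$ is a Perron-Frobenius matrix in the standard basis, the degree of its characteristic polynomial is $K := |\chi(S)|$, and its spectral radius equals $\lambda(f)$. Thus $A$ is a $K \times K$ reciprocal Perron-Frobenius matrix with $\rho(A) = \lambda(f)$.

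Next I would observe that $K \ge 2$. Since $f$ is pseudo-Anosov, $\chi(S) < 0$, so $K \ge 1$; and having at least two puncture orbits forces $S$ to have at least two punctures, whence $K = 2g - 2 + s \ge 2$ unless $g = 0$ and $s \le 3$. The only surface with $g = 0$, $s \le 3$ and $\chi < 0$ is $S_{0,3}$, which supports no pseudo-Anosov map (every self-homeomorphism is isotopic to a finite-order one), so this case does not occur. With $K \ge 2$ in hand, \Cref{thm:McMullen} yields
\[
L(S,f) = \lambda(f)^{K} = \rho(A)^{K} \ge \mu^4,
\]
which is the first assertion. When moreover $K \ge 3$, the finer part of \Cref{thm:McMullen} gives $\lambda(f) \ge |LT_{1,\frac{K}{2}}|$ for $K$ even and $\lambda(f)^{K} \ge 8$ for $K$ odd, which is the second assertion.

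I expect no essential obstacle at this final step: the entire difficulty has been absorbed into \Cref{thm:standardlyembeddedtt}, and behind it into \Cref{prop:radical} and the symplectic-form computations of \Cref{sec:thurstonform} and \Cref{sec:radical}, whose purpose is precisely to promote the Perron-Frobenius matrix $f_*^{\real}$ to a reciprocal one so that McMullen's estimate becomes applicable. The only points that need a line of justification in the write-up are the two bookkeeping facts (the degree of the characteristic polynomial of $f_*^{\real}$ equals $|\chi(S)|$, and its spectral radius equals $\lambda(f)$), both recorded in \Cref{sec:standardlyemb}, together with the elementary remark that $K \ge 2$.
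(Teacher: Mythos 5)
Your proof is correct and follows essentially the same approach as the paper: reduce to a reciprocal Perron–Frobenius matrix of size $|\chi(S)|$ via a standardly embedded train track and apply \Cref{thm:McMullen}, with the difference that you cite \Cref{thm:standardlyembeddedtt} as a packaged black box whereas the paper's proof in \Cref{sec:mainthm} re-runs the chain of commutative diagrams to re-derive the reciprocity of $f_*^{\real}$ inline. You also supply the small verification that $K \geq 2$ (needed to invoke \Cref{thm:McMullen}), which the paper leaves implicit.
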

\begin{proof}
Take some partition $\mathcal{X}=\mathcal{X}_I \sqcup \mathcal{X}_O$ of the set of punctures of $S$ into two nonempty $f$-invariant sets. Consider the standardly embedded train track $\tau=\tau_{(\mathcal{X}_I,\mathcal{X}_O)}$, the train track map $f=f_{(\mathcal{X}_I,\mathcal{X}_O)}$, and the matrix $f_*^\real=f_{(\mathcal{X}_I,\mathcal{X}_O)*}^\real$. \Cref{prop:realedgesPF} shows that $f_*^\real$ is Perron-Frobenius.

Meanwhile, consider the weight space $\mathcal{W}(\tau)$. We have the commutative diagram
\begin{center}
\begin{tikzcd}
0 \arrow[r] & \rad(\omega) \arrow[r] \arrow[d, "\mathcal{W}(f)"] & \mathcal{W}(\tau) \arrow[r] \arrow[d, "\mathcal{W}(f)"] & \mathcal{W}(\tau)/\rad(\omega) \arrow[r] \arrow[d, "\mathcal{W}(f)"] & 0 \\
0 \arrow[r] & \rad(\omega) \arrow[r] & \mathcal{W}(\tau) \arrow[r] & \mathcal{W}(\tau)/\rad(\omega) \arrow[r] & 0 
\end{tikzcd}
\end{center}
\Cref{prop:radicalreciprocal} and \Cref{prop:radical} imply that the restriction of $\mathcal{W}(f)$ to $\rad(\omega)$ is reciprocal. On the other hand, $\mathcal{W}(\tau)/\rad(\omega)$ inherits the form $\omega$, which is now symplectic. The induced map of $\mathcal{W}(f)$ on $\mathcal{W}(\tau)/\rad(\omega)$ preserves $\omega$ hence is symplectic, thus reciprocal by \Cref{prop:reciprocalfacts}(1). By \Cref{prop:reciprocalfacts}(3), $\mathcal{W}(f):\mathcal{W}(\tau) \to \mathcal{W}(\tau)$ is reciprocal. 

We also have the following commutative diagram from \Cref{prop:weightspaceses}.
\begin{center}
\begin{tikzcd}
0 \arrow[r] & \mathcal{W}(\tau) \arrow[r] \arrow[d, "\mathcal{W}(f)"] & \mathbb{R}^{\mathcal{E}} \arrow[r, "T_\mathcal{V}"] \arrow[d, "f_*"] & T_\mathcal{V}(\mathbb{R}^{\mathcal{E}}) \arrow[d, "P|_{T_\mathcal{V}(\mathbb{R}^{\mathcal{E}})}"] \arrow[r] & 0 \\
0 \arrow[r] & \mathcal{W}(\tau) \arrow[r] & \mathbb{R}^{\mathcal{E}} \arrow[r, "T_\mathcal{V}"] & T_\mathcal{V}(\mathbb{R}^{\mathcal{E}}) \arrow[r] & 0
\end{tikzcd}
\end{center}
We have deduced that $\mathcal{W}(f):\mathcal{W}(\tau) \to \mathcal{W}(\tau)$ is reciprocal above. By \Cref{prop:reciprocalfacts}(2), $P|_{T_\mathcal{V}(\mathbb{R}^{\mathcal{E}})}$ is reciprocal. So by \Cref{prop:reciprocalfacts}(3), $f_*$ is reciprocal.

Finally, by the discussion above \Cref{prop:realedgesPF}, $f_*$ being reciprocal implies that $f_*^\real$ is reciprocal. Hence $f_*^\real$ is a $|\chi(S)|$-by-$|\chi(S)|$ reciprocal Perron-Frobenius matrix.

By \Cref{prop:realedgesdilatation}, $\lambda(f)$ is the spectral radius of $f_*^\real$. Hence the theorem follows from \Cref{thm:McMullen}.
\end{proof}

\section{Sharpness of the main theorem} \label{sec:sharpness}

In this section, we discuss the sharpness of the main theorem. We give two families of pseudo-Anosov maps realizing the lower bounds in \Cref{thm:sharpthm} for even $\chi(S)$. We do this in two ways: In \Cref{subsec:evensharpness} we describe folding sequences of train tracks which determine the maps. In \Cref{subsec:fiberedface} we describe classes in certain fibered faces which determine the same maps. In \Cref{subsec:braidsharpness} we give examples showing that \Cref{thm:braids} and \Cref{thm:sharpthm} for odd $\chi(S)$ are not sharp in general. In \Cref{subsec:oneboundary} we give examples showing that the assumption of $f$ having at least two puncture orbits is necessary in \Cref{thm:mainthm}.

\subsection{Examples for even $\chi(S)$: train tracks} \label{subsec:evensharpness}

In this subsection, we will show that \Cref{thm:sharpthm} is sharp in the cases when $\chi(S)$ is even, by demonstrating \textit{folding sequences} of standardly embedded train tracks, i.e. sequences of the form $\tau_0 \overset{f_1}{\to} \cdots \overset{f_n}{\to} \tau_n \overset{\sigma}{\to} \tau_0$ where each $f_i$ is the composition of a subdivision move and an elementary folding move involving one of the subdivided edges, and $\sigma$ is an isomorphism of train tracks.

Given such a sequence, the induced map $\sigma f_n \cdots f_1$ on the tie neighborhood $N$ of $\tau_1$ will determine a mapping class on the punctured surface $S$ that is the interior of $N$. If the real transition matrix $f_*^\real$ of $\sigma f_n \cdots f_1$ is Perron-Frobenius, then this mapping class contains a (unique) fully-punctured pseudo-Anosov map $f:S \to S$, for which $\tau_0$ is an invariant train track. For a more detailed explanation of recovering pseudo-Anosov maps from folding sequences of train tracks, see for example \cite{BH95}. 

As described in \Cref{sec:standardlyemb}, one can then compute the expansion factor $\lambda(f)$ of $f$ as the spectral radius of $f_*^\real$. Here we will make this computation using the method described in \cite{McM15}. Namely, we write down the directed graph associated to $f_*^\real$, compute its curve complex $G$, then compute the clique polynomial $Q_G(t)$ of $G$. \cite[Theorem 1.2 and Theorem 1.4]{McM15} state that the smallest positive root of $Q_G(t)$ is equal to $\frac{1}{\lambda(f)}$. Here, as shown in the proof of \Cref{thm:mainthm}, $f_*^\real$ will be reciprocal, so we can more directly compute $\lambda(f)$ as the largest positive root of $Q_G(t)$.

Our first family of examples is shown in \Cref{fig:l6a2tt}. Here, each train track has a single infinitesimal polygon with $3k$ cusps, which we have used as the center of reference, and $2k$ real edges. We indicated each fold $f_i$ by highlighting the relevant edges in bright red (before) and dark red (after). The train track isomorphism $\sigma$ is induced by a rotation of the center infinitesimal polygon.

\begin{figure}
    \centering
    \resizebox{!}{10cm}{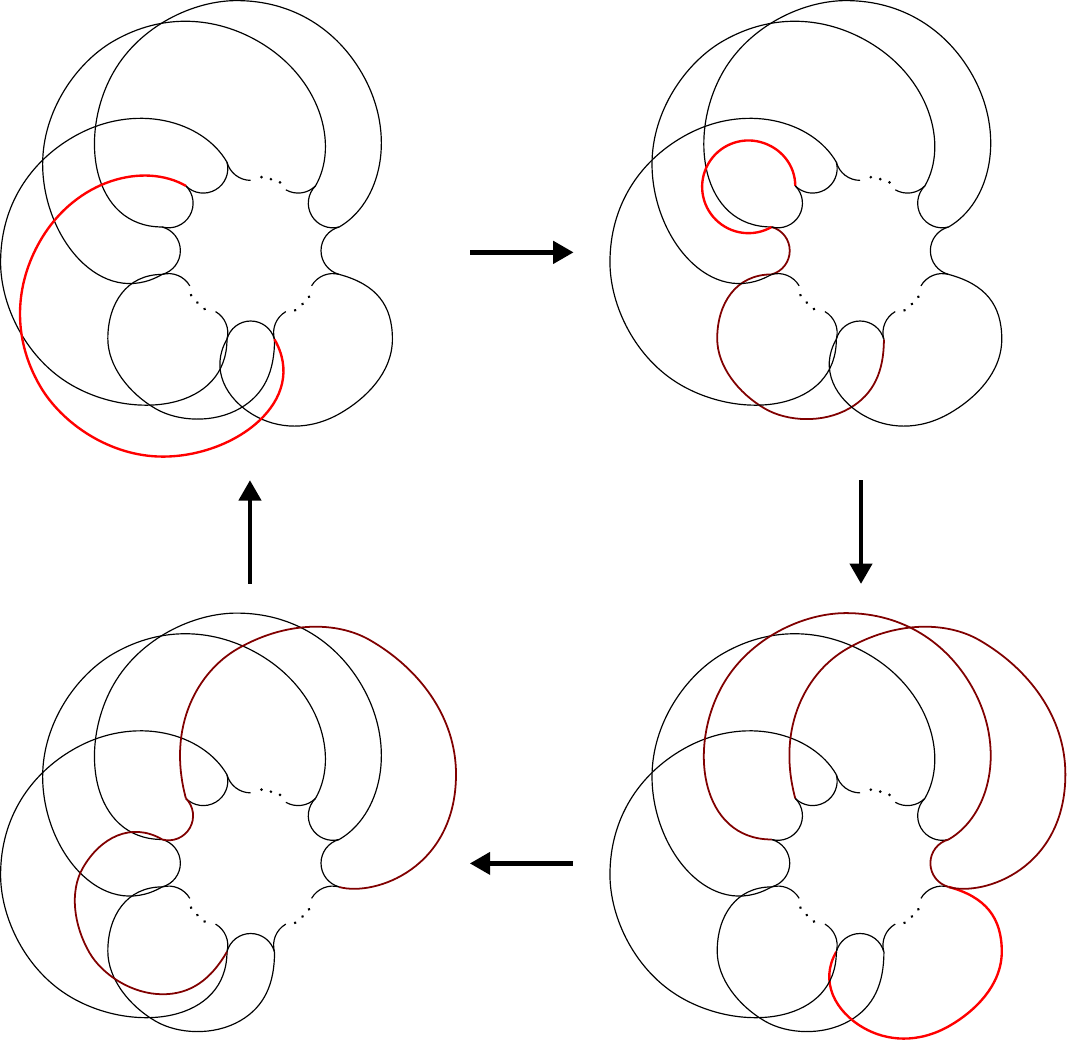}
    \caption{Folding sequence of train tracks in the first family of examples.}
    \label{fig:l6a2tt}
\end{figure}

When $k=2$, one computes the real transition matrix to be
$$f_*^\real = \left[ \begin{array}{cccc}
1 & 0 & 0 & 1 \\
0 & 0 & 1 & 1 \\
1 & 0 & 0 & 2 \\
0 & 1 & 0 & 0 \\
\end{array} \right]$$
and when $k \geq 3$, one computes it to be
$$f_*^\real = \left[ \begin{array}{cc|c|cccc}
0 & 0 & I_{2k-6} & 0 & 0 & 0 & 0 \\
\hline
0 & 0 & 0 & 1 & 0 & 0 & 1 \\
0 & 0 & 0 & 0 & 1 & 0 & 0 \\
\hline
1 & 0 & 0 & 0 & 0 & 0 & 0 \\
0 & 0 & 0 & 0 & 0 & 1 & 1 \\
1 & 0 & 0 & 0 & 0 & 0 & 1 \\
0 & 1 & 0 & 0 & 0 & 0 & 0 \\
\end{array} \right]$$
where $I_n$ denotes the $n$ by $n$ identity matrix.

The corresponding directed graphs and curve complexes for $k=2$ and $k \geq 3$ are shown in \Cref{fig:l6a2graph} top and bottom respectively. Here a number $n$ besides a directed edge $\to$ is shorthand for $n$ consecutive edges $\to \cdot \to \cdots \to \cdot \to$. Meanwhile, a number besides a vertex in the curve complex denotes its weight.

\begin{figure}
    \centering
    \scalebox{0.8}{\begin{tikzpicture}
        \draw[fill=black] (0,0) circle(0.1);
        \draw[fill=black] (0,2) circle(0.1);
        \draw[fill=black] (2,0) circle(0.1);
        \draw[fill=black] (2,2) circle(0.1);
        
        \draw[very thick,-stealth] (0,1.8) to (0,0.2);
        \draw[very thick,-stealth] (2,1.8) to[out=240,in=120] (2,0.2);
        \draw[very thick,-stealth] (2,0.2) to[out=60,in=300] (2,1.8);
        \draw[very thick,-stealth] (1.8,0) to[out=150,in=30] (0.2,0);
        \draw[very thick,-stealth] (1.8,0) to[out=210,in=-30] (0.2,0);
        \draw[very thick,-stealth] (0.2,0.2) to (1.8,1.8);
        \draw[very thick,-stealth] (1.8,0.2) to (0.2,1.8);
        \draw[very thick,-stealth] (0,2.2) to[out=90,in=180,looseness=12] (-0.2,2);
        
        \draw[fill=black] (6,0) circle(0.1);
        \node at (6,-0.5) {$2$};
        \draw[fill=black] (8,0) circle(0.1);
        \node at (8,-0.5) {$3$};
        \draw[fill=black] (10,0) circle(0.1);
        \node at (10,-0.5) {$3$};
        \draw[fill=black] (12,0) circle(0.1);
        \node at (12,-0.5) {$4$};
        \draw[fill=black] (8,2) circle(0.1);
        \node at (8,2.5) {$1$};
        
        \draw[very thick] (6,0) to (8,2);
        \draw[very thick] (8,0) to (8,2);
        \draw[very thick] (10,0) to (8,2);
    \end{tikzpicture}}
    
    \vspace{0.5cm}
    
    \scalebox{0.8}{\begin{tikzpicture}
        \draw[fill=black] (0,0) circle(0.1);
        \draw[fill=black] (0,2) circle(0.1);
        \draw[fill=black] (0,4) circle(0.1);
        \draw[fill=black] (0,6) circle(0.1);
        \draw[fill=black] (2,0) circle(0.1);
        \draw[fill=black] (2,2) circle(0.1);
        \draw[fill=black] (2,6) circle(0.1);
        
        \draw[very thick,-stealth] (0,2.2) to (0,3.8);
        \draw[very thick,-stealth] (0,4.2) to node[left]{\footnotesize $k-3$} (0,5.8);
        
        \draw[very thick,-stealth] (2,0.2) to (2,1.8);
        \draw[very thick,-stealth] (2,2.2) to node[right]{\footnotesize $k-2$} (2,5.8);
        
        \draw[very thick,-stealth] (1.8,0) to (0.2,0);
        \draw[very thick,-stealth] (0.2,0.2) to (1.8,1.8);
        \draw[very thick,-stealth] (1.8,0.2) to (0.2,3.8);
        \draw[very thick,-stealth] (-0.2,6) to[out=180,in=180] (-0.2,0);
        \draw[very thick,-stealth] (-0.2,6) to[out=180,in=180] (-0.2,2);
        \draw[very thick,-stealth] (2.2,6) to[out=0,in=0] (2.2,0);

        \draw[fill=black] (6,3) circle(0.1);
        \node at (6,3.5) {$k$};
        \draw[fill=black] (8,3) circle(0.1);
        \node at (8,3.5) {$k-1$};
        \draw[fill=black] (10,3) circle(0.1);
        \node at (10,3.5) {$k+1$};
        \draw[fill=black] (12,3) circle(0.1);
        \node at (12,3.5) {$2k-1$};
        
        \draw[very thick] (6,3) to (8,3);
        \draw[very thick] (8,3) to (10,3);
    \end{tikzpicture}}
    \caption{The associated directed graphs to the real transition matrices of \Cref{fig:l6a2tt} and their curve complexes.}
    \label{fig:l6a2graph}
\end{figure}
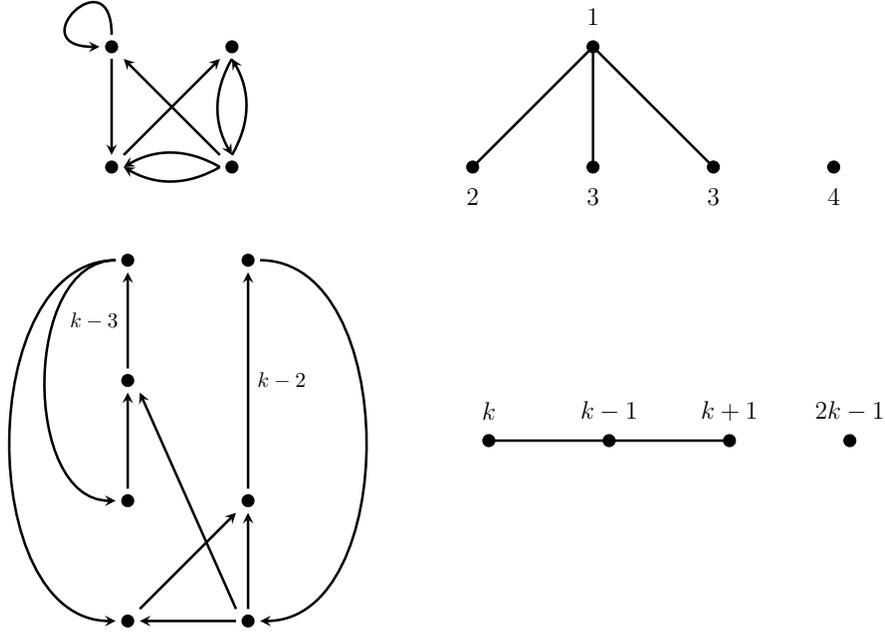

For each $k \geq 2$, the clique polynomial of the corresponding curve complex is $LT_{1,k}=t^{2k}-t^{k+1}-t^k-t^{k-1}+1$. Hence the induced fully-punctured pseudo-Anosov maps in this family of examples have expansion factors $\left| LT_{1,k} \right|$.

Notice that this first family of examples already shows the sharpness statement in \Cref{thm:sharpthm}. However, we are actually able to find a second family of examples that attain equality in \Cref{thm:sharpthm} for even $|\chi(S)|$. We will describe this second family next, following the same format. 

Unfortunately, for number theoretical reasons, it is difficult to present a single picture that illustrates all the members of this second family; we need to split into $5$ subcases depending on the value of $k$ (mod $5$), where $|\chi(S)|=2k$.

For $k \equiv 3$ (mod $5$), the folding sequence is shown in \Cref{fig:l13n5885ttk3}. 

For consistency, we have drawn \Cref{fig:l13n5885ttk3} in the style as \Cref{fig:l6a2tt}. Namely, we used the single infinitesimal polygon as the center of reference, we highlighted each fold in red, and the train track isomorphism is induced by a rotation of the infinitesimal polygon. 

However, we caution that there are a few differences: This time the infinitesimal polygon has $2k+1$ cusps (but there are still $2k$ real edges). Real edges that connect a shown cusp to a cusp in the $\cdots$ range are truncated. Also, the number of cusps in each $\cdots$ differs. The fact that the last map is a train track isomorphism will determine how these real edges should be connected and how many cusps each $\cdots$ should contain.

\begin{figure}
    \centering
    \fontsize{30pt}{30pt}\selectfont
    \resizebox{!}{10cm}{%% Creator: Inkscape inkscape 0.92.4, www.inkscape.org
%% PDF/EPS/PS + LaTeX output extension by Johan Engelen, 2010
%% Accompanies image file '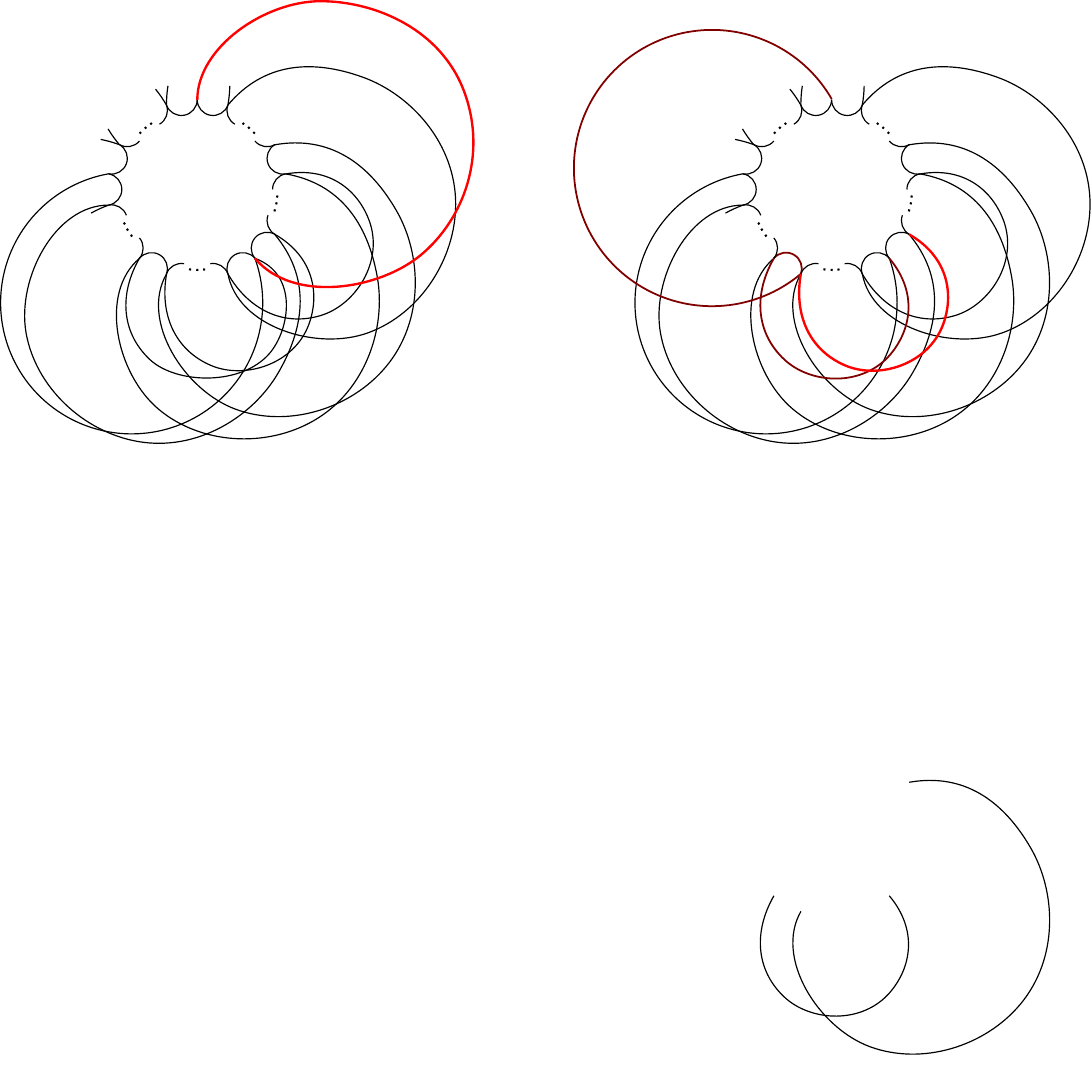' (pdf, eps, ps)
%%
%% To include the image in your LaTeX document, write
%%   \input{<filename>.pdf_tex}
%%  instead of
%%   \includegraphics{<filename>.pdf}
%% To scale the image, write
%%   \def\svgwidth{<desired width>}
%%   \input{<filename>.pdf_tex}
%%  instead of
%%   \includegraphics[width=<desired width>]{<filename>.pdf}
%%
%% Images with a different path to the parent latex file can
%% be accessed with the `import' package (which may need to be
%% installed) using
%%   \usepackage{import}
%% in the preamble, and then including the image with
%%   \import{<path to file>}{<filename>.pdf_tex}
%% Alternatively, one can specify
%%   \graphicspath{{<path to file>/}}
%% 
%% For more information, please see info/svg-inkscape on CTAN:
%%   http://tug.ctan.org/tex-archive/info/svg-inkscape
%%
\begingroup%
  \makeatletter%
  \providecommand\color[2][]{%
    \errmessage{(Inkscape) Color is used for the text in Inkscape, but the package 'color.sty' is not loaded}%
    \renewcommand\color[2][]{}%
  }%
  \providecommand\transparent[1]{%
    \errmessage{(Inkscape) Transparency is used (non-zero) for the text in Inkscape, but the package 'transparent.sty' is not loaded}%
    \renewcommand\transparent[1]{}%
  }%
  \providecommand\rotatebox[2]{#2}%
  \newcommand*\fsize{\dimexpr\f@size pt\relax}%
  \newcommand*\lineheight[1]{\fontsize{\fsize}{#1\fsize}\selectfont}%
  \ifx\svgwidth\undefined%
    \setlength{\unitlength}{523.50803863bp}%
    \ifx\svgscale\undefined%
      \relax%
    \else%
      \setlength{\unitlength}{\unitlength * \real{\svgscale}}%
    \fi%
  \else%
    \setlength{\unitlength}{\svgwidth}%
  \fi%
  \global\let\svgwidth\undefined%
  \global\let\svgscale\undefined%
  \makeatother%
  \begin{picture}(1,0.99179964)%
    \lineheight{1}%
    \setlength\tabcolsep{0pt}%
    \put(0,0){\includegraphics[width=\unitlength,page=1]{l13n5885tt1.pdf}}%
    \put(0.12755346,0.1274319){\color[rgb]{0,0,0}\makebox(0,0)[lt]{\lineheight{1.25}\smash{\begin{tabular}[t]{l}$k \equiv 3$ (mod $5$)\end{tabular}}}}%
    \put(0,0){\includegraphics[width=\unitlength,page=2]{l13n5885tt1.pdf}}%
  \end{picture}%
\endgroup%
}
    \caption{Folding sequence of train tracks in the second family of examples.}
    \label{fig:l13n5885ttk3}
\end{figure}

For $k \equiv 4$, $0$, and $1$ (mod $5$), the folding sequences are shown in \Cref{fig:l13n5885ttk401}. These are drawn in the same style as \Cref{fig:l13n5885ttk3}.

\begin{figure}
    \centering
    \fontsize{30pt}{30pt}\selectfont
    \resizebox{!}{10cm}{%% Creator: Inkscape inkscape 0.92.4, www.inkscape.org
%% PDF/EPS/PS + LaTeX output extension by Johan Engelen, 2010
%% Accompanies image file '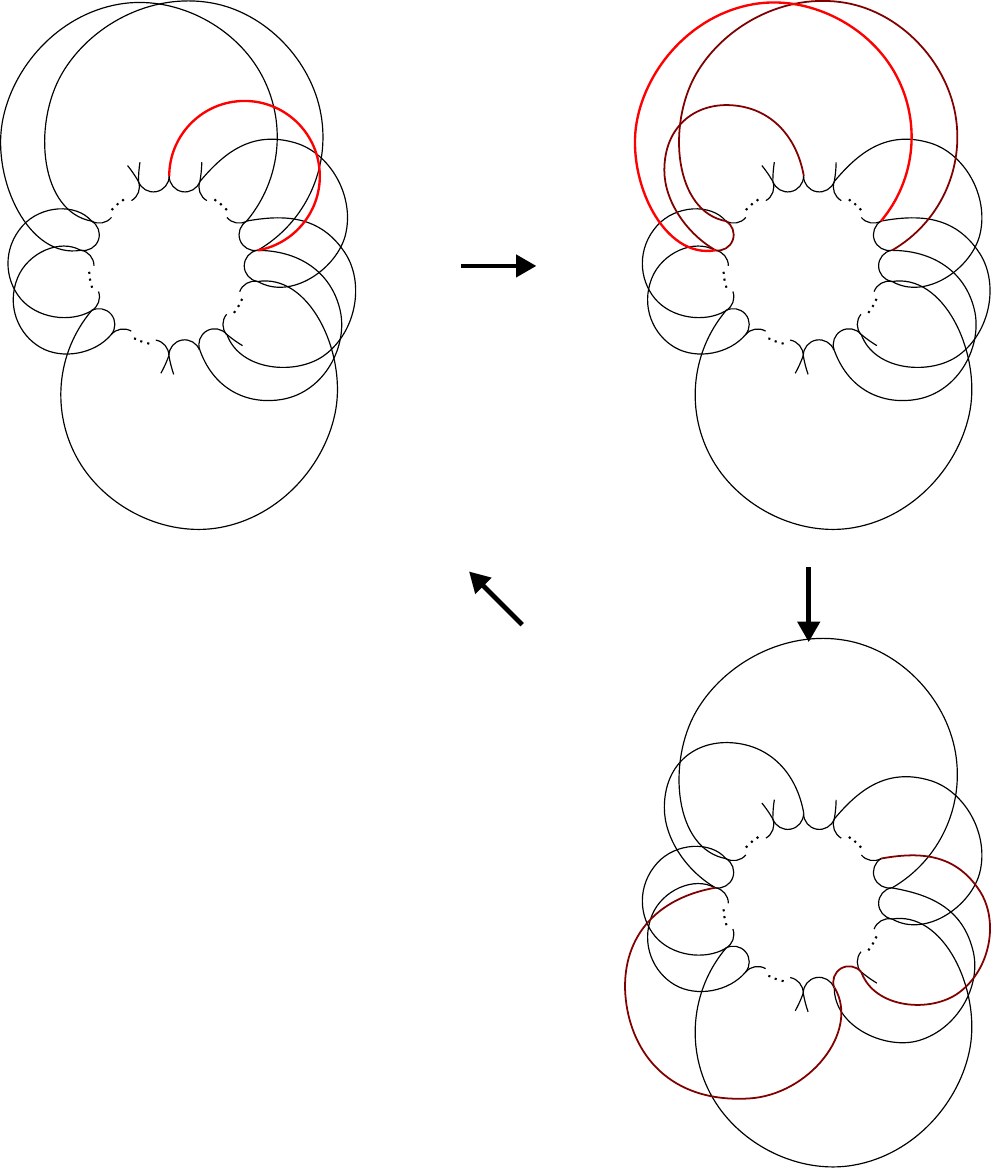' (pdf, eps, ps)
%%
%% To include the image in your LaTeX document, write
%%   \input{<filename>.pdf_tex}
%%  instead of
%%   \includegraphics{<filename>.pdf}
%% To scale the image, write
%%   \def\svgwidth{<desired width>}
%%   \input{<filename>.pdf_tex}
%%  instead of
%%   \includegraphics[width=<desired width>]{<filename>.pdf}
%%
%% Images with a different path to the parent latex file can
%% be accessed with the `import' package (which may need to be
%% installed) using
%%   \usepackage{import}
%% in the preamble, and then including the image with
%%   \import{<path to file>}{<filename>.pdf_tex}
%% Alternatively, one can specify
%%   \graphicspath{{<path to file>/}}
%% 
%% For more information, please see info/svg-inkscape on CTAN:
%%   http://tug.ctan.org/tex-archive/info/svg-inkscape
%%
\begingroup%
  \makeatletter%
  \providecommand\color[2][]{%
    \errmessage{(Inkscape) Color is used for the text in Inkscape, but the package 'color.sty' is not loaded}%
    \renewcommand\color[2][]{}%
  }%
  \providecommand\transparent[1]{%
    \errmessage{(Inkscape) Transparency is used (non-zero) for the text in Inkscape, but the package 'transparent.sty' is not loaded}%
    \renewcommand\transparent[1]{}%
  }%
  \providecommand\rotatebox[2]{#2}%
  \newcommand*\fsize{\dimexpr\f@size pt\relax}%
  \newcommand*\lineheight[1]{\fontsize{\fsize}{#1\fsize}\selectfont}%
  \ifx\svgwidth\undefined%
    \setlength{\unitlength}{475.65587752bp}%
    \ifx\svgscale\undefined%
      \relax%
    \else%
      \setlength{\unitlength}{\unitlength * \real{\svgscale}}%
    \fi%
  \else%
    \setlength{\unitlength}{\svgwidth}%
  \fi%
  \global\let\svgwidth\undefined%
  \global\let\svgscale\undefined%
  \makeatother%
  \begin{picture}(1,1.1781919)%
    \lineheight{1}%
    \setlength\tabcolsep{0pt}%
    \put(0,0){\includegraphics[width=\unitlength,page=1]{l13n5885tt2.pdf}}%
    \put(0.11212752,0.14974927){\color[rgb]{0,0,0}\makebox(0,0)[lt]{\lineheight{1.25}\smash{\begin{tabular}[t]{l}$k \equiv 4$ (mod $5$)\end{tabular}}}}%
  \end{picture}%
\endgroup%
}
    
    \vspace{0.5cm}
    
    \resizebox{!}{10cm}{%% Creator: Inkscape inkscape 0.92.4, www.inkscape.org
%% PDF/EPS/PS + LaTeX output extension by Johan Engelen, 2010
%% Accompanies image file '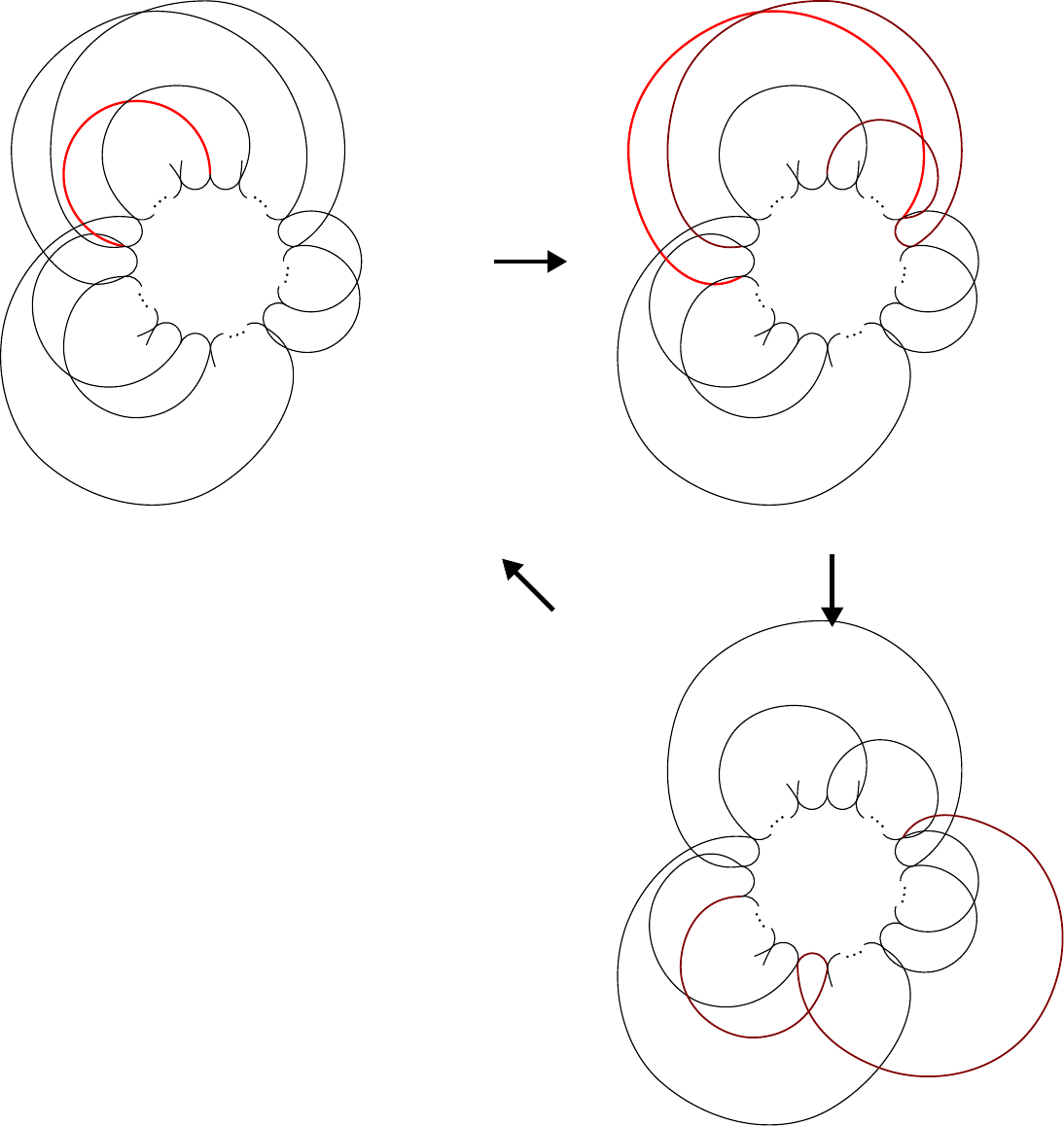' (pdf, eps, ps)
%%
%% To include the image in your LaTeX document, write
%%   \input{<filename>.pdf_tex}
%%  instead of
%%   \includegraphics{<filename>.pdf}
%% To scale the image, write
%%   \def\svgwidth{<desired width>}
%%   \input{<filename>.pdf_tex}
%%  instead of
%%   \includegraphics[width=<desired width>]{<filename>.pdf}
%%
%% Images with a different path to the parent latex file can
%% be accessed with the `import' package (which may need to be
%% installed) using
%%   \usepackage{import}
%% in the preamble, and then including the image with
%%   \import{<path to file>}{<filename>.pdf_tex}
%% Alternatively, one can specify
%%   \graphicspath{{<path to file>/}}
%% 
%% For more information, please see info/svg-inkscape on CTAN:
%%   http://tug.ctan.org/tex-archive/info/svg-inkscape
%%
\begingroup%
  \makeatletter%
  \providecommand\color[2][]{%
    \errmessage{(Inkscape) Color is used for the text in Inkscape, but the package 'color.sty' is not loaded}%
    \renewcommand\color[2][]{}%
  }%
  \providecommand\transparent[1]{%
    \errmessage{(Inkscape) Transparency is used (non-zero) for the text in Inkscape, but the package 'transparent.sty' is not loaded}%
    \renewcommand\transparent[1]{}%
  }%
  \providecommand\rotatebox[2]{#2}%
  \newcommand*\fsize{\dimexpr\f@size pt\relax}%
  \newcommand*\lineheight[1]{\fontsize{\fsize}{#1\fsize}\selectfont}%
  \ifx\svgwidth\undefined%
    \setlength{\unitlength}{524.87518742bp}%
    \ifx\svgscale\undefined%
      \relax%
    \else%
      \setlength{\unitlength}{\unitlength * \real{\svgscale}}%
    \fi%
  \else%
    \setlength{\unitlength}{\svgwidth}%
  \fi%
  \global\let\svgwidth\undefined%
  \global\let\svgscale\undefined%
  \makeatother%
  \begin{picture}(1,1.05852522)%
    \lineheight{1}%
    \setlength\tabcolsep{0pt}%
    \put(0,0){\includegraphics[width=\unitlength,page=1]{l13n5885tt3.pdf}}%
    \put(0.14456122,0.12380803){\color[rgb]{0,0,0}\makebox(0,0)[lt]{\lineheight{1.25}\smash{\begin{tabular}[t]{l}$k \equiv 0$ (mod $5$)\end{tabular}}}}%
  \end{picture}%
\endgroup%
}
    \caption{Folding sequence of train tracks in the second family of examples.}
\end{figure}
    
\begin{figure}
    \ContinuedFloat
    \centering
    \fontsize{30pt}{30pt}\selectfont
    \resizebox{!}{10cm}{%% Creator: Inkscape inkscape 0.92.4, www.inkscape.org
%% PDF/EPS/PS + LaTeX output extension by Johan Engelen, 2010
%% Accompanies image file '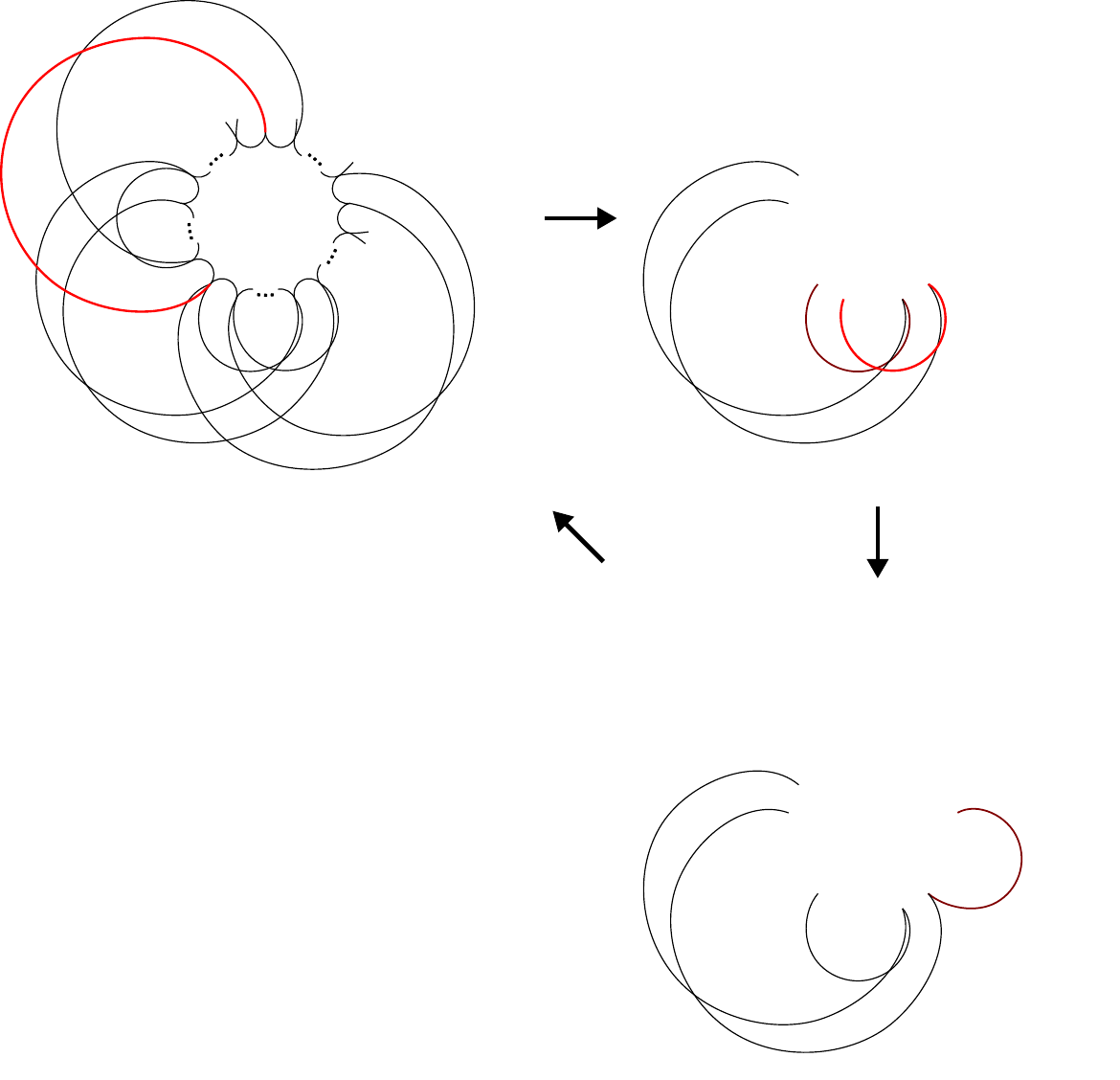' (pdf, eps, ps)
%%
%% To include the image in your LaTeX document, write
%%   \input{<filename>.pdf_tex}
%%  instead of
%%   \includegraphics{<filename>.pdf}
%% To scale the image, write
%%   \def\svgwidth{<desired width>}
%%   \input{<filename>.pdf_tex}
%%  instead of
%%   \includegraphics[width=<desired width>]{<filename>.pdf}
%%
%% Images with a different path to the parent latex file can
%% be accessed with the `import' package (which may need to be
%% installed) using
%%   \usepackage{import}
%% in the preamble, and then including the image with
%%   \import{<path to file>}{<filename>.pdf_tex}
%% Alternatively, one can specify
%%   \graphicspath{{<path to file>/}}
%% 
%% For more information, please see info/svg-inkscape on CTAN:
%%   http://tug.ctan.org/tex-archive/info/svg-inkscape
%%
\begingroup%
  \makeatletter%
  \providecommand\color[2][]{%
    \errmessage{(Inkscape) Color is used for the text in Inkscape, but the package 'color.sty' is not loaded}%
    \renewcommand\color[2][]{}%
  }%
  \providecommand\transparent[1]{%
    \errmessage{(Inkscape) Transparency is used (non-zero) for the text in Inkscape, but the package 'transparent.sty' is not loaded}%
    \renewcommand\transparent[1]{}%
  }%
  \providecommand\rotatebox[2]{#2}%
  \newcommand*\fsize{\dimexpr\f@size pt\relax}%
  \newcommand*\lineheight[1]{\fontsize{\fsize}{#1\fsize}\selectfont}%
  \ifx\svgwidth\undefined%
    \setlength{\unitlength}{561.27439919bp}%
    \ifx\svgscale\undefined%
      \relax%
    \else%
      \setlength{\unitlength}{\unitlength * \real{\svgscale}}%
    \fi%
  \else%
    \setlength{\unitlength}{\svgwidth}%
  \fi%
  \global\let\svgwidth\undefined%
  \global\let\svgscale\undefined%
  \makeatother%
  \begin{picture}(1,0.96407187)%
    \lineheight{1}%
    \setlength\tabcolsep{0pt}%
    \put(0,0){\includegraphics[width=\unitlength,page=1]{l13n5885tt4.pdf}}%
    \put(0.1875413,0.12499424){\color[rgb]{0,0,0}\makebox(0,0)[lt]{\lineheight{1.25}\smash{\begin{tabular}[t]{l}$k \equiv 1$ (mod $5$)\end{tabular}}}}%
    \put(0,0){\includegraphics[width=\unitlength,page=2]{l13n5885tt4.pdf}}%
  \end{picture}%
\endgroup%
}
    \caption{Folding sequence of train tracks in the second family of examples.}
    \label{fig:l13n5885ttk401}
\end{figure}

Finally, for $k \equiv 2$ (mod $5$), the folding sequence is shown in \Cref{fig:l13n5885ttk2}.

\begin{figure}
    \centering
    \fontsize{30pt}{30pt}\selectfont
    \resizebox{!}{10cm}{%% Creator: Inkscape inkscape 0.92.4, www.inkscape.org
%% PDF/EPS/PS + LaTeX output extension by Johan Engelen, 2010
%% Accompanies image file '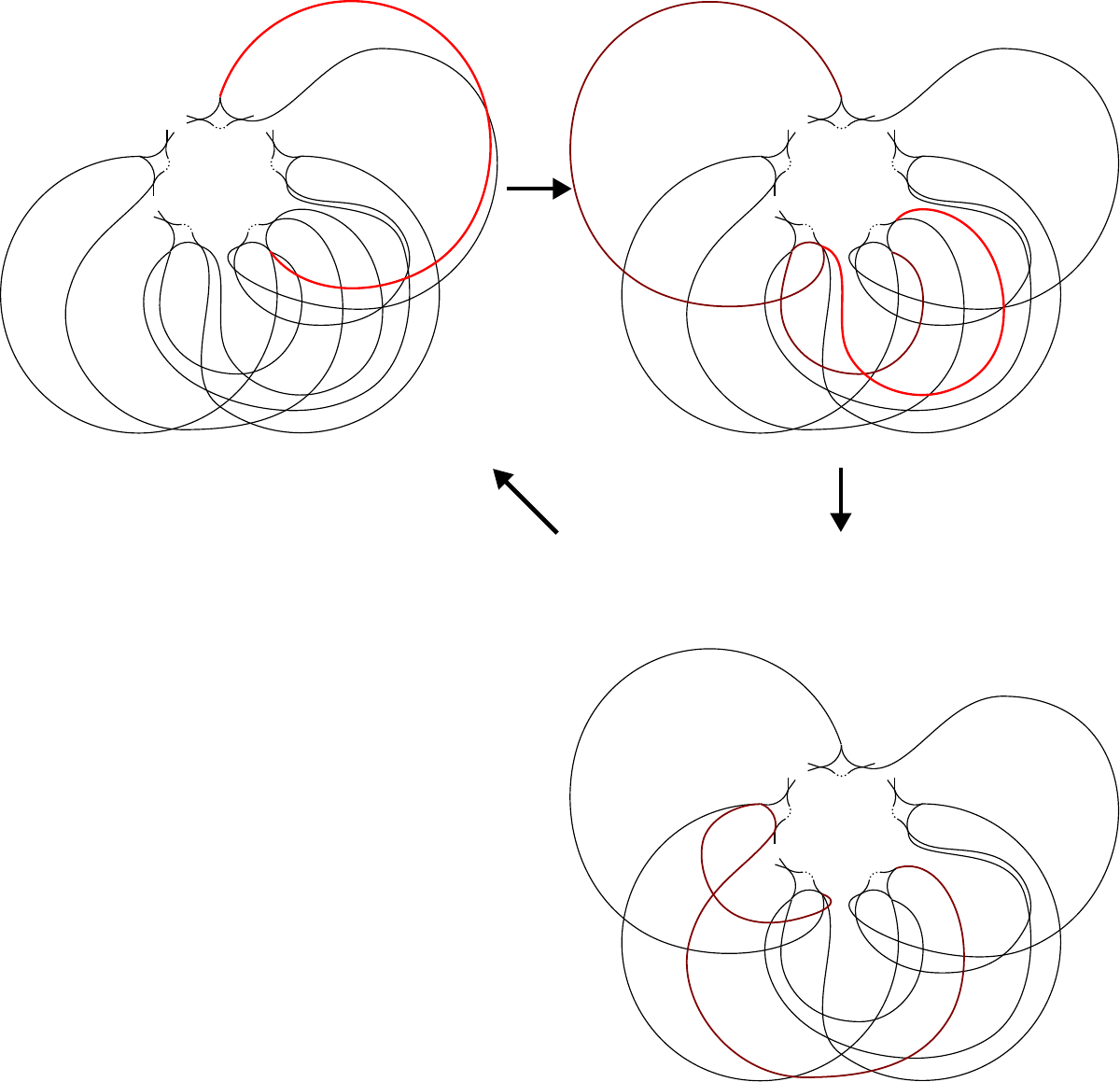' (pdf, eps, ps)
%%
%% To include the image in your LaTeX document, write
%%   \input{<filename>.pdf_tex}
%%  instead of
%%   \includegraphics{<filename>.pdf}
%% To scale the image, write
%%   \def\svgwidth{<desired width>}
%%   \input{<filename>.pdf_tex}
%%  instead of
%%   \includegraphics[width=<desired width>]{<filename>.pdf}
%%
%% Images with a different path to the parent latex file can
%% be accessed with the `import' package (which may need to be
%% installed) using
%%   \usepackage{import}
%% in the preamble, and then including the image with
%%   \import{<path to file>}{<filename>.pdf_tex}
%% Alternatively, one can specify
%%   \graphicspath{{<path to file>/}}
%% 
%% For more information, please see info/svg-inkscape on CTAN:
%%   http://tug.ctan.org/tex-archive/info/svg-inkscape
%%
\begingroup%
  \makeatletter%
  \providecommand\color[2][]{%
    \errmessage{(Inkscape) Color is used for the text in Inkscape, but the package 'color.sty' is not loaded}%
    \renewcommand\color[2][]{}%
  }%
  \providecommand\transparent[1]{%
    \errmessage{(Inkscape) Transparency is used (non-zero) for the text in Inkscape, but the package 'transparent.sty' is not loaded}%
    \renewcommand\transparent[1]{}%
  }%
  \providecommand\rotatebox[2]{#2}%
  \newcommand*\fsize{\dimexpr\f@size pt\relax}%
  \newcommand*\lineheight[1]{\fontsize{\fsize}{#1\fsize}\selectfont}%
  \ifx\svgwidth\undefined%
    \setlength{\unitlength}{575.55390752bp}%
    \ifx\svgscale\undefined%
      \relax%
    \else%
      \setlength{\unitlength}{\unitlength * \real{\svgscale}}%
    \fi%
  \else%
    \setlength{\unitlength}{\svgwidth}%
  \fi%
  \global\let\svgwidth\undefined%
  \global\let\svgscale\undefined%
  \makeatother%
  \begin{picture}(1,0.96598002)%
    \lineheight{1}%
    \setlength\tabcolsep{0pt}%
    \put(0,0){\includegraphics[width=\unitlength,page=1]{l13n5885tt5.pdf}}%
    \put(0.14185382,0.17396916){\color[rgb]{0,0,0}\makebox(0,0)[lt]{\lineheight{1.25}\smash{\begin{tabular}[t]{l}$k \equiv 2$ (mod $5$)\end{tabular}}}}%
  \end{picture}%
\endgroup%
}
    \caption{Folding sequence of train tracks in the second family of examples.}
    \label{fig:l13n5885ttk2}
\end{figure}

\Cref{fig:l13n5885ttk2} is drawn in a slightly different style from the previous pictures by necessity. There are now $5$ infinitesimal polygons, each having $\frac{2k+1}{5}$ cusps. The train track isomorphism now permutes the $5$ infinitesimal polygons, we have arranged these so that this permutation is induced by a rotation. However, we caution that the isomorphism is not just a rotation of the picture; one also needs to rotate one of the infinitesimal polygons.

The real transition matrices of these subcases admit a more consistent description. When $k=2$, we have
$$f_*^\real = \left[ \begin{array}{cccc}
0 & 1 & 1 & 1 \\
0 & 0 & 1 & 0 \\
1 & 0 & 0 & 0 \\
0 & 0 & 1 & 1 \\
\end{array} \right]$$
and when $k \geq 3$, we have
$$f_*^\real = \left[ \begin{array}{ccc|c|ccc}
0 & 1 & 1 & 0 & 0 & 0 & 0 \\
\hline
0 & 0 & 0 & I_{2k-6} & 0 & 0 & 0 \\
\hline
0 & 0 & 0 & 0 & 0 & 1 & 1 \\
0 & 0 & 0 & 0 & 1 & 0 & 0 \\
0 & 0 & 0 & 0 & 0 & 1 & 0 \\
1 & 0 & 0 & 0 & 0 & 0 & 0 \\
0 & 1 & 0 & 0 & 0 & 0 & 0 \\
\end{array} \right]$$

The corresponding directed graphs and curve complexes for $k=2$ and $k \geq 3$ are shown in \Cref{fig:l13n5885graph} top and bottom respectively.

\begin{figure}
    \centering
    \scalebox{0.8}{\begin{tikzpicture}
        \draw[fill=black] (0,0) circle(0.1);
        \draw[fill=black] (0,2) circle(0.1);
        \draw[fill=black] (2,0) circle(0.1);
        \draw[fill=black] (2,2) circle(0.1);
        
        \draw[very thick,-stealth] (0,0.2) to (0,1.8);
        \draw[very thick,-stealth] (2,0.2) to (2,1.8);
        \draw[very thick,-stealth] (0.2,1.8) to[out=-15,in=105] (1.8,0.2);
        \draw[very thick,-stealth] (1.8,0.2) to[out=165,in=-75] (0.2,1.8);
        \draw[very thick,-stealth] (1.8,0) to (0.2,0);        
        \draw[very thick,-stealth] (1.8,2) to (0.2,2);
        \draw[very thick,-stealth] (2,2.2) to[out=90,in=0,looseness=12] (2.2,2);
        
        \draw[fill=black] (6,1) circle(0.1);
        \node at (6,1.5) {$2$};
        \draw[fill=black] (8,1) circle(0.1);
        \node at (8,1.5) {$1$};
        \draw[fill=black] (10,1) circle(0.1);
        \node at (10,1.5) {$3$};
        \draw[fill=black] (12,1) circle(0.1);
        \node at (12,1.5) {$3$};
        
        \draw[very thick] (6,1) to (8,1);
        \draw[very thick] (8,1) to (10,1);
    \end{tikzpicture}}
    
    \vspace{0.5cm}
    
    \scalebox{0.8}{\begin{tikzpicture}
        \draw[fill=black] (0,0) circle(0.1);
        \draw[fill=black] (0,2) circle(0.1);
        \draw[fill=black] (0,4) circle(0.1);
        \draw[fill=black] (0,6) circle(0.1);
        \draw[fill=black] (2,0) circle(0.1);
        \draw[fill=black] (2,2) circle(0.1);
        \draw[fill=black] (2,4) circle(0.1);
        \draw[fill=black] (4,2) circle(0.1);
        
        \draw[very thick,-stealth] (0,0.2) to (0,1.8);
        \draw[very thick,-stealth] (0,2.2) to node[left]{\footnotesize $k-3$} (0,3.8);
        \draw[very thick,-stealth] (0,4.2) to (0,5.8);
        
        \draw[very thick,-stealth] (2,0.2) to (2,1.8);
        \draw[very thick,-stealth] (2,2.2) to node[left]{\footnotesize $k-3$} (2,3.8);
        
        \draw[very thick,-stealth] (1.8,0) to (0.2,0);
        \draw[very thick,-stealth] (1.8,4.2) to (0.2,5.8);
        \draw[very thick,-stealth] (3.8,2) to (2.2,2);
        \draw[very thick,-stealth] (0.1,5.8) to[out=-77,in=115] (1.8,0.2);
        \draw[very thick,-stealth] (2.2,3.8) to (3.8,2.2);
        
        \draw[fill=black] (8,3) circle(0.1);
        \node at (8,3.5) {$k+1$};
        \draw[fill=black] (10,3) circle(0.1);
        \node at (10,3.5) {$k-1$};
        \draw[fill=black] (12,3) circle(0.1);
        \node at (12,3.5) {$k$};
        
        \draw[very thick] (8,3) to (10,3);
    \end{tikzpicture}}
    \caption{The associated directed graphs to the real transition matrices of \Cref{fig:l13n5885ttk3}, \Cref{fig:l13n5885ttk401}, \Cref{fig:l13n5885ttk2}, and their curve complexes.}
    \label{fig:l13n5885graph}
\end{figure}

For each $k \geq 2$, the clique polynomial of the corresponding curve complex is $LT_{1,k}=t^{2k}-t^{k+1}-t^k-t^{k-1}+1$. Hence the induced fully-punctured pseudo-Anosov maps in this family of examples have expansion factors $\left| LT_{1,k} \right|$.

We point out that similar train track maps were obtained in \cite{Kin15}. In particular there is some overlap between our first family and \cite[Example 4.2]{Kin15} and between our second family and \cite[Example 4.4]{Kin15}. See also the discussion in the next subsection.

\subsection{Examples for even $\chi(S)$: fibered face theory} \label{subsec:fiberedface}

In this subsection, we will show the sharpness statement in \Cref{thm:sharpthm} again, but this time using the tool of fibered face theory. This will also explain the source of our train track maps in \Cref{subsec:evensharpness}.

We first provide a brief review of fibered face theory, referring to \cite{McM00} for details. Let $M$ be a fibered hyperbolic 3-manifold, with $n=b_1(M) \geq 2$ and let $F \subset H^1(M;\mathbb{R})$ be a fibered face. To every primitive integral point $a \in \cone(F)$, there is a fibration of $M$ over $S^1$ whose monodromy is a pseudo-Anosov mapping class $(S_a, f_a)$.

Let $\Delta_M \in \mathbb{Z}H_1(M)$ be the Alexander polynomial of $M$. The \textit{Newton polytope} of $\Delta_M$ is the convex hull of points in $H_1(M)$ that have nonzero coefficient in $\Delta_M$. The \textit{Alexander norm} of an element $a \in H^1(M)$ is defined to be 
$$||a|| = \sup_{g,h} ~\langle a,g-h \rangle$$
where $g,h$ range over the Newton polytope of $\Delta_M$. 

It is a well-known fact that the Euler characteristic of $S_a$ can be calculated by 
$$|\chi(S_a)|=||a||.$$
In other words, the Thurston norm agrees with the Alexander norm on $\cone(F)$. See, for example, \cite[Theorem 7.1]{McM00}.

Meanwhile, there is another polynomial $\Theta_F \in \mathbb{Z}H_1(M)$, called the \textit{Teichm\"uller polynomial} associated to the fibered face $F$, with the property that the expansion factor of the pseudo-Anosov monodromy $(S_a, f_a)$ corresponding to $a=(a_1,...,a_n)$ can be calculated by 
$$\lambda(f_a)=|\Theta_F(t^{a_1},...,t^{a_n})|.$$
See, for example, \cite[Theorem 5.1]{McM00}.

Now consider the L6a2 link complement, which we denote by $M_1$. It can be computed (for example by using the veering triangulation \texttt{eLMkbcddddedde\_2100}, see \cite{GSS} and \cite{Par21} for the details of such a computation) that the Alexander polynomial of $M_1$ is 
$$\Delta_1(a,b)=b^2+b(a^2-a+1)+a^2$$
under some choice of basis $(a,b)$ for $H_1(M_1)$, and the Teichm\"uller polynomial associated to the fibered cone $C_1=\cone \{a^*+2b^*,-a^*\}$ is
$$\Theta_1(a,b)=b^2-b(a^2+a+1)+a^2$$
Hence the Thurston norm on $C_1$ is given by $-2a+2b$. 

Consider the class $x_1=b^*$. The corresponding pseudo-Anosov monodromy $f_{1,1}$ is defined on a surface $S_1$ with $|\chi(S_1)|=||x_1||=2$. The expansion factor of $f_{1,1}$ is the largest root of $t^2-3t+1$, which is $\mu^2$. Hence $f_{1,1}$ attains equality in \Cref{thm:mainthm}.

Now consider the class $x_k=a^*+(k+1)b^* \in C_1$, for $k \geq 2$. The corresponding pseudo-Anosov monodromy $f_{1,k}$ is defined on a surface $S_k$ with $|\chi(S_k)|=||x_k||=2k$ and its expansion factor is the largest root of $t^{2k+2}-t^{k+1}(t^2+t+1)+t^2=t^2 LT_{1,k}$, which is $|LT_{1,k}|$. Hence $f_{1,k}$ attains equality in \Cref{thm:sharpthm}.

Indeed, the first family of train track maps we described in \Cref{subsec:evensharpness} is computed from $f_{1,k}$. Here we will not demonstrate this computation since the details are rather tedious. It suffices to say that we essentially followed the methodology in \cite{Kin15}; see in particular \cite[Example 4.2]{Kin15}. We also remark that the maps $f_{1,k}$ are considered in \cite{Hir10} as well, even though invariant train tracks were not provided there.

Similarly, consider the L13n5885 link (= the Whitehead sister link = the $(-2,3,8)$-pretzel link) complement, which we denote by $M_2$. It can be computed (for example by using the veering triangulation \texttt{fLLQcbeddeehhbghh\_01110}) that the Alexander polynomial of $M_2$ is 
$$\Delta_2(a,b)=b^2+b(a^2+a+1)+a^2$$
under some choice of basis $(a,b)$ for $H_1(M_2)$, and the Teichm\"uller polynomial associated to the fibered cone $C_2=\cone \{a^*+2b^*,-a^*\}$ is
$$\Theta_2(a,b)=b^2-b(a^2+a+1)+a^2$$
Hence the Thurston norm on $C_2$ is given by $-2a+2b$. 

Consider the class $x_1=b^*$. The corresponding pseudo-Anosov monodromy $f_{2,1}$ is defined on a surface $S_1$ with $|\chi(S_1)|=||x_1||=2$. The expansion factor of $f_{2,1}$ is the largest root of $t^2-3t+1$, which is $\mu^2$. Hence $f_{2,1}$ attains equality in \Cref{thm:mainthm}.

Now consider the class $x_k=a^*+(k+1)b^* \in C_2$, for $k \geq 2$. The corresponding pseudo-Anosov monodromy $f_{2,k}$ is defined on a surface $S_k$ with $|\chi(S_k)|=||x_k||=2k$ and its expansion factor is the largest root of $t^{2k+2}-t^{k+1}(t^2+t+1)+t^2=t^2 LT_{1,k}$, which is $|LT_{1,k}|$.  Hence $f_{2,k}$ attains equality in \Cref{thm:sharpthm}.

The second family of train track maps we described in \Cref{subsec:evensharpness} is computed from $f_{2,k}$. We refer to \cite[Example 4.4]{Kin15} for the methodology of our computation. We also remark that some of the maps $f_{2,k}$ were considered in \cite{AD10} and \cite{KT13}, even though invariant train tracks were not provided in those works.

We summarize all the examples we have discussed in \Cref{tab:sharpness}. In the table, we list a puncture as having singularity type $p$ if it has $p$ prongs, and punctures with the same color belong to the same orbit.

\begin{table}[h]
    \centering
    \footnotesize
    \caption{Examples of maps that attain the lower bound in \Cref{thm:mainthmtext}.}
    \begin{tabular}{|c|c|c|c|c|}
    \hline
    $g$ & $s$ & Range of $k$ & Description of $f$ & Singularity type \\
    \hline
    $0$ & $4$ & - & \multirow{3}{*}{Fiberings of L6a2} & $(\textcolor{red}{1}, \textcolor{red}{1}, \textcolor{red}{1}, \textcolor{blue}{1})$ \\
    $k$ & $2$ & $k \geq 2$, $k \equiv 1,2$ (mod $3$) & & $(\textcolor{red}{3k}, \textcolor{blue}{k})$ \\
    $k-1$ & $4$ & $k \geq 3$, $k \equiv 0$ (mod $3$) & & $(\textcolor{red}{3k}, \textcolor{blue}{\frac{k}{3}}, \textcolor{blue}{\frac{k}{3}}, \textcolor{blue}{\frac{k}{3}})$ \\
    \hline
    $1$ & $2$ & - & \multirow{4}{*}{Fiberings of L13n5885} & $(\textcolor{red}{2}, \textcolor{blue}{2})$ \\
    $k$ & $2$ & $k \geq 4$, $k \equiv 0,1,4$ (mod $5$) & & $(\textcolor{red}{2k+1}, \textcolor{blue}{2k-1})$ \\
    $k-2$ & $6$ & $k \geq 2$, $k \equiv 2$ (mod $5$) & & $(\textcolor{red}{\frac{2k+1}{5}}, \textcolor{red}{\frac{2k+1}{5}}, \textcolor{red}{\frac{2k+1}{5}}, \textcolor{red}{\frac{2k+1}{5}}, \textcolor{red}{\frac{2k+1}{5}}, \textcolor{blue}{2k-1})$ \\
    $k-2$ & $6$ & $k \geq 3$, $k \equiv 3$ (mod $5$) & & $(\textcolor{red}{2k+1}, \textcolor{blue}{\frac{2k-1}{5}}, \textcolor{blue}{\frac{2k-1}{5}}, \textcolor{blue}{\frac{2k-1}{5}}, \textcolor{blue}{\frac{2k-1}{5}}, \textcolor{blue}{\frac{2k-1}{5}})$ \\
    \hline
    \end{tabular}
    \label{tab:sharpness}
\end{table}

Finally, we remark that both $M_1$ and $M_2$ can be obtained by Dehn filling a single fibered 3-manifold $M$, commonly known as the \textit{magic manifold}. Indeed, all the known examples of small expansion factor maps are realized as monodromies of $M$ and its Dehn fillings along a component \cite{KKT13}.

\subsection{Braids and odd $\chi(S)$} \label{subsec:braidsharpness}

Note that the only braid monodromies that appear in \Cref{tab:sharpness} are $f_{1,1}$ and $f_{2,2}$. It can be checked that these are the simplest hyperbolic 3-braid $\sigma_1\sigma_2^{-1}$ (see \cite{Hir10}) and the 5-braid of minimal expansion factor $\sigma_1 \sigma_2 \sigma_3 \sigma_4 \sigma_1 \sigma_2$ (see \cite{HS07}) respectively. These imply that \Cref{thm:braids} is sharp for $n=3,5$.

We do not currently know whether \Cref{thm:braids} is sharp for odd $n \geq 7$, see \Cref{quest:braidsharpness}. Also note that the results of \cite{LT11b} on braids of minimum expansion factor do not provide an answer here, since many of those braids are not fully-punctured.

So far we have only discussed sharpness in the cases when $|\chi(S)|$ is even. This is because the cases when $|\chi(S)|$ is odd are likely not sharp. For example, it is shown in \cite{LT11b} that the 6-braid of minimum expansion factor $\sigma_2 \sigma_1 \sigma_2 \sigma_1 (\sigma_1 \sigma_2 \sigma_3 \sigma_4 \sigma_5)^2$ is fully-punctured and has normalized expansion factor $\lambda_{0,7}^5 \approx 15.14>8$. 
See \Cref{sec:questions} for some discussion on how one might try to sharpen the bound for odd $|\chi(S)|$.

\subsection{Single orbit of punctures} \label{subsec:oneboundary}

In this subsection, we describe some examples that show that the assumption of $f$ having at least two punctures orbits in \Cref{thm:mainthm} is necessary. 

Most of the examples are defined on the once-punctured torus $S_{1,1}$. To describe them, write $S_{1,1}$ as $(\mathbb{R}^2 \backslash \mathbb{Z}^2) / \mathbb{Z}^2$ and notice that an element $A \in SL(2,\mathbb{Z})$ induces a map $f_A$ on $S_{1,1}$. It is a classical fact that if $|\mathrm{tr} A|>2$, then $f_A$ is pseudo-Anosov with expansion factor given by the spectral radius of $A$. Meanwhile, since $|\chi(S_{1,1})|=1$, the normalized expansion factor equals to the expansion factor, and since $S_{1,1}$ only has one puncture, any map defined on it must only have one puncture orbit.

With this understanding, notice that the maps induced by $\pm \begin{bmatrix} 2 & 1 \\ 1 & 1 \end{bmatrix}$, $\pm \begin{bmatrix} 3 & 2 \\ 1 & 1 \end{bmatrix}$, $\pm \begin{bmatrix} 4 & 3 \\ 1 & 1 \end{bmatrix}$, and $\pm \begin{bmatrix} 5 & 4 \\ 1 & 1 \end{bmatrix}$ have expansion factors $\frac{3+\sqrt{5}}{2} \approx 2.62$, $\frac{4+\sqrt{12}}{2} \approx 3.73$, $\frac{5+\sqrt{21}}{2} \approx 4.79$, and $\frac{6+\sqrt{32}}{2} \approx 5.83$ respectively, each of them being strictly less than $\mu^4 \approx 6.85$. This shows that the inequality in \Cref{thm:mainthm} fails if the assumption `with at least two punctures orbits' is removed.

There are (at least) two other examples that can be used to show this. Consider the K12n242 knot (= the (-2,3,7)-pretzel knot) complement. This 3-manifold has a unique fibering, with monodromy $f:S_{5,1} \to S_{5,1}$. The expansion factor of $f$ is given by the largest real root of $t^{10}+t^9-t^7-t^6-t^5-t^4-t^3+t+1$, also known as \textit{Lehmer's number}, which is $\approx 1.18$. Hence the normalized expansion factor of $f$ is $\approx 4.31$, which is strictly less than $\mu^4 \approx 6.85$.

Meanwhile consider the map $f_{1,2}$ defined in \Cref{subsec:fiberedface}. According to \Cref{tab:sharpness}, it is defined on $S_{2,2}$, where one of the punctures is $6$-pronged while the other is $2$-pronged. If we fill in the $2$-pronged puncture, we would still get a fully-punctured pseudo-Anosov map $\overline{f_{1,2}}$ with the same expansion factor but now defined on $S_{2,1}$. Hence the normalized expansion factor of $\overline{f_{1,2}}$ is $\left| LT_{1,2} \right|^3 \approx 5.10$, which is strictly less than $\mu^4 \approx 6.85$.

We summarize these examples in \Cref{tab:oneboundary}.

\begin{table}[h]
    \centering
    \footnotesize
    \caption{Examples of fully-punctured pseudo-Anosov maps $f$ with only one puncture orbit for which the bound in \Cref{thm:mainthm} fails.}
    \begin{tabular}{|c|c|c|c|c|}
    \hline
    $g$ & $s$ & Description of $f$ & Singularity type & $L(S,f)$ \\
    \hline
    $1$ & $1$ & Induced by $\pm \begin{bmatrix} 2 & 1 \\ 1 & 1 \end{bmatrix}$ & $(2)$ & $\frac{3+\sqrt{5}}{2} \approx 2.62$ \\
    $1$ & $1$ & Induced by $\pm \begin{bmatrix} 3 & 2 \\ 1 & 1 \end{bmatrix}$ & $(2)$ & $\frac{4+\sqrt{12}}{2} \approx 3.73$ \\
    $5$ & $1$ & Fibering of K12n242 & $(18)$ & $(\text{Lehmer's number})^9 \approx 4.31$ \\
    $1$ & $1$ & Induced by $\pm \begin{bmatrix} 4 & 3 \\ 1 & 1 \end{bmatrix}$ & $(2)$ & $\frac{5+\sqrt{21}}{2} \approx 4.79$ \\
    $2$ & $1$ & Fill in $2$-pronged puncture of $f_{1,2}$ & (6) & $\left| LT_{1,2} \right|^3 \approx 5.10$ \\ 
    $1$ & $1$ & Induced by $\pm \begin{bmatrix} 5 & 4 \\ 1 & 1 \end{bmatrix}$ & $(2)$ & $\frac{6+\sqrt{32}}{2} \approx 5.83$ \\
    \hline
    \end{tabular}
    \label{tab:oneboundary}
\end{table}

\section{Discussion and further questions} \label{sec:questions}

We first note the following generalization of \Cref{prop:radical}. (Recall \Cref{defn:radicalelement} for the definition of the radical elements $r_c$.)

\begin{prop} \label{prop:generalradical}
Suppose $\tau$ is a train track that fully carries the unstable lamination of a pseudo-Anosov map. Then the radical of the Thurston symplectic form $\omega$ on $\tau$ is given by 
$$\rad(\omega)=\spn\{r_c\}$$
where $c$ ranges over all even-pronged boundary components of $\tau$.

In particular, 
$$\dim \rad(\omega) = \text{\# even-pronged boundary components} - \epsilon$$
for $\epsilon = 0$ or $1$.
\end{prop}
\begin{proof}
Up to puncturing the pseudo-Anosov map $f$ at an orbit of a (non-singular) periodic point, we can arrange for $f$ to have at least two puncture orbits. Correspondingly, we can modify $\tau$ by slitting a small segment of an edge at every punctured point, see \Cref{fig:slit}. 

\begin{figure}
    \centering
    \resizebox{!}{0.6cm}{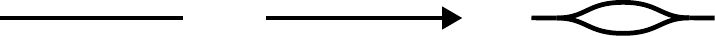}
    \caption{Puncturing at a non-singular point corresponds to slitting a small segment of an edge.}
    \label{fig:slit}
\end{figure}

It is straightforward to check that this slitting operation preserves $\rad(\omega)=\spn\{r_c\}$, hence we can assume that $f$ has at least two puncture orbits.

Then by \Cref{prop:radical}, $\rad(\omega)=\spn\{r_c\}$ is true for at least one train track $\tau'$ fully carrying the unstable lamination of $f$. Now by a theorem of Stallings \cite{Sta83}, $\tau$ and $\tau'$ are related by a sequence of elementary moves, hence by \Cref{lemma:elementarymoveradical}, $\rad(\omega)=\spn\{r_c\}$ is true for $\tau$ as well.

For the statement about the dimension of $\rad(\omega)$, recall that the proof of \Cref{prop:radicalreciprocal} shows that the radical elements $r_c$ have at most one relation.
\end{proof}

\begin{quest} \label{quest:traintrackrad}
Is the equation $\rad(\omega)=\spn\{r_c\}$, where $c$ ranges over all even-pronged boundary components of $\tau$, true for any train track $\tau$?
\end{quest}

Given the invariance of the equation under a wide array of operations on train tracks, it seems reasonable to expect a positive answer. However, if the answer is negative, then \Cref{prop:generalradical} would be an obstruction for train tracks to carry the unstable lamination of a pseudo-Anosov map.

Next, we propose some questions about small expansion factors which one might hope to tackle using the ideas in this paper. 

Firstly, recall that \Cref{tab:sharpness} records some maps that attain the lower bound in \Cref{thm:mainthmtext}. These being the only examples we are aware of, it is a natural question to ask if they are actually the only possible examples.

\begin{quest} \label{quest:evensharpness}
Let $f:S \to S$ be a fully-punctured pseudo-Anosov mapping class with at least two puncture orbits. Suppose $|\chi(S)|=2k \geq 4$ and $L(f)=\left| LT_{1,k}  \right|^{2k}$. Must $f$ be one of the maps listed in \Cref{tab:sharpness}?
\end{quest}

Recall that for the examples in \Cref{tab:sharpness}, we calculated that $L(S,f)=\left| LT_{1,k} \right|^{2k}$ using the techniques described in \cite{McM15}. More specifically, starting with the matrix $f_*^\real$ we get from the train track map, we consider its associated digraph, compute its curve complex $G$, compute the clique polynomial of $G$, and find its largest real root.

Now, McMullen has actually classified the weighted graphs $G$ that could occur in this computation, provided that $L(S,f)=\left| LT_{1,k}  \right|^{2k}$. Hence one can hope to work backwards from that information and classify the matrices $f_*^\real$ that could occur in this computation, then classify the train track maps themselves.

If the answer to \Cref{quest:evensharpness} is `yes', then this means that it is possible to sharpen \Cref{thm:sharpthm} for the other values of $(g,s)$. A natural candidate here would be to replace $|LT_{1,k}|$ by $|LT_{3,k}|$, since in McMullen's analysis in \cite{McM15}, this is the second smallest value for the largest real root of the clique polynomial that could occur in the computation (at least for large enough values of $k$). We remark that the first author has found maps that attain this expansion factor in \cite{Hir10}. 

If this can be done, then one could repeat \Cref{quest:evensharpness} for this second-best bound to ask for the exact range of $(g,s)$ it applies to. Then repeating this scheme as far as possible, one could hope to paint a nice picture to the minimum expansion factor problem when restricted to fully-punctured maps.

With our current understanding of Perron-Frobenius matrices (and pseudo-Anosov maps), one can hope to carry out this program for even $|\chi(S)|$. However, the same question for odd $|\chi(S)|$ would likely require new ideas.

\begin{quest} \label{quest:oddsharpness}
What is the sharpest lower bound for odd $|\chi(S)|$ in \Cref{thm:sharpthm}?
\end{quest}

Indeed, this is down to the fact that in \Cref{thm:McMullen}, the case when $n$ is odd is likely not sharp. This is in turn due to the approach taken in \cite{McM15}, where McMullen simply shows that $\rho(A)^n <8$ is impossible by showing that there would not be an appropriate curve complex $G$ in the computation, via analyzing some small graphs. To understand the sharpest lower bound possible when $n$ is odd, one would likely have to extend the analysis to some slightly larger graphs.

As another potential direction for the ideas in this paper, we turn our attention to the hypothesis that $f$ has at least two puncture orbits in \Cref{thm:mainthm}. Notice that all the examples in \Cref{tab:sharpness} have exactly two puncture orbits. In particular, if the answer to \Cref{quest:evensharpness} is `yes', then it must be possible to sharpen \Cref{thm:mainthm} if we replace the hypothesis by, say, `at least three puncture orbits'. In general, we ask:

\begin{quest} \label{quest:threeboundary}
What is the sharpest lower bound in \Cref{thm:mainthm} if one replaces `at least two puncture orbits' by `at least $q$ puncture orbits' for some number $q \geq 3$?
\end{quest}

As before, the approach to answering \Cref{quest:threeboundary} that we have in mind is to dive into the analysis of \cite{McM15}. The condition on the number of puncture orbits should impose some conditions on the Perron-Frobenius digraph, which might translate to some conditions on the corresponding curve complex $G$. By restricting to graphs satisfying this condition (potentially broadening the analysis if necessary), one can hope to get lower bounds in this scenario.

Another approach would be to generalize the definition of standardly embedded train tracks. For standardly embedded train tracks, the set of punctures is naturally divided into the sets of inner and outer punctures. If there is a type of train track that naturally treats, say, three classes of punctures distinctly, then the resulting dynamics could also give interesting lower bounds.

We remark that whether the sharpest such lower bound is attained is also an interesting question. In \cite{Sun15}, Sun showed that there exists 3-manifolds $M$ with fibered faces $F$ such that the minimum normalized expansion factor on $F$ is attained at an irrational point, i.e. not attained by a pseudo-Anosov map corresponding to a rational point on $F$. If this is the case for the 3-manifolds that produce the sharpest lower bound in \Cref{quest:threeboundary} for some particular value of $q$, then such a lower bound will not be attained. % New

\bibliographystyle{alpha}

\bibliography{bib.bib}

\end{document}